\documentclass[11pt]{amsart}

\usepackage{amsfonts,amscd,mathabx}
\usepackage{calligra,mathrsfs}
\usepackage[all]{xy}
\usepackage{float, comment}
\usepackage{mathtools}
\usepackage{amsmath}
\usepackage{amsthm}
\usepackage{amssymb}
\usepackage{amsbsy}
\usepackage{amstext}
\usepackage{amsopn}
\usepackage{mathrsfs} 
\usepackage[mathscr]{eucal}
\usepackage{enumerate}
\usepackage{enumitem}
\usepackage{xcolor}
\usepackage{graphicx} 
\usepackage{scalerel}
\usepackage{microtype} 
\usepackage[margin=1in,marginparwidth=0.8in, marginparsep=0.1in]{geometry}
\usepackage[bookmarks=true, bookmarksopen=true, bookmarksdepth=3,bookmarksopenlevel=2, colorlinks=true, linkcolor=blue, citecolor=blue, filecolor=blue, menucolor=blue, urlcolor=blue]{hyperref}
\usepackage{tikz}
\usepackage{bbm}
\usepackage[all]{xy}
\usepackage{xspace}
\usetikzlibrary{3d,arrows,calc,positioning,decorations.pathreplacing,matrix} 
\usepackage{accents}
\usepackage{extarrows}

\tikzset{dbl/.style={double,
		double distance=6.0,
		-implies,
		shorten >=10pt,
		shorten <=10pt}}

\usepackage{float, comment}
\usepackage{mathtools}
\usepackage{amsmath}
\usepackage{amsthm}
\usepackage{amssymb}
\usepackage{amsbsy}
\usepackage{amstext}
\usepackage{amsopn}
\usepackage{mathrsfs} 
\usepackage{enumerate}
\usepackage{scalerel}
\usepackage{microtype} 
\usepackage[bookmarks=true, bookmarksopen=true, bookmarksdepth=3,bookmarksopenlevel=2, colorlinks=true, linkcolor=blue, citecolor=blue, filecolor=blue, menucolor=blue, urlcolor=blue]{hyperref}
\usepackage{newtxtext} 
\usepackage{mathrsfs}
\usepackage{bbm}
\usepackage{tikz}
\usepackage{tikz-cd}
\usetikzlibrary{matrix,arrows.meta}

\DeclareMathOperator{\cHom}{\mathscr{H}\text{\kern -3pt {\calligra\large om}}\,}

\usepackage{amsthm}
\usepackage{hyperref}
\usepackage{cleveref}

\theoremstyle{plain}
\newtheorem{Theorem}{Theorem}[section]
\newtheorem{Lemma}[Theorem]{Lemma}
\newtheorem{Proposition}[Theorem]{Proposition}
\newtheorem{Corollary}[Theorem]{Corollary}

\numberwithin{equation}{section} 

\theoremstyle{definition}
\newtheorem{Definition}[Theorem]{Definition}
\newtheorem{Example}[Theorem]{Example}
\newtheorem{Remark}[Theorem]{Remark}

\crefname{Theorem}{theorem}{theorems}
\Crefname{Theorem}{Theorem}{Theorems}
\crefname{Lemma}{lemma}{lemmas}
\Crefname{Lemma}{Lemma}{Lemmas}
\crefname{Proposition}{proposition}{propositions}
\Crefname{Proposition}{Proposition}{Propositions}
\crefname{Corollary}{corollary}{corollaries}
\Crefname{Corollary}{Corollary}{Corollaries}
\crefname{Definition}{definition}{definitions}
\Crefname{Definition}{Definition}{Definitions}
\crefname{Example}{example}{examples}
\Crefname{Example}{Example}{Examples}
\crefname{Remark}{remark}{remarks}
\Crefname{Remark}{Remark}{Remarks}

\numberwithin{figure}{section}

\newcommand{\Coh}{\mathrm{Coh}}

\def\ren{{\rm{ren}}}
\def\im{{\rm{im}}}
\def\barR{\bar{R}}
\def\barF{\overline{{\sf{F}}}}
\def\bscrM{\overline{\mathscr{M}}}
\newcommand{\dcong}{\mathrel{{\dot{\cong}}}}
\newcommand{\ddcong}{\mathrel{{\ddot{\cong}}}}

\def\C{\mathrm{C}}
\def\D{\mathrm{D}}

\def\F{\mathrm{F}}

\def\L{\mathrm{L}}

\def\N{\mathrm{N}}
\def\O{\mathrm{O}}
\def\P{\mathrm{P}}
\def\Q{\mathrm{Q}}
\def\R{\mathrm{R}}

\def\Z{\mathrm{Z}}
\def\u{\mathrm{u}}

\def\bbC{\mathbb{C}}
\def\bbD{\mathbb{D}}

\def\bbN{\mathbb{N}}

\def\bbZ{\mathbb{Z}}

\def\scrL{\mathscr{L}}
\def\scrM{\mathscr{M}}

\def\scrS{\mathscr{S}}

\def\calO{\mathfrak{O}}

\def\frakS{\mathfrak{S}}

\def\calC{\mathcal{C}}
\def\calD{\mathcal{D}}
\def\calE{\mathcal{E}}

\def\calI{\mathcal{I}}

\def\calK{\mathcal{K}}

\def\calM{\mathcal{M}}
\def\calN{\mathcal{N}}
\def\calO{\mathcal{O}}
\def\calP{\mathcal{P}}
\def\calQ{\mathcal{Q}}
\def\calR{\mathcal{R}}
\def\calS{\mathcal{S}}
\def\calT{\mathcal{T}}

\def\calV{\mathcal{V}}

\def\frakg{\mathfrak{g}}

\def\frakn{\mathfrak{n}}

\def\bfr{\mathbf{r}}

\def\bfR{\mathbf{R}}

\def\uum{{{\underline{m}}}}
\def\uun{{{\underline{n}}}}

\def\La{\Lambda}
\def\t{t}
\def\hz{\widehat{z}}

\def\cusp{{\operatorname{cusp}\nolimits}}
\def\dim{\operatorname{dim}\nolimits}

\def\gmod{\operatorname{gmod}\nolimits}
\def\gMod{\operatorname{gMod}\nolimits}

\def\Mod{\operatorname{Mod}\nolimits}

\def\Coh{\operatorname{Coh}\nolimits}
\def\IndCoh{\operatorname{IndCoh}\nolimits}

\def\Im{\operatorname{Im}\nolimits}
\def\Ker{\operatorname{Ker}\nolimits}

\def\Res{\operatorname{Res}\nolimits}

\def\id{\operatorname{id}\nolimits}

\def\Spec{\operatorname{Spec}\nolimits}

\def\height{\operatorname{ht}\nolimits}
\def\pt{\operatorname{pt}\nolimits}

\def\af{{\operatorname{af}\nolimits}}

\def\re{{\operatorname{re}\nolimits}}

\def\height{\operatorname{ht}\nolimits}

\def\c1{\operatorname{c_1}\nolimits}

\def\-{{{\text{-}}}}
\DeclareMathOperator{\calHom}{\mathscr{H}\text{\kern -3pt {\calligra\large om}}\,}

\def\tF{\widetilde{\sf{F}}}
\def\tcalC{\widetilde{\mathcal{C}}}

\def\d{\Delta}
\def\n{\nabla}
\def\bd{\overline{\Delta}}
\def\bn{\overline{\nabla}}

\def\hGO{{\widehat{G}_\calO}}
\def\hGK{{\widehat{G}_{\mathcal K}}}

\def\uk{{\underline{k}}}
\def\u0{{\underline{0}}}
\def\ul{{\underline{\ell}}}

\def\b{\mathfrak{b}}
\def\hd{{\rm{hd}}}

\def\cl{{\rm{cl}}}

\def\r{{\bf{r}}}

\def\hd{{\rm{hd}}}

\def\td{\widetilde{d}}
\def\tm{\widetilde{m}}
\def\coker{{\rm coker}}

\def\ttimes{\widetilde{\times}}

\def\C{\mathbb{C}}
\def\D{\mathbb{D}}
\def\N{\mathbb{N}}

\def\Z{\mathbb{Z}}

\def\F{{\sf{F}}}

\newcommand{\Cone}{\mathrm{Cone}}
\def\O{{\mathcal O}}

\def\dim{\mbox{dim}}

\def\Gr{\mathrm{Gr}}

\def\cKP{{\mathcal{KP}}}

\def\la{\langle}
\def\ra{\rangle}

\def\cF{{\mathscr{F}}}
\def\cG{{\mathscr{G}}}
\def\cP{{\mathscr{P}}}

\def\cK{{\mathscr{K}}}
\def\cQ{{\mathscr{Q}}}
\def\cS{{\mathscr{S}}}
\def\cV{{\mathscr{V}}}

\def\bA{{\mathbb{A}}}

\def\hG{{\widehat{G}}}

\DeclareMathOperator{\Hom}{Hom}
\DeclareMathOperator{\Ext}{Ext}

\DeclareMathOperator{\End}{End}

\newcommand{\llb}{[\![}
\newcommand{\rrb}{]\!]}
\newcommand{\llp}{(\!(}	
\newcommand{\rrp}{)\!)}

\begin{document}

\title{Duality functors for Coulomb branches I}

\author[Sabin Cautis]{Sabin Cautis}
\address[Sabin Cautis]{University of British Columbia \\ Vancouver BC, Canada}
\email{cautis@math.ubc.ca}

\author[Eric Vasserot]{Eric Vasserot}
\address[Eric Vasserot]{Universit\'e Paris Cit\'e \\ 75013 Paris, France}
\email{vasserot@imj-prg.fr}

\begin{abstract}
To a quiver of finite type one can associate two abelian categories: the category of finite dimensional modules over the 
corresponding affine quiver Hecke algebra and the Coulomb category of Koszul-perverse coherent sheaves. In this paper 
we define an exact functor from the former to the latter. This functor sends a simple to a simple or zero and is (almost) essentially 
surjective on simples. This implies that simple Koszul-perverse sheaves categorify the dual canonical basis. 
\end{abstract}

\maketitle
\setcounter{tocdepth}{1}
\tableofcontents

\section{Introduction}

Fix a finite type simply laced quiver $Q=(I,E)$ and its affinization $Q_\af$. Associated to $Q$ and a fixed dimension vector $\alpha$ one has the 
K-theoretic Coulomb branch $\calM_{G,N}$ as defined by Braverman, Nakajima and Finkelberg \cite{Nak,BFNa,BFNb}. On the other hand, 
associated to $Q_\af$ one has the quiver Hecke algebra $R = \oplus_\beta R(\beta)$ as introduced by Khovanov-Lauda and Rouquier 
\cite{KL,Rou}. The goal of this paper is to provide a precise, direct relationship between these two at the level of categories. Our main result is 
the following.

\begin{Theorem}[cf. Prop. \ref{prop:Fexact}, Prop. \ref{prop:simple2simple}, Cor. \ref{cor:surjective}] \label{thm:main}
Assuming that $\alpha$ is a strictly dominant weight,
there is a monoidal exact functor between finite length abelian categories
\begin{equation}\label{eq:main}
\F: \calC \to \cKP_{X^{(\bullet)}}.
\end{equation}
This functor sends simple modules to simple objects or zero and is (almost) essentially surjective on simples. 
\end{Theorem}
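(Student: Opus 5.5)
The functor $\F$ will be built geometrically, by Fourier/Koszul duality between the quiver Hecke side and the Coulomb side. On the quiver Hecke side, finite dimensional $R(\beta)$-modules for $Q_\af$ can be realized, following Varagnolo--Vasserot and Rouquier, as $\Ext$-algebra modules of Lusztig sheaves on the representation space $E_\beta$ of $Q_\af$. Equivalently, after taking loop rotation into account, they are equivariant coherent-type data on the space of $\alpha$-dimensional representations. On the Coulomb side, $\cKP_{X^{(\bullet)}}$ is the heart of the Koszul-perverse t-structure on $G_\calO$-equivariant coherent sheaves on the BFN space of triples. The plan is to define $\F$ as convolution with a fixed kernel: the pushforward of the structure sheaf of the correspondence relating the cyclic quiver representation space to the affine Grassmannian slice.

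Under the strict dominance hypothesis on $\alpha$, this kernel is flat, so the induced functor from perfect $R$-modules extends to finite dimensional modules. The first step is to check that the convolution is monoidal. I would do this by comparing the induction product $\circ$ on $\calC$ with the convolution product on the Coulomb side, using base change for the induction diagram against the convolution diagram of the space of triples.

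Next comes exactness, which is Prop.~\ref{prop:Fexact}. Since $\calC$ has finite length, it suffices to show that $\F$ sends each simple $L$ into the heart $\cKP$. I would reduce this to a statement about standard objects. Standard modules $\Delta(\pi)$, defined via a convex order and PBW/cuspidal decomposition, should go to the Koszul-dual standard objects. These are restrictions of $\O$ twisted along $G_\calO$-orbits, and they are known to lie in the heart. Costandard modules go to costandards in the same way. Exactness then follows from a highest-weight argument: every finite dimensional module has a standard filtration in the appropriate derived sense, and the heart is closed under extensions. Because the cuspidal modules for $Q_\af$ of real root type are one-dimensional, their images are line bundles on the minimal orbits, and this computation is direct.

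For Prop.~\ref{prop:simple2simple}, $\F(L(\pi))$ is the image of the canonical map $\F\Delta(\pi)\to\F\nabla(\pi)$. By the previous step this is the map between the standard and costandard objects of the Coulomb category. Its image is the simple object $\mathcal{IC}$, or it is zero. It is zero exactly when the multisegment $\pi$ is not aperiodic/supported inside the slice, that is, when the corresponding orbit does not meet $X^{(\bullet)}$.

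Almost essential surjectivity (Cor.~\ref{cor:surjective}) would then follow by matching labels. Every Koszul-perverse simple corresponds to an orbit together with a line bundle twist. The orbits are indexed by the same combinatorial data as the surviving $\pi$'s, and the twists not reached are accounted for by the determinant twists, whence ``almost''.

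The main obstacle is the standard-object computation $\F\Delta(\pi)\cong\Delta_{\cKP}(\pi)$, since it requires controlling higher cohomology of the kernel along orbit closures. Here I would first try induction on the length of the convex order, using monoidality together with the cuspidal case.
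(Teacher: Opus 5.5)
There is a genuine gap at the very first step: you never actually construct the functor. Lusztig sheaves for $Q_\af$ live on representation spaces of the affine quiver for every $\beta\in\bbN\Phi_\af^+$ (and $Q_\af$ is of type $\tilde{A}$, $\tilde{D}$ or $\tilde{E}$, not cyclic in general). By contrast, $\cKP_{X^{(\bullet)}}$ consists of coherent sheaves on the spaces of triples for $Q$ at the single dimension vector $\alpha$. You do not specify a kernel relating the two. Nothing in your sketch produces an $R(\beta)$-action, monoidality, or the claimed flatness.

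The idea you are missing is the duality-datum (Schur--Weyl type) construction.
\begin{enumerate}
\item One needs simple real objects $\scrM(\alpha_i)\in\cKP$, $i\in I_\af$, with $\Lambda(\scrM(\alpha_i),\scrM(\alpha_j))=-\la\alpha_i,\alpha_j\ra$ for $i\neq j$.
\item Then $R(\beta)$ acts on $\bigoplus_\nu(\scrM_{\nu_1})_z\circ\cdots\circ(\scrM_{\nu_h})_z$ over $X^{(h)}$. The $\tau_k$ act by renormalized $r$-matrices coming from the factorization structure, and the $x_k$ act by the coordinates $z_k$.
\item Setting $\F(N)=\scrM(\beta)\otimes_{R(\beta)}N$ gives a functor that is monoidal for formal reasons.
\end{enumerate}
The real work is producing $\scrM(\alpha_0)={}^\L\scrM(\theta-\delta)$ from iterated heads $\cP_{e_{i_k}}\n(\cdots\n\cP_{e_i,-1})$ and computing its $\Lambda$-pairings. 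This is where strict dominance enters, through $m_i=-\la\alpha,\alpha_i\ra<0$ (Corollary~\ref{cor:B4}, Propositions~\ref{prop:lambdas}, \ref{prop:ses4} and~\ref{prop:alpha0}). It plays no role in making any kernel flat.

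The exactness step would also fail as written.
\begin{enumerate}
\item In affine type $R(\beta)$ need not have finite global dimension: the imaginary strata are governed by $\C[z]$ tensored with a zig-zag algebra. So $\F$ is only right t-exact a priori and may be unbounded, and this is the real obstacle.
\item Resolving $L(\delta_i)$ by standard modules needs infinitely many terms. Closure of the heart under extensions says nothing about such infinite resolutions.
\item You have no standard or costandard objects in $\cKP_{X^{(\bullet)}}$ indexed by root partitions.
\item Real cuspidal modules are not one-dimensional in general. Already in type $A_1^{(1)}$, $L(\alpha_1+\delta)$ is two-dimensional.
\end{enumerate}

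The paper instead shows $\F(L)\in\cKP$ one simple at a time.
\begin{enumerate}
\item $\F(L(-\beta+\delta))\ddcong\scrM(-\beta+\delta)$, by reverse induction along the minimal pairs of Proposition~\ref{prop:minimalpair} (Lemma~\ref{lem:F4}).
\item The hardest case, $\F(L(\delta_i))\ddcong\cV_i$ (Proposition~\ref{prop:delta5}). It uses the composition factors of $\cP_{e_i}*{}^\L\cP_{e_i,-1}$, obtained from the shifted quantum affine algebra computation (Lemma~\ref{lem:FT}, Corollary~\ref{cor:FT}), together with the extensions $L(\delta_{ij})$.
\item All real cuspidals, via Lemma~\ref{lem:F3} and Corollary~\ref{cor:F2}.
\item General $L(\pi)$, by sandwiching between $\bd(\pi)$ and $\bn(\pi)$.
\end{enumerate}

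The remaining two claims also go differently in the paper. Simples go to simples or zero because $L(\pi)$ is an iterated head with real factors, and $\F$ respects such heads via $r$-matrices (Lemma~\ref{lem:simple2simple}). Almost surjectivity comes from generation of $\cKP$ by the $\cP_{\pm e_i,n}$ (Proposition~\ref{prop:generation}) together with Lemma~\ref{lem:surjective}.

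Matching labels cannot work, because $\F$ kills many simples and no criterion for which ones is established. Your criterion about orbits meeting $X^{(\bullet)}$ has no meaning, since $X^{(\bullet)}$ parametrizes factorization points, not orbits.
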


Let us explain the notation in \eqref{eq:main}. We denote by $\calC=\gmod(R)^\heartsuit$ the abelian, monoidal category of graded, finite dimensional modules 
over the quiver Hecke algebra $R$. On the other hand, recall that $\calM_{G,N} = \Spec(K^{G_\O}(\calR_{G,N}))$ where $G$ is the gauge group, 
$N$ the representation of $Q$ with dimension vector $\alpha$,
$\calR = \calR_{G,N}$ is a certain infinite dimensional moduli space, and $K^{G_\O}(-)$ the equivariant 
K-theory. The space $\calR$ carries a convolution structure which makes $K^{G_\O}(\calR)$ into an algebra.
The Grothendieck group $K^{G_\O}(\calR)$ has an obvious 
categorification $\Coh^{G_\O}(\calR)$, namely the derived category of coherent sheaves on $\calR$. 
In \cite{CW2} a non-standard t-structure was constructed on $\Coh^{\hGO}(\calR)$ where $\hGO$ is an extension of $G_\O$ as in \eqref{hatG}.  
This was called the Koszul-perverse t-structure and its heart $\cKP$ is a finite length, monoidal abelian category. 

The space $\calR$ also carries a factorization structure. This means that for a smooth curve $X$ and $m \ge 1$ one has a space $\calR_{X^m}$ 
fibering over $X^m$. In our case we take $X=\bA^1$. 
This space descends to a space $\calR_{X^{(m)}} \to X^{(m)}$ over the symmetric 
product $X^{(m)} = X^m/\frakS_m$. 
The derived category of equivariant coherent sheaves on $\calR_{X^{(m)}}$ supported over the origin in $X^{(m)}$ has a similarly 
defined Koszul-perverse t-structure whose heart we denote $\cKP_{X^{(m)}}$. Moreover, the factorization structure provides natural 
embeddings $\calR_{X^{(m)}} \to \calR_{X^{(dm)}}$. Pushforward under these is t-exact and allows us to define the colimit category $
\cKP_{X^{(\bullet)}}$ that appears on the right side of \eqref{eq:main}. 

Like $\cKP$ the category $\cKP_{X^{(\bullet)}}$ is abelian, of finite length, and monoidal. The embedding $\calR \to \calR_{X^{(m)}}$ induces an exact functor 
$\cKP \to \cKP_{X^{(\bullet)}}$. This functor is faithful and identifies simples on either sides (but is not full). Thus $\cKP_{X^{(\bullet)}}$ is a larger categorification of the same algebra. 
The relationship between $\cKP$ and $\cKP_{X^{(\bullet)}}$  is very much analogous to that of $\Coh(\pt)$ and $\Coh_0(\bA^1)$ where the latter is the category of sheaves on $\bA^1$ supported over $0 \in \bA^1$.  

Lastly note that similarities between $\cKP$ and quiver Hecke algebras, such as the existence of renormalized $r$-matrices for both, were 
already highlighted in \cite{CW2}. In fact, a closely related version of Theorem \ref{thm:main} appears as \cite[Conj. 1.8]{CW2}. However, we did 
not foresee in \cite{CW2} that the factorization structure plays a fundamental role and consequently that the target category should be $\cKP_{X^{(\bullet)}}$ rather than $\cKP$. 

\subsection{Construction of the functor $\F$}

To define $\F$ we were inspired by the technique from \cite{KKK} which generalizes classical Schur-Weyl duality by defining duality functors 
from quiver Hecke algebras to quantum affine algebras. More precisely, they define
\begin{align}\label{eq:quantumaffine} 
\gMod(R)^\heartsuit \to \Mod(U_q(\frakg)),\quad
M \mapsto V^{\otimes \beta} \otimes_{R(\beta)} M
\end{align}
where $U_q(\frakg)$ is the quantum group associated with a symmetrizable Cartan datum and $V$,
the duality datum, is a particular 
representation of $U_q(\frakg)$ so that $V^{\otimes \beta}$ carries an action of $R(\beta)$. 
This action on $V^{\otimes \beta}$ is a 
biproduct of the theory of renormalized $r$-matrices they developed. 

The theory of renormalized $r$-matrices for $\cKP$ developed in \cite{CW1,CW2} is constructed using the deformations $\calR_{X^m}$. 
More precisely, see \S \ref{sec:Fdefs}, the duality datum consists of a set of objects $\{\scrM_i\}_{i \in I_\af}$ in $\cKP$ which are all simple and 
real and such that, for $i \ne j \in I_\af$, one has 
$$\Lambda(\scrM_i,\scrM_j) = - \la \alpha_i, \alpha_j \ra$$ 
where $\La(-,-)$ is the pairing defined by the renormalized $r$-matrices. 
These objects have natural affinizations $(\scrM_i)_z \in \cKP_X$ given by trivial deformations. The direct sum of products  
\begin{equation}\label{eq:localM}
\bigoplus_{\sum \nu_i = \nu} (\scrM_{\nu_1})_z \circ (\scrM_{\nu_2})_z \circ \dots \circ (\scrM_{\nu_m})_z
\end{equation}
lives on $\calR_{X^{(m)}}$ and turns out to carry an action of the quiver Hecke algebra $R(\nu)$. Subsequently one obtains a functor $\F$ by 
using \eqref{eq:localM} in place of $V^{\otimes \beta}$ and tensoring as in \eqref{eq:quantumaffine}.

This action of the quiver Hecke algebra exists quite generally for any duality datum as above. Roughly, the generators $\tau_i$ of the 
quiver Hecke algebra act by the renormalized $r$-matrices while the $x_i$ act as multiplication by the coordinates $z_i$ of 
$X^m = \Spec \C[z_1,\dots,z_m]$. 

There are two major technical challenges in the construction above. The first is determining the duality datum $\{\scrM_i\}_{i \in I_\af}$. If $i \in I$ 
there is a natural choice for $\scrM_i = \scrM(\alpha_i)$ but for the affine node $0 \in I_\af \setminus I$ it is more difficult to determine 
$\scrM_0 = \scrM(-\theta+\delta)$ where $\theta$ is the highest root. This is related to the fact that the Coulomb algebra 
$K^{G_{\O}}(\calR_{G,N})$ appears naturally in its loop presentation, i.e., it is a quotient of a loop affine Lie algebra, while the quiver Hecke 
algebra appears in its Kac-Moody presentation. Thus one has to translate between the two presentations in order to define 
$\scrM_0 = \scrM(-\theta+\delta)$. This is similar to writing the affine generator $T_0$ of an affine braid group in terms of standard generators 
in the loop presentation. To do this one must understand more broadly all the root objects $\scrM(-\beta+\delta)$. 
It is in this process that we use the assumption that $\alpha$ is strictly dominant.  

The second difficulty is showing that $\F$ is bounded (from which it follows fairly quickly that it is t-exact).  Finite type quiver Hecke algebras have finite global dimension \cite[Thm. 4.7]{Mc1} so the functor is always bounded. This is no longer the case for affine types. In fact, a random choice of duality datum of affine type will usually induce a functor $\F$ which is unbounded.

\subsection{Dual canonical basis and related work}\label{sec:dual-canonical}
 
The simples in $\gmod(R)^\heartsuit$ categorify the dual canonical basis of $A_q(\widehat{\frakn})$. In K-theory we know, by Proposition \ref{prop:simple2simple}, that a dual canonical basis element is sent to either zero or the class of a simple Koszul-perverse sheaf. Moreover, we know by Corollary \ref{cor:surjective} and \cite{VV1} that, up to localization, the class of every simple Koszul-perverse sheaf is the image of a dual canonical basis element. In this context we get the following.

\begin{Corollary}\label{cor:main}
For finite type simply laced quivers, simple Koszul-perverse coherent sheaves categorify the dual canonical basis. 
\end{Corollary}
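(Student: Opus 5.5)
The plan is to deduce the corollary from Theorem \ref{thm:main} by passing to Grothendieck groups, where it becomes a statement about bases.

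\textbf{Step 1: compatible K-theory map.} Since $\F$ is exact and monoidal, it induces a ring homomorphism
\[
[\F]: K(\calC) \to K(\cKP_{X^{(\bullet)}}).
\]
After localization, $K(\calC)$ is identified with $A_q(\widehat{\frakn})$. Pushforward from $\calR$ is t-exact and identifies simples, so $K(\cKP_{X^{(\bullet)}})$ agrees on the classes of simples with $K(\cKP)=K^{\hGO}(\calR)$, the quantized Coulomb branch. By \cite{VV1} the latter is a quotient of a localization of $A_q(\widehat{\frakn})$. The first check is that $[\F]$ coincides with this quotient map. Both are ring maps, so it suffices to compare them on generators. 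The duality datum sends the simple $L(i)$ to $\scrM_i$, so one compares the classes $[\scrM(\alpha_i)]$ and $[\scrM(-\theta+\delta)]$ with the images of the generators under the map of \cite{VV1}. For $i\in I$ this should be immediate. For the affine node one uses the loop-to-Kac--Moody translation from the construction of $\scrM_0$.

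\textbf{Step 2: simples go to simples or zero.} By Proposition \ref{prop:simple2simple}, each simple $L\in\calC$ is sent to a simple Koszul-perverse sheaf or to $0$. Hence $[\F]$ sends each dual canonical basis element $[L]$ either to $0$ or to the class of a simple object of $\cKP$.

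\textbf{Step 3: surjectivity on simples.} By Corollary \ref{cor:surjective}, every simple of $\cKP$ arises as $\F(L)$, up to the "almost" caveat. The main obstacle is this caveat. I expect the missing simples to be grading shifts or products with invertible objects, such as determinant-type line bundles on $\calR$, corresponding to the localization. The plan is to show that every simple Koszul-perverse sheaf has the form $\F(L)\circ \mathcal{L}$ for some invertible $\mathcal{L}$, and that $[\mathcal{L}]$ corresponds to a frozen or localized dual canonical basis element. These are normalized monomials in the central elements being inverted. Since products of dual canonical basis elements with such quasi-central elements are again dual canonical basis elements up to $q$-powers, the image set of $[\F]$ is stable under multiplication by them.

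\textbf{Step 4: conclusion.} Combining Steps 1–3, the nonzero images of the dual canonical basis under the localized quotient map are exactly the classes of simple Koszul-perverse sheaves, up to $q$-shifts. By finite length, the classes of simples form a basis of $K(\cKP)$. Hence the simples categorify the image of the dual canonical basis, which is the claim for every finite type simply laced $Q$ and strictly dominant $\alpha$.
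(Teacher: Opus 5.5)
Your proposal is correct and is essentially the paper's argument: Proposition~\ref{prop:simple2simple} sends each dual canonical basis element to zero or to the class of a simple, and Corollary~\ref{cor:surjective} together with \cite{VV1} gives surjectivity after inverting the classes $[\phi_i]$, which is exactly the localization you anticipate (and $\phi_i \ddcong \Lambda^{a_i}\cV_i$ is itself in the image of $\F$, as noted in the proof of Lemma~\ref{lem:uptophi}). Two small remarks: first, the paper simply uses $[\F]$ itself as the comparison map, so your generator-by-generator check against \cite{VV1} in Step~1 goes beyond what the paper does; second, in Step~3 the image is stable under multiplication by $[\phi_i]$ because $\phi_i$ is invertible and $q$-commutes with all simples in $\cKP$, not because the corresponding elements are quasi-central in $A_q(\widehat{\frakn})$, which does not hold there.
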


When the quiver $Q$ is type $A_1$ Corollary \ref{cor:main}  recovers the main result of \cite{FF}. The proof in \cite{FF}  uses a functor which relates $\cKP$ with sheaves on nilpotent cones. Though our proof is more direct it would be very interesting to generalize their functor to more general quivers. 

While the existence of a functor $\F$ as in \eqref{eq:main} for quivers not of finite type is unclear, Corollary \ref{cor:main} suggests that Koszul-perverse coherent sheaves can serve as analogues of dual canonical basis. It would be interesting to characterize the classes of these sheaves in K-theory the same way dual canonical basis are characterized (by some type of upper triangularity and positivity). 

A result similar to Theorem \ref{thm:main} appears in  \cite{SVV} where the authors construct by a different method a weakly monoidal equivalence between a category of representations of the quiver-Hecke algebra of type $A_{1}^{(1)}$ and equivariant perverse coherent sheaves on the nilpotent cone and the affine Grassmannian of type $A$. 

Since $\F$ from \eqref{eq:main} sends many simples to zero it is natural to ask if one can identify which simples are killed. 
In the subsequent paper \cite{CV} we  study the restriction of $\F$ to certain abelian subcategories $\calC_{w,v} \subset \calC$ studied in \cite{KKOP1}. 
Here $w,$ $v$ are explicit affine Weyl group elements which encode the dimension vector $\alpha$. We prove that $\F$ extends to a functor 
$\tF: \tcalC_{w,v} \to \cKP_{X^{(\bullet)}}$, where $\tcalC_{w,v}$ is a localization of $\calC_{w,v}$ introduced in \cite{KKOP2} to categorify open Richardson varieties. 
It turns out that this functor is faithful and identifies simples on either sides. In particular it yields an 
isomorphism between Grothendieck groups. This implies that $\tF$ categorifies an isomorphism between the Coulomb branch and an affine open Richardson variety and provides an enhancement of Theorem \ref{thm:main}.

Lastly, note that we only consider unframed quivers in this paper. In the framed case it is less clear what the appropriate duality datum should be. We hope to return to this question in the future. 

\subsection{Structure of paper}

The paper is organized as follows.
\begin{itemize}[leftmargin=8mm]
\item \S \ref{sec:quiverHecke} reviews the necessary background theory related to affine quiver Hecke algebras.
\item \S \ref{sec:catCB} discusses the Coulomb category, its heart $\cKP$ consisting of Koszul-perverse sheaves and the corresponding extended categories $\cKP_{X^{(m)}}$. 
\item \S \ref{sec:Fdefs} explains how to obtain the functor $\F$ from a duality datum. 
\item \S \ref{sec:quivercoulomb} introduces notation for objects in $\cKP$ and collects various properties such as the $\La(-,-)$ pairings between certain objects. 
\item \S \ref{sec:root-objects} studies root objects in $\cKP$ and their properties. 
\item \S \ref{sec:Fprops} identifies the main properties of the functor $\F$ and, in particular, shows that it is t-exact. 
\item \S \ref{sec:Fmore} shows that $\F$ sends simples to simples or zero and that it is (almost) essentially surjective on simples. 
\end{itemize}

\subsection{Acknowledgements}
We would like to thank P. Shan and H. Williams for inspiring discussions. 
S.C. was supported by NSERC Discovery Grant 2019-03961, PIMS fellowship and the Fondation des Sciences Math\'ematiques de Paris.
E.V. was supported by the UMR7586 (CNRS) and the ANR-24-CE40-3389.

\section{Affine quiver Hecke algebras}\label{sec:quiverHecke}

In this section we review some background representation theory of affine quiver Hecke algebras. 

\subsection{Affine root systems and convex orders}\label{sec:rootsystem}

Fix a simply laced quiver $Q = (I,E)$ of finite type and its affine extension $Q_\af = (I_\af,E_\af)$. 
We set $I_\af=\{0,1,\dots,\ell\}$ and $I=I_\af\ \smallsetminus\{0\}$.

Denote by $\Phi$ the associated simply laced finite root system, $\Delta = \{\alpha_1, \ldots, \alpha_n\}$ the set of simple roots, and $\Phi^+$ 
the set of positive finite roots. Denote by $\theta \in \Phi^+$ the highest root. Let $\P \supset \P^+ \supset \P^{++}$ denote the the set of weights, 
dominant weights and strictly dominant weights respectively. 

The associated affine root system $\Phi_\af$ is given by
\begin{equation*}
\Phi_\af = \{ \alpha + k\delta \,;\, \alpha \in \Phi , k \in \bbZ \} \cup \{ k\delta \,;\, k \in \bbZ \setminus \{0\} \} = \Phi_\re \cup \Phi_\im
\end{equation*}
where $\delta$ is the minimal positive imaginary root. The set of positive affine roots is
\begin{equation*}
\Phi_\af^+ = \Phi ^+ \cup \{ \alpha + k\delta \,;\, \alpha \in \Phi , k \in \mathbb{Z}_{>0} \} \cup \{ k\delta \,;\, k \in \mathbb{Z}_{>0} \}
\end{equation*}
The simple affine roots are $\Delta_\af = \Delta  \cup \{\alpha_0\}$ where $\alpha_0 = \delta - \theta$. We denote $\Phi_\af^- =-\Phi_\af^+$ and 
$\Phi^\pm_\re \subset \Phi_\af^\pm$ the subset of positive/negative real affine roots.  
Let $\Psi=\Phi_\re^+\cup\{\delta\}$ be the set of indivisible positive affine roots.

A pre-order $<$ on $\Phi_\af^+$ is called a convex order
if for any $\gamma_1, \gamma_2 \in \Phi_\af^+$ such that $\gamma_1 + \gamma_2 \in \Phi_\af^+$
we have either $\gamma_1 < \gamma_1 + \gamma_2 < \gamma_2$ or $\gamma_2 < \gamma_1 + \gamma_2 < \gamma_1$.
Let $W_\af = W  \ltimes \P$ be the affine Weyl group. 
For any $w\in W_\af$ with a reduced decomposition $w = s_{i_1} s_{i_2} \cdots s_{i_l}$ 
we equip the inversion set
\begin{equation*}
\calI(w) = \Phi_\af^+ \cap w(\Phi_\af^-) = \{ \beta_1, \beta_2, \ldots, \beta_l \}
\end{equation*}
with the order $\beta_1< \beta_2< \ldots<\beta_l$ where $\beta_k = s_{i_1} \cdots s_{i_{k-1}}(\alpha_{i_k})$. We are interested in convex 
orders extending this sequence, i.e., convex orders such that the inversion set is an initial set.

\begin{Proposition}\label{prop:minimalpair}
Suppose $\lambda \in \P^{++}$ and let $\leqslant$ be a convex order on $\Phi_\af^+$ extending the order on $\calI(t_\lambda)$ for a reduced 
decomposition of the translation $t_\lambda$ in $W_\af$. 
Let $\beta \in \Phi ^+$  be such that $\delta - \beta \notin \Delta_\af$. Set $I_\beta = \{ \alpha_i \in \Delta\,;\, (\beta, \alpha_i) = -1 \}$.
\begin{enumerate}[label=$\mathrm{(\alph*)}$,leftmargin=8mm]
\item
There exists a unique minimal simple finite root $\alpha_i\in I_\beta$ with respect to $<$.
\item
The pair $(\alpha_i\,;\, \delta - \alpha_i - \beta)$ consists of two positive affine roots summing to $\delta - \beta$.
\item
We have $\alpha_i > \delta  - \alpha_i-\beta$. There is no positive affine root $\gamma$ such that
\begin{equation}\label{eq:chain}
\alpha_i > \gamma > \delta - \beta - \gamma > \delta - \alpha_i - \beta
\end{equation}
meaning that $(\alpha_i\,;\, \delta - \beta - \alpha_i)$ is a minimal pair summing to $\delta - \beta$.
\end{enumerate}
\end{Proposition}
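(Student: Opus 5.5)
The plan is to make the convex order explicit enough on the two relevant families of real roots, and then reduce (c) to a statement about a convex order on the finite root system $\Phi^+$.

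\emph{Structure of the order.} For $\lambda\in\P^{++}$ the inversion set $\calI(t_\lambda)$ consists of the roots of one sign of finite part, namely $-\gamma'+k\delta$ with $\gamma'\in\Phi^+$ and $1\le k\le (\lambda,\gamma')$ in the convention forced by (c). The condition $(\lambda,\gamma')>0$ for every $\gamma'\in\Phi^+$, i.e.\ strict dominance, ensures that every $\gamma'$ occurs. Since $\calI(t_\lambda)$ is an initial segment, every real root $-\gamma'+\delta$ precedes every finite positive root $\gamma\in\Phi^+$.

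\emph{Parts (a) and (b).} Because $\delta-\beta\notin\Delta_\af$, we have $\beta\neq\theta$, so there is a simple root $\alpha_j$ with $\beta+\alpha_j\in\Phi^+$, i.e.\ $(\beta,\alpha_j)=-1$. Hence $I_\beta\neq\emptyset$. A convex order is a total order on real roots, since ties occur only among imaginary roots, so $I_\beta$ has a unique minimum $\alpha_i$; this is (a). For (b), $(\beta,\alpha_i)=-1$ gives $\beta+\alpha_i\in\Phi^+$, so $\delta-\alpha_i-\beta$ is a positive real affine root, and the two roots sum to $\delta-\beta$.

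\emph{Reducing (c).} The inequality $\alpha_i>\delta-\alpha_i-\beta$ follows from the first step. Now suppose $\gamma$ satisfies \eqref{eq:chain}. Both $\gamma$ and $\delta-\beta-\gamma$ are positive with sum $\delta-\beta$, so exactly one of them lies in $\Phi^+$ and the other has the form $-\gamma'+\delta$.
\begin{itemize}
\item If $\delta-\beta-\gamma\in\Phi^+$, then $\gamma$ has negative finite part, so $\gamma<\delta-\beta-\gamma$ by the first step. This contradicts \eqref{eq:chain}.
\item Hence $\gamma\in\Phi^+$, $\beta+\gamma\in\Phi^+$, and $\gamma<\alpha_i$.
\end{itemize}
So (c) reduces to the following claim: if $\gamma\in\Phi^+$ and $(\beta,\gamma)=-1$, then $\gamma\ge\min I_\beta$.

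\emph{The claim (main obstacle).} I would argue by induction on the height of $\gamma$, using the restriction of $<$ to $\Phi^+$, which is again convex. If $\gamma$ is simple, then $\gamma\in I_\beta$. Otherwise write $\gamma=\gamma_1+\gamma_2$ with $\gamma_1,\gamma_2\in\Phi^+$. Since the pairings lie in $\{-1,0,1\}$ (excluding $\gamma_k=\beta$), we get $\{(\beta,\gamma_1),(\beta,\gamma_2)\}=\{-1,0\}$. Say $(\beta,\gamma_1)=-1$, so $\gamma_1\ge\alpha_i$ by induction.
\begin{itemize}
\item If $\gamma_2\ge\alpha_i$ as well, convexity places $\gamma$ between $\gamma_1$ and $\gamma_2$, hence $\gamma\ge\alpha_i$.
\item The problematic case is $\gamma_2<\alpha_i$ with $(\beta,\gamma_2)=0$. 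Here I would use the other decomposition of the root $\beta+\gamma=(\beta+\gamma_1)+\gamma_2$. Convexity applied to this decomposition and to $\gamma=\gamma_1+\gamma_2$, together with the affine pairs $(\gamma,\delta-\beta-\gamma)$ and $(\gamma_2,\delta-\beta-\gamma)$ summing to $\delta-\beta$ and $\delta-\beta-\gamma_1$, should force an ordering contradiction among these affine roots.
\end{itemize}
If this does not close directly, the fallback is to use the explicit description of the $t_\lambda$-convex order by slopes. With $\lambda$ strictly dominant, the roots $-\gamma'+k\delta$ are ordered by $k/(\lambda,\gamma')$. One can then compare $\delta-\beta-\gamma$ and $\delta-\beta-\alpha_i$ directly, using $(\lambda,\beta+\gamma)\ge(\lambda,\beta+\alpha_i)$ whenever $\gamma\ge\alpha_i$ in the dominance order, and reduce to choosing $\alpha_i$ inside the support of $\gamma$.
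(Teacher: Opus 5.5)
Your treatment of (a), (b), the inequality $\alpha_i>\delta-\alpha_i-\beta$, and the case where $\gamma$ has a $\delta$-component is fine and matches the paper. The gap is the reduction of (c) to the claim that $\gamma\in\Phi^+$ with $(\beta,\gamma)=-1$ forces $\gamma\ge\min I_\beta$. That claim is false, so the problematic case of your induction cannot be closed.

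For a counterexample, work in type $A_3$ with $\beta=\alpha_1$, so $I_\beta=\{\alpha_2\}$. Let the order restrict on $\Phi^+$ to the inversion order of the reduced word $s_3s_2s_1s_3s_2s_3$ of $w_0$, namely $\alpha_3<\alpha_2+\alpha_3<\alpha_1+\alpha_2+\alpha_3<\alpha_2<\alpha_1+\alpha_2<\alpha_1$. Every convex order on $\Phi^+$ arises as such a restriction, because the part of the order above $\delta$ can be chosen independently of the reduced word of $t_\lambda$ (e.g. from a two-sided infinite reduced word whose left half begins with a suitable reduced word of $w_0$). Then $\gamma=\alpha_2+\alpha_3$ has $(\beta,\gamma)=-1$ and $\gamma<\alpha_2$. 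This is exactly your case $\gamma_1=\alpha_2$, $\gamma_2=\alpha_3<\alpha_i$.

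The paper makes the same reduction and closes it by asserting that every simple root in the support of an element of $\calI(w)$ lies in $\calI(w)$. That assertion fails here: $\{\rho\in\Phi^+:\rho<\alpha_2\}=\calI(s_3s_2s_1)=\{\alpha_3,\alpha_2+\alpha_3,\alpha_1+\alpha_2+\alpha_3\}$ omits $\alpha_2$. What actually excludes this $\gamma$ is the inequality you discarded. Here $\delta-\beta-\gamma=\alpha_0$ and $\delta-\beta-\alpha_i=\alpha_0+\alpha_3$, so convexity gives $\delta-\beta-\gamma<\delta-\beta-\alpha_i$.

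If you keep that fourth inequality, convexity handles every $\gamma\in\Phi^+$ whose support contains $\alpha_i$. In that case $\beta+\alpha_i<\beta+\gamma$ in the root poset, so the two are joined by a chain of positive roots adding one simple root at a time. Each step $\delta-\mu=(\delta-\mu-\alpha_j)+\alpha_j$, with $\delta-\mu-\alpha_j\in\calI(t_\lambda)$, forces $\delta-\mu-\alpha_j<\delta-\mu$. Hence $\delta-\beta-\gamma<\delta-\beta-\alpha_i$.

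When $\alpha_i\notin\mathrm{supp}(\gamma)$, the hypotheses give no link between the order on $\Phi^+$ and the order on $\calI(t_\lambda)$, and (c) can actually fail. Take type $A_4$ with $\beta=\alpha_2$, so $I_\beta=\{\alpha_1,\alpha_3\}$.
\begin{itemize}
\item Since $\lambda-\omega_1$ is dominant, $\ell(t_\lambda)=\ell(t_{\omega_1})+\ell(t_{\lambda-\omega_1})$. So some reduced word of $t_\lambda$ has $\calI(t_{\omega_1})$ as an initial segment of its inversions. Then $\delta-\alpha_1-\alpha_2$ precedes $\delta-\alpha_2-\alpha_3-\alpha_4$.
\item Choose an extension whose restriction to $\Phi^+$ begins $\alpha_4<\alpha_3+\alpha_4<\alpha_1$, which is the inversion set of $s_4s_3s_1$.
\item Then $\alpha_i=\alpha_1$, and $\gamma=\alpha_3+\alpha_4$ satisfies \eqref{eq:chain}.
\end{itemize}
Taking $(\lambda,\alpha_4)$ large in this example also shows that a reduced word of $t_\lambda$ need not order $\calI(t_\lambda)$ by slope, so the premise of your fallback is false. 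In any case $\alpha_i=\min I_\beta$ is fixed; it is not something you may choose inside $\mathrm{supp}(\gamma)$.

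So no argument from the stated hypotheses can prove (c) with $\alpha_i=\min I_\beta$. You would need either an extra compatibility between the order on $\Phi^+$ and the reduced word of $t_\lambda$, or a different choice of $\alpha_i$. In the $A_4$ example, $(\alpha_3;\delta-\alpha_2-\alpha_3)$ has no pair nested inside it. Lemma \ref{lem:F4} only needs some $\alpha_j\in I_\beta$ giving a minimal pair, so the weaker existence statement may be the right target.
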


\begin{proof}
By hypothesis $\delta - \beta$ is a positive affine root such that $\delta - \beta \notin \Delta_\af$.
This yields
$\beta \neq \theta$.
Since $\beta \in \Phi ^+$ and $\beta\neq\theta$, there exists some $\alpha_k \in \Delta $ such that
\begin{equation}
\beta + \alpha_k \in \Phi ^+
\end{equation}
Since the root system is simply laced,  $\alpha_k$ is a simple finite root, and $\beta$, $\beta + \alpha_k$ are both positive roots, the inner product must satisfy $ (\beta, \alpha_k) = -1$.  Hence  the set $I_\beta $ is non-empty. For any $\alpha_i \in I_\beta$, we have
\begin{equation}
\delta - \beta = \alpha_i + (\delta - \alpha_i - \beta)
\end{equation}
Since $\beta + \alpha_i \in \Phi^+$, both $\alpha_i$ and $\delta - \alpha_i - \beta$ are positive affine roots.  The set $I_\beta$ is a non-empty, finite subset of the simple roots $\Delta $.  Since $<$ is a total order on $\Phi_\af^+$, we may define $\alpha_i$ to be this unique minimal element $\alpha_i= \min(I_\beta)$, proving parts (a) and (b) of the theorem.

We now prove that the pair $(\alpha_i, \delta - \beta - \alpha_i)$ forms a minimal pair.
We must check that
$\alpha_i > \delta - \beta - \alpha_i$ and that there exists no positive affine root $\gamma$ such that \eqref{eq:chain} holds. 

Because $\lambda$ is strictly dominant, the inversion set $\calI(t_\lambda) = \Phi_\af^+ \cap t_\lambda(\Phi_\af^-)$ contains $-\mu + \delta$ for each $\mu \in \Phi ^+$.  Thus  $\delta - \beta - \alpha_i \in \calI(t_\lambda)$.  Conversely, the simple finite root $\alpha_i$ lies outside $\calI(t_\lambda)$ and must appear after the inversion set elements, hence $\alpha_i > \delta - \beta - \alpha_i$, proving the first claim.

Suppose for contradiction that there exists a root $\gamma \in \Phi_\af^+$ satisfying \eqref{eq:chain}.
Since $\gamma + (\delta - \beta - \gamma) = \delta - \beta$, exactly one of the two positive affine roots 
$\gamma$ and $\delta - \beta - \gamma$ must contain the component $\delta$.

\begin{itemize}[leftmargin=8mm]
    \item {Case 1:} the affine root $\gamma$ contains $\delta$.  We claim that $\gamma$ belongs to the inversion set $\calI(t_\lambda)$.  Indeed, 
since $\delta - \beta - \gamma$ is a positive affine root without $\delta$ component, it is in $\Phi^+$.  Thus $\gamma=-(\delta-\gamma)+\delta$ 
is in $\calI(t_\lambda)$, because $-(\delta-\gamma)$ is a finite root and is the sum $-(\delta - \beta - \gamma)-\beta$ of two negative finite root, 
hence is a negative finite root.  Since $\gamma \in \calI(t_\lambda)$ and $\delta - \beta - \gamma$ is a positive finite root, we must have 
$\gamma< \delta - \beta - \gamma$.  This contradicts the hypothesis $\gamma > \delta - \beta - \gamma$.

    \item {Case 2:} the affine root $\gamma$ is a finite positive root.  We have $\gamma<\alpha_i $. We claim that this relation is impossible. 
Indeed, the sum $\beta + \gamma$ is a finite positive root because $\delta-\beta-\gamma$ is an affine root and $\beta$, $\gamma$ are both 
positive.  In a simply laced root system, this is true  only if  $(\beta, \gamma) = -1$.  Since $ \{ \rho \in \Phi^+ \,;\, \rho < \alpha_i \}$ is an initial 
segment of the convex order restricted to the finite roots, it is the inversion set $\calI(w)$ of some Weyl group element $w \in W$ such that 
$\gamma \in \calI(w)$.  Because $\alpha_i$ is the unique minimal element of $I_\beta$, any simple root $\alpha_j$ in $\calI(w)$ satisfies 
$(\beta, \alpha_j) \geqslant 0$.  Since $\gamma \in \calI(w)$ its support consists of simple roots in $\calI(w)$, hence 
$(\beta, \gamma) \geqslant 0$, yielding a contradiction.
\end{itemize}
\end{proof}

Given a convex order $<$ on the set $\Phi_\af^+$, we set
$$\Phi_{>\delta}=\{\beta\in\Phi^+_\af\,;\,\beta>\delta\}
,\quad
\Phi_{<\delta}=\{\beta\in\Phi^+_\af\,;\,\beta<\delta\}
$$
Let $p:\Phi_\af\to\Phi\cup\{0\}$ be the projection which maps $\pm\beta+n\delta$ to $\pm\beta$ for each $\beta\in\Phi\cup\{0\}$.  
There is a Weyl group element $w\in W$ such that $p(\Phi_{>\delta})=w(\Phi^+)$.  
If $w=1$ the preorder is said to be balanced. Balanced convex orders do exist in all type.
Let $\gamma_i=w(\alpha_i)$ and
$\gamma_i^\pm$ be the affinization of $\pm\gamma_i$, i.e., the minimal height positive real root in $\Phi_\re^+$ such that
$p(\gamma_i^\pm)=\pm\gamma_i$.
If the convex order is balanced, then we may set $\gamma_i=\alpha_i$ and
$$\gamma_i^+=\alpha_i>\delta>\gamma_i^-=\delta-\alpha_i.$$
We have
\begin{align*}
\Phi_\re^+\cap\Phi_{>\delta}=\Phi^++\bbN \delta
,\quad
\Phi_\re^+\cap\Phi_{<\delta}=-\Phi^++\bbZ_{>0}\delta.
\end{align*}

\begin{Remark}\label{rem:minimalpair}
Considering the infinite sequence of dominant translations $t_{n\lambda}$ with $n\to\infty$ one defines balanced convex orders on $\Phi_\af^+$ 
extending the order on $\calI(t_\lambda)$ for any reduced decomposition of $t_\lambda$. 
We will henceforth fix such a convex balanced order. 
\end{Remark}

An $\ell$-multipartition is an $\ell$-tuple $\mu=(\mu_1,\dots,\mu_\ell)$ where each $\mu_i$ is a partition.  
Let $\delta_i=(0,\dots,1,0\dots,0)$ be the $\ell$-multipartition of 1 with the 1 at the $i$th spot.  A root partition of $\beta \in \bbN\Phi_\af^+$ is a
 pair $(\uum,\mu)$ where $\uum$ is a tuple $(m_\psi\,;\,\psi\in \Psi)$ of non-negative integers such that 
 $\sum_{\psi\in\Psi}m_\psi\,\psi=\beta$ and $\mu$ is an $\ell$-multipartition of $m_\delta$.  
 We can write the root partition in the form 
\begin{align*}
(\psi_1^{m_1},\ldots,\psi_s^{m_s},\mu,\psi_{-t}^{m_t},\ldots,\psi_{-1}^{m_{-1}})
,\quad
\psi_1 > \cdots > \psi_s > \delta > \psi_{-t} > \cdots > \psi_{-1}
\end{align*}
where $m_u=m_{\psi_u}$.  Let $\Pi(\beta)$ be the set of root partitions of $\beta$. It is given the bilexicographic order such that
\begin{align*}
(\uum,\mu)\leqslant (\uun,\nu) \iff \uum\leqslant\uun
\iff (\uum\leqslant_l \uun\ \text{and}\ \uun\leqslant_r \uum)
\end{align*}
where $\leqslant_l$ and $\leqslant_r$ are the left and right lexicographic orders.

\begin{Remark}\hfill
The convexity of the pre-order implies that the root partition $(\beta)$ is a minimal element in $\Pi(\beta)$ for each $\beta\in\Phi_\re^+$, and that the root partition $(\mu)$ is a minimal element in $\Pi(n\delta)$ for each $\ell$-multipartition $\mu$ of a positive integer $n$. Moreover, a minimal pair for $\beta \in \Phi_\af^+$ is a minimal root partition in the set $\Pi(\beta)\setminus\{(\beta)\}$. By definition, a minimal pair exists if $\beta$ is not a simple affine root. 
\end{Remark}

Using the convexity of the order, one can prove that a minimal pair is always of the form $(\beta_1,\beta_2)$ where $\beta_1, \beta_2\in\Psi$, $\beta=\beta_1+\beta_2$, $\beta_1\succ\beta_2$, and there is no pair of roots $(\beta_1',\beta_2')$ such that $\beta=\beta_1'+\beta_2'$ and $\beta_1\succ\beta_1'\succ\beta_2'\succ\beta_2$.  The minimal pair $(\beta_1,\beta_2)$ is said to be real if $\beta_1$, $\beta_2$ are both real, i.e., the affine roots $\beta_1,$ $\beta_2$ both belong to $\Phi_\re^+$. 


\subsection{Quiver Hecke algebras}\label{subsec:quiverHecke}

We now recall some background theory of affine quiver Hecke algebras, following \cite{Kl}, \cite{Mc2}, \cite{KM1}.  
We fix a matrix of polynomials $(Q_{ij}(u,v)\,;\,i,j\in I_\af)$ in $\bbC[u-v]$ such that 
$Q_{ii}(u,v)=0$ and the following relations holds for all $i\neq j\in I_\af$
\begin{align}\label{QQ}
\deg Q_{ij}(u,v) = -\langle\alpha_i,\alpha_j\ra
,\quad
Q_{ij}(u,v) = Q_{ji}(v,u).\end{align}
For each $\beta\in\Q^+_\af$ of height $\height(\beta)=h$ let $R(\beta)$ be the symmetric quiver Hecke algebra associated with the matrix
$(Q_{ij}(u,v)\,;\,i,j\in I_\af)$.  
We use the same notation  as in \cite[\S1.2]{KKK} or \cite[\S2.2]{KKOP3}.
Thus, the algebra $R(\beta)$ is generated by the elements
\begin{align}\label{xte}
x_1\,,\,x_2\,,\,\dots\,,\,x_h\,,\,\tau_1\,,\,\tau_2\,,\,\dots\,,\,\tau_{h-1}\,,\,e(\nu)
\end{align}
 with $\nu$ an $h$-tuple in 
the set
$$(I_\af)^\beta=\{(\nu_1,\nu_2,\dots,\nu_h)\in(I_\af)^h\,;\,\nu_1+\nu_2+\cdots+\nu_h=\beta\},$$ 
modulo the relations in loc.~cit.
In particular, the elements $e(\nu)$ is an idempotent.
Given two tuples $\nu\in(I_\af)^\beta$ and $\nu'\in(I_\af)^{\beta'}$ let $e(\nu,\nu')$ be the idempotent corresponding to the concatenation of
$\nu$ and $\nu'$ and set
$$e(\beta,\beta')=\sum_{\nu\in(I_\af)^\beta\,,\,\nu'\in(I_\af)^{\beta'}}e(\nu,\nu')\in R(\beta+\beta').$$
Let $\gMod(R(\beta))$ be the $\infty$-category of finitely generated graded $R(\beta)$-modules and $\gMod(R(\beta))^\heartsuit$ its standard 
heart. We also consider the subcategories $\gmod(R(\beta))$ and $\gmod(R(\beta))^\heartsuit$ where we impose the condition that the modules 
are finite dimensional. We abbreviate
\begin{equation}\label{gmodules}
\gMod(R) = \bigoplus_{\beta \in \bbN\Phi_\af^+} \gMod(R(\beta))
\end{equation}
and similarly for $\gMod(R)^\heartsuit,$ $ \gmod(R)$ and $ \gmod(R)^\heartsuit$. 
Forgetting the grading we get categories $\Mod(R),$ $ \Mod(R)^\heartsuit$ etc. 
Let $\circ$ be the induction functor 
$$\circ:\gMod(R(\beta))^\heartsuit\times \gMod(R(\beta'))^\heartsuit\to \gMod(R(\beta+\beta'))^\heartsuit ,\quad \beta,\beta' \in \bbN\Phi_\af^+.$$
Recall that
\begin{align}\label{circ}M\circ M'=R(\beta+\gamma)e(\beta,\gamma)\otimes_{R(\beta)\otimes R(\beta')}(M\otimes M').\end{align}
For any graded $R(\beta)$-module $M$, we define the grading shift functor $\{1\}$ as
\begin{align}\label{shift-KLR}
(M\{1\})_k = M_{k+1}.
\end{align}  
In K-theory, the shift $\{1\}$ is denoted by multiplication by $q^{-1}$.
 
The anti-automorphism of $R(\beta)$ which fixes all standard generators equips the dual space $M^*=\Hom_\bbC(M,\bbC)$ of an 
$R(\beta)$-module with an $R(\beta)$-action. The dual may not be finitely generated if $M$ is not finite dimensional.  A graded module 
$M=\bigoplus_{k\in\bbZ}M_k$ is self-dual if $M^* \cong M$ as a graded module. Every simple module is isomorphic to the grading shift of a 
unique self-dual simple module.

\subsection{Simple and standard modules}\label{sec:SSmodules}

For each indivisible positive root $\beta\in\Psi$ and each $n \in \bbZ_{>0}$, let
$\gMod(R(n\beta))^\cusp$ be the full subcategory of cuspidal graded modules in $\gMod(R(n\beta))^\heartsuit$ 
considered in \cite{Mc2}. If $\beta$ is a real positive affine root there is a unique 
self-dual simple cuspidal graded $R(\beta)$-module $L(\beta)$, up to isomorphisms.
If $\beta=\delta$, there are $\ell$ isomorphism classes  $L(\delta_i)$, 
with $i\in I$, of self-dual simple cuspidal graded $R(\delta)$-modules.
The following hold.
\begin{itemize}[leftmargin=8mm]
\item
If $\beta\in\Phi_\re^+$ and $n\in\bbZ_{>0}$ there is a unique self-dual simple cuspidal graded $R(n\beta)$-module $L(\beta^n)$. 
We have
$L(\beta)^{\circ n}=q^{n(n-1)/2}L(\beta^n).$
Let $\Delta(\beta^n)$ be the projective cover of
$L(\beta^n)$ in the category $\gMod(R(n\beta))^\cusp$.
We have
$\Delta(\beta)^{\circ n}=q^{n(n-1)/2}[n]!\Delta(\beta^n).$
\item
For each $n \in \bbZ_{>0}$ we have an algebra isomorphism 
$\End_{R(n\delta)}(L(\delta_i)^{\circ n}) \cong \bbC\frakS_n$
which yields a parametrization of the composition factors of $L(\delta_i)^{\circ n}$ by partitions
of $n$. Let $L(\mu_i)$ be the $\mu$-th factor.
\item
Given an $\ell$-multipartition $\mu=(\mu_1,\dots,\mu_\ell)$ of $n$, 
we define the graded $R(n\delta)$-module
$L(\mu)=L(\mu_1)\circ\cdots\circ L(\mu_\ell).$
It is self-dual simple and cuspidal.
Let $\Delta(\mu)$ be the projective cover of
$L(\mu)$ in $\gMod(R(n\delta))^\cusp$.
\end{itemize}

For each root partition $\pi$ of $\beta$ we define the proper standard module $\bd(\pi)$ to be the graded module
given by
\begin{align}\label{eq:bd}
\bd(\pi)&=L(\psi_1^{m_1})\circ\cdots\circ L(\psi_s^{m_s})\circ L(\mu)\circ
L(\psi_{-t}^{m_{-t}})\circ\cdots\circ L(\psi_{-1}^{m_{-1}}),
\end{align}
and the standard module $\Delta(\pi)$ to be the graded module given by
\begin{align*}
\Delta(\pi)&=\Delta(\psi_1^{m_1})\circ\cdots\circ \Delta(\psi_s^{m_s})\circ \Delta(\mu)\circ
\Delta(\psi_{-t}^{m_{-t}})\circ\cdots\circ \Delta(\psi_{-1}^{m_{-1}})
\end{align*}
Proper standard modules are finite dimensional while standard modules are not, meaning that
$$\bd(\pi)\in \gmod(R)^\heartsuit ,\quad \Delta(\pi)\in \gMod(R)^\heartsuit.$$

\begin{Proposition}\label{prop:hereditary}
Let $\pi$, $\omega$ be root partitions.\hfill
\begin{enumerate}[label=$\mathrm{(\alph*)}$,leftmargin=8mm]
\item
$\bd(\pi)$ is a quotient of $\Delta(\pi)$.
Let $L(\pi)$ be the head of $\bd(\pi)$ and $\Delta(\pi)$.
It is simple and self-dual.
\item
$\{L(\pi)\{n\}\,;\,\pi\in\Pi(\beta)\,,\,n\in\bbZ\}$ is a complete and irredundant system of simple graded $R(\beta)$-modules up to isomorphism.
\item
$[\bd(\pi)\,:\,L(\pi)]=1$ and $[\bd(\pi)\,:\, L(\omega)]=0$ unless $\omega\leqslant\pi$.
\qed
\end{enumerate}
\end{Proposition}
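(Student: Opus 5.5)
This is the standard highest-weight (stratified) structure theorem for affine quiver Hecke algebras with respect to a convex order, as in \cite{Kl}, \cite{Mc2}, \cite{KM1}. I would reduce it to properties of cuspidal modules and the Mackey filtration of restrictions of induced modules.

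The first step is (a). Each cuspidal factor $L(\psi^m)$, respectively $L(\mu)$, is by definition a quotient of its projective cover $\Delta(\psi^m)$, respectively $\Delta(\mu)$, in the cuspidal category. Since $\circ$ is right exact (induction is a tensor product), $\bd(\pi)$ is a quotient of $\Delta(\pi)$. For the head, I would use adjunction: $\Hom(\bd(\pi),N)\cong\Hom(L(\psi_1^{m_1})\boxtimes\cdots,\Res N)$. The key input is a Mackey-type computation: the restriction $\Res_{m_1\psi_1,\dots}\bd(\pi)$ to the parabolic of the weights of $\pi$ has $L(\psi_1^{m_1})\boxtimes\cdots\boxtimes L(\psi_{-1}^{m_{-1}})$ as a direct summand occurring with multiplicity one. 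All other Mackey subquotients vanish, because cuspidality (convexity of the order) forbids the relevant shuffles. From this, $\End(\bd(\pi))$ is one-dimensional in degree zero and every nonzero quotient contains the image of the canonical generator. Hence $\bd(\pi)$ has a simple head $L(\pi)$. The same argument applies to $\Delta(\pi)$, since $\Delta(\pi)$ and $\bd(\pi)$ have the same head. Self-duality would come from comparing $\bd(\pi)^*$ with the opposite-order induction, or from the bar-invariance of the Mackey summand together with uniqueness of the head up to shift.

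For (c), I would introduce the ``weight'' of a simple $L$ as the maximal root partition $\omega$ for which the restriction to $\omega$ has a nonzero cuspidal tensor product summand. Convexity of the order gives the triangularity lemma: if $\Res_\omega\bd(\pi)$ has a nonzero cuspidal component, then $\omega\leqslant\pi$, with equality giving exactly the one summand above. This uses the bilexicographic order, comparing first from the left using the roots $\psi_1>\cdots$ and then from the right. Any composition factor $L(\omega)$ of $\bd(\pi)$ has nonzero cuspidal restriction of type $\omega$, by adjunction and the head property. By exactness of restriction this gives $\omega\leqslant\pi$, and multiplicity one for $\omega=\pi$.

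For (b), irredundancy follows from (c) by unitriangularity: the $L(\pi)$ for distinct $\pi$ have distinct maximal cuspidal types. For completeness, given any simple $L$, I would take a maximal $\omega$ with nonzero cuspidal restriction. That restriction is then a simple cuspidal tensor product, using the classification of cuspidal simples listed in \S\ref{sec:SSmodules}. Frobenius reciprocity gives a nonzero map $\bd(\omega)\to L$, so $L\cong L(\omega)\{n\}$.

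The main obstacle is the Mackey/triangularity lemma, specifically the handling of the imaginary part $L(\mu)$ with $\mu$ a multipartition. There, cuspidality alone does not control $\Res_{\delta,\delta,\dots}$, and one needs the $\bbC\frakS_n$ description of $\End(L(\delta_i)^{\circ n})$ together with the fact that $L(\mu_1)\circ\cdots\circ L(\mu_\ell)$ is simple. I would cite McNamara's and Kleshchev–Muth's imaginary-stratum results, \cite{Mc2}, \cite{KM1}, for this step rather than reprove it.
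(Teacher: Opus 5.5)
Your sketch is correct and is essentially the argument behind this statement. The paper gives no proof of its own; it imports the result from \cite{Kl}, \cite{Mc2}, \cite{KM1}, where it is established by the same route you describe: the Mackey filtration, the identification of $\Res_{\uum}\bd(\pi)$ with $L(\psi_1^{m_1})\boxtimes\cdots\boxtimes L(\mu)\boxtimes\cdots\boxtimes L(\psi_{-1}^{m_{-1}})$, shape-level triangularity with respect to the convex order, and Frobenius reciprocity for completeness.

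One small relocation: the order on $\Pi(\beta)$ only sees the shape $\uum$, so the imaginary block $R(m_\delta\delta)$ is never split in the triangularity step. The imaginary input you defer to \cite{Mc2}, \cite{KM1} is really the classification of simple semicuspidal $R(n\delta)$-modules as the pairwise non-isomorphic $L(\mu)$. That classification is what completeness in (b) and the separation of root partitions with the same shape require.
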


\begin{Proposition}\label{prop:ses1}
We have the following short exact sequences 
\begin{align}
\label{eq:Ddelta} && 0 \to \Delta(\alpha_i) \circ \Delta(-\alpha_i+\delta) \{-2\} \to \Delta(-\alpha_i+\delta) \circ \Delta(\alpha_i) \to \Delta(\delta_i) \to 0 \\
\label{eq:DL} && 0 \to \Delta(\rho) \{-2\} \to \Delta(\rho) \to L(\rho) \to 0 \ \ \text{ for $\rho \in \Phi_\re^+$. }
\end{align}
\end{Proposition}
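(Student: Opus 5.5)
Both sequences would come from the standard theory of cuspidal and standard modules for affine quiver Hecke algebras (McNamara, Kleshchev--Muth), specialized to our balanced convex order, where $\alpha_i > \delta > \delta-\alpha_i$. I will treat \eqref{eq:DL} first and then \eqref{eq:Ddelta}.

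For \eqref{eq:DL}, I would use that for a real root $\rho$ the cuspidal category $\gMod(R(\rho))^\cusp$ is equivalent to graded modules over $\bbC[x]$ with $\deg x = 2$. Under this equivalence $\Delta(\rho)$ is the free module and $L(\rho)$ its quotient by $x$; this is \cite{Mc2}, or \cite[Thm.]{KM1} for the affine case. Concretely, $\End(\Delta(\rho)) \cong \bbC[z]$ with $z$ of degree $2$. This endomorphism is injective because $\Delta(\rho)$ is torsion-free over this polynomial ring, and its cokernel is $\bd(\rho)=L(\rho)$. That gives the sequence.

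For \eqref{eq:Ddelta}, the plan has three steps.
\begin{enumerate}
\item Produce a map $\Delta(-\alpha_i+\delta)\circ\Delta(\alpha_i)\to\Delta(\delta_i)$. By Frobenius reciprocity it suffices to show $\Delta(\delta_i)$ has a nonzero $(\delta-\alpha_i,\alpha_i)$ restriction containing the outer tensor product of the two cuspidal projectives. I would check this through the construction of imaginary standard modules in \cite{KM1}: $L(\delta_i)$ is the head of $L(\delta-\alpha_i)\circ L(\alpha_i)$, since $(\alpha_i;\delta-\alpha_i)$ is the relevant minimal pair with $\gamma_i^+=\alpha_i$ and $\gamma_i^-=\delta-\alpha_i$. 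Lifting this head map to the projective covers in the cuspidal category gives a surjection.
\item Construct the kernel map. Take the renormalized R-matrix, i.e. the intertwiner $\tau_w$ twisting the two blocks, from $\Delta(\alpha_i)\circ\Delta(\delta-\alpha_i)$ to $\Delta(\delta-\alpha_i)\circ\Delta(\alpha_i)$. Its degree is $-(\alpha_i,\delta-\alpha_i)=2$, which accounts for the shift $\{-2\}$.
\item Prove exactness in the middle and injectivity. Here I would use that standard modules admit filtrations by standards (e.g. via Mackey/shuffle filtrations) and compare graded characters. The character of $\Delta(-\alpha_i+\delta)\circ\Delta(\alpha_i)$ should equal the character of $\Delta(\delta_i)$ plus $q^2$ times the character of $\Delta(\alpha_i)\circ\Delta(\delta-\alpha_i)$. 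This is the categorified identity $[E_{\delta-\alpha_i},E_{\alpha_i}]_q$ expressing the imaginary root vector in the dual PBW basis, in the convention of Beck/\cite{KM1}. Combined with injectivity of the R-matrix, which holds because both modules are free over the polynomial centers and the map is generically invertible after localization, the character count forces exactness.
\end{enumerate}

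The main obstacle is step 3, in particular identifying the cokernel exactly with $\Delta(\delta_i)$ rather than with some module only filtered by imaginary standards. To handle it, I would first prove the sequence after applying $-\otimes_{\bbC[z_1,z_2]}\bbC$, where it reduces to the known proper-standard statement in \cite{KM1}. I would then lift it using graded Nakayama, since all modules are flat over the polynomial rings $\End\Delta(\alpha_i)\otimes\End\Delta(\delta-\alpha_i)$.
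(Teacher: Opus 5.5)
The paper proves both sequences purely by citation: \eqref{eq:DL} is \cite[Thm.~18.2]{Mc2}, which your first paragraph correctly reconstructs, and \eqref{eq:Ddelta} is \cite[Lem.~6.21]{KM1}.

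\textbf{The gap in your proof of \eqref{eq:Ddelta}.} Suppose you have a surjection $\Delta(-\alpha_i+\delta)\circ\Delta(\alpha_i)\to\Delta(\delta_i)$, an injection from $\Delta(\alpha_i)\circ\Delta(-\alpha_i+\delta)\{-2\}$, and a vanishing composite. Then middle exactness is equivalent to the character identity $\mathrm{ch}\,\Delta(-\alpha_i+\delta)\circ\Delta(\alpha_i)=\mathrm{ch}\,\Delta(\delta_i)+q^2\,\mathrm{ch}\,\Delta(\alpha_i)\circ\Delta(-\alpha_i+\delta)$. You only assert that this ``should'' hold. Invoking Beck's $q$-commutator presupposes that the class of the imaginary cuspidal projective $\Delta(\delta_i)$ has already been identified with Beck's imaginary root vector, and that is essentially the statement being proved. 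You also cannot extract it from this paper, since \eqref{eq:Ldelta2} is deduced \emph{from} \eqref{eq:Ddelta} together with \cite[Prop.~5.4]{McT}; that would be circular.

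\textbf{The device for the ``main obstacle'' fails.} After applying $-\otimes_{\bbC[z_1,z_2]}\bbC$, the two products become $L(\alpha_i)\circ L(-\alpha_i+\delta)$ and $L(-\alpha_i+\delta)\circ L(\alpha_i)$. Your $R$-matrix becomes either zero or a multiple of $\r_{L(\alpha_i),L(-\alpha_i+\delta)}$, whose image is the simple module $L(\alpha_i,-\alpha_i+\delta)\{-2\}$. So the specialized sequence is not left exact; this is exactly why \eqref{eq:Ldelta1} is only exact on the left and right. There is therefore no exact proper-standard sequence to lift.

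The flatness hypothesis is also false. $\Delta(\delta_i)$ is the cokernel of an injective map between free $\bbC[z_1,z_2]$-modules of the same rank, so it is torsion over $\bbC[z_1,z_2]$. Indeed, $\mathrm{Tor}_1^{\bbC[z_1,z_2]}(\Delta(\delta_i),\bbC)$ is precisely the kernel of the specialized map, so graded Nakayama cannot transport exactness.

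\textbf{Two smaller points.}
\begin{enumerate}
\item In Step 1, $\Delta(-\alpha_i+\delta)\circ\Delta(\alpha_i)$ is not in the cuspidal category, so projectivity there does not let you lift the head map through $\Delta(\delta_i)\to L(\delta_i)$. You need $\Ext^1(\Delta(-\alpha_i+\delta)\circ\Delta(\alpha_i),K)=0$ for the kernel $K$. This follows by adjunction: cuspidality of $K$ forces $\Res_{-\alpha_i+\delta,\alpha_i}K$ to have all composition factors of the form $L(-\alpha_i+\delta)\boxtimes L(\alpha_i)$, where the real $\Delta$'s are projective.
\item You never check that the composite of your two maps vanishes. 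This follows from $\Res_{\alpha_i,-\alpha_i+\delta}\Delta(\delta_i)=0$, which holds by cuspidality since $\alpha_i>\delta$.
\end{enumerate}

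With these repaired, the whole argument still rests on an independent derivation of the character identity, which is what the citation \cite[Lem.~6.21]{KM1} supplies.
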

\begin{proof}
Sequence \eqref{eq:Ddelta} follows from \cite[Lem.~6.21]{KM1} and \eqref{eq:DL} from \cite[Thm.~18.2]{Mc2}.
\end{proof}
 
\begin{Proposition}\label{prop:ses2}
We have the following relationships among simple objects.
\hfill
\begin{enumerate}[label=$\mathrm{(\alph*)}$,leftmargin=8mm]
\item
If $(\beta,\gamma)$ is a real minimal pair for $\rho \in \Phi_\re^+$ then one has short exact sequences
\begin{align}
\label{eq:Lminpair}
\begin{split}
0 \to L(\rho) \{-1\} \to L(\beta) \circ L(\gamma) \to L(\beta,\gamma) \to 0 \\
0 \to L(\beta,\gamma) \{-1\} \to L(\gamma) \circ L(\beta) \to L(\rho) \to 0.
\end{split}
\end{align}
\item
If $\rho = \alpha_i + n\delta$ for $n \ge 0$ then one has short exact sequences
\begin{align}
\label{eq:Lal1}
\begin{split}
 0 \to L(\rho+\delta) \{-1\} \to L(\rho)\circ L(\delta_i) \to L(\rho,\delta_i) \to 0 \\
 0 \to L(\rho,\delta_i) \to L(\delta_i) \circ L(\rho) \to L(\rho+\delta) \{1\} \to 0.
\end{split}
\end{align}
\item
If $\rho = -\alpha_i + (n+1)\delta$ for $n \ge 0$ then one has short exact sequences
\begin{align}
\label{eq:Lal2}
\begin{split}
0 \to L(\rho+\delta) \{-1\} \to L(\delta_i) \circ L(\rho) \to L(\delta_i,\rho) \to 0 \\
0 \to L(\delta_i,\rho) \to L(\rho) \circ L(\delta_i) \to L(\rho+\delta) \{1\} \to 0.
\end{split}
\end{align}
\item The sequences 
\begin{align}
\label{eq:Ldelta1}
\begin{split}
0 \to L(\delta_i) \{-2\} \to L(\alpha_i) \circ L(-\alpha_i+\delta) \to L(\alpha_i,-\alpha_i+\delta) \to 0 \\
0 \to L(\alpha_i,-\alpha_i+\delta) \{-2\} \to L(-\alpha_i+\delta) \circ L(\alpha_i) \to L(\delta_i) \to 0
\end{split}
\end{align}
are exact on the left and right but not necessarily in the middle. 
In the Grothendieck group we have
\begin{align}
\label{eq:Ldelta2}
\begin{split}
[L(\alpha_i) \circ L(-\alpha_i+\delta)] = q^{2}[L(\delta_i)] + [L(\alpha_i,-\alpha_i+\delta)] + q \sum_{i \leftrightarrow j} [L(\delta_j)] \\
[L(-\alpha_i+\delta) \circ L(\alpha_i)] = [L(\delta_i)] + q^{2}[L(\alpha_i,-\alpha_i+\delta)] + q  \sum_{i \leftrightarrow j} [L(\delta_j)]
\end{split}
\end{align}
\end{enumerate}
\end{Proposition}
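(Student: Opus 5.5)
The four parts all come from one mechanism: a real minimal pair, or a pair built from a real cuspidal module and an imaginary cuspidal module, gives a length-two convolution product. I will assume the standard facts for symmetric quiver Hecke algebras attached to a convex order, as developed in \cite{Mc2}, \cite{KM1} and \cite{KKOP3}. These facts are:
\begin{itemize}
\item for simple modules $L,L'$ with one of them real, $L\circ L'$ has simple head $L\nabla L'$ and simple socle $L'\nabla L$;
\item the renormalized $r$-matrix $L\circ L'\to L'\circ L\{\ldots\}$ has image $L\nabla L'$, and its degree is governed by $\Lambda(L,L')$;
\item the integer $\tilde\Lambda(L,L')=\frac12(\Lambda(L,L')+(\mathrm{wt}L,\mathrm{wt}L'))$ controls the grading shifts.
\end{itemize}

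\textbf{Part (a).} Let $(\beta,\gamma)$ be a real minimal pair for $\rho$, with $\beta\succ\gamma$.
\begin{enumerate}
\item The product $L(\beta)\circ L(\gamma)$ is the proper standard module $\bd(\beta,\gamma)$. By Proposition \ref{prop:hereditary} its head is $L(\beta,\gamma)$, with multiplicity one.
\item By Proposition \ref{prop:hereditary}(c), every other composition factor is $L(\omega)$ with $\omega<(\beta,\gamma)$ in $\Pi(\rho)$. Minimality of the pair means the only such $\omega$ is $(\rho)$.
\item In the Grothendieck group I would compare $[L(\beta)\circ L(\gamma)]$ with $[L(\gamma)\circ L(\beta)]$. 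Using cuspidality of $L(\rho)$ and the shuffle/restriction description $\Res_{\gamma,\beta}L(\rho)\neq 0$ (McNamara's minimal pair lemma), $L(\rho)$ occurs exactly once, in degree $\{-1\}$.
\item Hence the product has length two. Since $L(\beta)$ is real, the socle is simple, so the sequence is non-split with socle $L(\rho)\{-1\}$.
\item The second sequence follows by applying the duality $M\mapsto M^*$, using $(M\circ N)^*\cong N^*\circ M^*$ up to the shift $q^{(\mathrm{wt}M,\mathrm{wt}N)}$ and $(\beta,\gamma)=-1$.
\end{enumerate}

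\textbf{Parts (b) and (c).} The same argument applies with $L(\delta_i)$ in place of one real cuspidal module. Take $\rho=\alpha_i+n\delta$; then $(\rho,\delta_i)$ is a root partition of $\rho+\delta$.
\begin{enumerate}
\item The only smaller root partition of $\rho+\delta$ that can occur in $\bd(\rho,\delta_i)=L(\rho)\circ L(\delta_i)$ is $(\rho+\delta)$. The reason is that McNamara's classification of minimal pairs involving the imaginary piece forces any factor $L(\omega)$ to have $\omega\in\{(\rho,\delta_i),(\rho+\delta)\}$.
\item To get multiplicity one in degree $-1$, I would use the short exact sequence \eqref{eq:Ddelta}. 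Convolving it on the left with $\Delta(\rho)$ and using \eqref{eq:DL} computes $[\Delta(\rho)\circ\Delta(\delta_i)]$. Reducing to proper standards by the graded Euler characteristic, $q^{-2}$ scaling, then yields the class $[L(\rho)\circ L(\delta_i)]=[L(\rho,\delta_i)]+q[L(\rho+\delta)]$.
\item Reality of $L(\rho)$ then fixes the head and socle, and duality gives the second sequence. This shift bookkeeping is the step I would check most carefully.
\item Part (c) is symmetric: $\rho\in\Phi_{<\delta}$, so the order of the factors is reversed.
\end{enumerate}

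\textbf{Part (d).} This is the main obstacle, because here the minimal pair $(\alpha_i,-\alpha_i+\delta)$ sums to the imaginary root $\delta$.
\begin{enumerate}
\item The product $L(\alpha_i)\circ L(-\alpha_i+\delta)$ has head $L(\alpha_i,-\alpha_i+\delta)$. Every other factor is some $L(\delta_j)$, since those are the only root partitions below it.
\item To get \eqref{eq:Ldelta2}, I would compute the restriction $\Res_{\alpha_i,\delta-\alpha_i}L(\delta_j)$ via the known structure of the imaginary cuspidals: the $L(\delta_j)$ are described in \cite{KM1} through minuscule imaginary modules and the Cartan matrix. This gives multiplicity $q^2$ (in degree $-2$) for $j=i$, multiplicity $q$ (in degree $-1$) for $j$ adjacent to $i$, and $0$ otherwise.
\item Alternatively, \eqref{eq:Ddelta} together with \eqref{eq:DL} gives $[\bd(-\alpha_i+\delta)\circ\bd(\alpha_i)]$ after dividing by $(1-q^{-2})^2$. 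One then subtracts the known class of $[L(\delta_i)]$, using that $\Delta(\delta_i)$ has the graded character $\frac{1}{1-q^{-2}}[L(\delta_i)]$ plus corrections from neighbours.
\item Exactness on the left and right follows from the facts that the head is $L(\alpha_i,-\alpha_i+\delta)$ and the socle is $L(\delta_i)\{-2\}$ (reality of $L(\alpha_i)$). The $L(\delta_j)$ with $j$ adjacent to $i$ account for the failure of exactness in the middle.
\end{enumerate}
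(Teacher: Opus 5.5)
\paragraph{Parts (a)--(c).}
The paper does not prove these: it cites \cite[(6.2), Prop.~6.7, Prop.~6.8]{Kl}. Your outline of (a) is a reasonable reconstruction of that cited argument, though the multiplicity-one step is itself essentially a citation of McNamara.

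Your multiplicity step in (b) does not work as written.
\begin{itemize}
\item Convolving \eqref{eq:Ddelta} with $\Delta(\rho)$ does not compute $[\Delta(\rho)\circ\Delta(\delta_i)]$. It only relates this class to the two triple products $[\Delta(\rho)\circ\Delta(-\alpha_i+\delta)\circ\Delta(\alpha_i)]$ and $[\Delta(\rho)\circ\Delta(\alpha_i)\circ\Delta(-\alpha_i+\delta)]$.
\item There is no ``$q^{-2}$ scaling'' from $\Delta(\delta_i)$ to $L(\delta_i)$. The cuspidal category at $\delta$ is governed by $B_\delta$, and in $K_0$ one has $(1-q^2)[\Delta(\delta_i)]=(1+q^2)[L(\delta_i)]+q\sum_{i\leftrightarrow j}[L(\delta_j)]$.
\end{itemize}
For (b) and (c) you should simply invoke Kleshchev's results.

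\paragraph{Part (d): the genuine gap.}
Your route (3) gives only one equation in the two unknown classes, namely the $q$-commutator
\[
q^2[L(\alpha_i)\circ L(-\alpha_i+\delta)]-[L(-\alpha_i+\delta)\circ L(\alpha_i)]=(q^4-1)[L(\delta_i)]+q(q^2-1)\sum_{i\leftrightarrow j}[L(\delta_j)].
\]
There is nothing to ``subtract'' that isolates either product.

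Route (2) fails outright. By cuspidality, only restrictions with the root $<\delta$ on the left survive; the surjection in \eqref{eq:Ddelta} shows this is the convention. Hence the restriction you propose, $\Res_{\alpha_i,\delta-\alpha_i}L(\delta_j)$, vanishes for every $j$. In any case, restrictions of simples detect heads of induced modules, not composition multiplicities.

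Step (1) is also unjustified. Root partitions of $\delta$ include ones with real parts, such as $(\alpha_j,-\alpha_j+\delta)$. Whether these lie below $(\alpha_i,-\alpha_i+\delta)$ depends on how the convex order places $\alpha_i,\alpha_j$ and $-\alpha_i+\delta,-\alpha_j+\delta$. Unitriangularity alone does not exclude them.

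\paragraph{What the paper does instead.}
The missing ingredients are a second relation and a positivity input.
\begin{itemize}
\item Using Proposition~\ref{prop:lsubquotient} (head and socle), the paper writes
$[L(\alpha_i)\circ L(-\alpha_i+\delta)]=[L(\alpha_i,-\alpha_i+\delta)]+q^2[L(\delta_i)]+\sum_k q^{a_k}[L_k]$
with unknown self-dual simples $L_k$.
\item Dualizing via \cite[(5.2)]{Mc2} gives
$[L(-\alpha_i+\delta)\circ L(\alpha_i)]=q^2[L(\alpha_i,-\alpha_i+\delta)]+[L(\delta_i)]+\sum_k q^{2-a_k}[L_k]$
with the \emph{same} $L_k$.
\item Substituting both into the commutator identity leaves
$\sum_k(q^{a_k}-q^{-a_k})[L_k]=(q-q^{-1})\sum_{i\leftrightarrow j}[L(\delta_j)]$.
\item Since $a_k>0$ by \cite[Thm.~24.10]{Mc2}, no cancellation can occur. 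This forces $a_k=1$ and $\{L_k\}=\{L(\delta_j)\,;\,i\leftrightarrow j\}$.
\end{itemize}
This argument excludes every other candidate composition factor at once, so no classification of root partitions below $(\alpha_i,-\alpha_i+\delta)$ is needed.
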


\begin{proof}
The exact sequences \eqref{eq:Lminpair} follow from \cite[(6.2)]{Kl}, \eqref{eq:Lal1} from \cite[Prop.~6.7]{Kl}, \eqref{eq:Lal2} 
from \cite[Prop.~6.8]{Kl}. Finally, we concentrate on part (d).
By Proposition \ref{prop:lsubquotient},
in the Grothendieck group, we have
$$[L(\alpha_i) \circ L(-\alpha_i+\delta)]=[L(\alpha_i) \n L(-\alpha_i+\delta)]+[L(\alpha_i) \Delta L(-\alpha_i+\delta)]+\sum_kq^{a_k}[L_k]$$
with simple selfdual objects $L_k$ not isomorphic to 
$L(\alpha_i) \n L(-\alpha_i+\delta)$, $L(\alpha_i) \Delta L(-\alpha_i+\delta)$ and their shifts and $a_k\in\bbZ$. 
Since $\alpha_i>-\alpha_i+\delta$, we have
$L(\alpha_i) \n L(-\alpha_i+\delta)=L(\alpha_i,-\alpha_i+\delta)$
by definition of $L(\alpha_i,-\alpha_i+\delta)$.
By \eqref{eq:Ldelta1}, we also have
$L(\alpha_i) \Delta L(-\alpha_i+\delta)=q^2L(\delta_i).$
Hence, we get
$$[L(\alpha_i) \circ L(-\alpha_i+\delta)]=[L(\alpha_i,-\alpha_i+\delta)]+q^2[L(\delta_i)]+\sum_kq^{a_k}[L_k]$$
applying the duality and \cite[(5.2)]{Mc2}, we deduce that
$$[L(-\alpha_i+\delta)\circ L(\alpha_i)]=q^2[L(\alpha_i,-\alpha_i+\delta)]+[L(\delta_i)]+\sum_kq^{2-a_k}[L_k]$$
Next, by \eqref{eq:DL} we have 
$$[L(\alpha_i)]=(1-q^2)[\Delta(\alpha_i)],\quad [L(-\alpha_i+\delta)]=(1-q^2)[\Delta(-\alpha_i+\delta)]$$
which together with \eqref{eq:Ddelta}, \cite[Prop. 5.4]{McT} implies
$$q^2[L(\alpha_i) \circ L(-\alpha_i+\delta)]-[L(-\alpha_i+\delta)\circ L(\alpha_i)] = (q^4-1)[L(\delta_i)]+q(q^2-1)\sum_{i \leftrightarrow j} [L(\delta_j)].$$
Thus, we have
$$\sum_k(q^{a_k}-q^{-a_k})[L_k]=(q-q^{-1})\sum_{i \leftrightarrow j} [L(\delta_j)]$$
To conclude it is enough to observe that by \cite[Thm.~24.10]{Mc2} we have $a_k>0$ for all $k$,
from which we deduce that indeed $a_k=1$ for all $k$ and the family $(L_k)$ is precisely $(L(\delta_j)\,;\,i \leftrightarrow j)$.
\end{proof}

By \cite{KM2}, the cuspidal $R(\delta)$-modules are controlled by $B_\delta = \C[z] \otimes \sf{Z}$ 
where $\sf{Z}$ is the zig-zag algebra of the finite Dynkin diagram $Q=(I,E)$. 
In particular, for any connected vertices $i,j \in I$ there exists a unique non-trivial extension 
\begin{equation}\label{eq:delta}
0 \to L(\delta_i) \{-1\} \to L(\delta_{ij}) \to L(\delta_j) \to 0. 
\end{equation}

\begin{Corollary}\label{cor:gamma}
If $i,j \in I$ with $\la \alpha_i,\alpha_j \ra = -1$ then, up to scaling, there are unique nonzero maps 
\begin{equation}\label{eq:gamma}
L(-\alpha_j+\delta) \circ L(\alpha_j) \to L(\delta_{ij}) \to L(\alpha_i) \circ L(-\alpha_i+\delta) \{1\}
\end{equation}
where the first map is surjective and the second injective. We denote the composition $\gamma_{ji}$.
\end{Corollary}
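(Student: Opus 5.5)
We want unique-up-to-scalar nonzero maps $L(-\alpha_j+\delta)\circ L(\alpha_j)\to L(\delta_{ij})$, which must be surjective, and $L(\delta_{ij})\to L(\alpha_i)\circ L(-\alpha_i+\delta)\{1\}$, which must be injective. The plan is to read both off the head and socle structure of the two induced modules, using Proposition \ref{prop:ses2}(d) together with the extension \eqref{eq:delta}.

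\textbf{The first map.} By the second sequence in \eqref{eq:Ldelta1} (with $j$ in place of $i$), $L(-\alpha_j+\delta)\circ L(\alpha_j)$ surjects onto $L(\delta_j)$. Since $(\alpha_j,-\alpha_j+\delta)$ is a pair of real roots and $L(\alpha_j)$, $L(-\alpha_j+\delta)$ are real simples, this product has simple head. That head is $L(\delta_j)$.

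\begin{itemize}
\item \emph{Surjectivity.} The head of $L(\delta_{ij})$ is $L(\delta_j)$, because \eqref{eq:delta} is non-split. So any map from a module with simple head $L(\delta_j)$ into $L(\delta_{ij})$ is either surjective or lands in the socle $L(\delta_i)\{-1\}$.
\item \emph{Existence.} I would construct the map through cuspidal theory. The module $\Delta(\delta_j)$ is a quotient of $\Delta(-\alpha_j+\delta)\circ\Delta(\alpha_j)$ by \eqref{eq:Ddelta}. By \cite{KM2}, the category of cuspidal $R(\delta)$-modules is controlled by $B_\delta=\C[z]\otimes\sf{Z}$, and $L(\delta_{ij})$ is a quotient of $\Delta(\delta_j)$, namely the indecomposable projective $B_\delta e_j$ modulo $z$ and lower zig-zag terms. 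Tensoring \eqref{eq:DL} for $\rho=\alpha_j$ and $\rho=-\alpha_j+\delta$, one checks that the resulting map $\Delta(-\alpha_j+\delta)\circ\Delta(\alpha_j)\to L(\delta_{ij})$ kills the images of the multiplications by $x$. It therefore factors through $L(-\alpha_j+\delta)\circ L(\alpha_j)$.
\end{itemize}

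\textbf{Uniqueness of the first map.} Compute $\Hom(L(-\alpha_j+\delta)\circ L(\alpha_j),L(\delta_{ij}))$ by Frobenius reciprocity, i.e.\ as $\Hom$ out of $L(-\alpha_j+\delta)\boxtimes L(\alpha_j)$ into the restriction. The composition multiplicities in \eqref{eq:Ldelta2} show that $L(\delta_i)$ appears in $L(-\alpha_j+\delta)\circ L(\alpha_j)$ only in degree shift $q$, and not as a head. Hence the only possible maps are multiples of the surjection, and the Hom space is one-dimensional.

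\textbf{The second map.} The second map is dual to the first. Apply duality $M\mapsto M^*$ and \cite[(5.2)]{Mc2}, which turns $(M\circ N)^*$ into $N^*\circ M^*$ up to shift. The first sequence of \eqref{eq:Ldelta1} shows that $L(\alpha_i)\circ L(-\alpha_i+\delta)$ has simple socle $L(\delta_i)\{-2\}$. After the shift $\{1\}$ this becomes $L(\delta_i)\{-1\}$, which is exactly the socle of $L(\delta_{ij})$. A map from $L(\delta_{ij})$ that is nonzero on this socle is injective, since $L(\delta_{ij})$ has simple socle. Existence and uniqueness follow by the same argument as for the first map with roles reversed. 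Alternatively, apply the first statement to $L(\delta_{ij})^*\cong L(\delta_{ji})$ up to shift.

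\textbf{Main obstacle.} The hardest step is existence: producing a map that hits all of $L(\delta_{ij})$, not just its head. Concretely, one must check that $L(\delta_j)$ in the head of $L(-\alpha_j+\delta)\circ L(\alpha_j)$ is glued non-trivially to the $qL(\delta_i)$ factor, so that the quotient realizes the extension \eqref{eq:delta}. I would first try to verify this through an explicit Ext computation in $B_\delta$-modules. If that fails, I would use the commutator computation $q^2[L\circ L']-[L'\circ L]$ from the proof of Proposition \ref{prop:ses2}(d), combined with the non-splitness of \eqref{eq:delta}.
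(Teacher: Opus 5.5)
\textbf{The gap is the existence of the first map.} You yourself flag this as the main obstacle, and it is not resolved.

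Your construction goes through $\Delta(-\alpha_j+\delta)\circ\Delta(\alpha_j)\to\Delta(\delta_j)\to L(\delta_{ij})$. It depends entirely on the claim that this composite kills the images of the degree-two endomorphisms from \eqref{eq:DL}, and that claim is only asserted (``one checks''). It does not follow formally from what you cite, because it amounts to the vanishing of certain maps $\Delta(-\alpha_j+\delta)\circ\Delta(\alpha_j)\{-2\}\to L(\delta_{ij})$. To prove it you would need an extra input. For instance, you could show that the injection in \eqref{eq:Ddelta} intertwines these endomorphisms, so that they descend to degree-two endomorphisms of $\Delta(\delta_j)$. Projectivity of $\Delta(\delta_j)$ in the cuspidal category, together with $[L(\delta_{ij}):L(\delta_j)\{-2\}]=0$, would then kill them. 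Without such an argument, existence is unproved. The same goes for the dual existence of the injection, since you obtain it "by the same argument".

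\textbf{The obstacle is also misdiagnosed: the non-trivial gluing comes for free.} You already know that $L(-\alpha_j+\delta)\circ L(\alpha_j)$ has simple head $L(\delta_j)$. So any quotient of it whose composition factors are exactly $L(\delta_j)$ and $L(\delta_i)\{-1\}$ has simple head, is therefore non-split, and is $L(\delta_{ij})$ by the uniqueness in \eqref{eq:delta}. What you need is such a quotient, and the paper produces it from results you already quote:
\begin{enumerate}
\item Let $C$ be the cokernel of the socle inclusion $L(\alpha_j,-\alpha_j+\delta)\{-2\}\to L(-\alpha_j+\delta)\circ L(\alpha_j)$ from \eqref{eq:Ldelta1}. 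By \eqref{eq:Ldelta2}, its composition factors are $L(\delta_j)$ and the $L(\delta_k)\{-1\}$ for $k\leftrightarrow j$, each occurring once.
\item The kernel of $C\to L(\delta_j)$ is semisimple. Two distinct neighbours $k,k'$ of $j$ are not adjacent, since $Q$ has no $3$-cycles, so $\Ext^1(L(\delta_k),L(\delta_{k'}))=0$.
\item Quotienting $C$ by $\bigoplus_{k\leftrightarrow j,\,k\neq i}L(\delta_k)\{-1\}$ gives the required quotient.
\end{enumerate}
The injection is obtained symmetrically. Take the kernel of $L(\alpha_i)\circ L(-\alpha_i+\delta)\to L(\alpha_i,-\alpha_i+\delta)$; it has simple socle $L(\delta_i)\{-2\}$ and semisimple quotient $\bigoplus_{k\leftrightarrow i}L(\delta_k)\{-1\}$. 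The preimage of $L(\delta_j)\{-1\}$ is then $L(\delta_{ij})\{-1\}$. Your uniqueness argument (simple head, and $L(\delta_i)\{-1\}$ not in the head) is fine once existence is in place.
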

\begin{proof}
Consider the cokernel $\coker$ of 
$$L(\alpha_j, -\alpha_j+\delta)\{-2\} \to L(-\alpha_j+\delta) \circ L(\alpha_j)$$ 
from \eqref{eq:Ldelta1}. By Proposition \ref{prop:ses2} the composition factors of $\coker$ consist of 
$\{L(\delta_j), L(\delta_k)\{-1\}\,;\,j \leftrightarrow k\}$. Since $L(\delta_j)$ is the head of $L(-\alpha_j+\delta) \circ L(\alpha_j)$ we also have a
 surjective map $\coker \to L(\delta_j)$. The kernel of this map has composition factors $\{L(\delta_k)\{-1\}\,;\,k \leftrightarrow j\}$. 
 Since $Q$ is simply laced, finite type it does not contain any 3-cycles and thus $\Ext^1(L(\delta_k), L(\delta_{k'}) = 0$ for distinct 
 $k,k' \ne j$ with $k \leftrightarrow j \leftrightarrow k'$. This means that one has a short exact sequence
$$0 \to \bigoplus_{k \leftrightarrow j} L(\delta_k) \{-1\} \to \coker \to L(\delta_j) \to 0.$$
Thus quotienting $\coker$ by the direct sum of $L(\delta_k)$ where $k \leftrightarrow j$ and $k \ne i$ we get a quotient with composition 
factors $\{L(\delta_j), L(\delta_i)\{-1\}\}$ and with simple head $L(\delta_j)$. This quotient must be $L(\delta_{ij})$ which proves the existence 
and surjectivity of the first map in \eqref{eq:gamma}. The existence and injectivity of the second map in \eqref{eq:gamma} follows similarly.
\end{proof}

\subsection{Affinizations and renormalized $r$-matrices}\label{sec:ren}

An important result in \cite[\S 2.2]{KKKO2} is that the category $\gmod(R)^\heartsuit$ is equipped with a system of renormalized $r$-matrices.  
In the Appendix \ref{app:A} we recall some relevant properties shared by abelian categories with renormalized $r$-matrices.  
Let us recall briefly the definition of the renormalized $r$-matrices, following \cite[Def.~2.2.1]{KKKO2}.

An affinization $M_z$ of a graded $R(\beta)$-module $M$ is a module with an injective degree two endomorphism $z$ and an isomorphism 
$M_z/zM_z \cong M$ satisfying the conditions in \cite[Def.~2.2]{KP}. For symmetric quiver Hecke algebras, such as the ones in this paper, 
an affinization exists with underlying graded vector space $M[z] = M \otimes \bbC[z]$ and action of $R(\beta)$ given by 
\begin{align}\label{eq:affinization}
\begin{split}
e(\nu)(a \otimes u) &= a \otimes e(\nu)u \\
x_k(a \otimes u) &= a \otimes (x_ku) + (za) \otimes u \\
\tau_k(a \otimes u) &= a \otimes (\tau_k u)
\end{split}
\end{align}
where $\nu \in I^\beta, a \in \bbC[z]$ and $u \in M$. 

\begin{Remark}\label{rem:barR}
Let $z$ be of degree two.
The assignment
\begin{align}\label{eq:Rmap}
\begin{split}
R(\beta) \to R(\beta)[z],\quad
x_i \mapsto x_i + z ,\quad
\tau_i \mapsto \tau_i
\end{split}
\end{align}
defines a homomorphism. If $M$ is an $R(\beta)$-module then $M[z]$ is an $R(\beta)[z]$-module and thus an $R(\beta)$-module via \eqref{eq:Rmap}. Then $M_z \cong M[z]$ as $R(\beta)$-modules. 
\end{Remark}

Given nonzero graded modules $M,N\in \gmod(R)^\heartsuit$ one can define an $R$-module homomorphism
\begin{align}\label{KRMN}
\bfR_{M_z,N}:M_z \circ N \to N \circ M_z.
\end{align}
From this one obtains the renormalized map 
\begin{align}\label{KRrenMN}\bfR_{M_z,N}^\ren=z^{-s}\bfR_{M_z,N}\end{align} 
where $s \ge 0$ is the largest integer such that the image of $\bfR_{M_z,N}$ is contained in $z^s(N\circ M_z)$. 
We define
\begin{align}\label{KrMN}
\r_{M,N} = \bfR_{M_z,N}^\ren|_{z=0}: M * N\to N * M \{\Lambda(M,N)\}
\end{align}
where $\Lambda(M,N) \in \Z$ is the homogeneous degree of $\r_{M,N}$.

\section{Categorification of the Coulomb branch}\label{sec:catCB}

In this section we set up notation and recall some background on Coulomb branches of 4d $\calN = 2$ quiver gauge theories and the associated categories. We follow the conventions from \cite{CW2}. 

\subsection{Coulomb branches}\label{sec:CB}

Given a finite dimensional representation $N$ of a complex reductive group $G$, let $\calR_{G,N}^\cl$ be the classical space of triples as 
defined by Braverman, Nakajima and Finkelberg \cite{Nak, BFNa,BFNb}, and let $\calR_{G,N}$ be its derived version studied in \cite{CW2}. 
When $G$ and $N$ are clear from the context, we abbreviate $\calR^\cl = \calR_{G,N}^\cl$ and $\calR = \calR_{G,N}$. The space $\calR^\cl$ 
is an ind-scheme of infinite type which is given by the classical intersection of two infinite dimensional bundles over the affine Grassmannian 
$\Gr_G$. The derived intersection of these bundles yields the derived ind-scheme $\calR$. 
More precisely, set $\calO=\bbC\llb t\rrb$ and $K=\bbC\llp t\rrp$ and denote the corresponding loop functors applied to $G$ by $G_\calO$ and $G_\calK$ respectively. We abbreviate $N_\calO=N\otimes\calO$ and $N_\calK = N \otimes  \calK$. Define the classical ind-scheme of infinite type
$$\calT_{G,N} = \calT = G_\calK \times_{G_\calO} N_\calO.$$
The derived ind-scheme $\calR$ is given by the following Cartesian square in the category of derived ind-schemes
\begin{align}\label{def-R}
\begin{split}
\xymatrix{\calR\ar[r]^-{i_1}\ar[d]_-{i_2}&\Gr\times N_\calO\ar[d]\\
\calT\ar[r]&\Gr\times N_\calK}
\end{split}
\end{align}
We equip $\calR$ with the obvious action of the group 
\begin{align}\label{hatG}\hGO = (G_\calO \rtimes \bbC^\times) \times \bbC^\times.\end{align}
The inner $\bbC^\times$ is called the loop $\bbC^\times$, 
the outer $\bbC^\times$ the scaling $\bbC^\times$.
The $\bbC^\times$'s act as follow
\begin{itemize}[leftmargin=8mm]
\item
the loop $\bbC^\times$ acts by loop rotation on $\Gr_G$ with weight two on 
the uniformizer $t$ and weight $-1$ on $N$, 
\item
the scaling $\bbC^\times$ acts trivially on $\Gr_G$ and with weight $1$ on $N$.
\end{itemize}
Let $\{1\}$ and $\la 1 \ra$ denote the twists by the linear characters of 
these $\C^\times$. In K-theory, the shifts $\{1\}$ and $\la 1 \ra$ are denoted by the multiplication by $q^{-1}$ and $t$ respectively. 

The K-theory $K^{\hGO}(\calR)$ of the quotient stack $\calR/\hGO$ is equipped with a product \cite{BFNa}. 
Although described more explicitly in \cite{BFNa} this product is induced by the natural convolution structure from the identification 
\begin{align}\label{R/G} 
\calR/\hGO \cong N_\O/\hGO \times_{N_\calK/\hGK} N_\O/\hGO 
\end{align}
as explained in \cite[\S 5]{CW2}. If we replace $\hGO$ by $G_\O$ then this product becomes commutative and the scheme 
\begin{align}\label{specR}
\calM_{G,N} = \Spec(K^{G_\O}(\calR) \otimes \bbC)
\end{align}
is the Coulomb branch of the 4d, $\calN=2$ quiver gauge theory (of cotangent type) associated to $(G,N \oplus N^\vee)$. 

\begin{Remark}
The notation $\{1\}$ and $\la 1 \ra$ agrees with \cite{CW2} 
but differs from \cite[p.~719]{CW1} where the scaling $\bbC^\times$ did not play a role and $\la 1 \ra$ was used as short-hand for $[1]\{-1\}$.
Note that \cite[\S8]{CW2} uses the diagonal $\C^\times$ in $GL_n$ as the scaling $\C^\times$ so the shift $\langle 1 \rangle$ in that section  does not refer to our scaling shift but rather to $[1]\{-1\}$.
\end{Remark}

\subsection{Coulomb categories}\label{sec:coulombcat}

We are interested in the stable $\infty$-category $\calD = \Coh^{\hGO}(\calR) = \Coh(\calR/\hGO)$ of $\hGO$-equivariant coherent sheaves on 
$\calR$. Since the stack $\calR/\hGO$ is ind-tamely presented in the sense of \cite{CW3} the coherent sheaves are the bounded 
objects in $\IndCoh^{\hGO}(\calR)$ with coherent cohomology. 
The identification from \eqref{R/G} can be used to endow $\Coh^{\hGO}(\calR)$ with a product. 

\subsubsection{Factorization categories}\label{sec:factcat}

For the purposes of defining the main functor in this paper we will need to consider certain enlargements of these categories as in 
\cite[\S 7]{CW2}. To describe these we first consider the case $N=0$ where $\calR_{G,0} = \Gr$ is the affine Grassmannian. In this case $\Gr$ 
can be deformed to a factorization space $\Gr_{X^J}$ which is the Beilinson-Drinfeld (BD) Grassmannian over $X^J$, where $X = \bA^1$ 
denotes the affine line and $J$ is some finite index set. This space represents the presheaf over $X^J$ defined by having $S$-points 
triples $(f_J, \calE, \beta)$ where $f_J: S \to X^J$, $\calE$ is a $G$-torsor over $X \times S$ and $\beta$ is a trivialization of $\calE$ away from 
the graph of $f_J$, see, e.g., \cite[Def. 3.1.1]{Z}. 

The group $G_\O$ also has a global analogue $G_{\O,X^J}$, given by the $J$-jet space of the affine group $G$, which acts naturally on $\Gr_{X^J}$. We also take 
$$\hG_{\O,X^J} = (G_{\O,X^J} \rtimes \bbC^\times) \times \bbC^\times$$
where the two $\bbC^\times$ are the loop and scaling $\bbC^\times$ as before, acting on $X^J$ with weight two and zero respectively. 
The category $\Coh^{\hG_{\O,X^J}}(\Gr_{X^J})$ carries a functorial convolution product 
\begin{align*}
\Coh^{\hG_{\O,X^J}}(\Gr_{X^J}) \otimes \Coh^{\hG_{\O,X^J}}(\Gr_{X^J})  \to \Coh^{\hG_{\O,X^J}}(\Gr_{X^J})
,\quad
(\cF, \cG)  \mapsto \cF*\cG.
\end{align*}
There is also a unital structure consisting of monoidal functors 
\begin{align*}
\eta_{J_1}^{J_1 \sqcup J_2}: & \Coh^{\hG_{\O,X^{J_1}}}(\Gr_{X^{J_1}} \times X^{J_2}) \to \Coh^{\hG_{\O,X^{J_1 \sqcup J_2}}}(\Gr_{X^{J_1 \sqcup J_2}}) \\
\eta_{J_2}^{J_1 \sqcup J_2}: & \Coh^{\hG_{\O,X^{J_2}}}(X^{J_1} \times \Gr_{X^{J_2}}) \to \Coh^{\hG_{\O,X^{J_1 \sqcup J_2}}}(\Gr_{X^{J_1 \sqcup J_2}})
\end{align*}
for any $J_1,J_2$. These can be used to define another product $\odot$ as follows
\begin{align*}
\Coh^{\hG_{\O,X^{J_1}}}(\Gr_{X^{J_1}}) \otimes \Coh^{\hG_{\O,X^{J_2}}}(\Gr_{X^{J_2}}) & \to \Coh^{\hG_{\O,X^{J_1 \sqcup J_2}}}(\Gr_{X^{J_1 \sqcup J_2}})  \\
(\cF, \cG) & \mapsto \cF \odot \cG = \eta_{J_1}^{J_1 \sqcup J_2}(\cF \boxtimes \O_{X^{J_2}}) * \eta_{J_2}^{J_1 \sqcup J_2}(\O_{X^{J_1}} \boxtimes \cG)
\end{align*}

As explained in \cite[\S 7]{CW2} these constructions all have analogues when we replace $\Gr_{X^J}$ with $\calR_{X^J}$. More precisely, the role of \eqref{R/G} is played by 
\begin{equation}\label{R/G2}
\calR_{X^J}/\hG_{\O,X^J} \cong N_{\O,X^J}/\hG_{\O,X^J} \times_{N_{\calK,X^J}/\hG_{\calK,X^J}} N_{\O,X^J}/\hG_{\O,X^J}
\end{equation}
where $N_{\O,X^J}$ and $N_{\calK,X^J}$ are defined by having $S$-points
\begin{align*}
N_{\O,X^J}(S) &= \{f_J: S \to X^J, \rho: \hz_J \to N\} \\
N_{\calK,X^J}(S) &= \{f_J: S \to X^J, \rho: \hz_J \setminus z_J \to N\} 
\end{align*}
where $z_J \subset S \times X$ is the graph of $f_J$ and $\hz_J$ is its formal neighborhood. We denote 
$$\calD_{X^J} = \Coh^{\hG_{\O,X^J}}(\calR_{X^J}).$$

Subsequently we obtain a monoidal product $*$ on each $\calD_{X^J}$ as well as the product $\odot$ given by
\begin{align}\label{eq:odot}
\begin{split}
\calD_{X^{J_1}} \otimes \calD_{X^{J_2}} & \to \calD_{X^{J_1 \sqcup J_2}} \\
(\cF, \cG) & \mapsto \cF \odot \cG = \eta_{J_1}^{J_1 \sqcup J_2}(\cF \boxtimes \O_{X^{J_2}}) * \eta_{J_2}^{J_1 \sqcup J_2}(\O_{X^{J_1}} \boxtimes \cG).
\end{split}
\end{align}

\subsubsection{Symmetrized categories}\label{sec:symmcat}

Let $\frakS_J$ be the symmetric group permuting the finite set $J$ and $X^{(J)} = X^J/\frakS_J$. 
The BD Grassmannian $\Gr_{X^J}$ has a natural action of $\frakS_J$ arising from permuting the points in $X^J$. 
We consider the symmetrized version $\Gr_{X^{(J)}}$ of $\Gr_{X^J}$ which represents the same functor except 
that now $f_J: S \to X^{(J)}$ is a relative effective divisor over $X \times S$, see, e.g., \cite[Eq.~3.1.24]{Z}. 
Replacing $X^J$ with $X^{(J)}$ one gets analogues $\hG_{\O,X^{(J)}}$ and $\hG_{\calK,X^{(J)}}$ as well as $N_{\O,X^{(J)}},$
 $N_{\calK,X^{(J)}}$ and $\calR_{X^{(J)}}$, see, e.g., \cite[\S 3]{Ri}. 
The fiber product corresponding to \eqref{R/G2} is now
\begin{equation}\label{R/G3}
\calR_{X^{(J)}}/\hG_{\O,X^{(J)}} \cong N_{\O,X^{(J)}}/\hG_{\O,X^{(J)}} 
\times_{N_{\calK,X^{(J)}}/\hG_{\calK,X^{(J)}}} N_{\O,X^{(J)}}/\hG_{\O,X^{(J)}}. 
\end{equation}
Following our earlier notation we denote $\calD_{X^{(J)}} = \Coh^{\hG_{\O,X^{(J)}}}(\calR_{X^{(J)}})$. 
As before we have a monoidal structure on each $\calD_{X^{(J)}}$ whose product is denoted $*$ as well as an 
analogue of $\odot$ from \eqref{eq:odot}
\begin{align}\label{eq:circ} 
\begin{split}
\calD_{X^{(J_1)}} \otimes \calD_{X^{(J_2)}}  \to \calD_{X^{(J_1 \sqcup J_2)}}
,\quad
\notag (\cF, \cG)  \mapsto \cF \circ \cG. 
\end{split}
\end{align}
which we denote $\circ$ in order to easily distinguish it from $\odot$. 

Let $\Pi$ denote the natural projection
\begin{equation}\label{eq:Pi}
\Pi: \calR_{X^J}/\hG_{\O,X^J} \to \calR_{X^{(J)}}/\hG_{\O,X^{(J)}}
\end{equation}
The compatibility with base change $X^J \to X^{(J)}$ implies that 
\begin{equation}\label{eq:Pi2} 
\Pi(\cF \odot \cG) \cong \Pi(\cF) \circ \Pi(\cG) \in \calD_{X^{J_1 \sqcup J_2}}
\end{equation}
for $\cF \in \calD_{X^{J_1}}$ and $\cG \in \calD_{X^{J_2}}$.

\subsubsection{Subcategories supported over $0$}

We can also consider the full subcategory of $\calD_{X^J,0} \subset \calD_{X^J}$ consisting of objects set theoretically supported over 
$\calR = \calR_{X^J} \times_{X^J} \{0\} \subset \calR_{X^J}$. Compatibility with base change implies that these subcategories are preserved 
by $*$ and $\odot$. One similarly has a subcategory $\calD_{X^{(J)},0} \subset \calD_{X^{(J)}}$ which is preserved by $*$ and $\circ$.

\subsubsection{Colimit categories}\label{sec:colimit}

The categories $\calD_{X^{(J)}}$ do not carry a traditional factorization structure. However, for any $d \in \N$ we have a natural map 
\begin{align}\label{eq:dD}
\begin{split}
X^{(m)}  \to X^{(dm)}, \quad
[D]  \mapsto [dD]
\end{split}
\end{align}
where $X^{(m)} = X^m/\frakS_m$ and $D$ denotes a degree $m$ divisor on $X$. Let $\calD_{X^{(m)}}$ be 
the category $\Coh^{\hG_{\O,X^{(m)}}}(\calR_{X^{(m)}})$ over $X^{(m)}$. The stacks $\calR_{X^{(m)}}/\hG_{\O,X^{(m)}}$ are compatible 
under the base change \eqref{eq:dD} in the sense that 
$$\calR_{X^{(m)}}/\hG_{\O,X^{(m)}} \cong \calR_{X^{(dm)}}/\hG_{\O,X^{(dm)}} \times_{X^{(dm)}} X^{(m)}.$$
Thus, via pushforward, we get faithful functors 
\begin{equation}\label{eq:i}
i_m^d: \calD_{X^{(m)}} \to \calD_{X^{(md)}}.
\end{equation}
which are monoidal with respect to the $\circ$ product. 
Taking the colimit over such maps we get a category that we denote $\calD_{X^{(\bullet)}}$. The functor in \eqref{eq:i} restricts to a faithful functor
\begin{equation}\label{eq:i2}
i_m^d: \calD_{X^{(m)},0} \to \calD_{X^{(md)},0}
\end{equation}
which can be used to define the analogous category $\calD_{X^{(\bullet)},0}$. 

Note that in the discussion above one interprets $X^{(0)}$ as just a point so that $\calD_{X^{(0)}} = \calD_{X^{(0)},0} = \calD$. 
In particular one has a natural faithful functor 
\begin{equation}\label{eq:i3}
i: \calD \to \calD_{X^{(\bullet)},0}.
\end{equation}
This functor is compatible with the monoidal structures in the sense that $i(\cF*\cG) \cong i(\cF) \circ i(\cG)$ for any $\cF,\cG \in \calD$. 
Because of this compatibility we will often work with and make computations in $\calD$ and only after apply $i$. 

The functors from \eqref{eq:i2} induce an isomorphism in K-theory.  
This means that the map $i$ from \eqref{eq:i3} is also an isomorphism in K-theory, see Remark \ref{rem:Ktheory}.  

\begin{Remark}
The categories $\calD_{X^m,0}$ and $\calD_{X^{(m)},0}$ are very similar. The discussion above is still valid with $\calD_{X^m,0}$ in place of 
$\calD_{X^{(m)},0}$. We work with the latter in order to later define the duality functor $\F$ in \S \ref{sec:Fdefs}. 
\end{Remark}



\subsection{Koszul-perverse coherent sheaves}\label{sec:KP}

In \cite{CW2} a t-structure on $\calD = \Coh^{\hGO}(\calR)$ is constructed for which the $*$-product is t-exact.
Note that the $*$-product is not t-exact for the standard t-structure. 
We call it the Koszul-perverse t-structure and denote its heart $\cKP_{G,N}$, or $\cKP$ for short. 
It is proved there that $\cKP$ is a finite-length Abelian, rigid, monoidal category  which
 is $\Z^2$-graded by the loop shift $\{1\}$ and the Koszul shift $[1]\la -1 \ra$. 
For $\cF \in \calD$ we denote by $H^i_{\cKP}(\cF) \in \cKP$ its cohomology with respect to the Koszul-perverse t-structure. 

This t-structure can be extended from $\calD$ to $\calD_{X^J,0}$. We briefly sketch this construction. 
Recall that the Koszul-perverse t-structure on $\calD$ begins with the perverse coherent t-structure on 
$\Coh^{\hGO}(\Gr)$ which is then lifted from $\Gr$ to $\calR$ using the fact that $\calR \to \Gr$ is the derived intersection of two bundles over $\Gr$. 
The key thing to note is that the construction of the perverse coherent t-structure on $\Coh^{\hGO}(\Gr)$ extends via the usual definition to 
$\Coh_0^{\hG_{\O,X^J}}(\Gr_{X^J})$. 
One then lifts this to $\calD_{X^J,0}$ in exactly the same way as before. 
One similarly obtains a Koszul-perverse t-structure on 
$\calD_{X^{(J)},0}$. We denote the corresponding hearts $\cKP_{X^J}$ and $\cKP_{X^{(J)}}$. 

\begin{Remark}
The perverse t-structure on $\Coh^{\hGO}(\Gr)$ does not extend to $\Coh^{\hG_{\O,X^J}}(\Gr_{X^J})$ because the $\hG_{\O,X^J}$-orbits on 
$\Gr_{X^J}$ are not all even or all odd dimensional on any fixed connected component. Thus there are no obvious Koszul-perverse 
t-structures on the bigger categories $\calD_{X^J}$ or $\calD_{X^{(J)}}$.
\end{Remark}

The functors $i_m^d: \calD_{X^{(m)},0} \to \calD_{X^{(dm)},0}$ from \eqref{eq:i2} are t-exact for the Koszul-perverse t-structures and identifies 
Koszul-perverse simples on either side. This is a consequence of \cite[Prop. 3.29]{CW2}. 
Taking the colimit over such maps we get a Koszul-perverse t-structure on  $\calD_{X^{(\bullet)}}$.
Since $i(\cF * \cG) \cong i(\cF) \circ i(\cG)$, the product $\circ$ on $\calD_{X^{(\bullet)},0}$ 
is t-exact for the Koszul-perverse t-structure.  
Hence, the heart $\cKP_{X^{(\bullet)}}$ is an abelian, monoidal subcategory such that
every simple in $\cKP_{X^{(\bullet)}}$ is the image of a simple in $\cKP$. 
It is again $\Z^2$-graded by the loop shift $\{1\}$ and the Koszul shift $[1]\la -1 \ra$. 

\begin{Remark}\label{rem:Ktheory}
Since $i_0^d: \cKP \to \cKP_{X^{(d)}}$ identifies simples on either side,
 it follows that the Grothendieck groups of all categories $\cKP_{X^{(d)}}$ are canonically isomorphic. 
\end{Remark}

\subsection{Renormalized $r$-matrices for Coulomb categories}\label{sec:ren2}

The monoidal category $\cKP$ is equipped with renormalized $r$-matrices in the same way as the category $\gmod(R)^\heartsuit$,
see \cite[\S2.2]{KKKO2} and \S\ref{sec:ren} for the terminology. This is a consequence of the general construction of renormalized 
$r$-matrices from factorization structures as explained in \cite[\S 5]{CW1}. 
We now review this in part to fix notation but also because a related idea is used to construct the functor $\F$ in \S\ref{sec:Fdefs}. 

A system of renormalized $r$-matrices consists of integers $\La(\cF,\cG)$ and non-zero maps
\begin{align}\label{r-renor}
\r_{\cF,\cG}: \cF * \cG \to \cG * \cF \{\Lambda(\cF,\cG)\}
\end{align}
for any non-zero $\cF, \cG \in \cKP$. This data is required to satisfy several properties familiar from the theory of quantum groups,
see \cite[Def. 4.1]{CW1}. 

Recall that $\calR_X \cong \calR \times X$. For $\cF \in \cKP$ we denote by 
$$\cF_z = \cF \boxtimes \O_X \in \calD_{X}$$ 
the trivial deformation of $\cF$ where $z$ is the parameter of $X = \Spec \C[z]$.

\begin{Remark}
The sheaf $\cF_z$ was denoted $\widetilde{\cF}$ in \cite{CW1,CW2}. We switch to $\cF_z$ in order to mimic the notation used in \cite{KKKO2} and related literature. 
Note that while $\cF \in \cKP$ the object $\cF_z \in \calD_{X}$ will not lie in $\calD_{X,0}$ and hence cannot belong to $\cKP_X$. 
\end{Remark}

Consider the sheaves in $\calD_{X^2} = \Coh^{\hG_{\O,X^2}}(\calR_{X^2})$ given by
$\cF_{z} \odot \cG_{z}$ and
$s_{12}^*(\cG_{z} \odot \cF_{z}).$
Their restriction  to the diagonal $\Delta \subset X^2$ are $(\cF * \cG)_z$ and 
$(\cG * \cF)_z$ respectively. Their restriction away from $\Delta$ are $(\cF \boxtimes \cG) \times (X^2 \setminus \Delta)$ 
in both cases. This means that the identity generic isomorphism extends to a morphism
\begin{equation}\label{eq:ren}
\bfR_{\cF,\cG}^{\ren} : \cF_{z} \odot \cG_{z} \to s_{12}^*(\cG_{z} \odot \cF_{z}) \{d\}
\end{equation}
which is nonzero over $\Delta$ for some integer $d$. Specializing \eqref{eq:ren} to $0 \in X^2$ yields the map $\r_{\cF,\cG}$ from \eqref{r-renor} where we define $\Lambda(\cF,\cG)= d$. 
These maps define a system of renormalized $r$-matrices.

\section{Duality functors}\label{sec:Fdefs}

Fix an orientation of the edges of $Q_\af$.
This distinguishes a quiver Hecke algebra $R$ as follows.
Let $d_{ij}$ be the number of arrows $i\to j$.
Thus, if $i \ne j \in I_\af$ we have $d_{ij}+d_{ji}=\b(\scrM_i,\scrM_j)$ and we define
$$Q_{ij}(u,v) = (-1)^{d_{ij}}(u-v)^{\b(\scrM_i, \scrM_j)}.$$
The relation \eqref{QQ} holds. Hence we can define $R$ as in \S \ref{subsec:quiverHecke}.
Recall the categories $\gMod(R)$ from \S \ref{subsec:quiverHecke} and $\calD_{X^{(\bullet)}}$ from \S \ref{sec:colimit}. 
We denote by $\calD^-_{X^{(\bullet)}}$ the analogue of $\calD_{X^{(\bullet)}}$ where objects are allowed to be unbounded below. The goal of this section is to explain how to define a monoidal functor
\begin{align}\label{eq:FMod}
\F: (\gMod(R), \circ) \to (\calD^-_{X^{(\bullet)}}, \circ)
\end{align}
analogous to the duality functors studied in \cite{KKK}, \cite{KP}, \cite{KKOP3}. Later we will show that the particular functor we are interested in factors through $\calD_{X^{(\bullet)}}$. 

Following \cite[\S4]{KKOP3}, the functor $\F$ depends on the choice of a duality datum in $\cKP$. This consists of a set of objects $(\scrM_i\,;\,i \in I_\af)$ in $\cKP$ such that
\begin{itemize}[leftmargin=8mm]
\item $\scrM_i$ is simple and real for each $i \in I_\af$,
\item $\Lambda(\scrM_i,\scrM_j) = - \la \alpha_i, \alpha_j \ra$ for each $i \ne j \in I_\af$.
\end{itemize}
Here $\Lambda(-,-)$ is the pairing induced by the $r$-matrices as in \eqref{r-renor}. 
Note that if $i=j$ then $\Lambda(\scrM_i, \scrM_i)=0$ by Proposition \ref{prop:qcom}. We will now assume such a datum is given and explain how to use it to define $\F$ as in \eqref{eq:FMod}.  


\subsection{The maps $\tau$}

To define $\F$ we first consider two objects $\scrM_i,$ $ \scrM_j$ with $i,j\in I_\af$.
Let $J = \{1,2\}$ and $\Pi: \calR_{X^J}/\hG_{\O,X^J} \to \calR_{X^{(J)}}/\hG_{\O,X^{(J)}}$
be the map from \eqref{eq:Pi}. We have the following objects in $ \Coh^{\hG_{\O,X^{(J)}}}(\calR_{X^{(J)}})$
\begin{align*}
(\scrM_i)_{z} \circ (\scrM_j)_{z} &= \Pi_{*}((\scrM_i)_{z} \odot (\scrM_j)_{z}) \\
(\scrM_j)_{z} \circ (\scrM_i)_{z} &= \Pi_{*}((\scrM_j)_{z} \odot (\scrM_i)_{z})
\end{align*}
By \eqref{eq:ren} we have a morphism
\begin{equation}\label{eq:ren2}
\bfR_{\scrM_i,\scrM_j}^{\ren} : (\scrM_i)_{z} \odot (\scrM_j)_{z} \to s_{12}^*((\scrM_j)_{z} \odot (\scrM_i)_{z}) \{\La(\scrM_i,\scrM_j)\}
\end{equation}
Applying $\Pi_{*}$ and using that $\Pi \circ s_{12} = \Pi$ we get a map 
\begin{equation}\label{eq:ren3}
\t: (\scrM_i)_z \circ (\scrM_j)_z \to (\scrM_j)_z \circ (\scrM_i)_z \{\La(\scrM_i,\scrM_j)\}.
\end{equation}
Since $(\scrM_i)_z \odot (\scrM_j)_z $ lies in $ \Coh^{\hG_{\O,X^2}}(\calR_{X^2})$ 
there is a natural action by multiplication of the coordinates $(z_1,z_2) \in X^2$ on it. Let
$$x_1 , x_2: (\scrM_i)_z \circ (\scrM_j)_z \to (\scrM_i)_z \circ (\scrM_j)_z \{2\}$$
denote the pushforwards $x_1 = \Pi_{*}(z_1)$ and $x_2 = \Pi_{*}(z_2)$.
The shift by $\{2\}$ is because $z_1$ and $z_2$ have weight $2$ with respect to the loop $\bbC^\times$. 

\begin{Proposition}\label{prop:t}
The maps $\t$ satisfy the braiding relation.
Further, for any $i,j \in I_\af$ we have
$$\t^2 = c_{ij} (x_1-x_2)^{\b(\scrM_i, \scrM_j)},\quad \t x_1 = x_2 \t $$
for some $c_{ij} \in \bbC^\times$ with $c_{ii}=1$. 
\end{Proposition}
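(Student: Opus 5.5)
The plan is to prove every relation upstairs on $X^J$, before applying $\Pi_*$, and then push forward. Since $\Pi\circ s_{12}=\Pi$ and $\Pi_*$ is functorial, any identity among the maps $\bfR^{\ren}$, their $s^*$-pullbacks and multiplication by the coordinates $z_a$ descends to the corresponding identity among $\t$, $x_1$, $x_2$. Throughout I will use one principle, which I call (P). The objects $(\scrM_i)_z\odot(\scrM_j)_z$, and their threefold analogues on $X^3$, are flat over $X^J$, since they are built from trivial deformations. Away from the diagonals they are the external products $\scrM_i\boxtimes\scrM_j\times(X^J\setminus\Delta)$. Hence a morphism between two such objects is determined by its restriction to the complement of the diagonals.

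\textbf{Step 1: $\t x_1=x_2\t$.} By construction, $\bfR^{\ren}_{\scrM_i,\scrM_j}$ is a morphism of sheaves over $X^2$ into $s_{12}^*((\scrM_j)_z\odot(\scrM_i)_z)$. It is therefore linear over $\C[z_1,z_2]$, where $z_1$ acts on the target through $s_{12}^*$. On $(\scrM_j)_z\odot(\scrM_i)_z$ this action is multiplication by the second coordinate. After applying $\Pi_*$, multiplication by $z_1$ on the source is $x_1$, and on the target it becomes $x_2$. This gives $\t x_1=x_2\t$.

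\textbf{Step 2: $\t^2$.} Consider the composite
\[
s_{12}^*\bfR^{\ren}_{\scrM_j,\scrM_i}\circ\bfR^{\ren}_{\scrM_i,\scrM_j}\colon (\scrM_i)_z\odot(\scrM_j)_z\to(\scrM_i)_z\odot(\scrM_j)_z\{\La(\scrM_i,\scrM_j)+\La(\scrM_j,\scrM_i)\}.
\]
Off the diagonal it is multiplication by some function $f$, because each factor is generically the identity isomorphism. By (P), the composite is multiplication by $f\in\C[z_1,z_2]$. I then pin down $f$ using three facts:
\begin{itemize}[leftmargin=8mm]
\item $f$ is invertible off $\Delta$, so $f=c(z_1-z_2)^n$ up to a unit; the translation-equivariance of the trivial deformations removes any other dependence.
\item $f$ is homogeneous for the loop $\C^\times$, under which $z_a$ has weight $2$, so $2n=\La(\scrM_i,\scrM_j)+\La(\scrM_j,\scrM_i)$. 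This equals $2\b(\scrM_i,\scrM_j)$ by the definition of $\b$ recalled in the Appendix.
\item $c\neq0$, since the composite is nonzero.
\end{itemize}
For $i=j$ we have $\La=0$ and $\b=0$, so $f=c$ is a constant. Since $\scrM_i$ is real simple, I normalize $\bfR^{\ren}_{\scrM_i,\scrM_i}$ so that the composite is $1$, giving $c_{ii}=1$. Pushing forward yields $\t^2=c_{ij}(x_1-x_2)^{\b(\scrM_i,\scrM_j)}$.

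\textbf{Step 3: braid relation.} This is the main obstacle. I would work on $X^3$ with $(\scrM_i)_z\odot(\scrM_j)_z\odot(\scrM_k)_z$. Each braid move would be realized as a renormalized $r$-matrix in one pair of variables, using the factorization (unital) structure $\eta$ to extend the map on $X^2$ to $X^3$. The two sides of the braid relation are then two composites of three such maps, both landing in $s_{13}^*$ of $(\scrM_k)_z\odot(\scrM_j)_z\odot(\scrM_i)_z$ shifted by $\La(i,j)+\La(i,k)+\La(j,k)$. The shifts match automatically.

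Both composites restrict to the identity off all diagonals, so by (P) they agree. The delicate point is to justify that the extension of $\bfR^{\ren}$ from $X^2$ to $X^3$ via $\eta$ is still a morphism of the flat objects. In particular, it must not acquire extra factors $(z_a-z_b)$ from renormalizing differently in three variables. I would check this by restricting to the generic point of each pairwise diagonal $\{z_a=z_b\}$. There, by factorization, the object locally looks like the two-variable object times $\scrM\boxtimes(\text{third factor})$, and the map reduces to the two-variable $\bfR^{\ren}$ times the identity.

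Finally, $\Pi_*$ converts this equality into the braid relation for $\t$.
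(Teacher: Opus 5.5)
Your proposal is correct and follows essentially the same route as the paper. You get $\t x_1=x_2\t$ from the $\C[z_1,z_2]$-linearity of $\bfR^{\ren}$ together with $s_{12}^*(z_1)=z_2$, and you get $\t^2$ by recognizing the composite of the two renormalized $r$-matrices on $X^2$ as $c(z_1-z_2)^d$ and fixing $d=\b(\scrM_i,\scrM_j)$ by a loop-degree count. You get the braid relation by comparing the two triple composites on $X^3$, which agree off the big diagonal, and then pushing forward along $\Pi$.

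The only differences are cosmetic. The ``delicate point'' in your Step 3 is automatic, since $\bfR^{\ren}\odot\id$ is simply the $\odot$-product of morphisms. For $i=j$ no normalization is needed: $\bfR^{\ren}_{\scrM_i,\scrM_i}$ is already the extension of the generic identity (with $\La(\scrM_i,\scrM_i)=0$), so the composite is the identity on the nose, and the paper later uses this un-rescaled map restricting to $\id$ on $\Delta$.
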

\begin{proof}
If $i=j$ then $\bfR_{\scrM_i,\scrM_i}^{\ren}$ is the identity morphism and so $\t^2=\id$ with $c_{ii}=1$. If $i \ne j$ then
$$\bfR_{\scrM_j,\scrM_i}^{\ren} \circ \bfR_{\scrM_i,\scrM_j}^{\ren}: 
(\scrM_i)_z \odot (\scrM_j)_z \to (\scrM_i)_z \odot (\scrM_j)_z \{2\b(\scrM_i, \scrM_j)\}$$
is the identity map away from the diagonal $\Delta \subset X^2$. 
Thus it must be equal to $c(z_1-z_2)^d$ for some $c \in \bbC^\times$ and $d \in \Z$. 
Degree considerations implies that $d = \b(\scrM_i,\scrM_j)$. 
Applying $\Pi_{*}$ then gives 
$$\t^2 = c(x_1-x_2)^{\b(\scrM_i, \scrM_j)}.$$
Next, since $\bfR_{\scrM_i,\scrM_j}^{\ren}$ is an extension of the identity map, it commutes with the multiplication by $z_1$. 
Since $s_{12}^*(z_1) = z_2$ it follows that after applying $\Pi_{*}$ to \eqref{eq:ren2} we get $\t x_1 = x_2 \t$. 
Finally, for the braiding, consider the composition of maps in $\Coh^{\hG_{\O,X^3}}(\calR_{X^3})$ given by
\begin{align*}
(\scrM_i)_z \odot (\scrM_j)_z \odot (\scrM_k)_z 
& \xrightarrow{\bfR_{\scrM_i,\scrM_j}^{\ren} \odot \id} s_{12}^* ((\scrM_j)_z \odot (\scrM_i)_z \odot (\scrM_k)_z) \{m_{ij}\} \\ 
& \xrightarrow{s_{12}^*(\id \odot \bfR_{\scrM_i,\scrM_k}^{\ren})} s_{12}^* s_{23}^* ((\scrM_j)_z \odot (\scrM_k)_z \odot (\scrM_i)_z) \{m_{ij}+m_{ik}\} \\
& \xrightarrow{s_{12}^* s_{23}^* (\bfR_{\scrM_j,\scrM_k}^{\ren} \odot \id)} s_{12}^* s_{23}^* s_{12}^* ((\scrM_k)_z \odot (\scrM_j)_z \odot (\scrM_i)_z)\{m_{ij}+m_{ik}+m_{jk}\}
\end{align*}
where $m_{ij} = \La(\scrM_i,\scrM_j)$, $m_{jk} = \La(\scrM_j,\scrM_k)$ and $m_{ik} = \La(\scrM_i,\scrM_k)$. 
This map is the extension of the identity on the complement of the big diagonal in $X^3$ multiplied by 
$$(z_1-z_2)^{m_{ij}} \cdot s_{12}^*(z_2-z_3)^{m_{ik}} \cdot s_{12}^*s_{23}^* (z_1-z_2)^{m_{jk}} = (z_1-z_2)^{m_{ij}} (z_1-z_3)^{m_{ik}} (z_2-z_3)^{m_{jk}}.$$

This is the same map as the one obtained from the composition 
\begin{align*}
(\scrM_i)_z \odot (\scrM_j)_z \odot (\scrM_k)_z 
& \xrightarrow{\id \odot \bfR_{\scrM_j,\scrM_k}^{\ren}} s_{23}^* ((\scrM_i)_z \odot (\scrM_k)_z \odot (\scrM_j)_z) \{m_{jk}\} \\             
& \xrightarrow{s_{23}^*(\bfR_{\scrM_i,\scrM_k}^{\ren} \odot \id)} s_{23}^* s_{12}^* ((\scrM_k)_z \odot (\scrM_i)_z \odot (\scrM_j)_z) \{m_{ik}+m_{ik}\} \\
& \xrightarrow{s_{23}^* s_{12}^* (\id \odot \bfR_{\scrM_i,\scrM_j}^{\ren})} s_{23}^* s_{12}^* s_{23}^* ((\scrM_k)_z \odot (\scrM_j)_z \odot (\scrM_i)_z)\{m_{jk}+m_{ik}+m_{ij}\}
\end{align*}
Pushing forward by 
$$\Pi: \calR_{X^3}/\hG_{\O,X^{3}} \to \calR_{X^{(3)}}/\hG_{\O,X^{(3)}}$$ 
recovers the braiding relation for the maps $\t$. 
\end{proof}

We now rescale $\t$ in order to absorb the constants $c_{ij}$ from Proposition \ref{prop:t}. We choose a total ordering $(I_\af, \le)$ of the vertices and let
$$\varphi = \begin{cases} 
(-1)^{d_{ij}}(c_{ij})^{-1} \t & \text{ if $i < j$ } \\
 \t & \text{ if $i \ge j$ } 
\end{cases}$$
For any $i,j \in I_\af$ we have
\begin{align}\label{intertwiner}
\varphi^2 = Q_{ij}(x_1,x_2)+\delta_{ij}
\end{align}

Lastly, recall that $\bfR_{\scrM_i,\scrM_i}^{\ren}$ from \eqref{eq:ren2} restricts to the identity over the diagonal $\Delta \subset X^2$. Equivalently, the map $\bfR_{\scrM_i,\scrM_i}^{\ren} - \id$ restricts to zero.  This means that there exists some $\hat{\bfR}_{\scrM_i,\scrM_i}^{\ren}$ such that $\bfR_{\scrM_i,\scrM_j}^{\ren} = \hat{\bfR}_{\scrM_i,\scrM_i}^{\ren}(z_1-z_2)+\id$.  Then, we define 
$$\tau = \begin{cases}
\Pi_{*}(\hat{\bfR}_{\scrM_i,\scrM_i}^{\ren}) & \text{ if $i=j$ } \\
\varphi & \text{ if $i\ne j$ }
\end{cases}$$ 

\subsection{The functor $\F$}

For each $\beta \in \bbN\Phi_\af^+$ of height $h=\height(\beta)$ and $\nu = (\nu_1,\dots,\nu_h) \in (I_\af)^\beta$ we define
\begin{align}\label{eq:M}
\begin{split}
\scrM(\nu) &= (\scrM_{\nu_1})_{z} \circ (\scrM_{\nu_2})_{z} \circ \cdots \circ (\scrM_{\nu_h})_{z} \in \Coh^{\hG_{\O,X^{(h)}}}(\calR_{X^{(h)}}) \\
\scrM(\beta) &= \bigoplus_{\nu \in (I_\af)^\beta} \scrM(\nu).
\end{split}
\end{align}
Following the notation above, we denote  
\begin{align*}
\varphi_k e(\nu) & = \id^{k-1} \circ \varphi \circ \id^{h-k-1}: \scrM(\nu) \to \scrM(s_k \cdot \nu) \{\La(\scrM_{\nu_k}, \scrM_{\nu_{k+1}})\} \\
\tau_k e(\nu) & = \id^{k-1} \circ \tau \circ \id^{h-k-1}: \scrM(\nu) \to \scrM(s_k \cdot \nu) \{\La(\scrM_{\nu_k}, \scrM_{\nu_{k+1}} - 2\delta_{\nu_k,\nu_{k+1}})\} \\
x_k e(\nu) & = \id^{k-1} \circ x \circ \id^{h-k}: \scrM(\nu) \to \scrM(\nu) \{2\}
\end{align*}
where $e(\nu)$ denotes the idempotent projecting onto $\scrM(\nu)$. 
Here  that in \S\ref{subsec:quiverHecke} the symbols $x_k$, $\tau_k$ and $e(\nu)$ denote the generators of $R(\beta)$,
see \eqref{xte}.
Let $\varphi_k$ be the intertwiner in $R(\beta)$, as defined in \cite[(1.6)]{KKK}.

\begin{Proposition}\label{prop:action}
The maps $e(\nu)$, $\tau_ke(\nu)$ and $x_ke(\nu)$ above define an algebra homomorphism $R(\beta) \to \End(\scrM(\beta))$. Moreover, we 
have $\End^i(\scrM(\beta)) = 0$ for $i < 0$. 
\end{Proposition}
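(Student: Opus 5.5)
We check the defining relations of $R(\beta)$ from \cite[\S1.2]{KKK} one at a time, reducing everything to computations on $X^h$ before applying $\Pi_*$. Since $\scrM(\nu)=\Pi_*((\scrM_{\nu_1})_z\odot\cdots\odot(\scrM_{\nu_h})_z)$, and the maps $x_k$, $\tau_k$ are pushforwards of maps on $\calR_{X^h}$ (multiplication by $z_k$, and $\bfR^{\ren}$ or $\hat{\bfR}^{\ren}$ acting in the $k,k+1$ slots twisted by $s_k^*$), each relation only has to be verified upstairs. Two kinds of relation are immediate. First, the idempotent relations and the commutation $x_kx_l=x_lx_k$ hold because the $z_k$ are coordinate functions. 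Second, the far-commutativity relations $\tau_k\tau_l=\tau_l\tau_k$ for $|k-l|>1$, and $\tau_kx_l=x_l\tau_k$ for $l\ne k,k+1$, hold because the maps act on disjoint tensor factors of $\odot$.

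The remaining relations come in three groups.

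\emph{Case $\nu_k\ne\nu_{k+1}$.} Here $\tau_k=\varphi_k$. Proposition \ref{prop:t}, after the rescaling \eqref{intertwiner}, gives $\tau_k^2e(\nu)=Q_{\nu_k\nu_{k+1}}(x_k,x_{k+1})e(\nu)$ and $\tau_kx_k=x_{k+1}\tau_k$.

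\emph{Case $\nu_k=\nu_{k+1}$.} We have $\bfR^{\ren}=\id+(z_1-z_2)\hat{\bfR}^{\ren}$ and $\bfR^{\ren}z_1=z_2\bfR^{\ren}$ upstairs (the latter read through $s_{12}^*$). Substituting the first into the second and cancelling the non-zero-divisor $z_1-z_2$ gives $\hat{\bfR}z_1-z_2\hat{\bfR}=\pm\id$, i.e.\ $\tau_kx_k-x_{k+1}\tau_k=\pm e(\nu)$; the sign convention should be matched to \cite{KKK}. Similarly $(\bfR^{\ren})^2=\id$ (Proposition \ref{prop:t} with $c_{ii}=1$) expands to a relation which, again after cancelling $z_1-z_2$, forces $\tau_k^2e(\nu)=0$.

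\emph{Braid relation.} This is the most delicate step and the one I expect to be the main obstacle. Proposition \ref{prop:t} gives the braid relation for the unnormalized $\t$. For pairwise distinct labels the rescaling constants agree on both sides, since each side rescales once per pair $\{i,j\}$. When labels repeat, $\tau$ is $\hat{\bfR}$, so the relation must be derived rather than copied. The cleanest route is the one in \cite{KKK}: work in the localization inverting all $z_a-z_b$, where $\hat{\bfR}=(z_1-z_2)^{-1}(\bfR^{\ren}-\id)$ and the $\varphi$'s satisfy the honest braid relation. Expanding $\tau$ in terms of the intertwiners there produces exactly the standard correction term $\tau_{k+1}\tau_k\tau_{k+1}-\tau_k\tau_{k+1}\tau_k=\frac{Q_{ij}(x_k,x_{k+1})-Q_{ij}(x_{k+2},x_{k+1})}{x_k-x_{k+2}}e(\nu)$ when $\nu_k=\nu_{k+2}$. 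Since $(\scrM_i)_z\odot(\scrM_j)_z\odot(\scrM_k)_z$ is flat over $X^3$, localization is faithful on Hom spaces, so the identity descends. Alternatively, this is the formal statement that intertwiners satisfying \eqref{intertwiner} and the braid relation generate an $R(\beta)$-action \cite[Prop.~1.4.x]{KKK}, and I would cite it in that form.

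\emph{Vanishing of negative endomorphisms.} We compute $\End(\scrM(\nu'),\scrM(\nu))$ by adjunction along $\Pi$. Since $\Pi$ is finite, $\Pi^*\Pi_*$ decomposes over $\frakS_h$. This reduces the claim to Homs on $X^h$ between $\odot$-products of trivial deformations, twisted by some $w^*$. Away from the big diagonal such a Hom is $\Hom(\boxtimes\scrM_{\nu'_a},\boxtimes\scrM_{\nu_a})\otimes\calO$. Because each $\scrM_i$ lies in the heart, its negative self-Exts vanish, and the twist by $\{\cdot\}$ is controlled. Any map in negative loop degree restricts to zero generically, and by flatness it is then zero. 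The nonnegativity of loop weights on $\C[z_1,\dots,z_h]$ (all $z_a$ of weight $2$) ensures no negative-degree contributions appear.
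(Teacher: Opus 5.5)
Your treatment of the relations is essentially the paper's. The paper derives the intertwiner relations for $x_k,\varphi_k,e(\nu)$ from Proposition~\ref{prop:t} and \eqref{intertwiner}, then invokes \cite[Lem.~1.5]{KKK} to obtain the relations of $R(\beta)$. That is the mechanism you use for the braid relation, and your direct computation for $\nu_k=\nu_{k+1}$ (giving $\tau_kx_k-x_{k+1}\tau_k=-e(\nu)$ and $\tau_k^2e(\nu)=0$) is correct.

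The gap is in the vanishing statement. The paper gives no separate argument for it, so your argument has to stand on its own, and it does not.

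\emph{The grading.} $\End^i$ must mean cohomological degree, $\Hom(\scrM(\beta),\scrM(\beta)[i])$. This is what turns an algebra map $R(\beta)\to H^0\End(\scrM(\beta))$ into an $R(\beta)$-module structure in the stable $\infty$-category, so that $\scrM(\beta)\otimes_{R(\beta)}-$ is defined. In loop degree the statement is false, and so is your claim that maps of negative loop degree vanish generically. Your own $\tau_ke(\nu)$ with $\nu_k=\nu_{k+1}$ has loop degree $-2$. It is nonzero off the diagonal, because $\tau_kx_k-x_{k+1}\tau_k=-e(\nu)$ holds there too.

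\emph{The inference.} Even in cohomological degree, ``zero generically, hence zero by flatness'' is not valid. A generically vanishing class in $\Hom(\scrM(\nu'),\scrM(\nu)[i])$ is just a torsion element of this module over the base. Let $g$ be a function cutting out the big diagonal. Flatness of $\scrM(\nu)$ only tells you that the $g$-torsion is the image of $\Hom(\scrM(\nu'),(\scrM(\nu)/g)[i-1])$, and controlling that group is as hard as the original problem. Information away from the diagonals cannot exclude torsion supported on them. Also, $\Pi^*\Pi_*$ splits over $\frakS_h$ only on the \'etale locus.

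\emph{The missing input.} You need information at the most degenerate point, not the generic one.
\begin{enumerate}
\item The derived fibre $\scrM(\nu)|_0$ over $0\in X^{(h)}$ is an iterated extension, over the fat point $\Pi^{-1}(0)$, of loop shifts of $\scrM_{\nu_1}*\cdots*\scrM_{\nu_h}$. Since $*$ is t-exact, it lies in $\cKP$.
\item Work with the loop-graded Hom spaces $\bigoplus_n\mathrm{RHom}(\scrM(\nu'),\scrM(\nu)\{n\})$, a graded module over $\C[X^{(h)}]=\C[e_1,\dots,e_h]$. Since $e_1,\dots,e_h$ is a regular sequence, adjunction gives that this complex, tensored with $\C_0$ via $\otimes^{\mathrm{L}}$, is $\mathrm{RHom}(\scrM(\nu')|_0,\scrM(\nu)|_0)$ up to loop shifts. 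This is concentrated in degrees $\ge 0$ because both arguments lie in the heart.
\item The loop $\C^\times$ acts on $X^{(h)}$ with positive weights. Graded Nakayama, applied to the lowest nonzero cohomology of the bounded-below complex $\mathrm{RHom}(\scrM(\nu'),\scrM(\nu))$, then forces it into degrees $\ge 0$.
\end{enumerate}
This, not generic vanishing, is where the weight $2$ of the $z_a$ genuinely enters. A variant of the same argument over $X^h$ also gives the torsion-freeness you tacitly use in the relations part: to cancel $z_k-z_{k+1}$, and for your claim that localization is faithful on Hom spaces.
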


\begin{proof}
The proposition follows from the relations between the elements $x_k$, $\varphi_k$ and $e(\nu)$ in $R(\beta)$,
given in \cite[Lem.~1.5]{KKK}, and the corresponding relations between the operators $x_k$, $\varphi_k$ and $e(\nu)$ in $\End(\scrM(\beta))$ which follow from Proposition \ref{prop:t} and \eqref{intertwiner}. 
\end{proof}

\begin{Remark}
Under this homomorphism, the intertwiner $\varphi_k e(\nu) \in R(\beta)$ in \cite[(1.6)]{KKK} is taken to the operator $\varphi_k e(\nu)$ above. 
\end{Remark}

We define the functor
\begin{align}\label{eq:F}
\F_\beta : \gMod(R(\beta))  \to \calD^-_{X^{(h)}},\quad
\notag N  \mapsto \scrM(\beta) \otimes_{R(\beta)} N
\end{align}
where $h = \height(\beta)$ and denote $\F = \bigoplus_{\beta \in \bbN\Phi_\af^+} \F_\beta$. 
The following result is analogous to \cite[Thm.~4.2]{KKOP3}.

\begin{Proposition}\label{prop:Fmonoidal}
The functor $\F$ is monoidal. 
\end{Proposition}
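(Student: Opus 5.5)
The approach follows \cite[Thm.~4.2]{KKOP3}: construct a natural isomorphism $\F(M\circ N)\cong \F(M)\circ\F(N)$ that is compatible with associativity and units. The starting point is a bimodule identification at the level of the kernels $\scrM(\beta)$. For $\beta,\beta'$ of heights $h,h'$ and tuples $\nu\in(I_\af)^\beta$, $\nu'\in(I_\af)^{\beta'}$, I would first check that
$$\scrM(\nu,\nu')\cong \scrM(\nu)\circ\scrM(\nu').$$
Here the product on the left lives over $X^{(h+h')}$, and $\circ$ is the product $\calD_{X^{(h)}}\otimes\calD_{X^{(h')}}\to\calD_{X^{(h+h')}}$ from \eqref{eq:circ}. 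This is immediate from the definition \eqref{eq:M}, using associativity of $\odot$ and the compatibility \eqref{eq:Pi2} of $\Pi_*$ with the two products. Summing over all tuples gives
$$\scrM(\beta)\circ\scrM(\beta')\cong e(\beta,\beta')\,\scrM(\beta+\beta'),$$
where $e(\beta,\beta')$ is the idempotent of $R(\beta+\beta')$.

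The next step is to show that this isomorphism intertwines the two $R(\beta)\otimes R(\beta')$-actions. On the right, $R(\beta)\otimes R(\beta')\subset e(\beta,\beta')R(\beta+\beta')e(\beta,\beta')$ acts through Proposition \ref{prop:action}. On the left, $R(\beta)$ and $R(\beta')$ act on the two factors separately. The operators $x_k$, $\tau_k$ and $e(\nu)$ for $k<h$, and their shifted versions for $k>h$, are built locally from $\bfR^{\ren}$ and the coordinates $z_k$ on adjacent factors. They are therefore compatible with $\odot$, and hence with $\circ$ after $\Pi_*$. So the isomorphism is one of $(R(\beta)\otimes R(\beta'))$-modules, and in fact of $(\End,\,R(\beta)\otimes R(\beta'))$-bimodules, with the left structure coming from $e(\beta,\beta')R(\beta+\beta')$.

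Given this, for $M\in\gMod(R(\beta))$ and $N\in\gMod(R(\beta'))$ I would use \eqref{circ} to compute
\begin{align*}
\F(M\circ N)&=\scrM(\beta+\beta')\otimes_{R(\beta+\beta')}R(\beta+\beta')e(\beta,\beta')\otimes_{R(\beta)\otimes R(\beta')}(M\otimes N)\\
&\cong e(\beta,\beta')\scrM(\beta+\beta')\otimes_{R(\beta)\otimes R(\beta')}(M\otimes N)\\
&\cong (\scrM(\beta)\circ\scrM(\beta'))\otimes_{R(\beta)\otimes R(\beta')}(M\otimes N).
\end{align*}
I then need the final term to equal $(\scrM(\beta)\otimes_{R(\beta)}M)\circ(\scrM(\beta')\otimes_{R(\beta')}N)$. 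This requires $\circ$ to commute with relative tensor products over the algebras acting on each factor separately. The product $\circ$ is exact in each variable as a functor of stable categories and commutes with colimits, which is needed because $\calD^-$ allows unbounded-below objects. The tensor product $\otimes_{R}$ can be computed by a bar resolution, that is, a geometric realization of free modules. Each term is then a direct sum of shifted copies of $\scrM(\beta)\circ\scrM(\beta')$, and the claim follows termwise by naturality.

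The coherence conditions (associativity, and the unit given by $\beta=0$ with $\scrM(0)$ the unit object) reduce to the associativity of $\odot$ and of the concatenation of idempotents. The last step is compatibility with the colimit category $\calD^-_{X^{(\bullet)}}$: the maps $i^d_m$ are monoidal for $\circ$, so the isomorphisms descend.

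I expect the main obstacle to be the second step, namely verifying that the actions of $\tau_k$ match. In particular, the case $i=j$ involves $\hat{\bfR}^{\ren}$, defined by dividing by $z_1-z_2$, and I need to confirm that this division commutes with $\odot$ by an identity factor and with $\Pi_*$. I would handle it by noting that $\bfR^{\ren}\odot\id$ on $X^{3}$ is again the unique extension of the generic identity. Division by $z_1-z_2$, which is a non-zero-divisor, is then unambiguous and commutes with pushforward.
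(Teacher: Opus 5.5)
Your proposal is correct and takes the same approach as the paper. The paper's proof, modeled on \cite[Thm.~4.2]{KKOP3}, is exactly your chain $\F(M\circ M')\cong \scrM(\beta+\beta')e(\beta,\beta')\otimes_{R(\beta)\otimes R(\beta')}(M\otimes M')\cong(\scrM(\beta)\circ\scrM(\beta'))\otimes_{R(\beta)\otimes R(\beta')}(M\otimes M')\cong\F(M)\circ\F(M')$, with the key input $\scrM(\beta+\beta')e(\beta,\beta')=\scrM(\beta)\circ\scrM(\beta')$ and the remaining points you raise left implicit. The $\tau_k$-compatibility you expect to be the main obstacle is in fact immediate: $\tau_k e(\nu)$ is defined as $\id^{k-1}\circ\tau\circ\id^{h-k-1}$, so matching the actions follows from functoriality and associativity of $\circ$, and no further division by $z_1-z_2$ needs checking.
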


\begin{proof}
Suppose $M \in \gMod(R(\beta))$ and $M' \in \gMod(R(\beta'))$. Using \eqref{circ} we get 
\begin{align*}
\F_{\beta+\beta'}(M \circ M') 
&\cong \scrM(\beta+\beta') \otimes_{R(\beta+\beta')} R(\beta+\beta') e(\beta,\beta') \otimes_{(R(\beta) \otimes R(\beta'))} (M \otimes M') \\
&\cong \scrM(\beta+\beta') e(\beta,\beta') \otimes_{(R(\beta) \otimes R(\beta'))} (M \otimes M') \\
&\cong (\scrM(\beta) \circ \scrM(\beta')) \otimes_{(R(\beta) \otimes R(\beta'))} (M \otimes M') \\
&\cong (\scrM(\beta) \otimes_{R(\beta)} M) \circ (\scrM(\beta') \otimes_{R(\beta')} M') \\
&\cong \F_{\beta}(M) \circ \F_{\beta'}(M')
\end{align*}
For the third isomorphism, note that
\begin{align*}
\scrM(\beta+\beta')e(\beta,\beta')
&= \bigoplus_{\nu \in (I_\af)^{\beta+\beta'}} \scrM(\nu) e(\beta,\beta')\\
&= \bigoplus_{(\nu,\nu') \in (I_\af)^{\beta}\times(I_\af)^{\beta'}} (\scrM(\nu) \circ \scrM(\nu'))e(\beta,\beta')\\
&= \scrM(\beta) \circ \scrM(\beta')
\end{align*}

\end{proof}

\subsection{The reduced functor $\barF$}\label{sec:bF}

Symmetric functions in the $x_i \in R(\beta)$ generate a central subalgebra of $R(\beta)$. We denote $e_1(x) = \sum_i x_i$ and $\barR(\beta) = R(\beta)/(e_1(x))$. It is easy to check that the composition 
\begin{equation}\label{eq:Rmap2}
R(\beta) \to R(\beta)[z] \to \barR(\beta)[z]
\end{equation}
is an isomorphism, where the first map is from \eqref{eq:Rmap} and the second is projection. We denote 
\begin{align}\label{eq:varpi}
\begin{split}
\varpi: \gMod(\barR(\beta)) \to \gMod(R(\beta)) ,\quad
M \mapsto M[z]
\end{split}
\end{align}
which, by Remark \ref{rem:barR}, is the affinization of $M$. The goal of this section is to discuss an analogous structure on the Coulomb side. While this is mildly interesting on its own the main motivation is to show that the functor $\F$ behaves well with respect to affinizations (Corollary \ref{cor:FLz}). 

We denote by $\overline{X^J} \subset X^J$ the locus of points $(z_1,\dots,z_j) \in X^J$ where $j=|J|$ and $\sum_i z_i=0$. 
Let $\calR_{\overline{X^J}}/\hG_{\O,\overline{X^J}}$ be the base change of $\calR_{X^J}/\hG_{\O,X^J}$ to $\overline{X^J} \subset X^J$. 
Note that we have an isomorphism 
\begin{align}\label{eq:XJ}
\begin{split}
\overline{X^J} \times X \xrightarrow{\sim} X^J ,\quad
(z_1,\dots,z_j,a) \mapsto (z_1+a,z_2+a,\dots,z_j+a).
\end{split}
\end{align}
There is a natural isomorphism between the fibers of $\calR_{\overline{X^J}}/\hG_{\O,\overline{X^J}}$ over $(z_1,\dots,z_j)$ and 
$(z_1+a,\dots,z_j+a)$ induced by the automorphism $z \mapsto z+a$ of $X=\bA^1$. Thus we get an isomorphism 
$$\calR_{X^J}/\hG_{\O,X^J} \cong \calR_{\overline{X^J}}/\hG_{\O,\overline{X^J}} \times X.$$
This is a generalization of the isomorphism $\calR_X \cong \calR \times X$ that came up in \S \ref{sec:ren2}. 

We note that the construction above is equivariant with respect to the action of the symmetric group $\frakS_J$. 
One similarly obtains an isomorphism 
$$\calR_{X^{(J)}}/\hG_{\O,X^{(J)}} \cong \calR_{\overline{X^{(J)}}}/\hG_{\O,\overline{X^{(J)}}} \times X.$$
where $\overline{X^{(J)}} = \overline{X^J}/\frakS_J$ and $\calR_{\overline{X^{(J)}}}/\hG_{\O,\overline{X^{(J)}}}$ is the base change to 
$\overline{X^{(J)}} \subset X^{(J)}$. We denote 
$$\calR_{\overline{X^{(J)}}}/\hG_{\O,\overline{X^{(J)}}} \xrightarrow{\iota} \calR_{X^{(J)}}/\hG_{\O,X^{(J)}} \xrightarrow{\pi} \calR_{\overline{X^{(J)}}}/\hG_{\O,\overline{X^{(J)}}}$$ 
the obvious inclusion and projection and, following our earlier notation, let $\calD_{\overline{X^{(J)}}} = \Coh^{\hG_{\O,\overline{X^{(J)}}}}(\calR_{\overline{X^{(J)}}})$. 

The objects $\scrM(\nu) \in \calD_{X^{(J)}}$ defined in \eqref{eq:M} have analogues $\bscrM(\nu) \in \calD_{\overline{X^{(J)}}}$. Explicitly, $\bscrM(\nu) = \iota^* \scrM(\nu)$. We also have $\bscrM(\beta) = \bigoplus_{\nu \in I^\beta_\af} \bscrM(\nu)$. Since $\iota^*(e_1(x)) = 0$ it follows that the map $R(\beta) \to \End(\scrM(\beta))$ from Proposition \ref{prop:action} factors through $\barR(\beta)$. 
Thus, we get a functor
\begin{align*}
\barF_\beta : \gMod(\barR(\beta))  \to \calD^-_{\overline{X^{(J)}}},\quad
N  \mapsto \bscrM(\beta) \otimes_{\barR(\beta)} N
\end{align*}

\begin{Lemma}\label{lem:barF}
The following squares are commutative
\begin{equation}\label{eq:barF}
\begin{split}
\xymatrix{
\gMod(\barR(\beta)) \ar[r]^-{\barF_\beta} \ar[d]^{\varpi} & \calD^-_{\overline{X^{(J)}}} \ar[d]^{\pi^*} \\
\gMod(R(\beta)) \ar[r]^-{\F_\beta} & \calD^-_{X^{(J)}}} 
\hspace{2cm}
\xymatrix{
\gMod(\barR(\beta)) \ar[r]^-{\barF_\beta} \ar[d]^{\Res} & \calD^-_{\overline{X^{(J)}}} \ar[d]^{\iota_*} \\
\gMod(R(\beta)) \ar[r]^-{\F_\beta} & \calD^-_{X^{(J)}}} 
\end{split}
\end{equation}
\end{Lemma}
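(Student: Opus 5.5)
We prove both squares by identifying the relevant bimodules, using the product decomposition $\calR_{X^{(J)}}/\hG_{\O,X^{(J)}} \cong \calR_{\overline{X^{(J)}}}/\hG_{\O,\overline{X^{(J)}}} \times X$.

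\textbf{The first square.} The key step is an isomorphism
$$\scrM(\beta) \cong \pi^*\bscrM(\beta) = \bscrM(\beta)\boxtimes \O_X$$
compatible with the actions. On the left, $R(\beta)$ acts as in Proposition \ref{prop:action}. On the right, $R(\beta)$ acts through the isomorphism \eqref{eq:Rmap2} $R(\beta)\cong \barR(\beta)[z]$. Concretely:
\begin{enumerate}[label=$\mathrm{(\roman*)}$,leftmargin=8mm]
\item Each $(\scrM_i)_z=\scrM_i\boxtimes\O_X$ is a trivial deformation. Hence $(\scrM_{\nu_1})_z\odot\cdots\odot(\scrM_{\nu_h})_z$ is invariant under the diagonal translation $z_k\mapsto z_k+a$ used in \eqref{eq:XJ}.
\item It is therefore pulled back from $\overline{X^J}$. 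Pushing forward along $\Pi$, which is $\frakS_J$-equivariant, gives $\scrM(\nu)\cong \pi^*\iota^*\scrM(\nu)=\pi^*\bscrM(\nu)$.
\item Under this identification, the coordinate $z_k$ on $X^J$ corresponds to $\bar z_k + a$, where $a$ is the coordinate on the $X$ factor. So $x_k$ acts as $\bar x_k + z$, matching \eqref{eq:Rmap}.
\item The maps $\bfR^{\ren}$ are extensions of the identity off the diagonals. They are translation invariant, so they are pulled back from their restrictions to $\overline{X^J}$. Thus $\tau_k$ and $e(\nu)$ act as $\pi^*$ of their reduced counterparts.
\end{enumerate}

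Given this identification, for $N\in\gMod(\barR(\beta))$ we compute
$$\F_\beta(\varpi N)=\scrM(\beta)\otimes_{R(\beta)}N[z]\cong(\bscrM(\beta)\boxtimes\O_X)\otimes_{\barR(\beta)[z]}(N\otimes\bbC[z])\cong(\bscrM(\beta)\otimes_{\barR(\beta)}N)\boxtimes\O_X=\pi^*\barF_\beta(N).$$
The middle step uses that, through \eqref{eq:Rmap2}, the module $N[z]$ is exactly $N\otimes\bbC[z]$ with $\bbC[z]$ acting on the $X$-factor. This is the content of Remark \ref{rem:barR}.

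\textbf{The second square.} Here $\Res$ is restriction along $R(\beta)\to\barR(\beta)$. We use the projection formula for the closed embedding $\iota$:
$$\iota_*(\iota^*\scrM(\beta)\otimes_{\barR(\beta)}N)\cong \scrM(\beta)\otimes_{R(\beta)}N.$$
This holds for a $\barR(\beta)$-module $N$, i.e.\ one on which $e_1(x)$ acts by zero. To justify it, write $\scrM(\beta)\otimes_{R(\beta)}N=(\scrM(\beta)\otimes_{R(\beta)}\barR(\beta))\otimes_{\barR(\beta)}N$. It then suffices to show
$$\scrM(\beta)\otimes_{R(\beta)}\barR(\beta)\cong \scrM(\beta)/e_1(x)\cong\iota_*\bscrM(\beta).$$
Since $e_1(x)$ acts as multiplication by the function $\sum z_k$ cutting out $\overline{X^{(J)}}$, this amounts to two checks: $\sum z_k$ is a nonzerodivisor on $\scrM(\beta)$, and the derived tensor agrees with the naive one. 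Both follow from the product decomposition $\scrM(\beta)\cong\bscrM(\beta)\boxtimes\O_X$ established above, since $e_1(x)$ is $h\cdot z$ plus a reduced part.

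\textbf{Main obstacle.} The hard part is the equivariance claim in step (iv): that the renormalized $r$-matrices, and the modified $\tau$ for $i=j$, are compatible with the translation isomorphism. The plan is to argue by uniqueness. Both $\bfR^{\ren}$ and its translate extend the same generic identity map, and such an extension is unique. Therefore they coincide, and so do the derived $\hat{\bfR}^{\ren}$.
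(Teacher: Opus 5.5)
Your proposal is correct and follows essentially the same route as the paper. The paper asserts $\scrM(\beta)\cong\pi^*\bscrM(\beta)$ (as ``not hard to see'') together with $\iota^*\scrM(\beta)\cong\bscrM(\beta)$, then runs the same two chains of tensor-product isomorphisms; your steps (i)--(iv) and the identification $\scrM(\beta)\otimes_{R(\beta)}\barR(\beta)\cong\iota_*\bscrM(\beta)$ supply the details it leaves implicit, and also correct its loosely written ``$\otimes_{\barR(\beta)}$'' in the second chain.
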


\begin{proof}
It is not hard to see that $\scrM(\beta) \cong \pi^* \bscrM(\beta)$. It follows that
\begin{align*}
\pi^* (\barF_\beta(N))
\cong \pi^* (\bscrM(\beta) \otimes_{\barR(\beta)} N) \cong \scrM(\beta) \otimes_{R(\beta)} \varpi(N) \cong \F_\beta(\varpi(N))
\end{align*}
for any $N \in \gMod(\barR(\beta))$. Similarly, using that $\iota^*(\scrM(\beta)) \cong \bscrM(\beta)$, we have 
$$\iota_*(\barF_\beta(N)) \cong \iota_*(\bscrM(\beta) \otimes_{\barR(\beta)} N) \cong \iota_*(\iota^*(\scrM(\beta)) \otimes_{\barR(\beta)} N) \cong \scrM(\beta) \otimes_{\barR(\beta)} \Res(N) \cong \F_\beta(\Res(N)).$$
\end{proof}

\begin{Corollary}\label{cor:FLz}
Suppose $L \in \gmod(R(\beta))$ is simple such that $\scrM = \F(L)$ is a simple module in $\cKP_{X^{(J)}}$ where $|J| = \height(\beta)$.  Then $\F(L_z) \cong \scrM_z$. 
\end{Corollary}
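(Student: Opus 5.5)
The plan is to reduce to the left square of Lemma \ref{lem:barF}. By Remark \ref{rem:barR} and \eqref{eq:varpi}, the affinization $L_z$ is $\varpi$ applied to an $\barR(\beta)$-module. Concretely, $L$ is finite dimensional and simple, so the central element $e_1(x)$ acts nilpotently on $L$. Since it is central of positive degree, it acts by zero on the simple module $L$. (Any nonzero graded quotient of $L$ by the submodule $e_1(x)L$ would be proper, forcing $e_1(x)L=0$.) Thus $L=\Res(\bar L)$ for an $\barR(\beta)$-module $\bar L$, and the isomorphism \eqref{eq:Rmap2} identifies $L_z=L[z]$ with $\varpi(\bar L)$.

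Applying both squares of \eqref{eq:barF} then gives
$$\F(L_z)\cong\F_\beta(\varpi(\bar L))\cong \pi^*\barF_\beta(\bar L), \qquad \scrM=\F(L)=\F_\beta(\Res \bar L)\cong\iota_*\barF_\beta(\bar L).$$
It therefore suffices to show that $\pi^*\barF_\beta(\bar L)$ is the trivial deformation $\scrM_z$ of $\scrM$. Here the trivial deformation is meant in the sense of \S\ref{sec:ren2}, transported along $\calR_{X^{(J)}}/\hG_{\O,X^{(J)}}\cong \calR_{\overline{X^{(J)}}}/\hG_{\O,\overline{X^{(J)}}}\times X$, with $X$ the translation direction.

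The main point is to check that this trivial deformation of $\iota_*\barF_\beta(\bar L)$ is exactly $\pi^*\barF_\beta(\bar L)$. That is, the object $\iota_*\barF_\beta(\bar L)$, which is supported over $0$, is being compared with $\barF_\beta(\bar L)$ viewed on $\overline{X^{(J)}}$. Since $\scrM$ lies in $\cKP_{X^{(J)}}$, it is supported over the origin, so $\barF_\beta(\bar L)$ is set-theoretically supported over $0\in\overline{X^{(J)}}$.

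Under the product decomposition, $\pi^*$ is just $-\boxtimes\O_X$. Translating the support point $0$ along $a\in X$ produces the object over the diagonal point $(a,\dots,a)$, which corresponds to the inclusion $\calR\to\calR_{X}$ used to define $\cF_z$. In other words, $\pi^*\barF_\beta(\bar L)\cong \barF_\beta(\bar L)\boxtimes \O_X$. This is the natural analogue of $\scrM\boxtimes\O_X$, once $\scrM$ is identified with $\barF_\beta(\bar L)$ regarded in the fibre $\calR_{X^{(J)}}\times_{X}\{0\}$.

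The step I expect to be the main obstacle is making this identification precise. The goal is to see that $\scrM_z$ for an object of $\cKP_{X^{(J)}}$ means precisely $\pi^*$ of its restriction to $\overline{X^{(J)}}$, and that $\iota^*\pi^*=\id$ recovers $\barF_\beta(\bar L)$. I would also need to ensure that no derived correction appears. I would try to check that $\barF_\beta(\bar L)$ is bounded, indeed a single object, because $\iota_*$ of it equals $\scrM$, which lies in the heart, and $\iota_*$ is conservative and t-exact for pushforward of closed embeddings. Then $\pi^*$, being flat pullback along a trivial $\bA^1$-factor, produces a genuine sheaf-level trivial deformation. This flatness is what justifies calling it $\scrM_z$, and it gives $\F(L_z)\cong\scrM_z$.
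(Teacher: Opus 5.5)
Your proposal is correct and follows the paper's proof essentially verbatim. You descend $L$ to an $\barR(\beta)$-module because the central element $e_1(x)$ acts by zero, identify $L_z\cong\varpi(\bar L)$, and combine the two squares of Lemma~\ref{lem:barF} to get $\F(L_z)\cong\pi^*\barF_\beta(\bar L)$ and $\iota_*\barF_\beta(\bar L)\cong\scrM$. The paper treats the last step, $\pi^*\barF_\beta(\bar L)\cong\scrM_z$, as immediate, since this is essentially what the trivial deformation along the translation factor $X$ means for an object pushed forward from $\overline{X^{(J)}}$. Your extra remarks on boundedness and derived corrections are therefore a harmless elaboration rather than a needed step.
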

\begin{proof}
If $L$ is simple then $e_1(x)$, which is central in $R(\beta)$, acts by zero. Thus $L$ descends to a simple $\barR(\beta)$-module which we denote $L'$. Then $L_z \cong \varpi(L')$ and we have
$$\F_\beta(L_z) \cong \F_\beta(\varpi(L')) \cong \pi^*(\barF_\beta(L'))$$
where the second isomorphism is from the left square in \eqref{eq:barF}. But, using the right square in \eqref{eq:barF} we have
$$\iota_*(\barF_\beta(L')) \cong \F_\beta(\Res(L')) \cong \scrM$$ 
and so we get $\pi^*(\barF_\beta(L')) \cong \scrM_z$. 
\end{proof}

\section{Coulomb categories of quiver type}\label{sec:quivercoulomb}

In \S \ref{sec:CB} we discussed the Coulomb branch categories $\cKP_{G,N}$ for arbitrary pairs $(G,N)$. We now restrict to the case when $(G,N)$ is associated with a simply laced quiver $Q = (I,E)$ of finite type as in \S \ref{sec:quiverHecke}. The main goal is to find a duality datum in $\cKP = \cKP_{G,N}$ which is ultimately achieved in Proposition \ref{prop:alpha0}.

\subsection{General notation}\label{sec:notation}

Fix finite dimensional $I$-graded vector spaces $V=\bigoplus_{i \in I} V^{(i)}$ and let 
\begin{equation}\label{GN}
G = \prod_{i\in I}GL(V^{(i)}),\quad N = \bigoplus_{i\to j \in E} \Hom(V^{(i)},V^{(j)}).
\end{equation}
Write $a_i = \dim V^{(i)}$.
We identify the dimension vector of $V$ with the weight 
\begin{align}\label{alpha}\alpha=\sum_{i\in I} a_i \alpha_i \in \Q^+.\end{align}
We will also make use of the following quantities 
\begin{align}\label{m}
m_i = m_i^+ +m_i^- ,\quad m_i^+ = - a_i + \sum_{j \to i \in E} a_j ,\quad m_i^- = - a_i + \sum_{i \to j \in E} a_j.
\end{align}

As before we abbreviate $\Gr_G = \Gr$, $\calR_{G,N} = \calR$, etc.  We also denote the associated Coulomb branch $\calM_\alpha$ if we want to emphasize the dimension vector. 
The sets of $\bbC$-points of the ind-schemes $\calR^\cl$, $\calT$ and $\Gr \times N_\O$ have the following explicit description
\begin{align}\label{i1i2}
\begin{split}
\calR^\cl&=\{(L,x)\in\Gr\times N_\calK\,;\,x(L)\subset L\,,\,x(L_0)\subset L_0\},\\
\calT&=\{(L,x)\in\Gr\times N_\calK\,;\,x(L)\subset L\},\\
\Gr\times N_\calO&=\{(L,x)\in\Gr\times N_\calK\,;\,x(L_0)\subset L_0\}
\end{split}
\end{align}
where $[L_0] \in \Gr_G$ is the standard lattice. The maps $i_1$, $i_2$ in \eqref{def-R} are both given by $(L,x)\mapsto (L,x)$.

\subsection{The twisted product} \label{sec:twisted product}

The connected components of $\Gr_{GL(V^{(i)})}$ are indexed by $\Z$, with the orbit $\Gr_{\lambda^\vee_i}$ labeled by the dominant 
cocharacter $\lambda^\vee_i$ of $GL(V^{(i)})$ belonging to the component labelled by $\la \lambda^\vee_i, \omega_{i,a_i} \ra$.  Thus the 
connected 
components of $\Gr$ and $\calR$ are indexed by $\Z I$ which we identify with $\Z\Phi$, with the orbit $\Gr_\lambda^\vee$ belonging to the 
component labelled by $\sum_i \la \lambda^\vee_i, \omega_{i,a_i} \ra\, \alpha_i$.  Here we view a cocharacter of $G$ has a tuple 
$\lambda^\vee=(\lambda^\vee_i)$ of cocharacters of the $GL(V^{(i)})$'s.
The diagonal $\bbC^\times \subset G$ acts trivially on $\calR$ and thus gives a further decomposition of $\cKP$ indexed by $\Z$. Together this 
gives us a decomposition of the categories $\Coh^{\hGO}(\calR)$ and $\cKP$ indexed by $\Z\Phi_\af$.  For $\beta \in \Z\Phi_\af$ we denote the 
corresponding block of $\cKP$ by $\cKP_\beta$ yielding the decomposition 
\begin{align}\label{KPbeta}
\cKP=\bigoplus_\beta\cKP_\beta.
\end{align}

Because of this decomposition one can combinatorically tweak the product $*$ by loop shifts.
If $\cF \in \cKP_\beta$ and $\cF' \in \cKP_{\beta'}$ for $\beta,\beta' \in \Z \Phi_\af$ we define 
\begin{equation}\label{eq:twist}
\cF \star \cF' = \cF * \cF' \{ - \sum_{e \in E} \la \beta, \omega_{t(e),1} \ra \cdot \la \beta', \omega_{h(e),1} \ra\}
\end{equation}
where $t(e), h(e) \in I$ denote the tail and head of $e \in E$. Such twists are often considered, for example, in the context of Hall algebras. For 
our 
purposes the main effect of this twist is to tweak the $\La(-,-)$ pairing. The twist in \eqref{eq:twist} is necessary in order to match up $\cKP$ with 
the 
grading on the quiver Hecke algebra side. The definition of $\cKP$ depends on the orientation of the quiver $Q$ while the quiver Hecke algebra 
does not and this twist makes up for this asymmetry. From hereon we will use this twisted product which we will still denote $*$ in order to 
simplify 
notation. 

There are similar decompositions for the categories $\cKP_{X^n},$ $ \cKP_{X^{(n)}}$ in \S\ref{sec:KP}.
Thus, we can twist the product as in \eqref{eq:twist}. From hereon we will use these twisted products.

\subsection{Some natural objects}

The $G_\calO$-orbits $\Gr_{\lambda^\vee}$ in the affine Grassmannian $\Gr_G$ are indexed by dominant cocharacters $\lambda^\vee$ of $G$. 
Let $\Gr_{\le\lambda^\vee}$ be the closure of such an orbit. 
For $k\in\bbZ_{>0}$, let $\omega^\vee_{i,k}$ be the $k$th fundamental cocharacter of the subgroup $GL(V^{(i)}) \subset G$. 
We abbreviate 
$$\Gr_{ke_i}=\Gr_{\omega^\vee_{i,k}}\subset \Gr_{GL(V^{(i)})}$$
where $e_i\in\N I$ is the Dirac function at $i$.  If $L_0^{(i)} \in \Gr_{GL(V^{(i)})}$ denotes the standard lattice in $(V^{(i)})_\calK$, then we have 
\begin{align}\label{Gr}\Gr_{ke_i} \cong \{tL_0^{(i)} \subset L^{(i)} \subset L_0^{(i)}\,;\,\dim(L_0^{(i)}/L^{(i)}) = k\}.\end{align}
More generally, for a tuple $\uk = (k_i)$ of positive integers we set
$$\Gr_\uk=\prod_{i\in I}\Gr_{k_ie_i} \subset \Gr_G = \prod_{i\in I} \Gr_{GL(V^{(i)})}.$$
Thus $\Gr_\uk$ is the $G_\O$-orbit corresponding to the cocharacter $\sum_{i\in I} \omega^\vee_{i,k_i}$.
Replacing $\omega^\vee_{i,k}$ with the cocharacter 
$$\omega^\vee_{i,-k}=(\omega^\vee_{i,k})^*=- w_0 (\omega^\vee_{i,k})$$ we get a similar subset 
$$\Gr_{-ke_i}= \{ L_0^{(i)} \subset L ^{(i)}\subset t^{-1}L_0^{(i)}\,;\,\dim(L^{(i)}/L_0^{(i)}) = k\}\subset\Gr_{GL(V^{(i)})}$$
This allows us to define $\Gr_\uk$ for any sequence $\uk$ with $-a_i \le k_i \le a_i$.
Using the projection map $\calR \to \Gr$ we define the base changes 
$$\calR_{\le \lambda^\vee} = \calR \times_{\Gr} \Gr_{\le\lambda^\vee}
,\quad
\calR_{\uk} = \calR \times_{\Gr} \Gr_\uk.$$
Let $i_{\le \lambda^\vee}$ and $ i_\uk$ be the closed embeddings of the corresponding classical loci 
$$i_{\le \lambda^\vee}: \calR^\cl_{\le \lambda^\vee} \to \calR , \quad i_\uk: \calR^\cl_{\uk} \to \calR.$$

For tuples $\uk$, $\ul \in \bbZ I$ with $-a_i \le k_i \le a_i$ we 
consider the following objects in the category $\Coh^{\hGO}(\calR)$ 
\begin{align}\label{eq:cP}
\begin{split}
\cP_{\uk,\ul} = i_{\uk*} \Big(\O_{\calR_\uk^\cl} \bigotimes_{i \in I} \scrL_i^{\ell_i}\Big) [\frac12\dim \Gr_\uk]
\end{split}
\end{align}
where  
\begin{align*} 
\begin{split}
\scrL_i = \begin{cases} \det((L_0^{(i)}/L^{(i)}) \{-1\}) & \text{ if $k_i \ge 0$ } \\ 
\det((L^{(i)}/L_0^{(i)}) \{-1\}) & \text{ if $k_i \le 0$. } \end{cases}
\end{split}
\end{align*}
In particular, we have $\cP_{ke_i,\ul} \cong \cP_{ke_i, \ell_i e_i}$ for each $i \in I$ and $k \in \bbZ$, 
because in this case the line bundle
$\scrL_j$ is trivial for  $j \ne i$. 
We abbreviate 
$$\cP_{ke_i,\ell} = \cP_{ke_i, \ell e_i}
,\quad
\cP_{\uk} =\cP_{\uk,0}.$$ 
Finally, we define the following objects 
\begin{align}
\label{eq:V-phi-psi}
\begin{split}
\cV_i &= (i_\u0)_* \Big(\O_{\calR_\u0^\cl} \otimes (L^{(i)}_0/tL^{(i)}_0) \{-1\}  \Big), \\
\phi_i &= (i_\u0)_* \Big(\O_{\calR_\u0^\cl} \otimes \det((L^{(i)}_0/tL^{(i)}_0) [1]\la-1\ra)\Big), \\
\psi_i^- &= \phi_i *\bigast_{i \to j \in E} \phi_j^{-1} ,\\ 
\psi_i^+ &= \phi_i *\bigast_{j \to i \in E} \phi_j^{-1} ,\\
\psi_i &= \psi_i^- * \psi_i^+.
\end{split}
\end{align}
Note that  $\phi_i$ and $\phi_j$ commute and are invertible for the convolution product in $\Coh^\hGO(\calR)$. 
We abbreviate
$$\phi_i ^k=(\phi_i )^{*k} \cong (i_\u0)_* \Big(\O_{\calR_\u0^\cl} \otimes \det((L^{(i)}_0/tL^{(i)}_0) [1]\la -1 \ra)^k\Big)
,\quad
k\in\bbZ.$$

\begin{Remark} Some miscelaneous remarks. 
\begin{enumerate}[label=$\mathrm{(\alph*)}$,leftmargin=8mm]
\item In \eqref{eq:cP} the fractional shift $[\frac12]$ is interpreted formally. This is possible to do because in any component of $\Gr_G$ the $\hGO$-
orbits have either all even or odd dimension. As a result the $[-]$ shifts one obtains are either all integer or all half-integer in any block of $\cKP$. 
\item The object $\cP_{\u0,\ul}=(i_\u0)_*(\O_{\calR_\u0^\cl})$ is the monoidal unit in $\cKP$ and $\Coh^\hGO(\calR)$. We will denote it $1$. 
\item Let $\cKP^\Gr$ denote the Koszul-perverse heart in $\Coh^\hGO(\Gr_G)$. One can define analogous objects $\cP_{\uk,\ul}^{\Gr},$ 
$ \cV_i^{\Gr}$, etc., in $\cKP^\Gr$ by applying the definitions to a quiver with no edges. 
\item It will often be convenient to ignore loop shifts in various calculations. 
We write $\cF \dcong \cF'$ if $\cF \cong \cF' \{s\}$ for some $s \in \Z$. 
Note that if $\cF \dcong \cF'$ then $\La(\cF,\cG) \cong \La(\cF',\cG)$ for any $\cG$. 
On some occasions we will also want to ignore the Koszul shift. 
We write $\cF \ddcong \cF'$ if $\cF \cong \cF' \{s\} [s'] \la -s' \ra$ for some $s,s' \in \Z$. 
\end{enumerate}
\end{Remark}

\subsection{Simple objects in $\cKP$}\label{sec:simples}

We now recall some generalities from \cite{CW2} about the simple objects in $\cKP$. 
Following Theorem 3.31 of \cite{CW2} the simple objects in $\cKP$ are in bijection with those in $\cKP^\Gr$. 
To describe this bijection recall that for any cocharacter $\lambda^\vee$ the map $\pi: \calR^\cl_{\lambda^\vee} \to \Gr_{\lambda^\vee}$ is a 
vector bundle. Given any simple $\cF^\Gr$ in $\cKP^\Gr$ supported on $\Gr_{\le \lambda^\vee}$ the pullback 
$\pi^*(\cF^\Gr|_{\Gr_{\lambda^\vee}})$ has a unique extension to a simple Koszul-perverse coherent sheaf $\cF$ supported on 
$\calR_{\le \lambda^\vee}$.  The bijection is then $\cF^\Gr \mapsto \cF$. 
The extension $\cF$ may be very complicated, because the map $\pi:\calR^\cl_{\le \lambda^\vee} \to \Gr_{\le \lambda^\vee}$ may be not flat. 
The following result illustrates that this is indeed the source for this complication.

\begin{Lemma}\label{lem:extension}
Suppose the map 
$\pi: \calR^\cl_{\le \lambda^\vee} \to \Gr_{\le \lambda^\vee}$ is a bundle, and $\cF^\Gr$ a simple object in $\cKP^\Gr$ supported on 
$\Gr_{\le \lambda^\vee}$. Then $\pi^*(\cF^\Gr) \in \cKP$ is simple. 
\end{Lemma}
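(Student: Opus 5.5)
The plan is to show that $\pi^*(\cF^\Gr)$ is Koszul-perverse and satisfies the characterization of the simple extension from \cite[Thm.~3.31]{CW2}. That theorem describes the simple object attached to $\cF^\Gr$ as the unique Koszul-perverse sheaf $\cF$ supported on $\calR_{\le\lambda^\vee}$ with three properties:
\begin{itemize}[leftmargin=8mm]
\item its restriction to the open stratum is $\pi^*(\cF^\Gr|_{\Gr_{\lambda^\vee}})$;
\item it has no nonzero subobjects supported on the boundary $\calR_{<\lambda^\vee}$;
\item it has no nonzero quotients supported on the boundary $\calR_{<\lambda^\vee}$.
\end{itemize}
This is the intermediate extension in the Koszul-perverse setting. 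So it suffices to verify these properties for $\pi^*(\cF^\Gr)$, where $\pi:\calR^\cl_{\le\lambda^\vee}\to\Gr_{\le\lambda^\vee}$ is now assumed to be a vector bundle.

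First, I would check that $\pi^*$ is t-exact. Over each stratum $\Gr_{\mu^\vee}\subset\Gr_{\le\lambda^\vee}$, the fibre of $\calR^\cl_{\le\lambda^\vee}$ is a vector bundle. Because $\pi$ is a bundle over all of $\Gr_{\le\lambda^\vee}$, these fibre ranks are constant, hence equal to the rank on the open stratum. In \cite{CW2} the Koszul-perverse t-structure on $\calR$ is defined by lifting the perverse coherent t-structure on $\Gr$ along $\calR\to\Gr$, with the Koszul shift $[1]\la-1\ra$ compensating for the fibre dimensions on each stratum. When the fibre dimension is constant, this lift is exactly pullback along a flat map followed by a single uniform shift. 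I would make the convention that the normalization of $\pi^*$ includes that shift. The perversity conditions, which compare $!$- and $*$-restrictions to strata, then commute with flat pullback along $\pi$, using base change for the bundle restricted to each stratum. This gives $\pi^*(\cF^\Gr)\in\cKP$, and its restriction to the open stratum is the required one.

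Second, I would show that there are no subobjects or quotients supported on the boundary. Suppose $\cG\subset\pi^*\cF^\Gr$ is supported on $\calR_{<\lambda^\vee}$. Pushing forward along $\pi$ and taking degree-zero $\bbC^\times$-invariants for the scaling action (the fibre coordinates have positive scaling weight) recovers $\cF^\Gr$ as a summand of $\pi_*\pi^*\cF^\Gr$. I would prefer a cleaner route: use the zero section $s$ of the bundle. The functor $s^*$, suitably shifted, is t-exact, and it is right inverse to $\pi^*$ in the sense that $s^*\pi^*\cong\id$. A subobject of $\pi^*\cF^\Gr$ supported on the boundary is $\hGO$-equivariant, and in particular $\bbC^\times$-equivariant for the fibre-scaling action. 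By a Nakayama-type argument for the contracting action, such a nonzero $\cG$ has $s^*\cG\neq0$, and $s^*\cG$ is supported on $\Gr_{<\lambda^\vee}$. Then $s^*\cG\to s^*\pi^*\cF^\Gr=\cF^\Gr$ would give a boundary-supported subobject of a simple object. Since $\cF^\Gr$ is simple with full support, this is impossible. Quotients are handled in the same way, or by Grothendieck–Serre duality, which preserves $\cKP$ and commutes with $\pi^*$ up to twist.

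The main obstacle is the injectivity step: showing that $s^*\cG\to\cF^\Gr$ is injective, that is, that $s^*$ preserves monomorphisms between objects of $\cKP$ in this setting. The reason is that $s^*$ is only t-exact after the shift, and that exactness depends on $\pi$ being a bundle. I would try to prove this directly from the stratumwise definition of the t-structure. On each stratum everything reduces to pulling back along a vector bundle and then restricting to its zero section, and this composite is exact because of flatness.
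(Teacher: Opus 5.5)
Your Step 2 uses the same ingredients as the paper: the zero section $s$, the isomorphism $s^*\pi^*\cF^\Gr\cong\cF^\Gr$, and the fact that $s^*$ is t-exact and conservative on scaling-equivariant sheaves \cite[Lem.~3.17]{CW2}. However, the ``main obstacle'' you identify is not one, and the detour through the intermediate-extension characterization is unnecessary. A t-exact functor, shifted or not, restricts to an exact functor between hearts, so it preserves monomorphisms. Moreover, the paper takes the t-exactness of $s^*$ from \cite{CW2} without any use of the bundle hypothesis. So suppose $0\neq\cG\subsetneq\pi^*\cF^\Gr$ in $\cKP$. Then $s^*\cG\to s^*\pi^*\cF^\Gr\cong\cF^\Gr$ is a monomorphism. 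By conservativity, $s^*\cG$ is nonzero and so is its cokernel $s^*(\pi^*\cF^\Gr/\cG)$. This contradicts the simplicity of $\cF^\Gr$. That is exactly the paper's argument. It needs no boundary-support analysis, no control of the open-stratum restriction, no duality, and no assumption that $\cF^\Gr$ has full support on $\Gr_{\le\lambda^\vee}$.

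The genuine gap is in Step 1, which is where the bundle hypothesis is really used. Koszul-perversity on $\calR$ is not defined by stratumwise perversity conditions that commute with flat pullback up to a uniform shift. By \cite[Thm.~3.1]{CW2}, an object is Koszul-perverse if and only if $\sigma^*(i_2)_*$ of it is Koszul-perverse on $\Gr$. Here $i_2:\calR^\cl_{\le\lambda^\vee}\to\calT_{\le\lambda^\vee}$, and $\sigma=i_2\circ s$ is the zero section of $\calT_{\le\lambda^\vee}$. Because $\calR$ is derived and $\pi^*\cF^\Gr$ lives on the classical locus, $\sigma^*(i_2)_*\pi^*\cF^\Gr\cong s^*(i_2)^*(i_2)_*\pi^*\cF^\Gr$ is not just a shift of $\cF^\Gr$. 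What the hypothesis buys is that $i_2$ is an inclusion of vector bundles of finite codimension over $\Gr_{\le\lambda^\vee}$. Consequently this object has a Koszul filtration with subquotients $\cF^\Gr\otimes\cV[k]\la -k\ra$, where $\cV$ runs over exterior powers of the conormal bundle of $i_2$. One then concludes because tensoring with such bundles and applying $[k]\la -k\ra$ preserves Koszul-perverse sheaves on $\Gr$. Your Step 1 should be replaced by this computation. As written, it argues from a description of the t-structure that is not the one in \cite{CW2}, and it never sees these extra Koszul terms.
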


\begin{proof}
We first check that $\pi^*(\cF^\Gr)$ is Koszul-perverse. To do that, we consider the obvious inclusions
$$\xymatrix{\Gr_{\le \lambda^\vee}\ar[r]^s\ar@/_1.5pc/[rr]_-\sigma
&\ar@/_1pc/[l]_-\pi\calR^\cl_{\le \lambda^\vee}  \ar[r]^{i_2} &\calT_{\le \lambda^\vee} }.$$
By definition of $\cKP$ in \cite[Thm.~3.1]{CW2}, the object $\pi^*(\cF^\Gr)$ is Koszul-perverse if and only if 
$\sigma^* (i_2)_* \pi^*(\cF^\Gr)$ is Koszul-perverse. 
The map $i_2$ is an inclusion of bundles of finite codimension.  We have the isomorphism 
$$\sigma^* (i_2)_*\pi^*(\cF^\Gr) \cong s^* (i_2)^* (i_2)_* \pi^*(\cF^\Gr).$$ 
Since $\pi \circ s$ is the identity, the object $s^* (i_2)^* (i_2)_* \pi^*(\cF^\Gr)$ has a filtration with sub-quotients isomorphic to 
$\cF^\Gr \otimes \cV[k] \la -k \ra$ for bundles $\cV$ which are isomorphic to exterior products of the conormal bundle 
of the inclusion $i_2$. Since Koszul-perverse sheaves remain Koszul-perverse after tensoring with any $\cV[k] \la -k \ra$ 
it follows that $s^* (i_2)^* (i_2)_* \pi^*(\cF^\Gr)$ is 
Koszul-perverse. Finally, note that the functor $s^*$ is conservative on (scaling equivariant) coherent sheaves and t-exact for the 
Koszul-perverse t-structure \cite[Lem. 3.17]{CW2}. Since $s^*\pi^*(\cF^\Gr) \cong \cF^\Gr$ is simple, it follows that $\pi^*(\cF^\Gr)$ 
must also be simple. 
\end{proof}

Since $\Gr_{\le \lambda^\vee} = \Gr_{\lambda^\vee}$ if $\lambda^\vee$ is minuscule, we deduce the following.

\begin{Corollary} \label{cor:simple}
The objects $\cP_{\uk,\ul},$ $ \cV_i$ and $\phi_i$  in $\cKP$ are simple.  
\qed
\end{Corollary}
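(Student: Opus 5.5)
The plan is to deduce Corollary \ref{cor:simple} directly from Lemma \ref{lem:extension}. Each object in the statement has the form $i_{\uk*}(\O_{\calR^\cl_\uk}\otimes \cL)$ up to shift, where $\cL$ is a line bundle pulled back from $\Gr_\uk$; for $\cV_i$ it is a vector bundle instead. So we check two things: the hypothesis of the lemma, and that each object is the pullback of a simple object of $\cKP^\Gr$.

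\textbf{Step 1 (minuscule orbits are closed).} For $\cP_{\uk,\ul}$, the orbit $\Gr_\uk=\prod_i \Gr_{k_ie_i}$ corresponds to the cocharacter $\sum_i\omega^\vee_{i,k_i}$ (or its dual when $k_i<0$). In each factor $GL(V^{(i)})$ this is a fundamental cocharacter, hence minuscule. Therefore $\Gr_{\le\lambda^\vee}=\Gr_{\lambda^\vee}=\Gr_\uk$, and by \eqref{Gr} this is a smooth closed partial flag variety. For $\cV_i$ and $\phi_i$ the support is $\Gr_{\u0}$, a point.

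\textbf{Step 2 (the bundle hypothesis).} As recalled in \S\ref{sec:simples}, the map $\pi:\calR^\cl_{\lambda^\vee}\to\Gr_{\lambda^\vee}$ is a vector bundle. Since $\calR^\cl_{\le\lambda^\vee}=\calR^\cl_{\lambda^\vee}$ here, the hypothesis of Lemma \ref{lem:extension} holds.

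\textbf{Step 3 (identify the objects as pullbacks of simples).} The line bundles $\scrL_i$ (and $\det(L_0^{(i)}/tL_0^{(i)})$) are pulled back from $\Gr_\uk$, since they are defined by the lattice alone. This gives
\[
\cP_{\uk,\ul}\cong\pi^*\cP^\Gr_{\uk,\ul},
\]
and similarly for $\cV_i$ and $\phi_i$, using the analogous objects in $\cKP^\Gr$ for the quiver with no edges.

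It remains to see that $\cP^\Gr_{\uk,\ul}$, $\cV_i^\Gr$ and $\phi_i^\Gr$ are simple in $\cKP^\Gr$. A $G_\O$-equivariant vector bundle on the closed orbit $G_\O/P$, with the shift $[\tfrac12\dim\Gr_\uk]$, lies in the perverse coherent heart. Such an object is simple exactly when the bundle comes from an irreducible representation of the stabilizer; this is the standard description of simple perverse coherent sheaves as IC-extensions, which is trivial on a closed orbit. Line bundles correspond to irreducible representations automatically.

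For $\cV_i$, the fibre is $L_0^{(i)}/tL_0^{(i)}\cong V^{(i)}$ at the point $\Gr_{\u0}$, carrying the action of the stabilizer $G_\O$ through $GL(V^{(i)})$. This is the irreducible standard representation twisted by the loop character, so $\cV_i^\Gr$ is simple. The factors $\{-1\}$ and $[1]\la-1\ra$ are shifts preserving simplicity in the $\Z^2$-graded heart. Lemma \ref{lem:extension} then gives simplicity in $\cKP$.

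\textbf{Main obstacle.} The only point needing care is confirming that the perverse coherent normalization, the shift $[\tfrac12\dim\Gr_\uk]$, matches the conventions of \cite{CW2} on each orbit. I would cite the description of simples in \cite[Thm.~3.31]{CW2}, which parametrizes them by an orbit together with an irreducible equivariant bundle.
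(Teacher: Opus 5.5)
Your proposal is correct and follows the paper's argument. The paper deduces the corollary directly from Lemma \ref{lem:extension}, using that $\Gr_{\le\lambda^\vee}=\Gr_{\lambda^\vee}$ for minuscule $\lambda^\vee$ (including $\u0$), so that $\calR^\cl_{\le\lambda^\vee}\to\Gr_{\le\lambda^\vee}$ is a bundle. Your Step 3, which checks that the corresponding objects of $\cKP^\Gr$ are simple (equivariant line bundles, or the irreducible bundle with fibre $V^{(i)}$, on a closed orbit, up to shifts), makes explicit what the paper leaves implicit.
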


Simple Koszul-perverse sheaves in $\cKP$ are indexed, up to shifts, by pairs $(\lambda^\vee,\mu)$ of a coweight and weight of $G$ up  to the diagonal action of the Weyl group.  
We denote these simples  $\cP_{\lambda^\vee,\mu}$ with $\lambda^\vee$ dominant.
Given tuples $\uk,\ul\in\bbZ I$ with $-a_i\leqslant k_i\leqslant a_i$, one can check that 
$$\cP_{\uk,\ul} \cong \cP_{\lambda^\vee,\mu} \iff \lambda^\vee = \sum_i \omega^\vee_{i,k_i} ,\quad \mu = \sum_i \ell_i \omega_{i,k_i}.$$

Recall the decomposition $\cKP=\bigoplus_\beta\cKP_\beta$ in \eqref{KPbeta} where $\beta$ runs over $\bbZ\Phi_\af$. We have
$$\cP_{\lambda^\vee, \mu} \in\cKP_\beta \iff \beta =
\sum_i \la \lambda^\vee_i, \omega_{i,a_i} \ra\, \alpha_i + \sum_i \la \mu_i, \omega^\vee_{i,a_i} \ra\, \delta.$$

\subsection{Real objects in $\cKP$}\label{sec:real}

In the theory of renormalized $r$-matrices, real simple objects play an important role. In this section we prove that the simple objects 
$\cP_{\pm e_i, \ell}$ in $\cKP_{\pm \alpha_i + \ell \delta}$ are real (Corollary \ref{cor:real}). Let us first recall how to compute a product of the 
form $\cP_{k_1e_i} * \cP_{k_2e_i}$. Following the notation from \cite[\S 5.2]{CW2} consider the following diagram
\begin{equation}\label{eq:conv1}
\begin{split}
\xymatrix{
& \calR_{k_1e_i}^\cl \ttimes \calR_{k_2e_i}^\cl \ar[d]^{\cl} & \ar[l]_-{d'} \calS' \ar[d]^{\cl'} & \\
\calR_{k_1e_i} \ar[d]^{i_2} & \calR_{k_1e_i} \ttimes \calR_{k_2e_i} \ar[l] \ar[d]^{i_2 \ttimes \id} & \calS \ar[d] \ar[l]_-d \ar[r]^-m & \calR_{\le \omega^\vee_{i,k_1} + \omega^\vee_{i,k_2}} \ar[d] \\ 
\calT_{k_1e_i} & \calT_{k_1e_i} \ttimes \calR_{k_2e_i} \ar[l] & \Gr_{k_1e_i} \ttimes \calR_{k_2e_i} \ar[l]_{\td} \ar[r]^{\tm} & \calT_{\le \omega^\vee_{i,k_1} + \omega^\vee_{i,k_2}}
}
\end{split}
\end{equation}
where the top square is the fibre product and the bottom squares form the standard diagram used to compute convolution.
In particular, the bottom middle and right squares are Cartesian. Up to shifts, which are irrelevant for the current discussion, we have
\begin{align}\label{P*P}
\cP_{k_1e_i} * \cP_{k_2e_i}\cong m_* d^* \cl_* (\O_{\calR_{k_1e_i}^\cl \ttimes \calR_{k_2e_i}^\cl})
\cong m_* \cl'_* {d'}^*  (\O_{\calR_{k_1e_i}^\cl \ttimes \calR_{k_2e_i}^\cl})
\cong m_* \cl'_* \O_{\calS'}
\end{align}
where the first isomorphism is by base change.

\begin{Lemma}\label{lem:S}
We have the following Cartesian square where the map $\pi$ is a vector bundle
\begin{equation}\label{eq:local2}
\begin{split}
\xymatrix{
\calS' \ar[r]^-{m_2} \ar[d]^{\pi'} & \calR^\cl_{\le \omega^\vee_{i,k_1} + \omega^\vee_{i,k_2}} \ar[d]^{\pi} \\
\Gr_{k_1e_i} \ttimes \Gr_{k_2e_i} \ar[r]^{m_1} & \Gr_{\le \omega^\vee_{i,k_1} + \omega^\vee_{i,k_2}}
}
\end{split}
\end{equation}
\end{Lemma}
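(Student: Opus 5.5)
The plan is to describe every space in \eqref{eq:local2} explicitly as a space of lattices together with an element $x\in N_\calK$, using \eqref{i1i2}. Then check that the defining linear conditions on $x$ in $\calS'$ are exactly the conditions for $(L_2,x)$ to lie in $\calR^\cl$, where $L_2=m_1(L_1,L_2)$ is the image lattice. Treat first the case $k_1,k_2\ge 0$; the case $k_1,k_2\le 0$ is symmetric (reverse all inclusions and swap the roles of incoming and outgoing edges at $i$).

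By \eqref{Gr}, a point of $\Gr_{k_1e_i}\ttimes\Gr_{k_2e_i}$ is a chain $L_0\supset L_1\supset L_2\supset t L_1$, with $\dim L_0/L_1=k_1$, $\dim L_1/L_2=k_2$ and $L_1\supset tL_0$, changing only the $i$-th component. The map $m_1$ forgets $L_1$. Since $\calS'$ is the fibre product in \eqref{eq:conv1}, a point of $\calS'$ over this chain is an $x\in N_\calK$ satisfying three conditions.
\begin{itemize}
\item From $\calR^\cl_{k_1e_i}$: $x(L_0)\subset L_0$ and $x(L_1)\subset L_1$.
\item From $\calR^\cl_{k_2e_i}$, read in the twisted coordinates relative to $L_1$: $x(L_1)\subset L_1$ and $x(L_2)\subset L_2$.
\end{itemize}

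Next, decompose $x$ by edges. Only the $i$-components of the lattices differ from $L_0$, so the conditions reduce as follows.
\begin{itemize}
\item Components $x_e$ with $e$ not incident to $i$ only require $x_e\in N_\calO$.
\item For an edge $i\to j$, the condition $x(L)\subset L$ reads $x_e(L^{(i)})\subset L_0^{(j)}$, which follows from $x_e(L_0^{(i)})\subset L_0^{(j)}$ because $L\subset L_0$.
\item For an edge $j\to i$, the condition reads $x_e(L_0^{(j)})\subset L^{(i)}$. For nested $L_2\subset L_1\subset L_0$, the $L_2$-condition implies the $L_1$- and $L_0$-conditions.
\end{itemize}
Hence the fibre of $\pi'$ over the chain is the linear subspace
$$V_{L_2}=\{x\in N_\calO\,;\,x_e(L_0^{(j)})\subset L_2^{(i)}\ \text{for all}\ e=(j\to i)\}.$$
By the same computation, applied to $(L_2,x)$ using \eqref{i1i2}, this is also the fibre of $\pi$ over $L_2=m_1(L_1,L_2)$. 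The map $m_2$ sends $(L_1,L_2,x)\mapsto(L_2,x)$, so it is compatible with these identifications and the square commutes.

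It remains to show that $\pi$ is a vector bundle and that the square is Cartesian as schemes, not just on points. Each $L_2\in\Gr_{\le\omega^\vee_{i,k_1}+\omega^\vee_{i,k_2}}$ satisfies $t^2L_0\subset L_2\subset L_0$. Therefore $V_{L_2}$ contains $t^2N_\calO$, and it is the preimage of a subspace of the finite-dimensional space $N_\calO/t^2N_\calO$. Its codimension is $\sum_{j\to i}a_j\cdot\dim(L_0^{(i)}/L_2^{(i)})=\sum_{j\to i}a_j\,(k_1+k_2)$, which is constant on the closure. The conditions are linear and depend algebraically on $L_2$, so they cut out a subbundle of the trivial bundle with fibre $N_\calO/t^2N_\calO$; hence $\pi$ is a vector bundle. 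The same functorial description on $S$-points, using the tautological lattice bundles, shows that $\calS'$ represents exactly $(\Gr_{k_1e_i}\ttimes\Gr_{k_2e_i})\times_{\Gr_{\le}}\calR^\cl_{\le}$. The redundancy of the intermediate conditions is an inclusion of subsheaves of locally free sheaves, so it holds scheme-theoretically as well.

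The main obstacle is the mixed-sign case ($k_1>0>k_2$ or vice versa). There the intermediate lattice $L_1$ is not sandwiched between $L_0$ and $L_2$, so the $L_1$-condition need not be redundant, and the closure also contains lattices not comparable to $L_0$. I would first check whether the lemma is only needed for equal signs, which suffices for the reality statements about $\cP_{\pm e_i,\ell}$ (products $\cP_{k_1e_i}*\cP_{k_2e_i}$ with $k_1,k_2$ of the same sign). If it is needed more generally, I would attempt the same edge-by-edge computation using $L_0\cap L_2\subset L_1\subset L_0+L_2$ and test whether the resulting conditions still depend only on $L_2$.
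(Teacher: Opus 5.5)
Your pointwise analysis is correct as far as it goes. The fibre of $\pi'$ over a chain $L_2\subset L_1\subset L_0$ is $V_{L_2}$, this coincides with the fibre of $\pi$ over $L_2$, and $\pi$ is a vector bundle for the reason you give. Your remark that only the equal-sign case is needed is also right.

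\textbf{The gap.} You treat $\calS'$ as a classical fibre product, but in \eqref{eq:conv1} it is a fibre product of derived schemes: $\calR_{k_1e_i}\ttimes\calR_{k_2e_i}$ and $\calS$ are derived. The isomorphism $\cP_{k_1e_i}*\cP_{k_2e_i}\cong m_*\cl'_*\O_{\calS'}$ in \eqref{P*P} uses derived base change. Describing $S$-points for classical $S$ only identifies the truncation $(\calS')^{\cl}$ with $(\Gr_{k_1e_i}\ttimes\Gr_{k_2e_i})\times_{\Gr_{\le}}\calR^\cl_{\le}$. It says nothing about whether $\O_{\calS'}$ has higher homotopy sheaves. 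That is exactly what matters here: Corollary \ref{cor:composition} replaces $\O_{\calS'}$ by $\pi'^*$ of a line bundle, which is only legitimate if $\calS'$ is classical. This is not automatic. In the second half of Lemma \ref{lem:map1}, the analogous $\calS'$ for $\cP_{e_i}*\cP_{e_j}$ with $i\to j$ has excess intersection, and its derived structure contributes an extra Koszul term. Your sentence about ``inclusions of subsheaves of locally free sheaves'' upgrades set-theoretic statements to scheme-theoretic ones, not classical to derived.

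\textbf{The missing step} is a transversality argument, and your edge-by-edge decomposition almost supplies it.
\begin{enumerate}
\item By pasting the Cartesian squares of \eqref{eq:conv1}, write $\calS'\cong X_1\times_Y X_2$, where
\begin{itemize}
\item $Y=\calT_{k_1e_i}\ttimes\Gr_{k_2e_i}$ (the condition $x(L_1)\subset L_1$),
\item $X_1=\calR^\cl_{k_1e_i}\ttimes\Gr_{k_2e_i}$,
\item $X_2=\Gr_{k_1e_i}\ttimes\calR^\cl_{k_2e_i}$.
\end{itemize}
All three are vector bundles over $\Gr_{k_1e_i}\ttimes\Gr_{k_2e_i}$.
\item The bundle $Y$ splits as $\calT'\times\calT''$, according to arrows out of and into $i$.
\item Relative to $Y$, the subbundle $X_1$ only imposes $x(L_0^{(i)})\subset L_0^{(j)}$ for $i\to j$, which is a condition on $\calT'$. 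The subbundle $X_2$ only imposes $x(L_0^{(j)})\subset L_2^{(i)}$ for $j\to i$, which is a condition on $\calT''$.
\item Hence $X_1+X_2=Y$ fibrewise, the derived intersection has no excess, and $\calS'$ is classical.
\end{enumerate}
Only after this step does your fibrewise comparison prove that the square is Cartesian.
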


\begin{proof}
We can extend the middle two squares of (\ref{eq:conv1}) as follows
$$\xymatrix{
\calR_{k_1e_i}^\cl  \ar[d] & \calR_{k_1e_i}^\cl \ttimes \calR_{k_2e_i}^\cl  \ar[l]  \ar[d]^{\cl} & \ar[l]_-{d'} \calS' \ar[d] \\
\calT_{k_1e_i} & \calT_{k_1e_i} \ttimes \calR_{k_2e_i}^\cl \ar[l] \ar[d]  & \Gr_{k_1e_i} \ttimes \calR_{k_2e_i}^\cl  \ar[d] \ar[l] \\
 & \calT_{k_1e_i} \ttimes \calR_{k_2e_i} & \Gr_{k_1e_i} \ttimes \calR_{k_2e_i} \ar[l]_{\td}
}$$
Whence, we have
\begin{align}\label{eq:local4}
\begin{split}
\calS' \cong \calR_{k_1e_i}^\cl \times_{\calT_{k_1e_i}} (\Gr_{k_1e_i} \ttimes \calR_{k_2e_i}^\cl)
\cong (\calR_{k_1e_i}^\cl \ttimes \Gr_{k_2e_i}) \times_{(\calT_{k_1e_i} \ttimes \Gr_{k_2e_i})} (\Gr_{k_1e_i} \ttimes \calR_{k_2e_i}^\cl).
\end{split}
\end{align}
All three spaces in the fiber product above are vector bundles over $\Gr_{k_1e_i} \ttimes \Gr_{k_2e_i}$ with the following lattice descriptions
\begin{itemize}[leftmargin=8mm]
\item The bundle $\calT_{k_1e_i} \ttimes \Gr_{k_2e_i}$ is described by
$$\{L_2^{(i)} \overset{k_2}{\subset} L_1^{(i)} \overset{k_1}{\subset} L_0^{(i)}
\,;\,
xL_1^{(i)} \subset L_0^{(j)} \text{ if $i \to j$ }
\,,\,
xL_0^{(j)} \subset L_1^{(i)} \text{ if $j \to i$ }
\,,\, tL_0^{(i)} \subset L_1^{(i)}
\,,\, tL_1^{(i)} \subset L_2^{(i)} \}$$
where the superscripts denote the codimension. In particular we have an isomorphism
$$\calT_{k_1e_i} \ttimes \Gr_{k_2e_i} \cong \calT' \times_{(\Gr_{k_1e_i} \ttimes \Gr_{k_2e_i})} \calT''$$
where $\calT'$ and $\calT''$ are bundles over $\Gr_{k_1e_i} \ttimes \Gr_{k_2e_i}$ corresponding to the space of maps $x$ for arrows $i \to j$ and $j \to i$ respectively.
\item The factor $\calR_{k_1e_i}^\cl \ttimes \Gr_{k_2e_i}$ in (\ref{eq:local4}) has a similar lattice description as $\calT_{k_1e_i} \ttimes \Gr_{k_2e_i}$ with the only difference being the additional conditions that $xL_0^{(i)} \subset L_0^{(j)}$ if $i \to j$. This additional restriction is on the $\calT'$ factor.
\item The factor $\Gr_{e_i} \ttimes \calR_{e_i}^\cl$ in (\ref{eq:local4}) carries the additional condition that $xL_0^{(j)} \subset L_2^{(i)}$ if $j \to i$. This restriction is entirely on the $\calT''$ factor.
\end{itemize}

All this means that the fiber product in (\ref{eq:local4}) is classical (non-derived). Thus $\calS'$ is a vector bundle over $\Gr_{k_1e_i} \ttimes \Gr_{k_2e_i}$ that can be described as
$$\{L_2^{(i)} \overset{k_2}{\subset} L_1^{(i)} \overset{k_1}{\subset} L_0^{(i)}\,;\, xL_0^{(i)} \subset L_0^{(j)} \text{ if $i \to j$ }
\,,\, xL_0^{(j)} \subset L_2^{(i)} \text{ if $j \to i$ }
\,,\, tL_0^{(i)} \subset L_1^{(i)}
\,,\, tL_1^{(i)} \subset L_2^{(i)} \}$$
where $L_2^{(j)}=L_1^{(j)}=L_0^{(j)}$ if $j\neq i$. The lemma follows from this description.
\end{proof}

\begin{Corollary}\label{cor:composition}
For any $0 \le k_1,k_2 \le a_i$ and $\ell_1, \ell_2 \in \Z$ we have
\begin{align*}
\cP_{k_1e_i,\ell_1} * \cP_{k_2e_i, \ell_2} \cong \pi^* (\cP^{\Gr}_{k_1e_i,\ell_1} * \cP^{\Gr}_{k_2e_i, \ell_2})
,\quad
\cP_{-k_1e_i,\ell_1} * \cP_{-k_2e_i, \ell_2} \cong \pi^* (\cP^{\Gr}_{-k_1e_i,\ell_1} * \cP^{\Gr}_{-k_2e_i, \ell_2})
\end{align*}
where the map $\pi$ is as in Lemma $\ref{lem:S}$.
\end{Corollary}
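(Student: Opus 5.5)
The plan is to compute the convolution through the diagram \eqref{eq:conv1} and then use Lemma \ref{lem:S} and base change to pass to the affine Grassmannian.

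\textbf{Step 1: reduce to a pushforward from $\calS'$.} By \eqref{P*P}, up to shifts we have
\[
\cP_{k_1e_i,\ell_1} * \cP_{k_2e_i,\ell_2} \cong m_* \cl'_* \bigl({d'}^*(\scrL)\bigr),
\]
where $\scrL$ is the line bundle on $\calR_{k_1e_i}^\cl \ttimes \calR_{k_2e_i}^\cl$ built from $\scrL_i^{\ell_1}$ on the first factor and $\scrL_i^{\ell_2}$ on the second. This is the same computation as in \eqref{P*P}, now carrying the line bundles along. Each $\scrL_i$ is pulled back from the lattice data on $\Gr$: it is $\det(L_0^{(i)}/L_1^{(i)})$ on the first factor and $\det(L_1^{(i)}/L_2^{(i)})$ on the twisted second factor. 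Hence ${d'}^*\scrL \cong {\pi'}^*\scrL^{\Gr}$, where $\scrL^{\Gr}$ is the corresponding line bundle on $\Gr_{k_1e_i} \ttimes \Gr_{k_2e_i}$.

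\textbf{Step 2: base change along the square of Lemma \ref{lem:S}.} The composite $m \circ \cl'$ is the map $\calS' \to \calR^\cl_{\le \omega^\vee_{i,k_1}+\omega^\vee_{i,k_2}} \to \calR$, that is, $m_2$ followed by the closed embedding. The square \eqref{eq:local2} is Cartesian and $\pi$ is a vector bundle, hence flat. Flat base change then gives
\[
(m_2)_* {\pi'}^* \scrL^{\Gr} \cong \pi^* (m_1)_* \scrL^{\Gr}.
\]

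\textbf{Step 3: identify with the Grassmannian convolution.} By the same diagram for a quiver with no edges (so $\calR = \Gr$ and $\calS' = \Gr_{k_1e_i} \ttimes \Gr_{k_2e_i}$), we have
\[
(m_1)_*\scrL^{\Gr} \cong \cP^{\Gr}_{k_1e_i,\ell_1} * \cP^{\Gr}_{k_2e_i,\ell_2}
\]
up to the same shifts. Combining Steps 1–3 gives the first isomorphism.

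\textbf{Step 4: the negative case.} For $-k_1,-k_2$ the argument is identical with the lattice chains reversed, $L_0^{(i)} \subset L_1^{(i)} \subset L_2^{(i)}$. The analogue of Lemma \ref{lem:S} holds by the same lattice description, with the roles of the arrows $i\to j$ and $j \to i$ swapped: the condition on incoming arrows involves $L_0$, and the condition on outgoing arrows involves $L_2$. So the fibre product is again classical and a vector bundle.

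\textbf{Expected obstacle: matching the shifts.} The main point needing care is the bookkeeping of cohomological, loop and Koszul shifts, together with the twist \eqref{eq:twist}, on both sides. The twist \eqref{eq:twist} vanishes for the quiver with no edges but not for $\calR$. The shifts $[\frac12\dim]$ agree because the $\Gr$-orbits involved are the same. The remaining discrepancy is a loop shift coming from the ranks of the bundles $\calT'$, $\calT''$ and from \eqref{eq:twist}. I would check that these cancel by a weight computation at a torus fixed point.
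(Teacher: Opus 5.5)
Your proposal is correct and follows the paper's proof. The paper likewise carries $\det(L_0^{(i)}/L_1^{(i)})^{\ell_1}\otimes\det(L_1^{(i)}/L_2^{(i)})^{\ell_2}$ through \eqref{P*P}, applies base change along the Cartesian square \eqref{eq:local2}, identifies $m_{1*}$ of that line bundle with $\cP^{\Gr}_{k_1e_i,\ell_1}*\cP^{\Gr}_{k_2e_i,\ell_2}$, handles the negative case in the same way, and just asserts that the shifts match (``up to the very same shift''). On your flagged obstacle, one correction: under the natural reading of \eqref{eq:twist}, the twist does not act only on the $\Gr$ side. It actually vanishes on both sides here, because both factors lie in blocks concentrated at the single vertex $i$ and $Q$ has no loops.
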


\begin{proof}
To prove the first isomorphism we follow the argument and notation used to compute $\cP_{k_1e_i} * \cP_{k_2e_i}$ in \eqref{P*P}.
Up to shifts, we have
\begin{align*}
\cP_{k_1e_i,\ell_1} * \cP_{k_2e_i, \ell_2} & \cong m_{2*} \pi^{\prime *}(\det(L_0^{(i)}/L_1^{(i)})^{\ell_1} \otimes \det(L_1^{(i)}/L_2^{(i)})^{\ell_2}) \\
&\cong \pi^* m_{1*}(\det(L_0^{(i)}/L_1^{(i)})^{\ell_1} \otimes \det(L_1^{(i)}/L_2^{(i)})^{\ell_2})
\end{align*}
The second isomorphism is by base change using (\ref{eq:local2}). This, up to the very same shift, is isomorphic to $\pi^* (\cP^{\Gr}_{k_1e_i,\ell_1} * \cP^{\Gr}_{k_2e_i, \ell_2})$. The second isomorphism is proved similarly.
\end{proof}

\begin{Corollary}\label{cor:real}
For any $0 \le k \le a_i$ and $\ell \in \Z$ the simple objects $\cP_{\pm k e_i, \ell} \in \cKP_{\pm k \alpha_i + k \ell \delta}$ are real.
\end{Corollary}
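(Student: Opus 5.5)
We reduce reality to a statement on the affine Grassmannian by means of Corollary \ref{cor:composition}. Recall that a simple $\cF$ is real if $\cF * \cF$ is simple. Take $k_1=k_2=k$ and $\ell_1=\ell_2=\ell$ in Corollary \ref{cor:composition}. This gives
$$\cP_{\pm ke_i,\ell} * \cP_{\pm ke_i,\ell} \cong \pi^*\big(\cP^{\Gr}_{\pm ke_i,\ell} * \cP^{\Gr}_{\pm ke_i,\ell}\big),$$
where $\pi: \calR^\cl_{\le 2\omega^\vee_{i,\pm k}} \to \Gr_{\le 2\omega^\vee_{i,\pm k}}$ is the vector bundle of Lemma \ref{lem:S}. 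Here $\omega^\vee_{i,\pm k}$ means $\omega^\vee_{i,k}$ or $\omega^\vee_{i,-k}$ according to the sign.

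By Lemma \ref{lem:extension}, applied with $\lambda^\vee = 2\omega^\vee_{i,\pm k}$ (the bundle hypothesis is exactly the conclusion of Lemma \ref{lem:S}), it then suffices to show that
$$\cP^{\Gr}_{\pm ke_i,\ell} * \cP^{\Gr}_{\pm ke_i,\ell} \in \cKP^{\Gr}$$
is simple, i.e. that $\cP^{\Gr}_{\pm ke_i,\ell}$ is real in $\cKP^{\Gr}$. The decorations of $\pi^*$ (shifts, the twist \eqref{eq:twist}) only change the object by loop and Koszul shifts, so simplicity is unaffected.

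The next step is to compute the Grassmannian product. The problem only involves the factor $GL(V^{(i)}) = GL_{a_i}$, so we work in $\cKP^{\Gr}$ for $GL_n$ with $n=a_i$. The object $\cP^{\Gr}_{ke_i,\ell}$ is the (shifted) line bundle $\det(L_0/L)^{\ell}$ on the minuscule orbit $\Gr_{\omega^\vee_k}$, which is closed. The convolution is $m_{1*}$ of the line bundle
$$\det(L_0/L_1)^{\ell} \otimes \det(L_1/L_2)^{\ell} \cong \det(L_0/L_2)^{\ell}$$
on the convolution variety $\Gr_{\omega^\vee_k} \ttimes \Gr_{\omega^\vee_k}$. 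This line bundle is pulled back from $\Gr_{\le 2\omega^\vee_k}$, so by the projection formula it suffices to know $m_{1*}\O$ up to the appropriate shift. The map $m_1$ is a resolution of $\Gr_{\le 2\omega^\vee_k}$; it is semismall and birational onto a variety with rational singularities. Hence $m_{1*}\O \cong \O_{\Gr_{\le 2\omega^\vee_k}}$, and
$$\cP^{\Gr}_{ke_i,\ell} * \cP^{\Gr}_{ke_i,\ell} \dcong \O_{\Gr_{\le 2\omega^\vee_k}} \otimes \det(L_0/L)^{\ell}\,[\tfrac12\dim \Gr_{\le 2\omega^\vee_k}].$$
This is the standard description of the perverse coherent IC-type sheaf attached to $(2\omega^\vee_k, 2\ell\omega_k)$; compare the coherent Satake category of Bezrukavnikov--Finkelberg and Cautis--Williams, where products of fundamental minuscule objects with the same weight are simple.

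I expect the main obstacle to be verifying simplicity of this object in $\cKP^{\Gr}$. The plan is to use that $\Gr_{\le 2\omega^\vee_k}$ is Cohen--Macaulay, being a Schubert variety, so that the structure sheaf shifted by half the dimension is the intermediate (Goresky--MacPherson) extension of the line bundle on the open orbit; this amounts to the required strict depth/codimension estimates along the boundary strata. Alternatively one can cite the corresponding statement of \cite{CW2} that $\cP^{\Gr}_{\uk,\ul}$ is real. The case $-k$ is identical with $L_0 \subset L \subset t^{-1}L_0$ in place of $tL_0 \subset L \subset L_0$, and the case $k=0$ is trivial since $\cP_{0,\ell}$ is invertible.
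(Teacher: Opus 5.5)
Your proposal is correct and follows the paper's proof. Both arguments use Corollary \ref{cor:composition} together with Lemmas \ref{lem:S} and \ref{lem:extension} to reduce to the simplicity of $\cP^{\Gr}_{ke_i,\ell} * \cP^{\Gr}_{ke_i,\ell} \dcong \O_{\Gr_{\le 2\omega^\vee_{i,k}}} \otimes \det(L_0^{(i)}/L^{(i)})^{\ell}$; the only difference is that the paper takes this Grassmannian identity and its simplicity from the proof of \cite[Prop.~2.9]{CW1}, whereas you derive them from the rational singularities and Cohen--Macaulayness of $\Gr_{\le 2\omega^\vee_{i,k}}$, which is a valid way to fill in that step.
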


\begin{proof}
Up to shift, the proof of \cite[Prop.~2.9]{CW1} yields
$$\cP^{\Gr}_{ke_i,\ell} * \cP^{\Gr}_{ke_i,\ell}=\O_{\Gr_{\le 2 \omega_{i,k}^\vee}} \otimes \det(L_0^{(i)}/L^{(i)})^{\ell}$$
Hence $\cP^{\Gr}_{ke_i,\ell} * \cP^{\Gr}_{ke_i,\ell}$ is simple in $\cKP^\Gr$.
By Corollary \ref{cor:composition} we have
$$\cP_{ke_i,\ell} * \cP_{ke_i, \ell} \cong \pi^* (\cP^\Gr_{ke_i,\ell} * \cP^{\Gr}_{ke_i,\ell}).$$
Since, by Lemma \ref{lem:S}, the map
$\pi: \calR^\cl_{\le 2\omega_{i,k}^\vee} \to \Gr_{\le 2\omega_{i,k}^\vee}$ is a vector, bundle it follows from Lemma \ref{lem:extension} that $\pi^* (\cP^{\Gr}_{ke_i,\ell} * \cP^{\Gr}_{ke_i,\ell})$ is also simple.  Thus $\cP_{ke_i, \ell}$ is real. The proof that $\cP_{-ke_i,\ell}$ is real is similar.
\end{proof}

\subsection{Some relations in $\cKP$}

\begin{Lemma}\label{lem:GrG}
If $\ell \ge \ell'$ there is an object $\cQ^{\Gr} \in \cKP^\Gr$
with short exact sequences 
\begin{align}
\label{eq:ses3} &0 \to \cQ^{\Gr}  \{1\} \to  \cP^{\Gr}_{e_i,\ell+1} * \cP^{\Gr}_{e_i,\ell'} \to \cP^{\Gr}_{e_i,\ell} * \cP^{\Gr}_{e_i,\ell'+1} \{2\} \to 0 \\
\label{eq:ses4} &0 \to \cP^{\Gr}_{e_i,\ell'+1} * \cP^{\Gr}_{e_i,\ell} \{-2\} \to  \cP^{\Gr}_{e_i,\ell'} * \cP^{\Gr}_{e_i,\ell+1} \to \cQ^{\Gr} \{-1\} \to 0 
\end{align}
If $\ell > \ell'$ then $\cQ^{\Gr}$ is simple. If $\ell = \ell'$ then $\cQ^{\Gr} = 0$. 
\end{Lemma}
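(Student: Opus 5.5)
We work entirely in $\Gr = \Gr_{GL(V^{(i)})}$, since the other factors play no role. We compute both products geometrically via the convolution diagram: $\cP^{\Gr}_{e_i,\ell_1} * \cP^{\Gr}_{e_i,\ell_2}$ is, up to a fixed shift, $m_{1*}$ of the line bundle $\det(L_0/L_1)^{\ell_1}\otimes\det(L_1/L_2)^{\ell_2}$. This bundle lives on the convolution variety $Y=\Gr_{e_i}\ttimes\Gr_{e_i}=\{L_2\overset1\subset L_1\overset1\subset L_0,\ tL_0\subset L_1,\ tL_1\subset L_2\}$, and $m_1:Y\to\Gr_{\le 2\omega^\vee_{i,1}}$ is the map $(L_1,L_2)\mapsto L_2$.

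The first key step is to find a divisor relating the two line bundles. Write $\mathcal{A}=L_0/L_1$ and $\mathcal{B}=L_1/L_2$. Multiplication by $t$ gives a map $t:\mathcal{A}\to\mathcal{B}\{-2\}$, where the shift comes from the weight two of $t$ under loop rotation. This map vanishes exactly on the divisor $D=\{L_1 = t^{-1}L_2\cap L_0\}$. Over $D$, the point lies in the preimage of the small stratum $\Gr_{\omega^\vee_{i,2}}$-closure component. More precisely, $D\cong\{tL_0\subset L_2\overset 2\subset L_0\}\times\mathbb{P}^1$-fiber reversed, i.e. $D$ is the locus where $L_2\supset tL_0$ fails.

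The section $t$ gives a short exact sequence
$$0\to \mathcal{A}^{-1}\otimes\mathcal{B}\{\cdot\}\to \O_Y\to\O_D\to0.$$
Tensoring with $\mathcal{A}^{\ell+1}\mathcal{B}^{\ell'}$ relates $\mathcal{A}^{\ell}\mathcal{B}^{\ell'+1}$, $\mathcal{A}^{\ell+1}\mathcal{B}^{\ell'}$ and a line bundle on $D$. Pushing forward along $m_1$ then gives a triangle among the two products and $\cQ^{\Gr}:=m_{1*}(\O_D\otimes\mathcal{A}^{\ell+1}\mathcal{B}^{\ell'})$, with the shift normalized. For \eqref{eq:ses3} we must make sure the maps go the right way. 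I would instead use the section vanishing on the divisor $D'=m_1^{-1}(\Gr_{\le\omega^\vee_{i,2}}\cup\ldots)$ where $L_2=tL_0$ is impossible, and pick whichever of the two natural sections, $t$ or its dual pairing $\mathcal{B}\to\mathcal{A}$, makes the orientation match. Sequence \eqref{eq:ses4} arises from the second, "opposite" divisor, or equivalently by applying the duality/adjunction that swaps the order of convolution and inverts $\det$ twists.

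Next, the triangles must become short exact sequences in $\cKP^{\Gr}$, and the same object $\cQ^{\Gr}$ must appear in both. Both products lie in the heart by t-exactness of $*$. Hence it suffices to show that $\cQ^{\Gr}$ is perverse, which follows once $m_1|_D$ is understood: it is a $\mathbb{P}^1$-bundle over (or an isomorphism onto) the orbit-closure $\Gr_{\le\omega^\vee_{i,2}}$, so $\cQ^{\Gr}$ is computed by $\mathbb{P}^1$-cohomology of $\O(\ell-\ell'+c)$.

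For $\ell=\ell'$ the relevant degree is $-1$, so the pushforward vanishes and $\cQ^{\Gr}=0$. This matches the known commutativity $\cP_{e_i,\ell}*\cP_{e_i,\ell+1}\dcong\cP_{e_i,\ell+1}*\cP_{e_i,\ell}$ when only one step apart. For $\ell>\ell'$ the pushforward is a vector bundle $\mathrm{Sym}^{\ell-\ell'-1}$ of the tautological rank-2 bundle on $\Gr_{\omega^\vee_{i,2}}$, with the right cohomological shift. Since $\omega^\vee_{i,2}$ is minuscule, this is a simple perverse coherent sheaf, namely the IC extension of an irreducible equivariant bundle. Identifying the same $\cQ^{\Gr}$ in \eqref{eq:ses4} is done by comparing the two pushforwards of line bundles on $D$, which agree because $\mathcal{A}$ and $\mathcal{B}$ coincide up to the shift $\{2\}$ on $D$.

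The main obstacle is the bookkeeping: getting the exact divisor and sign of the twists so that $\cQ^{\Gr}$ has the right loop shifts $\{\pm1\}$, and checking that $m_1|_D$ pushes the line bundle to an irreducible object rather than an extension. I would first carry this out explicitly for $a_i=2$ and then argue via the fibration over the orbit.
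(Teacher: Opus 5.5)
Your strategy is the natural direct route. You realize both products as $m_{1*}$ of $\mathcal{A}^{\ell_1}\otimes\mathcal{B}^{\ell_2}$ on $Y=\Gr_{e_i}\ttimes\Gr_{e_i}$, compare them through the section $t$, and push forward from the exceptional divisor. The paper itself does no computation and simply invokes \cite[Prop.~2.10, 2.17]{CW1}.

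However, the step everything hinges on is wrong as written. The zero locus of $t\colon\mathcal{A}\to\mathcal{B}$ (your $D$) is $E=\{tL_0\subset L_2\}=m_1^{-1}(\Gr_{\omega^\vee_{i,2}})$. This is a $\mathbb{P}^1$-bundle $\pi\colon E\to\Gr_{\omega^\vee_{i,2}}$ with fibre $\mathbb{P}(L_0/L_2)$. Your descriptions ``$L_1=t^{-1}L_2\cap L_0$'' and ``where $L_2\supset tL_0$ fails'' describe the open complement instead.

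More seriously, $t$ is a section of $\mathcal{A}^{-1}\otimes\mathcal{B}$, vanishing to first order along $E$ (this needs a local check). So the ideal sheaf of $E$ is $\mathcal{A}\otimes\mathcal{B}^{-1}$. Your sequence $0\to\mathcal{A}^{-1}\otimes\mathcal{B}\to\O_Y\to\O_D\to0$ does not exist: a map $\mathcal{A}^{-1}\otimes\mathcal{B}\to\O_Y$ is a section of $\O_Y(-E)$, hence zero on the proper connected $Y$. This is why your arrows point the wrong way. It also makes your $\cQ^{\Gr}=m_{1*}(\O_D\otimes\mathcal{A}^{\ell+1}\mathcal{B}^{\ell'})$ wrong. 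Since $\mathcal{A}$ and $\mathcal{B}$ have degrees $1$ and $-1$ on the fibres of $\pi$, it is a nonzero bundle of rank $\ell-\ell'+2$ even for $\ell=\ell'$, contradicting your own ``degree $-1$'' claim. The fixes you suggest (a second divisor $D'$, or a ``dual'' section $\mathcal{B}\to\mathcal{A}$) are not available, because $\mathrm{Hom}(\mathcal{B},\mathcal{A})=H^0(\O_Y(-E))=0$.

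The missing point is that one divisor and one sequence $0\to\mathcal{A}\otimes\mathcal{B}^{-1}\xrightarrow{t}\O_Y\to\O_E\to0$ give both (\ref{eq:ses3}) and (\ref{eq:ses4}). The orientation is decided by which derived pushforward along $\pi$ survives.

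\emph{First sequence.} Tensoring with $\mathcal{A}^{\ell}\mathcal{B}^{\ell'+1}$ gives $t\colon\mathcal{A}^{\ell+1}\mathcal{B}^{\ell'}\hookrightarrow\mathcal{A}^{\ell}\mathcal{B}^{\ell'+1}$, with cokernel of fibre degree $\ell-\ell'-1\ge -1$. Only $R^0\pi_*$ survives. After the shift $[\tfrac12\dim Y]=[a_i-1]$, the cone is a bundle on $\Gr_{\omega^\vee_{i,2}}$ shifted by $[a_i-1]$. That is one more than the shift $[\tfrac12\dim\Gr_{\omega^\vee_{i,2}}]=[a_i-2]$ of the heart, so the cone sits in perverse degree $-1$ and $\cQ^{\Gr}$ appears as a kernel, as in (\ref{eq:ses3}).

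\emph{Second sequence.} Tensoring instead with $\mathcal{A}^{\ell'}\mathcal{B}^{\ell+1}$, the cokernel has fibre degree $\ell'-\ell-1\le -1$. Only $R^1\pi_*$ survives, the cone lies in the heart, and $\cQ^{\Gr}$ appears as a cokernel, as in (\ref{eq:ses4}).

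\emph{Same $\cQ^{\Gr}$.} The two cones do not agree because ``$\mathcal{A}$ and $\mathcal{B}$ coincide on $D$'': they do not, since $t$ vanishes there. They agree by relative Serre duality. With $W=L_0/L_2$ one has $\mathcal{A}\otimes\mathcal{B}\cong\det W$ and $\omega_\pi\cong\mathcal{A}^{-1}\otimes\mathcal{B}$. Hence both $\pi_*(\mathcal{A}^{\ell}\mathcal{B}^{\ell'+1})$ and $R^1\pi_*(\mathcal{A}^{\ell'}\mathcal{B}^{\ell+1})$ equal $\mathrm{Sym}^{\ell-\ell'-1}(W)\otimes(\det W)^{\ell'+1}$, up to a loop twist.

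With these corrections your concluding steps go through. For $\ell=\ell'$ the bundle vanishes. For $\ell>\ell'$ it is an irreducible homogeneous bundle on the closed orbit $\Gr_{\omega^\vee_{i,2}}$, hence simple. What then remains is genuinely just the loop-shift bookkeeping.
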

\begin{proof}
If $\ell = \ell'$ one recovers this result from \cite[Prop.~2.10]{CW1} by taking $k_1=k_2=1$. If $\ell - \ell'=1$ then one recovers this result 
from \cite[Prop.~2.17]{CW1} by taking $k=1$. If $\ell-\ell'>1$ then the argument from \cite[Prop.~2.17]{CW1} still applies near verbatim 
(by taking $k=1$ again). We leave the details to the reader. 
\end{proof}

\begin{Corollary}\label{cor:GrG}
If $\ell \ge \ell'$ there is an object $\cQ \in \cKP$
with short exact sequences 
\begin{align}
\label{eq:ses5}& 0 \to \cQ  \{1\} \to  \cP_{e_i,\ell+1} * \cP_{e_i,\ell'} \to \cP_{e_i,\ell} * \cP_{e_i,\ell'+1} \{2\} \to 0 \\
\label{eq:ses6} &0 \to \cP_{e_i,\ell'+1} * \cP_{e_i,\ell} \{-2\} \to  \cP_{e_i,\ell'} * \cP_{e_i,\ell+1} \to \cQ \{-1\} \to 0 
\end{align}
If $\ell > \ell'$ then $\cQ$ is simple. If $\ell = \ell'$ then $\cQ = 0$. 
\end{Corollary}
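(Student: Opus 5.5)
The plan is to transport Lemma \ref{lem:GrG} from $\Gr$ to $\calR$ by applying $\pi^*$, where $\pi:\calR^\cl_{\le 2\omega^\vee_{i,1}} \to \Gr_{\le 2\omega^\vee_{i,1}}$ is the vector bundle of Lemma \ref{lem:S} (taking $k_1=k_2=1$). Set $\cQ := \pi^*(\cQ^{\Gr})$, pushed forward to $\calR$ along the closed embedding $i_{\le 2\omega^\vee_{i,1}}$.

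First, by Corollary \ref{cor:composition} (with $k_1=k_2=1$), each of the four products in \eqref{eq:ses5}–\eqref{eq:ses6} is identified with $\pi^*$ of the corresponding product in \eqref{eq:ses3}–\eqref{eq:ses4}. This identification includes the loop shifts, since the shift conventions of Corollary \ref{cor:composition} are the same on both sides. One point needs a quick check: the twisted product \eqref{eq:twist} changes each product only by a loop shift that depends on the blocks $\alpha_i+\ell\delta$. These shifts agree for all four products appearing in a given sequence (they have the same $\Gr$-components), so the relative shifts $\{1\},\{2\},\{-1\},\{-2\}$ are unaffected.

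Second, $\pi^*$ is exact because $\pi$ is flat, being a vector bundle. Applying it to \eqref{eq:ses3} and \eqref{eq:ses4} therefore gives exact triangles in $\Coh^{\hGO}(\calR)$ of the required shape. To see that these are short exact sequences in $\cKP$, we need all terms to be Koszul-perverse. The middle and right terms are products of objects in $\cKP$, and $*$ is t-exact, so they lie in $\cKP$. For $\pi^*\cQ^{\Gr}$, we repeat the argument of Lemma \ref{lem:extension}, which is valid for any (not necessarily simple) $\cF^\Gr\in\cKP^\Gr$ supported on $\Gr_{\le\lambda^\vee}$ when $\pi$ is a bundle over $\Gr_{\le\lambda^\vee}$. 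In that argument, $\sigma^*(i_2)_*\pi^*\cF^\Gr$ is filtered by copies of $\cF^\Gr\otimes\cV[k]\la -k\ra$, so it is Koszul-perverse. Alternatively, $\pi^*\cQ^{\Gr}$ is the kernel of a map between objects of the heart whose cone is a sheaf, and this would also suffice.

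Third, if $\ell>\ell'$ then $\cQ^{\Gr}$ is simple, and Lemma \ref{lem:extension} applies directly to give that $\pi^*\cQ^{\Gr}$ is simple. If $\ell=\ell'$ then $\cQ^{\Gr}=0$, so $\cQ=0$.

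The main obstacle is the compatibility of the maps rather than of the objects. Corollary \ref{cor:composition} gives isomorphisms of objects, but we need the maps in \eqref{eq:ses5} to be $\pi^*$ of the maps in \eqref{eq:ses3}. The cleanest way to get this is to note that the base-change isomorphism in Corollary \ref{cor:composition} is functorial in the line-bundle data on $\Gr_{e_i}\ttimes\Gr_{e_i}$. The maps in Lemma \ref{lem:GrG} come from maps of sheaves on the convolution variety via $m_{1*}$, namely sections of line bundles as in \cite[Prop.~2.10, 2.17]{CW1}, so they pull back along $\pi'$ compatibly. If that fails, it is enough to define the sequences on $\calR$ by applying $\pi^*$, since only existence of such sequences is claimed.
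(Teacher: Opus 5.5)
Your proposal is correct and is essentially the paper's argument. You apply $\pi^*$ to \eqref{eq:ses3} and \eqref{eq:ses4}, set $\cQ=\pi^*(\cQ^{\Gr})$, identify the products via Corollary~\ref{cor:composition}, and get simplicity from Lemmas~\ref{lem:extension} and~\ref{lem:S}. Two of your side worries are unnecessary. The compatibility of maps is moot, for the reason you already give: only existence of the sequences is claimed. And since $\cQ^{\Gr}$ is always either simple or zero, Lemma~\ref{lem:extension} as stated already shows $\pi^*\cQ^{\Gr}\in\cKP$, so you need neither its extension to non-simple objects nor your vaguer alternative argument.
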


\begin{proof}
Sequences (\ref{eq:ses5}) and (\ref{eq:ses6}) are obtained from (\ref{eq:ses3}) and (\ref{eq:ses4}) by applying the functor $\pi^*$ and setting $\cQ = \pi^*(\cQ^{\Gr})$.  One uses Corollary \ref{cor:composition} to prove that 
$$\cP_{e_i,\ell+1} * \cP_{e_i,\ell'} \cong \pi^*(\cP^{\Gr}_{e_i,\ell+1} * \cP^{\Gr}_{e_i,\ell'}),$$
and Lemmas \ref{lem:extension}, \ref{lem:S} to conclude that the object $\cQ $ is simple. 
\end{proof}

\begin{Lemma}\label{lem:map1}
If $i \to j$ in $E$ and $\ell,\ell' \in \Z$, there are exact sequences
\begin{align}
\label{eq:ses7} & 0 \to \cP_{e_i+e_j, \ell e_i + \ell' e_j} \{-1\} \to \cP_{e_j,\ell'} * \cP_{e_i,\ell} \to \cP_{e_i+e_j,(\ell+1)e_i+(\ell'-1)e_j} [1]\la-1\ra \to 0 \\
\label{eq:ses8} & 0 \to \cP_{e_i+e_j, (\ell+1)e_i+(\ell'-1)e_j} [1]\la-1\ra \{-1\} \to \cP_{e_i, \ell} * \cP_{e_j, \ell'} \to \cP_{e_i+e_j, \ell e_i + \ell' e_j} \to 0 
\end{align}
\end{Lemma}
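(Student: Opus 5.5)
We compute both products $\cP_{e_j,\ell'} * \cP_{e_i,\ell}$ and $\cP_{e_i,\ell} * \cP_{e_j,\ell'}$ directly with the convolution diagram \eqref{eq:conv1}, now with two distinct vertices. We then identify the result with a push-forward of a structure sheaf on a space that sits inside a vector bundle over $\Gr_{e_i+e_j} = \Gr_{e_i}\times\Gr_{e_j}$. Since $i \neq j$, the convolution on the Grassmannian level is trivial: $\Gr_{e_i}\ttimes\Gr_{e_j}\cong \Gr_{e_i+e_j}$ and $m_1$ is an isomorphism. Hence all the interesting behaviour comes from the fibres of $\calR\to\Gr$ in the direction of the arrow $i\to j$.

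\textbf{Step 1: the space $\calS'$.} As in the proof of Lemma \ref{lem:S}, we write $\calS'$ as the fibre product \eqref{eq:local4} and describe it by lattices. Take $(L^{(i)},L^{(j)})$ with $tL_0\subset L\subset L_0$ of codimension one in each of the two slots. The only coupling between the two slots is through the component $x_{ij}:V^{(i)}\to V^{(j)}$.
\begin{itemize}[leftmargin=8mm]
\item For $\cP_{e_i,\ell}*\cP_{e_j,\ell'}$ (first factor at $i$), the conditions we get are $x_{ij}L_0^{(i)}\subset L_0^{(j)}$ and $x_{ij}L^{(i)}\subset L^{(j)}$. Together with the corresponding conditions for the other arrows, this is exactly $\calR^\cl_{e_i+e_j}$.
\item For the opposite order, the condition is $x_{ij}L_0^{(i)}\subset L^{(j)}$, which is a corank-one subbundle. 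The fibre product is no longer classical: there is one excess direction. That direction is the line $\Hom(L_0^{(i)}/L^{(i)},L_0^{(j)}/L^{(j)})$, twisted by the loop and scaling characters.
\end{itemize}

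\textbf{Step 2: the two exact sequences.} Inside $\calR^\cl_{e_i+e_j}$ we have the divisor $D=\{x_{ij}L_0^{(i)}\subset L^{(j)}\}$. It is the zero locus of the section
\[
\bar x_{ij}:\ L_0^{(i)}/L^{(i)}\to L_0^{(j)}/L^{(j)}
\]
of the line bundle $\scrL_i^{-1}\otimes\scrL_j$ (up to a character).
\begin{itemize}[leftmargin=8mm]
\item For \eqref{eq:ses8}: the product $\cP_{e_i,\ell}*\cP_{e_j,\ell'}$ is, up to shift, $\O$ of a space that is either $\calR^\cl_{e_i+e_j}$ itself or a nonreduced/derived thickening along $D$. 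I would get \eqref{eq:ses8} from the standard sequence $0\to \O(-D)\to\O\to\O_D\to0$, or its Koszul analogue for the derived zero locus. Twisting by $\scrL_i^{\ell}\scrL_j^{\ell'}$, the term $\O(-D)$ contributes the twist $(\ell+1)e_i+(\ell'-1)e_j$.
\item For \eqref{eq:ses7}: in the other order, $\calS'$ is the derived zero locus, namely a Koszul complex $[\O(-D)\to\O]$ in which the missing map shows up as $[1]\la-1\ra$. Its two cohomology pieces give the sub $\cP_{e_i+e_j,\ell e_i+\ell'e_j}\{-1\}$ and the quotient $\cP_{e_i+e_j,(\ell+1)e_i+(\ell'-1)e_j}[1]\la-1\ra$.
\end{itemize}
I would then check that both end terms are the simples $\cP_{\uk,\ul}$ of Corollary \ref{cor:simple}, using that $e_i+e_j$ is minuscule and that $\calR^\cl_{e_i+e_j}\to\Gr_{e_i+e_j}$ is a bundle. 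Because the Koszul-perverse heart contains these objects and is closed under extensions, the triangles become short exact sequences in $\cKP$.

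\textbf{Main obstacle.} I expect the bookkeeping of shifts to be the real difficulty. This includes the $\{-1\}$ coming from the loop weight of $\bar x_{ij}$, the $[1]\la-1\ra$ coming from the scaling weight $1$ on $N$ together with the homological shift of the Koszul term, the half-dimension shifts $[\tfrac12\dim\Gr]$ in \eqref{eq:cP}, and the twist \eqref{eq:twist} of the product. It also requires deciding which ordering produces the honest divisor and which produces the derived excess. I would fix the conventions by the rank-one case $a_i=a_j=1$, where all spaces are explicit, and then argue that the general case is obtained by pulling back along the bundle $\pi$, as in Corollary \ref{cor:composition}.
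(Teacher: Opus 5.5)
There is a genuine gap: you have put the excess intersection on the wrong product, and as a result you have no mechanism that produces \eqref{eq:ses8}.

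Write $\calS'=X_1\times_Y X_2$ as in the proof of Lemma \ref{lem:S}, with $Y=\calT\ttimes\Gr$, $X_1=\calR^\cl\ttimes\Gr$ and $X_2=\Gr\ttimes\calR^\cl$. These are sub-bundles of $Y$ over $\Gr_{e_i}\times\Gr_{e_j}$. So $\calS'$ is classical iff $X_1+X_2=Y$, and otherwise the excess is $Y/(X_1+X_2)$.

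For $\cP_{e_j,\ell'}*\cP_{e_i,\ell}$ the middle lattice is $(L_0^{(i)},L^{(j)})$. On the $i\to j$ component, $Y$ already imposes $xL_0^{(i)}\subset L^{(j)}$, and this implies the conditions added by $X_1$ and $X_2$. Every component is transversal, so $\calS'$ is classical and is exactly the divisor $D\subset\calR^\cl_{e_i+e_j}$.

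For $\cP_{e_i,\ell}*\cP_{e_j,\ell'}$ the middle lattice is $(L^{(i)},L_0^{(j)})$. Here $Y$ only imposes $xL^{(i)}\subset L_0^{(j)}$, while $X_1$ and $X_2$ add $xL_0^{(i)}\subset L_0^{(j)}$ and $xL^{(i)}\subset L^{(j)}$. Already for $a_i=a_j=1$ this reads $Y=t^{-1}\calO\supset X_1=X_2=\calO$, which is a one-dimensional excess.

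So the order you call ``exactly $\calR^\cl_{e_i+e_j}$'' is the derived one, and the order you call derived is classical. Your answer for \eqref{eq:ses7} comes out right only because the Koszul complex $[\O(-D)\to\O]$ of the regular section $\bar x_{ij}$ is just $\O_D$. There is no excess there; an excess along $D$ would give four composition factors.

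For \eqref{eq:ses8}, none of your options works:
\begin{itemize}
\item If the product were $\O_{\calR^\cl_{e_i+e_j}}$ up to twist and shift, it would be simple by Corollary \ref{cor:simple}.
\item A thickening along $D$ would create composition factors supported on $D$, whereas both outer terms of \eqref{eq:ses8} have full support.
\item The sequence $0\to\O(-D)\to\O\to\O_D\to0$ only produces $\O_D$. Its Koszul-perverse sub is $\O$ and its quotient is $\O(-D)[1]$, which is \eqref{eq:ses7} again, not the reversed extension.
\end{itemize}

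The missing step is the excess-intersection computation. One has $(\calS')^\cl=X_1\cap X_2=\calR^\cl_{e_i+e_j}$, and the excess is a line bundle $E$ on all of $\calR^\cl_{e_i+e_j}$. The paper identifies it as follows. Let $Z\subset Y$ be the vanishing locus of $x\colon L_0^{(i)}/L^{(i)}\to t^{-1}L_0^{(j)}/t^{-1}L^{(j)}$. One checks that $X_1,X_2\subset Z$ meet in the expected codimension inside $Z$, so $E=Y/Z\cong (L_0^{(j)}/L^{(j)})\otimes(L_0^{(i)}/L^{(i)})^\vee\la 1\ra\{1\}$. Applying $m_*\cl'_*$ to the triangle $\O_{(\calS')^\cl}\otimes E^\vee[1]\to\O_{\calS'}\to\O_{(\calS')^\cl}$ then gives \eqref{eq:ses8}. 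Since $E\cong\O(D)$ up to characters, the two products have the same composition factors up to shifts, but in opposite order.

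Finally, your proposed reduction to rank one ``by pulling back along $\pi$ as in Corollary \ref{cor:composition}'' is not available here. These products are not pulled back from $\Gr$, where $\cP^{\Gr}_{e_i}*\cP^{\Gr}_{e_j}$ is simple, so the lattice computation must be done in general rank, as the paper does.
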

\begin{proof}
To simplify notation we construct these sequences when $\ell=\ell'=0$ (the general case is very similar). To obtain (\ref{eq:ses7}) consider the following diagram similar to the one from (\ref{eq:conv1})
\begin{equation}\label{eq:conv2}
\begin{split}
\xymatrix{
& \calR_{e_j}^\cl \ttimes \calR_{e_i}^\cl \ar[d]^{\cl} & \ar[l]_-{d'} \calS' \ar[d]^{\cl'} & \\
\calR_{e_j} \ar[d]^{i_2} & \calR_{e_j} \ttimes \calR_{e_i} \ar[l] \ar[d]^{i_2 \ttimes \id} & \calS \ar[d] \ar[l]_-{d} \ar[r]^-{m} & \calR_{e_i+e_j} \ar[d] \\
\calT_{e_j} & \calT_{e_j} \ttimes \calR_{e_i} \ar[l] & \Gr_{e_j} \ttimes \calR_{e_i} \ar[l]_{\td} \ar[r]^{\tm} & \calT_{e_i+e_j} 
}
\end{split}
\end{equation}
Arguing as in \eqref{P*P}, we get
$$\cP_{e_j} * \cP_{e_i} \cong m_* \cl'_* (\O_{\calS'}) [\frac12(a_i+a_j-2)].$$
As in the proof of Lemma \ref{lem:S}, we have
$$\calS' \cong (\calR_{e_j}^{\cl} \ttimes \Gr_{e_i}) \times_{(\calT_{e_j} \ttimes \Gr_{e_i})} (\Gr_{e_j} \ttimes \calR_{e_i}^\cl)$$
and consequently 
\begin{align}\label{eq:S'}
\begin{split}
\calS' \cong \{(L^{(i)} \overset{1}{\subset} L_0^{(i)} \,,\, L^{(j)} \overset{1}{\subset} L_0^{(j)} \,,\,x) \,;\,
& xL_0^{(k)} \subset L_0^{(k')} \text{ if $k \to k'$ }, xL^{(i)} \subset L^{(j)}, \\
& xL_0^{(k)} \subset L^{(i)} \text{ if $k \to i$ }, \\
& xL_0^{(k)} \subset L^{(j)} \text{ if $k \to j$} \}
\end{split}
\end{align}
On the other hand, $\calR^\cl_{e_i+e_j}$ is defined by the same conditions as $\calS'$ from \eqref{eq:S'} but {\it without} the condition $xL_0^{(i)} \subset L^{(j)}$. Thus the map $m \circ \cl': \calS' \to \calR_{e_i+e_j}$ factors as in immersion  through $\calR_{e_i+e_j}^\cl$, 
with image the subset
$$\{(L,x)\in\calR_{e_i+e_j}^\cl\,;\,xL_0^{(i)} \subset L^{(j)}\}.$$ 
This locus has codimension one, and is carved out by the vanishing of the map
$$x: L_0^{(i)}/L^{(i)} \to (L_0^{(j)}/L^{(j)}) \la 1 \ra \{-1\}.$$
Here the degrees $\la 1 \ra$ and $ \{-1\}$ come from the fact that the scaling and loop actions on $N$ have the weights 1 and -1
according to the conventions in \S\ref{sec:CB}.
Thus we obtain the following short exact sequence 
$$0 \to \O_{\calR^\cl_{e_i+e_j}} \otimes (L_0^{(i)}/L^{(i)}) \otimes (L_0^{(j)}/L^{(j)})^\vee \la -1 \ra \{1\} \to \O_{\calR^\cl_{e_i+e_j}} \to 
m_* \cl'_* (\O_{\calS'}) \to 0$$
which we can rewrite as a triangle
$$\O_{\calR^\cl_{e_i+e_j}} \to m_* \cl'_* (\O_{\calS'}) \to \O_{\calR^\cl_{e_i+e_j}} \otimes (L_0^{(i)}/L^{(i)}) \otimes (L_0^{(j)}/L^{(j)})^\vee [1] \la -1 \ra \{1\}.$$
Shifting by the appropriate amount recovers (\ref{eq:ses7}). 

To obtain (\ref{eq:ses8}) we have a similar diagram as in (\ref{eq:conv2}) except with the roles of $i$ and $j$ exchanged. We have
$$\calS' \cong (\calR_{e_i}^{\cl} \ttimes \Gr_{e_j}) \times_{(\calT_{e_i} \ttimes \Gr_{e_j})} (\Gr_{e_i} \ttimes \calR_{e_j}^\cl)$$
but this time there is excess intersection. 
To see this denote 
$$X_1 = \calR_{e_i}^{\cl} \ttimes \Gr_{e_j}
,\quad
X_2 = \Gr_{e_i} \ttimes \calR_{e_j}^\cl
,\quad
Y = \calT_{e_i} \ttimes \Gr_{e_j}.$$
As bundles over $\Gr_{e_i} \times \Gr_{e_j}$ these have the following descriptions
$$
X_1 = \left\{
\vcenter{\xymatrix{
L^{(i)} \ar@{}[r]|-{\subset} & L_0^{(i)} \ar[d]^x \\
L^{(j)} \ar@{}[r]|-{\subset} & L_0^{(j)} 
}}
\right\} \ \ \ \
X_2 = \left\{
\vcenter{\xymatrix{
L^{(i)} \ar[d]^x \ar@{}[r]|-{\subset} & L_0^{(i)} \\
L^{(j)} \ar@{}[r]|-{\subset} & L_0^{(j)}
}} \right\} \ \ \ \ 
Y = \left\{
\vcenter{\xymatrix{
L^{(i)} \ar[dr]^x \ar@{}[r]|-{\subset} & L_0^{(i)} \\
L^{(j)} \ar@{}[r]|-{\subset} & L_0^{(j)}
}} \right\}
$$
where we omit the bundles involving vertices other than $i$ and $j$ in order to simplify the exposition (they do not play a significant role)
Inside $Y$ one also has the locus 
$$Z = \left\{
\vcenter{\xymatrix{
L^{(i)} \ar[dr]^x \ar@{}[r]|-{\subset} & L_0^{(i)} \ar[dr]^x & \\
L^{(j)} \ar@{}[r]|-{\subset} & L_0^{(j)} \ar@{}[r]|-{\subset} & t^{-1}L^{(j)} 
}} \right\}
$$
One can check that $X_1,X_2 \subset Z$ and that both have codimension $a_j-1$. On the other hand,
\begin{equation}\label{eq:X1X2}
X_1 \cap X_2 = \left\{
\vcenter{\xymatrix{
L^{(i)} \ar[d]^x \ar@{}[r]|-{\subset} & L_0^{(i)} \ar[d]^x \\
L^{(j)} \ar@{}[r]|-{\subset} & L_0^{(j)}
}}
\right\} 
\subset
\left\{
\vcenter{\xymatrix{
L^{(i)} \ar[dr]^x \ar@{}[r]|-{\subset} & L_0^{(i)} \ar[d]^x \\
L^{(j)} \ar@{}[r]|-{\subset} & L_0^{(j)}
}}
\right\} \subset Z
\end{equation}
with both inclusions of codimension $a_j-1$. In particular $X_1$ and $X_2$ intersect inside $Z$ in the expected codimension. 
Thus the excess intersection of $X_1 \times_Y X_2$ is given by the quotient $Y/Z$. To identify this quotient more explicitly we note that on $Y$ there exists a natural map 
$$x: L_0^{(i)}/L^{(i)} \to t^{-1}L_0^{(j)}/t^{-1}L^{(j)} \la 1 \ra \{-1\}.$$
which vanishes exactly along $Z$. It follows that 
$$Y/Z \cong (L_0^{(j)}/L^{(j)}) \otimes (L_0^{(i)}/L^{(i)})^\vee \la 1 \ra \{1\}$$ 
where we use that the product with $t$ yields an isomorphism
$$t^{-1}L_0^{(j)}/t^{-1}L^{(j)} \cong L_0^{(j)}/L^{(j)} \{2\}.$$
Thus, we have the following triangle
$$\O_{(\calS')^\cl} \otimes (L_0^{(i)}/L^{(i)}) \otimes (L_0^{(j)}/L^{(j)})^\vee [1] \la -1 \ra \{-1\} \to  \O_{\calS'} \to \O_{(\calS')^{\cl}} $$
of sheaves on the derived scheme $\calS'$. On the other hand, as can be seen from \eqref{eq:X1X2}, we have 
$$(\calS')^\cl = X_1 \cap X_2 = \calR_{e_i+e_j}^{\cl}.$$ 
Thus, applying the functor $m_* \cl'_*$ gives us an exact triangle
$$\O_{\calR^\cl_{e_i+e_j}} (L_0^{(i)}/L^{(i)}) \otimes (L_0^{(j)}/L^{(j)})^\vee [1] \la -1 \ra \{-1\} \to m_* \cl'_* (\O_{\calS'}) 
\to \O_{\calR^\cl_{e_i+e_j}}.$$
Shifting appropriately this recovers (\ref{eq:ses8}). 
\end{proof}

\begin{Corollary}\label{cor:map2}
If $i,j \in I$ with $\la \alpha_i,\alpha_j \ra = -1$ we have 
$$\cP_{e_i} \n \cP_{e_j,-1} \cong 
\begin{cases}
\cP_{e_i,-1} {\Delta} \cP_{e_j} [-1] \la 1 \ra \{1\} & \text{ if $i \to j$ } \\
\cP_{e_i,-1} {\Delta} \cP_{e_j} [1] \la -1 \ra \{1\} & \text{ if $j \to i$ }.
\end{cases}$$
\end{Corollary}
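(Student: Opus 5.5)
The plan is to read off both sides directly from the short exact sequences of Lemma \ref{lem:map1}, applied with suitable values of $\ell,\ell'$. Recall from the appendix (Proposition \ref{prop:lsubquotient} and the surrounding discussion) that for real simples $M,N$ in a category with renormalized $r$-matrices, $M \n N$ is the simple head of $M*N$ and $M \Delta N$ is its simple socle. The objects $\cP_{e_i,\ell}$ and $\cP_{e_j,\ell'}$ are real simples by Corollary \ref{cor:real}. So whenever $M*N$ has length two with simple composition factors $A$ and $B$ fitting in a sequence $0\to A\to M*N\to B\to 0$, the head is $B$ and the socle is $A$. The simplicity of the outer terms in \eqref{eq:ses7} and \eqref{eq:ses8} comes from Corollary \ref{cor:simple}, since they are of the form $\cP_{\uk,\ul}$ with $\uk$ minuscule.

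\emph{Case $i \to j$.} Apply \eqref{eq:ses8} with $(\ell,\ell')=(0,-1)$:
$$0\to \cP_{e_i+e_j,e_i-2e_j}[1]\la-1\ra\{-1\}\to \cP_{e_i}*\cP_{e_j,-1}\to \cP_{e_i+e_j,-e_j}\to 0.$$
This gives $\cP_{e_i}\n\cP_{e_j,-1}\cong \cP_{e_i+e_j,-e_j}$. Next apply \eqref{eq:ses8} with $(\ell,\ell')=(-1,0)$:
$$0\to \cP_{e_i+e_j,-e_j}[1]\la-1\ra\{-1\}\to \cP_{e_i,-1}*\cP_{e_j}\to \cP_{e_i+e_j,-e_i}\to 0.$$
This gives $\cP_{e_i,-1}\Delta\cP_{e_j}\cong \cP_{e_i+e_j,-e_j}[1]\la-1\ra\{-1\}$. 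Comparing the two identifications yields the first formula, $\cP_{e_i}\n\cP_{e_j,-1}\cong \cP_{e_i,-1}\Delta\cP_{e_j}[-1]\la1\ra\{1\}$.

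\emph{Case $j\to i$.} Here we use \eqref{eq:ses7} with the roles of $i$ and $j$ exchanged, which reads
$$0\to\cP_{e_i+e_j,\ell e_j+\ell' e_i}\{-1\}\to \cP_{e_i,\ell'}*\cP_{e_j,\ell}\to \cP_{e_i+e_j,(\ell+1)e_j+(\ell'-1)e_i}[1]\la-1\ra\to0.$$
\begin{itemize}
\item Taking $(\ell',\ell)=(0,-1)$ gives the head $\cP_{e_i}\n\cP_{e_j,-1}\cong \cP_{e_i+e_j,-e_i}[1]\la-1\ra$.
\item Taking $(\ell',\ell)=(-1,0)$ gives the socle $\cP_{e_i,-1}\Delta\cP_{e_j}\cong\cP_{e_i+e_j,-e_i}\{-1\}$.
\end{itemize}
Comparing these yields the second formula, $\cP_{e_i}\n\cP_{e_j,-1}\cong \cP_{e_i,-1}\Delta\cP_{e_j}[1]\la-1\ra\{1\}$.

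The main point needing care is the shift bookkeeping. Lemma \ref{lem:map1} was proved only for $\ell=\ell'=0$, with the general case declared very similar. I would check that twisting by the line bundles $\scrL_i^{\ell}\scrL_j^{\ell'}$ produces exactly the indices shown, with the extra $\pm1$ on $\ell$ coming from the factor $(L_0^{(i)}/L^{(i)})\otimes(L_0^{(j)}/L^{(j)})^\vee$. I would also check that the twisted product \eqref{eq:twist} introduces the same loop shift on both sides, since the two products lie in the same block. A secondary point is that head and socle are as claimed. This is automatic here: the middle terms have simple head and socle, so the length-two sequences cannot split.
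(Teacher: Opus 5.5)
Your proposal is correct and is essentially the paper's proof. The paper likewise reads off the head of $\cP_{e_i}*\cP_{e_j,-1}$ and the socle of $\cP_{e_i,-1}*\cP_{e_j}$ from the sequences of Lemma \ref{lem:map1}, using \eqref{eq:ses8} when $i\to j$ and \eqref{eq:ses7} with $i,j$ swapped when $j\to i$, and then compares the two; your remarks on non-splitting and shift bookkeeping only make explicit what the paper leaves implicit.
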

\begin{proof}
We use the exact sequences from Lemma \ref{lem:map1}. If $i \to j$ then we have 
$$\cP_{e_i} \n \cP_{e_j, -1} \cong \cP_{e_i+e_j,-e_j} \cong \cP_{e_i,-1} {\Delta} \cP_{e_j} [-1] \la1\ra \{1\}.$$
If $j \to i$ then we have
$$\cP_{e_i} \n \cP_{e_j, -1} \cong \cP_{e_i+e_j,-e_i} [1]\la-1 \ra \cong \cP_{e_i,-1} {\Delta} \cP_{e_j} [1]\la-1\ra \{1\}.$$
\end{proof}


\begin{Lemma}\label{lem:V}
For any $i \in I$ and $\ell \in \Z $, there is a simple object $\cQ \in \cKP$
and exact triangles
\begin{align}\label{eq:VP}
\begin{split}
\cQ \to \cV_i * \cP_{e_i, \ell} \to \cP_{e_i, \ell+1} \{1\},\\
\cP_{e_i, \ell+1} \{-1\} \to \cP_{e_i, \ell} * \cV_i \to \cQ 
\end{split}
\end{align}
\end{Lemma}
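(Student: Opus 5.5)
Our plan is to reduce to the affine Grassmannian, as in Corollaries \ref{cor:composition} and \ref{cor:GrG}, and compute there by a direct lattice argument. First observe that $\cV_i$ is supported on $\calR_{\u0}$, over the base point of $\Gr$, where $\calR^\cl_\u0 = N_\O$. Convolution with $\cV_i$ therefore amounts to tensoring with the tautological bundle $L^{(i)}_0/tL^{(i)}_0$ (suitably pulled back) on the first or on the second lattice in the convolution diagram \eqref{eq:conv1}. Concretely, for $\cV_i * \cP_{e_i,\ell}$ the bundle is the one attached to the outer lattice $L_0^{(i)}/tL_0^{(i)}$. For $\cP_{e_i,\ell}*\cV_i$ it is the one attached to $L^{(i)}/tL^{(i)}$, where $L^{(i)}\overset{1}{\subset}L_0^{(i)}$ is the lattice parametrized by $\Gr_{e_i}$. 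No derived excess intersection arises, because $\cV_i$ is a vector bundle on $\calR^\cl_\u0$ and the relevant fiber square is of the trivial type in Lemma \ref{lem:S} (with $k_1=0$ or $k_2=0$). Thus, up to the shift $[\frac12\dim\Gr_{e_i}]$, both products are $i_{e_i*}$ of $\O_{\calR^\cl_{e_i}}\otimes \scrL_i^{\ell}$ tensored with a rank $a_i$ bundle, with a loop twist $\{-1\}$.

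Next, on $\Gr_{e_i}$ we have the tautological short exact sequences
\[
0\to L^{(i)}/tL_0^{(i)} \to L_0^{(i)}/tL_0^{(i)} \to L_0^{(i)}/L^{(i)}\to 0,\qquad 0\to tL_0^{(i)}/tL^{(i)}\to L^{(i)}/tL^{(i)}\to L^{(i)}/tL_0^{(i)}\to 0 .
\]
Here $L_0^{(i)}/L^{(i)}$ is the line giving $\scrL_i\{1\}$, and $tL_0^{(i)}/tL^{(i)}\cong (L_0^{(i)}/L^{(i)})\{2\}$ up to sign conventions. The rank $a_i-1$ bundle $\cK = L^{(i)}/tL_0^{(i)}$ appears in both sequences. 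Pulling back to $\calR^\cl_{e_i}$, tensoring with $\scrL_i^\ell$, pushing forward and shifting, we obtain two exact triangles. The first has the form $\cQ\to \cV_i*\cP_{e_i,\ell}\to \cP_{e_i,\ell+1}\{1\}$, and the second the form $\cP_{e_i,\ell+1}\{-1\}\to \cP_{e_i,\ell}*\cV_i\to\cQ$, where
\[
\cQ = i_{e_i*}\big(\O_{\calR^\cl_{e_i}}\otimes \cK\otimes\scrL_i^{\ell}\big)\{-1\}[\tfrac12\dim\Gr_{e_i}].
\]
We must check that the loop shifts match exactly. This requires tracking the weight-two action of $t$ and the twist in the definition of $\cV_i$. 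We must also check that the twisted product \eqref{eq:twist} contributes no extra shift, since $\cV_i$ lies in the block $\cKP_0$ up to a $\delta$-component, so that the pairing term vanishes. This bookkeeping is routine but must be done carefully.

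The remaining, and main, step is to show that $\cQ$ is simple, and in particular Koszul-perverse. Our approach is to write $\cQ=\pi^*(\cQ^{\Gr})$, where $\pi:\calR^\cl_{e_i}\to\Gr_{e_i}$ is the vector bundle of Lemma \ref{lem:S} and $\cQ^\Gr$ is the pushforward of the irreducible homogeneous bundle $\cK\otimes\scrL_i^\ell$ on the minuscule orbit $\Gr_{e_i}\cong \mathbb{P}(V^{(i)})$. Because $\Gr_{e_i}$ is closed and $G_\O$ acts through $GL(V^{(i)})$ with $\cK$ irreducible equivariant (the tangent-type bundle twisted by a line bundle), $\cQ^\Gr$ is a simple object of $\cKP^\Gr$; this is the same reasoning that gives Corollary \ref{cor:simple}. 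Lemma \ref{lem:extension} then shows that $\cQ$ is simple in $\cKP$. Simplicity of $\cQ$ also forces both triangles to be short exact sequences in $\cKP$, since all three terms are then Koszul-perverse.
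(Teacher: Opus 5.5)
Your proposal is correct and follows essentially the same route as the paper: both use the two tautological short exact sequences $0\to L^{(i)}/tL_0^{(i)}\to L_0^{(i)}/tL_0^{(i)}\to L_0^{(i)}/L^{(i)}\to 0$ and $0\to tL_0^{(i)}/tL^{(i)}\to L^{(i)}/tL^{(i)}\to L^{(i)}/tL_0^{(i)}\to 0$ on $\calR_{e_i}$, then push forward and shift, with $\cQ$ the correspondingly twisted and shifted bundle $L^{(i)}/tL_0^{(i)}$ (the paper is equally loose about the loop-shift bookkeeping you defer). Your extra step, proving $\cQ$ simple by applying Lemma \ref{lem:extension} to the irreducible homogeneous bundle on the minuscule orbit $\Gr_{e_i}$, fills in a point the paper leaves implicit.
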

\begin{proof}
The short exact sequences
\begin{align*}
& 0 \to \O_{\calR_{e_i}} \otimes (L^{(i)}/tL_0^{(i)}) \to \O_{\calR_{e_i}} \otimes (L_0^{(i)}/tL_0^{(i)}) \to  \O_{\calR_{e_i}} \otimes  (L_0^{(i)}/L^{(i)}) \to 0 \\
& 0 \to \O_{\calR_{e_i}} \otimes (tL_0^{(i)}/tL^{(i)}) \to \O_{\calR_{e_i}} \otimes (L^{(i)}/tL^{(i)}) \to \O_{\calR_{e_i}} \otimes (L^{(i)}/tL_0^{(i)}) \to 0
\end{align*}
yield
\begin{align*}
& 0 \to \cQ[-d]\{1\} \to \cV_i * \cP_{e_i, 0}[-d] \to \cP_{e_i,1}[-d] \{1\}\to 0 \\
& 0 \to  \cP_{e_i,1}[-d] \{-1\} \to \cP_{e_i, 0}*\cV_i[-d]\{1\} \to \cQ[-d] \to 0
\end{align*}
where $d=\frac12(a_i-1)$ and
$$\cQ \cong  \O_{\calR_{e_i}} \otimes (L^{(i)}/tL_0^{(i)})[d]\{-1\}.$$
Recall that $t$ yields an isomorphism
\begin{align}\label{loop-t}(L_0^{(i)}/L^{(i)}) \{-2\} \cong (tL_0^{(i)}/tL^{(i)}).\end{align}
Shifting these by $[d] \{-1\}$ recovers the triangles in (\ref{eq:VP}) when $\ul=0$. For more general $\ul$ we twist by the appropriate line bundle and argue as above.
\end{proof}

\subsection{Duals}\label{sec:duals}
In this section we gather some formulas for the duals of some objects in the category $\cKP$. By \cite[Thm.~6.1]{CW2}, the monoidal categories $\Coh^\hGO(\calR)$ and $\cKP$ are rigid with respect to the $*$-product. We denote ${}^\L \cF$ and ${}^\R\cF$ the left and right duals of an object $\cF$. By loc.~cit. we have
\begin{equation}\label{eq:duals}
{}^\L \cF \cong \D_1(\cF^*) ,\quad {}^\R\cF \cong \D_1(\cF)^*.
\end{equation}
where $\D_1$ and $(-)^*$ are involutions on $\Coh^\hGO(\calR)$ which preserve $\cKP$. To define these involutions following \cite[\S 5,6]{CW2} denote
$$\pi_1, \pi_2: \calR/\hGO \to N_\O/\hGO ,\quad \iota: \calR/\hGO \to \calR/\hGO$$
the first and second projections from \eqref{R/G} and the involution induced by exchanging the two factors from \eqref{R/G}. Then $\bbD_1 = \calHom(-, \omega_1)$ and $(-)^*=\iota_* \cong \iota^*$ where
$$\omega_1 = (\pi_1)^!(\calO_{N_\O/\hGO}) \in \IndCoh^{\hG_\calO}(\calR).$$

\begin{Lemma}\label{lem:*}
For any $\uk, \ul \in \Z I$ we have an isomorphism $(\cP_{\uk,\ul})^* \cong \cP_{-\uk,\ul}$.
\end{Lemma}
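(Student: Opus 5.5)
The plan is to compute $(-)^* = \iota_*$ directly on the explicit sheaf $\cP_{\uk,\ul}$. The involution $\iota$ of $\calR/\hGO$ swaps the two factors of \eqref{R/G}. In lattice terms, a point $(L,x)$ with $x(L)\subset L$ and $x(L_0)\subset L_0$ is sent, after translating by a representative $g\in G_\calK$ with $L=gL_0$, to the point $(g^{-1}L_0, g^{-1}xg)$. On $\Gr$ this is the map $[g]\mapsto[g^{-1}]$ on $G_\calO\backslash G_\calK/G_\calO$. It therefore sends the orbit of $\lambda^\vee$ to the orbit of $-w_0\lambda^\vee$. In particular it exchanges $\Gr_{\uk}$ and $\Gr_{-\uk}$, since $\omega^\vee_{i,-k}=-w_0\omega^\vee_{i,k}$.

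The first step is to check that $\iota$ restricts to an isomorphism of classical loci $\calR^\cl_{\uk}\cong\calR^\cl_{-\uk}$, compatible with the vector bundle structures over $\Gr_{\uk}$ and $\Gr_{-\uk}$. This is immediate from the symmetric description \eqref{i1i2}: both conditions $x(L)\subset L$ and $x(L_0)\subset L_0$ are preserved when the roles of $L$ and $L_0$ are swapped. Consequently $\iota_*(i_{\uk})_*\O_{\calR^\cl_\uk}\cong (i_{-\uk})_*\O_{\calR^\cl_{-\uk}}$, up to equivariance twists. The cohomological shift $[\frac12\dim\Gr_\uk]$ matches because $\dim\Gr_\uk=\dim\Gr_{-\uk}$.

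The second step is to track the line bundle. For $k_i\ge0$ the point $L^{(i)}\subset L^{(i)}_0$ has codimension $k_i$, and $\iota$ sends it to $L_0^{(i)}\subset g^{-1}L_0^{(i)}$. Under this map the quotient $L_0^{(i)}/L^{(i)}$ is carried by $g^{-1}$ to $g^{-1}L_0^{(i)}/L_0^{(i)}$, which is exactly the quotient defining $\scrL_i$ on $\Gr_{-k_ie_i}$. Hence $\iota^*\scrL_i\cong\scrL_i$ as $G_\calO$-equivariant line bundles, including the loop twist $\{-1\}$, since $g$ commutes with loop rotation up to the equivariant structure. This gives $\ell_i\mapsto\ell_i$, and the case $k_i\le0$ is symmetric.

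The main obstacle is bookkeeping of the $\hGO$-equivariant structure, namely possible loop or scaling twists acquired by $\iota$ through the $\bbC^\times$ weights on $N$ and $t$. If needed, I would first verify the statement on $\cP_{\pm e_i}$ and the unit, and then note that $(-)^*$ is an anti-monoidal involution. This determines the twists, which must vanish because $(-)^*$ fixes the unit $1=\cP_{\u0}$ and preserves the blocks $\cKP_\beta$ up to the sign on the $\Z\Phi$ component. The result then holds up to no shift, as claimed.
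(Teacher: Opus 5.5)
Your direct computation is correct and is essentially the argument of \cite[Lem.~8.18]{CW2}, to which the paper defers. You observe that $\iota$ sends $(gL_0,x)\mapsto(g^{-1}L_0,g^{-1}\cdot x)$, that it exchanges $\calR^\cl_{\uk}/\hGO$ and $\calR^\cl_{-\uk}/\hGO$, and that $g^{-1}$ identifies $\det(L_0^{(i)}/L^{(i)})$ with $\det(g^{-1}L_0^{(i)}/L_0^{(i)})$ compatibly with the swapped $G_\calO$-actions and loop rotation.

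The fallback in your last paragraph is unnecessary, and on its own it would not rule out a block-dependent loop twist. Because $\iota$ is an automorphism of the stack $\calR/\hGO$ compatible with both $\bbC^\times$-characters, $\iota_*\O\cong\O$ with no twist, so the only bookkeeping is the line-bundle identification you already give.
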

\begin{proof}
This follows similarly to \cite[Lem.~8.18]{CW2}. 
\end{proof}

\begin{Lemma}\label{lem:D1}
For $k \in\bbN$ and $\ell \in \Z$ we have  isomorphisms
\begin{align*}
\D_1(\cP_{ke_i,\ell}) \dcong (\psi_i^+)^k * \cP_{ke_i,-\ell + m_i^+}
,\quad
\D_1(\cP_{-ke_i,\ell}) \dcong (\psi_i^-)^{-k} * \cP_{-ke_i,-\ell - m_i^-}
\end{align*}
where $m_i^\pm$ were introduced in \eqref{m}.
\end{Lemma}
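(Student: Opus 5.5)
We compute $\D_1(\cP_{ke_i,\ell}) = \calHom(\cP_{ke_i,\ell},\omega_1)$ directly. Because $\cP_{ke_i,\ell}$ is a line bundle pushed forward along the closed embedding $i_{ke_i}:\calR^\cl_{ke_i}\to\calR$, Grothendieck duality gives
\[
\D_1(\cP_{ke_i,\ell}) \cong (i_{ke_i})_*\big(i_{ke_i}^!\,\omega_1 \otimes \scrL_i^{-\ell}\big)[-\tfrac12\dim\Gr_{ke_i}].
\]
The whole proof therefore reduces to identifying the relative dualizing object $i_{ke_i}^!\omega_1 = (\pi_1\circ i_{ke_i})^!\O_{N_\O/\hGO}$ as an explicit shifted line bundle on $\calR^\cl_{ke_i}$. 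We then read off its determinant factors in terms of $\scrL_i$ and $\det(L_0^{(j)}/tL_0^{(j)})$.

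To identify this dualizing object, we factor $\pi_1\circ i_{ke_i}$ as follows. First, $\calR^\cl_{ke_i}$ is a vector bundle over $\Gr_{ke_i}$ cut out inside $\Gr_{ke_i}\times N_\O$ by the conditions of \eqref{i1i2}, as in the proof of Lemma~\ref{lem:S}. Second, we project $\Gr_{ke_i}\times N_\O \to N_\O$. Using the lattice description \eqref{Gr} of $\Gr_{ke_i}$, the contributions are these:
\begin{enumerate}[label=(\roman*),leftmargin=8mm]
\item $\omega_{\Gr_{ke_i}}$, a power of $\scrL_i$ times determinants of $L_0^{(i)}/tL_0^{(i)}$;
\item the determinant of the normal bundle of $\calR^\cl_{ke_i}\subset \Gr_{ke_i}\times N_\O$.
\end{enumerate}
The normal bundle in (ii) is given by the conditions $x L_0^{(j)}\subset L^{(i)}$ for arrows $j\to i$. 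Its fibre is $\bigoplus_{j\to i}\Hom(L_0^{(j)}/tL_0^{(j)}, L_0^{(i)}/L^{(i)})$, suitably twisted by $\la 1\ra\{-1\}$. Its determinant is therefore $\scrL_i^{\sum_{j\to i}a_j}\otimes\bigotimes_{j\to i}\det(L_0^{(j)}/tL_0^{(j)})^{-k}$. Combining this with $\omega_{\Gr_{ke_i}}\simeq \scrL_i^{-a_i}\otimes \det(L_0^{(i)}/tL_0^{(i)})^{k}$, the exponent of $\scrL_i$ becomes $-a_i+\sum_{j\to i}a_j = m_i^+$, accounting for the index $-\ell+m_i^+$.

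The remaining determinants of $L_0^{(i)}/tL_0^{(i)}$ and $L_0^{(j)}/tL_0^{(j)}$ are pulled back from the unit component $\calR_{\u0}$. Since $\phi_j$ is convolution-invertible and central, tensoring by these determinants is the same as convolving with $\phi_i^{k}*\bigast_{j\to i}\phi_j^{-k} = (\psi_i^+)^k$. The cohomological shifts $[\cdot]$ and scaling shifts $\la\cdot\ra$ built into $\phi_i$ must then cancel against the codimension shift and the Koszul twists from the normal bundle. Loop shifts are ignored, since the claim is only up to $\dcong$. This gives the first isomorphism.

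The second isomorphism is proved the same way using $\Gr_{-ke_i}$. Now the relevant conditions are $xL^{(i)}\subset L_0^{(j)}$ for arrows $i\to j$, so $m_i^-$ appears with the opposite sign and $\psi_i^-$ appears with exponent $-k$. Alternatively, we could combine Lemma~\ref{lem:*} with the relation between $\D_1$ and $(-)^*$ coming from \eqref{eq:duals}.

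The main obstacle is the bookkeeping of the second step: tracking all signs of the $\la\cdot\ra$ and $[\cdot]$ twists, including the half-integer shift $\frac12\dim\Gr_{\pm ke_i}$, so that the result matches $\psi_i^\pm$ exactly up to a loop shift. We would check this first in the case $k=1$ for a quiver with a single arrow, and compare with \cite[Lem.~8.18]{CW2}.
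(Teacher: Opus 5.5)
\textbf{Same approach as the paper, with one correction to the shift bookkeeping.}

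Your argument matches the paper's proof step by step:
\begin{itemize}
\item Grothendieck duality for the closed embedding.
\item Factoring $\pi_1$ through $\Gr_{ke_i}\times N_\O$, and identifying $\calR^\cl_{ke_i}$ as the zero locus of $x:\bigoplus_{j\to i}L_0^{(j)}/tL_0^{(j)}\to (L_0^{(i)}/L^{(i)})\la 1\ra\{-1\}$.
\item Applying $f^!=f^*\otimes\det(N[-1])$ together with the formula for $\omega_{\Gr_{ke_i}}$.
\item Reading off the exponent $-a_i+\sum_{j\to i}a_j=m_i^+$.
\end{itemize}

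\textbf{The shifts do not all cancel.} The $[-1]\la 1\ra$ twists coming from the normal bundle do match those built into $\bigast_{j\to i}\phi_j^{-k}$. However, the factor $\det(L_0^{(i)}/tL_0^{(i)})^k$ comes from $\omega_{\Gr_{ke_i}}$ with no such twist. Rewriting it as $\phi_i^k$ therefore leaves a residual Koszul shift $[-a_ik]\la a_ik\ra$. The paper's own computation ends with exactly this: $(\psi_i^+)^k*\cP_{ke_i,m_i^+}[-a_ik]\la a_ik\ra\{-a_ik\}$. So what you actually prove is the isomorphism up to $\ddcong$, that is, up to loop and Koszul shifts. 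This is enough for every later use, which only needs $\Lambda$, $\b$, and K-theory classes up to sign and powers of $q$.

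\textbf{Minor point on centrality.} $\phi_j$ is not central; Lemma~\ref{lem:B1} shows it only $q$-commutes. You don't need centrality, though. Tensoring with $\det(L_0^{(j)}/tL_0^{(j)})$ is left convolution with $\phi_j$, up to its built-in shift. That is exactly the form $(\psi_i^+)^k*(-)$ appearing in the statement.
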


\begin{proof}
We write out only the first isomorphism as the proof of the second is similar. We take $\ul = \u0$ to simplify notation -- the general case just involves taking the dual of the corresponding bundle and thus changing $\ul$ to $\-\ul$. With $i_1$ as in \eqref{def-R}.we have 
\begin{align*}
\D_1(\cP_\uk) 
&\cong \cHom\Big(\cl_* (\O_{\calR_{\uk}^\cl} [\frac12 k(a_i-k)] \,, i_1^! (\omega_{\Gr_{\uk}})\Big) \\
&\cong \cl_* \cHom\Big(\O_{\calR_{\uk}^\cl} , \cl^! i_1^! \omega_{\Gr_{\uk}}\Big) 
[-\frac12 k(a_i-k)] 
\end{align*}
The composed map $i_1 \circ \cl:\calR_{ke_i}\to\Gr_{ke_i}\times N_\calO$ is the obvious inclusion
$$\begin{array}{c}
\left\{(L,x)\in\Gr\times N_\calK\,;\,tL_0 \subset L \subset L_0,\,
\dim(L_0/L)=ke_i,\,
x(L)\subset L,\,
x(L_0)\subset L_0\right\}\\[-1.5ex]
\rotatebox{270}{$\subseteq$}\\
\left\{(L,x)\in\Gr\times N_\calK\,;\,tL_0 \subset L \subset L_0,\,
\dim(L_0/L)=ke_i,\,
x(L_0)\subset L_0\right\}
\end{array}$$
Since $L^{(j)}=L_0^{(j)}$ for $i\neq j$, the image of $i_1 \circ \cl$ is the vanishing locus of the obvious map of bundles
\begin{equation}\label{eq:map1}
x: \bigoplus_{j \to i \in E} L_0^{(j)}/tL_0^{(j)} \to (L_0^{(i)}/L^{(i)} )\la 1 \ra \{-1\}
\end{equation}
Recall that for a closed immersion $f:X\to Y$ of smooth schemes the pullback  and the exceptional pullback of quasi-coherent
sheaves are related by the following formula 
$$f^!(-)\cong f^*(-)\otimes \det(N_{X/Y}[-1]).$$
This means that 
\begin{align*}
\cl^! i_1^! (-)
\cong&\cl^* i_1^*(-) \otimes \det 
\Big(\big(\bigoplus_{j \to i \in E} L_0^{(j)}/tL_0^{(j)}\big)^\vee\otimes 
(L_0^{(i)}/L^{(i)} )[-1]\la 1\ra\{-1\}\Big) \\ 
\cong &\cl^* i_1^*(-) \otimes \det((L_0^{(i)}/L^{(i)}) \{-1\})^{\sum_{j \to i \in E} a_j})^{-k}\otimes \bigotimes_{j \to i \in E} \det((L_0^{(j)}/tL_0^{(j)})[1]\la -1 \ra)^{-k}.
\end{align*}
Moreover, by \cite[(3.10)]{CW1} we have
$$\omega_{\Gr_{ke_i}} \cong \O_{\Gr_{ke_i}} \otimes \det(L_0^{(i)}/L^{(i)})^{-a_i} \otimes \det(L_0^{(i)}/tL_0^{(i)})^k [k(a_i-k)].$$
Putting this together we get 
\begin{align*}
\D_1(\cP_{ke_i}) 
\cong & \cl_* \Big(\O_{\calR_{ke_i}^\cl} \otimes \det((L_0^{(i)}/L^{(i)}) \{-1\})^{m_i^+}\Big) 
\otimes\bigotimes_{j \to i \in E} \det((L_0^{(j)}/tL_0^{(j)})[1]\la -1 \ra)^{-k}  \\
& \otimes \det((L_0^{(i)}/tL_0^{(i)}) [1]\la -1 \ra)^{k} [\frac12 k(a_i-k)] [-a_ik] \la a_ik \ra \{-a_ik\} \\
\cong &
 \Big(\phi_i*\bigast_{j \to i \in E} \phi_j^{-1} \Big)^{k} * \cP_{ke_i,m_i^+} [-a_ik] \la a_ik \ra \{ -a_ik \}.
\end{align*}
\end{proof}

\begin{Corollary}\label{cor:duals}
If $k \in\bbN$ and $\ell \in \Z $, then we have  isomorphisms
\begin{align*}
{}^\L \cP_{ke_i,\ell} \dcong (\psi_i^-)^{-k} * \cP_{-ke_i, -\ell - m_i^-}
,\quad
{}^\R\cP_{ke_i,\ell} \dcong \cP_{-ke_i, -\ell + m_i^+} * (\psi_i^+)^k.
\end{align*}
In particular, we have ${}^\R\cP_{ke_i,\ell} \dcong {}^\L \cP_{ke_i,\ell-m_i} *( \psi_i)^k$.
\end{Corollary}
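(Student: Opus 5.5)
The plan is to combine the two formulas in \eqref{eq:duals} with Lemma \ref{lem:*}, Lemma \ref{lem:D1}, and the behaviour of $(-)^*$ and $\D_1$ on the invertible objects $\phi_j$.

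\emph{Left dual.} Since ${}^\L\cP_{ke_i,\ell}\cong\D_1(\cP_{ke_i,\ell}^*)$, Lemma \ref{lem:*} first turns this into $\D_1(\cP_{-ke_i,\ell})$. The second formula of Lemma \ref{lem:D1} then gives
$$\D_1(\cP_{-ke_i,\ell})\dcong(\psi_i^-)^{-k}*\cP_{-ke_i,-\ell-m_i^-},$$
which is exactly the claimed formula. No further work is needed here.

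\emph{Right dual.} Here ${}^\R\cP_{ke_i,\ell}\cong\D_1(\cP_{ke_i,\ell})^*$. The first formula of Lemma \ref{lem:D1} gives $\D_1(\cP_{ke_i,\ell})\dcong(\psi_i^+)^k*\cP_{ke_i,-\ell+m_i^+}$, so the remaining task is to apply $(-)^*$. The involution $(-)^*=\iota_*$ comes from swapping the two factors in \eqref{R/G}, so it is an anti-monoidal equivalence: $(\cF*\cG)^*\cong\cG^**\cF^*$. Lemma \ref{lem:*} gives $(\cP_{ke_i,-\ell+m_i^+})^*\cong\cP_{-ke_i,-\ell+m_i^+}$.

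\emph{Main obstacle.} The step I expect to be hardest is computing $(\psi_i^+)^*$, which I claim satisfies $(\phi_j)^*\cong\phi_j$. The $\phi_j$ are supported on $\calR_\u0^\cl$, with $L=L_0$ on both sides. The swap $\iota$ preserves this locus and the tautological bundle $L_0/tL_0$, so it fixes $\det(L_0^{(j)}/tL_0^{(j)})$. The shifts $[1]\la-1\ra$ are untouched by $\iota$; this agrees with the case $\uk=0$ of Lemma \ref{lem:*}. Hence $(\psi_i^+)^*\cong\psi_i^+$ up to loop shift, and since the $\phi_j$ commute the order of the factors is irrelevant. Putting this together gives
$${}^\R\cP_{ke_i,\ell}\dcong\cP_{-ke_i,-\ell+m_i^+}*(\psi_i^+)^k.$$

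\emph{Final statement.} I compare the two formulas with $\ell$ replaced by $\ell-m_i$ in the left dual:
$${}^\L\cP_{ke_i,\ell-m_i}\dcong(\psi_i^-)^{-k}*\cP_{-ke_i,-\ell+m_i-m_i^-}=(\psi_i^-)^{-k}*\cP_{-ke_i,-\ell+m_i^+},$$
using $m_i=m_i^++m_i^-$. Multiplying on the right by $\psi_i^k=(\psi_i^-)^k*(\psi_i^+)^k$ requires moving $\psi_i^k$ past $\cP_{-ke_i,-\ell+m_i^+}$. The $\phi_j$ are invertible objects on $\calR_\u0$, and convolving with one only tensors by a determinant line bundle of $L_0$, which is canonical and so commutes with $\cP$ up to loop shift; this gives $\phi_j*\cP\dcong\cP*\phi_j$. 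I would check this by computing the convolution directly, as for $\cV_i$ in Lemma \ref{lem:V}, keeping track only of loop shifts. One caveat: if commuting $\phi_j$ past $\cP$ also produced Koszul shifts, the result would hold only up to $\ddcong$, so I need to confirm these shifts cancel. Once the commutation is established, the factors $(\psi_i^-)^{\pm k}$ cancel and the result is ${}^\R\cP_{ke_i,\ell}$ up to $\dcong$.
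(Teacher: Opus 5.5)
Your proof is correct and is essentially the paper's argument, which simply combines \eqref{eq:duals} with Lemmas \ref{lem:*} and \ref{lem:D1}; you have made explicit what the paper leaves implicit, namely that $(-)^*$ is anti-monoidal and fixes the $\phi_j$. For the final claim, the commutation of $(\psi_i^-)^k$ past $\cP_{-ke_i,-\ell+m_i^+}$ up to a pure loop shift is exactly Lemma \ref{lem:B1}, which is proved without using this corollary. There the mechanism is the determinant identity relating $\det(L^{(i)}/tL^{(i)})$ and $\det(L_0^{(i)}/tL_0^{(i)})$ on $\calR^\cl_{\uk}$, not any canonicity of the line bundle, and no Koszul shifts arise.
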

\begin{proof}
This follows from (\ref{eq:duals}) and Lemmas \ref{lem:*} and \ref{lem:D1}. 
\end{proof}

\subsection{Invariants $\Lambda$ and $\b$}
Let us compute the invariants $\Lambda$ and $\b$ between some objects in \eqref{eq:cP} and \eqref{eq:V-phi-psi}.

\begin{Lemma}\label{lem:B1}
If $i\neq j$ in $I$ and $\uk, \ul,\ul'\in\bbZ I$ with $-a_i \le k_i \le a_i$, then we have 
\begin{align*}
& \Lambda(\phi_i\,,\, \cP_{\uk,\ul}) = 2k_i  = - \Lambda(\cP_{\uk,\ul}\,,\, \phi_i) \\
& \Lambda(\psi_i^-\,,\, \cP_{\uk,\ul}) = 2k_i - \sum_{i \to j} 2k_j = - \Lambda(\cP_{\uk,\ul}\,,\,\psi_i^-) \\
& \Lambda(\psi_i^+\,,\, \cP_{\uk,\ul}) = 2k_i - \sum_{j \to i} 2k_j = - \Lambda(\cP_{\uk,\ul}\,,\, \psi_i^+).
\end{align*}
In particular, the objects $\phi_i$ and $\psi_i^\pm$ all $q$-commute with $\cP_{\uk,\ul}$. 

\end{Lemma}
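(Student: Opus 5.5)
The plan is to reduce everything to the first identity, for $\phi_i$, and then obtain the $\psi_i^\pm$ formulas by additivity. Since $\phi_i$ is invertible and supported on $\calR_{\u0}$, the product $\phi_i * \cF$ is just $\cF$ tensored with a line bundle, and likewise for $\cF*\phi_i$. Concretely, $\phi_i * \cF$ is $\cF\otimes\det((L^{(i)}_0/tL^{(i)}_0)[1]\la-1\ra)$, the determinant of the standard lattice quotient. On the other hand $\cF * \phi_i$ is $\cF\otimes\det((L^{(i)}/tL^{(i)})[1]\la-1\ra)$, the same construction applied to the moving lattice $L$. For $\cF=\cP_{\uk,\ul}$ I compare the two line bundles on $\calR^\cl_\uk$. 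Suppose $k_i\ge 0$. The sequence $0\to L^{(i)}/tL^{(i)}_0\to L_0^{(i)}/tL_0^{(i)}\to L^{(i)}_0/L^{(i)}\to 0$, together with its analogue for $tL_0^{(i)}/tL^{(i)}$ and the identification \eqref{loop-t}, shows that the two determinants differ by $\det(L_0^{(i)}/L^{(i)})\otimes\det(tL_0^{(i)}/tL^{(i)})^{-1}$. This ratio is a trivial line bundle twisted by $\{2k_i\}$, since $\det(L_0^{(i)}/L^{(i)})$ has rank $k_i$. The case $k_i\le0$ is handled identically. Hence $\phi_i*\cP_{\uk,\ul}\cong \cP_{\uk,\ul}*\phi_i\{\pm 2k_i\}$, and this isomorphism is canonical.

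Next I identify this isomorphism with the renormalized $r$-matrix, which gives the value of $\Lambda$ with its sign. I would use the factorization description \eqref{eq:ren}. Over $X^2$, $(\phi_i)_z\odot(\cP_{\uk,\ul})_z$ is again $\cP$ twisted by a line bundle, now the determinant of the lattice at the point $z_1$. The comparison of the two orderings off the diagonal is the identity. Near the diagonal, the identity extends to a map that vanishes to order $k_i$ along $\Delta$: it is the determinant of the natural map between the two lattices, a section of a line bundle of degree $k_i$ in $(z_1-z_2)$. This makes $\bfR^\ren$ an isomorphism up to the twist $\{2k_i\}$, since $z$ has loop weight $2$. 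Because the renormalized map restricted to the diagonal is an isomorphism, $\La(\phi_i,\cP_{\uk,\ul})=2k_i$, and the reverse order gives $-2k_i$. Alternatively, once I know $\phi_i*\cP\dcong\cP*\phi_i$ with $\phi_i$ invertible, I can invoke the general fact (Appendix \ref{app:A}) that $\La(M,N)=-\La(N,M)$ when $M,N$ $q$-commute and the $r$-matrix is an isomorphism. The degree is then read off from the shift computed above, remembering the twist \eqref{eq:twist}. I need to check that the twist contributes nothing, which holds because $\phi_i\in\cKP_{a_i\delta}$ has no $\Phi$-component, so one factor in \eqref{eq:twist} vanishes.

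For $\psi_i^\pm$ I use additivity: $\La(M*M',N)=\La(M,N)+\La(M',N)$ whenever $M$ or $M'$ $q$-commutes with $N$ and all are invertible or real. Applying this to $\psi_i^- = \phi_i*\bigast_{i\to j}\phi_j^{-1}$ gives the stated sums, using $\La(\phi_j^{-1},\cP)=-\La(\phi_j,\cP)$ for inverses. The $q$-commutation claim then follows from the explicit isomorphisms. I expect the main obstacle to be pinning down the sign and normalization of the shift: the order of vanishing along the diagonal must be correctly matched with the loop-grading convention $\{1\}$ and with the twisted product.
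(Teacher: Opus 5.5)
Your proposal is correct and matches the paper's proof. You compare $\det(L_0^{(i)}/tL_0^{(i)})$ with $\det(L^{(i)}/tL^{(i)})$ on $\calR^\cl_\uk$ to get $\phi_i*\cP_{\uk,\ul}\cong\cP_{\uk,\ul}*\phi_i\{2k_i\}$; your own ratio computation fixes the sign you left as $\pm$. You then identify this isomorphism with the $r$-matrix, which the paper does via the one-dimensionality of $\Hom(M*N,N*M)$ in Proposition \ref{prop:homs1}, and obtain the $\psi_i^\pm$ formulas by additivity (Proposition \ref{prop:Lambda*}). Your alternative factorization argument via vanishing to order $(z_1-z_2)^{k_i}$ along the diagonal, and your check that the twist \eqref{eq:twist} contributes nothing, are valid extras that the paper's argument does not need.
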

\begin{proof}
For notational convenience assume $\ul=0$ and $k_i \ge 0$. Both $\phi_i * \cP_\uk$ and $\cP_\uk * \phi_i$ are supported over 
$$\calR_\uk^\cl = \{(L,x)\in\calR^\cl\,;\,tL_0^{(i)} \subset L^{(i)} \subset L_0^{(i)}\,,\,\dim(L_0^{(i)}/L^{(i)}) = k_i \}.$$ 
Up to a common shift they are given by line bundles $\det(L_0^{(i)}/tL_0^{(i)})$ and $\det(L^{(i)}/tL^{(i)})$ respectively. 
Now consider the following standard short exact sequences of vector bundles on $\calR_\uk^\cl$
\begin{align*}
& 0 \to L^{(i)}/tL_0^{(i)} \to L_0^{(i)}/tL_0^{(i)} \to L_0^{(i)}/L^{(i)} \to 0 \\
& 0 \to L_0^{(i)}/L^{(i)} \to t^{-1}L^{(i)}/L^{(i)} \to t^{-1}L^{(i)}/L_0^{(i)} \to 0.
\end{align*}
The corresponding determinantal line bundles satisfy 
\begin{align*}
\det(L_0^{(i)}/tL_0^{(i)}) &\cong \det(L^{(i)}/tL_0^{(i)}) \otimes \det(L_0^{(i)}/L^{(i)}) \\
\det(t^{-1}L^{(i)}/L^{(i)}) &\cong \det(L_0^{(i)}/L^{(i)}) \otimes \det(t^{-1}L^{(i)}/L_0^{(i)}).
\end{align*}
Further, we have
$$t^{-1}L^{(i)}/L^{(i)} \cong L^{(i)}/tL^{(i)} \{2\}
,\quad
t^{-1}L^{(i)}/L_0^{(i)} \cong L^{(i)}/tL_0^{(i)} \{2\}.$$
We deduce that
\begin{align*}
\cP_\uk * \phi_i
&\cong\det(t^{-1}L^{(i)}/L^{(i)})\{-2a_i\}\\
&\cong \det(L_0^{(i)}/L^{(i)}) \otimes \det(t^{-1}L^{(i)}/L_0^{(i)})\{-2a_i\}\\
&\cong \det(L_0^{(i)}/L^{(i)}) \otimes \det(L^{(i)}/tL_0^{(i)})\{-2k_i\}\\
&\cong\det(L_0^{(i)}/tL_0^{(i)})\{-2k_i\}\\
&\cong\phi_i * \cP_\uk\{-2k_i\}
\end{align*}
Hence Corollary \ref{cor:real}, Formula \eqref{rMN} and Proposition \ref{prop:homs1} imply that
$$ \Lambda(\phi_i\,,\, \cP_{\uk}) = 2k_i  = - \Lambda(\cP_{\uk}\,,\, \phi_i) $$
The subsequent equalities in the lemma are immediate consequences, by Proposition \ref{prop:Lambda*}. 
If $\ul\neq 0$ and $k_i \le 0$ the computation is similar.
For instance $\ul\neq 0$ contributes to a common power of $\det(L_0/L)$
in $\phi_i * \cP_{\uk,\ul}$ and $\cP_{\uk,\ul} * \phi_i$ which does not change their relative position.
The last claim is Proposition \ref{prop:qcom}.
\end{proof}

\begin{Proposition}\label{prop:B2}
For any $\ell,\ell' \in \Z $ we have 
\begin{enumerate}[label=$\mathrm{(\alph*)}$,leftmargin=8mm]
\item
$\Lambda(\cP_{e_i,\ell}\,, \cP_{e_j,\ell'}) = 
\begin{cases} 
2(\ell-\ell') & \text{ if $i=j$ and $\ell \ge \ell'$ } \\
-2 & \text{ if $i=j$ and $\ell< \ell'$ } \\
- \la \alpha_i,\alpha_j \ra & \text{ if $i \ne j$. }
\end{cases}$ 
\item
$\b(\cP_{e_i,\ell}\,, \cP_{e_j,\ell'}) =             
\begin{cases}
0 & \text{ if $i=j$ and $\ell = \ell'$ } \\
|\ell-\ell'|-1 & \text{ if $i=j$ and $\ell \ne \ell'$ } \\
- \la \alpha_i,\alpha_j \ra & \text{ if $i \ne j$. }
\end{cases}$
\end{enumerate}
\end{Proposition}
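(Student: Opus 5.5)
We split into the cases $i=j$ and $i\neq j$, and in each case read off $\Lambda$ from explicit short exact sequences already established. Throughout we use the appendix facts on renormalized $r$-matrices for real simples. For real simple $M,N$ the map $\r_{M,N}$ is, up to scalar, the unique nonzero map $M*N\to N*M\{\Lambda(M,N)\}$ of minimal degree; its image is the simple head $M\n N$. We also use the formula $\b(M,N)=\tfrac12(\Lambda(M,N)+\Lambda(N,M))$ from \eqref{rMN}. Part (b) is then a formal consequence of part (a): for $i=j$, $\ell>\ell'$ it gives $\tfrac12(2(\ell-\ell')-2)=\ell-\ell'-1$; for $\ell=\ell'$ it gives $0$; for $i\neq j$ it gives $-\la\alpha_i,\alpha_j\ra$.

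For $i=j$ and $\ell=\ell'$, $\cP_{e_i,\ell}$ is real by Corollary \ref{cor:real}, so $\Lambda=0$ (Proposition \ref{prop:qcom}). For $\ell>\ell'$ we use Corollary \ref{cor:GrG}, applied with $\ell-1\ge \ell'$.
\begin{itemize}
\item The sequence \eqref{eq:ses5} gives a surjection $\cP_{e_i,\ell}*\cP_{e_i,\ell'}\to \cP_{e_i,\ell-1}*\cP_{e_i,\ell'+1}\{2\}$.
\item The sequence \eqref{eq:ses6} gives an injection of $\cP_{e_i,\ell'+1}*\cP_{e_i,\ell-1}$ into $\cP_{e_i,\ell'}*\cP_{e_i,\ell}$, up to shift.
\end{itemize}
Iterating $\ell-\ell'$ times should move the pair to $\cP_{e_i,\ell'}*\cP_{e_i,\ell}$. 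The iteration passes through the middle configuration, whose quotient $\cQ$ vanishes when the two labels meet; at that point one factor of the shift is swallowed. Collecting degrees should produce a nonzero composite
\[\cP_{e_i,\ell}*\cP_{e_i,\ell'}\to \cP_{e_i,\ell'}*\cP_{e_i,\ell}\{2(\ell-\ell')\}.\]

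The cleanest route is to compute the $r$-matrix on $\cKP^{\Gr}$ and transport it by $\pi^*$ using Corollary \ref{cor:composition}. On the Grassmannian side, $\Gr_{\le 2\omega^\vee_{i,1}}$ is explicit, the convolutions are pushforwards of line bundles from $\Gr_{e_i}\ttimes\Gr_{e_i}$, and the $r$-matrix is the natural restriction map. One then shows this composite is the renormalized $r$-matrix. By degree minimality it suffices to see that its image is the simple head, that is, the composite does not vanish. Hom-space vanishing in lower degrees comes from simplicity of heads, Proposition \ref{prop:homs1}.

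The value $-2$ for $\ell<\ell'$ I would not derive by a second composite. Instead I would use the general identity $\Lambda(M,N)+\Lambda(N,M)=2\b(M,N)$ together with the duality formula $\Lambda(M,N)=\Lambda(N,{}^\R M)$-type relations and Corollary \ref{cor:duals}. An alternative is to check directly that \eqref{eq:ses6} exhibits a degree $-2$ nonzero map in the reversed direction whose image is the head $\cQ$. For the case $\ell-\ell'=1$, this is exactly the map $\cP_{e_i,\ell'}*\cP_{e_i,\ell}\to\cQ\{-1\}\hookrightarrow \cP_{e_i,\ell}*\cP_{e_i,\ell'}\{-2\}$ obtained by composing \eqref{eq:ses6} and \eqref{eq:ses5}. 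I expect the main obstacle to be checking that such composites are genuinely of minimal degree, i.e.\ equal $\r$, rather than merely existing.

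For $i\ne j$ with $i,j$ not adjacent, the supports are in different $GL$ factors with no arrows between them. The product is then an exterior product, and the objects commute with $\Lambda=0=-\la\alpha_i,\alpha_j\ra$. For adjacent $i\to j$, Lemma \ref{lem:map1} gives a surjection $\cP_{e_i,\ell}*\cP_{e_j,\ell'}\to\cP_{e_i+e_j,\ell e_i+\ell' e_j}$ onto the head. Reading \eqref{eq:ses7} shows this head embeds into $\cP_{e_j,\ell'}*\cP_{e_i,\ell}$ with shift $\{1\}$. This yields $\Lambda(\cP_{e_i,\ell},\cP_{e_j,\ell'})=1$, provided the twisted product \eqref{eq:twist} has been accounted for. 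The twist is precisely what symmetrizes the computation between the two orientations, so the case $j\to i$ gives $1$ as well. Again the delicate point is bookkeeping of $\{\cdot\}$ shifts under the twist \eqref{eq:twist}.
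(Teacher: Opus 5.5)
The case $i=j$, $\ell<\ell'$ has a genuine gap. Your primary route is circular. The identity $\Lambda(M,N)+\Lambda(N,M)=2\b(M,N)$ needs $\b$, which is exactly what part (b) is deduced from. The duality rewriting $\Lambda(\cP_{e_i,\ell},\cP_{e_i,\ell'})=\Lambda(\cP_{e_i,\ell'},{}^\R\cP_{e_i,\ell})$, with ${}^\R\cP_{e_i,\ell}\dcong\cP_{-e_i,-\ell+m_i^+}*\psi_i^+$, reduces you to a pairing $\Lambda(\cP_{e_i,\cdot},\cP_{-e_i,\cdot})$. The paper only obtains that pairing afterwards (Proposition~\ref{prop:B3}), using the very statement you are proving.

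Your alternative is the right idea, but you have the cases backwards. Apply Corollary~\ref{cor:GrG} with $(\ell'-1,\ell)$ in place of $(\ell,\ell')$. Then \eqref{eq:ses6} and \eqref{eq:ses5} give $\cP_{e_i,\ell}*\cP_{e_i,\ell'}\twoheadrightarrow\cQ\{-1\}\hookrightarrow\cP_{e_i,\ell'}*\cP_{e_i,\ell}\{-2\}$. This composite is nonzero exactly when $\cQ$ is simple, i.e.\ when $\ell'-\ell\ge 2$. In the case of difference one, which is the case you single out, $\cQ=0$ and your composite is zero. There you must instead use that $\cQ=0$ turns \eqref{eq:ses5} into an isomorphism $\cP_{e_i,\ell+1}*\cP_{e_i,\ell}\cong\cP_{e_i,\ell}*\cP_{e_i,\ell+1}\{2\}$, which gives $\Lambda(\cP_{e_i,\ell},\cP_{e_i,\ell+1})=-2$. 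Part (b) for $\ell\ne\ell'$ uses this value, so it inherits the gap.

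Two smaller points. First, for adjacent vertices you need not invoke the twist \eqref{eq:twist} to handle the other orientation. With $i\to j$ fixed, Lemma~\ref{lem:map1} also gives $\cP_{e_j,\ell'}*\cP_{e_i,\ell}\twoheadrightarrow\cP_{e_i+e_j,(\ell+1)e_i+(\ell'-1)e_j}[1]\la-1\ra\hookrightarrow\cP_{e_i,\ell}*\cP_{e_j,\ell'}\{1\}$, with the surjection from \eqref{eq:ses7} and the injection from \eqref{eq:ses8}. So $\Lambda=1$ in both orders, and relabelling covers $j\to i$. As written, your appeal to the twist is an assertion, not an argument.

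Second, the ``obstacle'' you anticipate, that a composite might not equal $\r$, disappears by Proposition~\ref{prop:homs1}(b). The graded space $\Hom(M*N,N*M)$ is spanned by $\r_{M,N}$, so any nonzero homogeneous map $M*N\to N*M\{d\}$ forces $d=\Lambda(M,N)$. Your chain for $\ell>\ell'$ is nonzero because it is surjections followed by injections. No transport of $r$-matrices from $\cKP^{\Gr}$ via $\pi^*$ is needed, and such compatibility is not established in the paper anyway. Also, in that chain every step shifts by exactly $\{2\}$, including the one where $\cQ=0$ and the map is an isomorphism; nothing is swallowed.
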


\begin{proof}
If $i=j$ and $\ell +1< \ell'$ then Corollary \ref{cor:GrG} yields maps
$$\cP_{e_i, \ell} * \cP_{e_i, \ell'} \to \cQ \{-1\} \to \cP_{e_i, \ell'} * \cP_{e_i, \ell} \{-2\}$$
with $\calQ\neq 0$.
Hence Corollary \ref{cor:real}, Formula \eqref{rMN} and Proposition \ref{prop:homs1} imply that
$$\Lambda(\cP_{e_i,\ell}\,, \cP_{e_i,\ell'})  = -2.$$

If $\ell +1=\ell'$ then Corollary \ref{cor:GrG} yields an isomorphism
$$\cP_{e_i, \ell'} * \cP_{e_i, \ell} \cong \cP_{e_i, \ell} * \cP_{e_i, \ell'} \{2\}$$
Hence for the same reason as above we have
$$\Lambda(\cP_{e_i,\ell}\,, \cP_{e_i,\ell'})  = -2.$$

If $\ell \ge \ell'$ then again from Corollary \ref{cor:GrG} we have a sequence of maps
$$\cP_{e_i, \ell} * \cP_{e_i, \ell'} \to \cP_{e_i, \ell-1} * \cP_{e_j, \ell'+1} \{2\} \to \dots \to \cP_{e_i, \ell'} *
 \cP_{e_i, \ell} \{2(\ell-\ell')\}.$$
Up to the midpoint these maps are surjective and thereafter they are injective. 
This implies that the composition above is nonzero.
We deduce as before that 
$$\Lambda(\cP_{e_i,\ell}\,, \cP_{e_i,\ell'}) = 2(\ell-\ell').$$

If $\la \alpha_i, \alpha_j \ra = -1$ and $ i\to j \in E$, then Lemma \ref{lem:map1} yields maps
\begin{align*}
& \cP_{e_i,\ell} * \cP_{e_j, \ell'} \to \cP_{e_i+e_j, \ell e_i + \ell' e_j} \to \cP_{e_j, \ell'} * \cP_{e_i,\ell} \{1\} \\
&  \cP_{e_j,\ell'}* \cP_{e_i,\ell} \to  \cP_{e_i+e_j,(\ell+1)e_i+(\ell'-1)e_j} [1]\la-1\ra \to \cP_{e_i,\ell} * \cP_{e_j, \ell'} \{1\}
\end{align*}
Both these compositions are nonzero since they are the composition of a surjective with an injective map. 
This implies that $\Lambda(\cP_{e_i,\ell}\,, \cP_{e_j,\ell'}) = 1 = \Lambda(\cP_{e_j,\ell'}\,, \cP_{e_i,\ell})$. 

If $\la \alpha_i, \alpha_j \ra = 0$ then it is easy to see that $\cP_{e_i,\ell} * \cP_{e_j,\ell'} \cong \cP_{e_j,\ell'} * \cP_{e_i,\ell}$. This implies that $\Lambda(\cP_{e_i,\ell}\,, \cP_{e_j,\ell'})=0$. Part (a) is proved.
Part (b) follows immediately from part (a). 
\end{proof}

\begin{Proposition}\label{prop:B3}
The following equalities hold for $i,j \in I$ and $ \ell,\ell'\in\bbZ$. 
\begin{enumerate}[label=$\mathrm{(\alph*)}$,leftmargin=8mm]
\item If $i \ne j$ then
$$\Lambda(\cP_{e_i,\ell}\,,\cP_{-e_j,\ell'}) = - \Lambda(\cP_{-e_j,\ell'}\,,\cP_{e_i,\ell}) = 
\begin{cases}
0 & \text{ if $\la \alpha_i, \alpha_j \ra = 0$ } \\
-1 & \text{ if $i \to j$ } \\
1 & \text{ if $j \to i$ }
\end{cases}$$
In particular, $\b(\cP_{e_i,\ell}\,,\cP_{-e_j,\ell'}) = 0$. 
\item If $m_i < 0$ then 
$$\b(\cP_{e_i,\ell}, \cP_{-e_i,\ell'}) =
\begin{cases}
0 & \text{ if $m_i^+ < \ell+\ell' < -m_i^-$} \\
-\ell - \ell' + m_i^+ + 1 & \text{ if $\ell + \ell' \le m_i^+$ } \\
\ell + \ell' + m_i^- + 1 & \text{ if $- m_i^- \le \ell + \ell'$ }
\end{cases}$$
\item If $m_i \ge 0$ then 
$$\b(\cP_{e_i,\ell}, \cP_{-e_i,\ell'}) =
\begin{cases}
m_i+2 & \text{ if $- m_i^- \le \ell+\ell' \le m_i^+$} \\
-\ell - \ell' + m_i^+ + 1 & \text{ if $\ell + \ell' < - m_i^-$ } \\
\ell + \ell' + m_i^- + 1 & \text{ if $m_i^+ < \ell + \ell'$ }
\end{cases}$$
\end{enumerate}
\end{Proposition}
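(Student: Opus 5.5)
Part (a) should go exactly like the $i \ne j$ case of Proposition \ref{prop:B2}. If $\la \alpha_i,\alpha_j\ra = 0$, the supports $\calR_{e_i}$ and $\calR_{-e_j}$ involve unrelated vertices, so $\cP_{e_i,\ell} * \cP_{-e_j,\ell'} \cong \cP_{-e_j,\ell'} * \cP_{e_i,\ell}$ and $\Lambda = 0$.

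If $i \to j$ (or $j\to i$), we redo the convolution computation of Lemma \ref{lem:map1} with $\Gr_{-e_j}$ in place of $\Gr_{e_j}$. Here $\Gr_{e_i}\ttimes\Gr_{-e_j}$ is just $\Gr_{e_i}\times\Gr_{-e_j}$, since the lattices live at different vertices. We compare the two fiber products $\calS'$ describing $\cP_{e_i,\ell}*\cP_{-e_j,\ell'}$ and $\cP_{-e_j,\ell'}*\cP_{e_i,\ell}$. In one order the map $x:L_0^{(i)}\to L_0^{(j)}$ picks up a condition cut out by a section of a line bundle such as $(L_0^{(i)}/L^{(i)})^\vee\otimes(L'^{(j)}/L_0^{(j)})\la1\ra\{-1\}$. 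In the other order that condition is absent, or there is an excess bundle of rank one.

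I expect both products to be supported on $\calR_{e_i-e_j}^\cl$ and to differ by a single line bundle twist. The key point is that $\calR_{\omega^\vee_{i,1}+\omega^\vee_{j,-1}}$ is a minuscule orbit, so both products should be invertible twists of the same simple object. That gives $\cP_{e_i,\ell}*\cP_{-e_j,\ell'}\dcong \cP_{-e_j,\ell'}*\cP_{e_i,\ell}$, with the loop shift $\mp1$ read off from the weight of $x$. By Proposition \ref{prop:homs1} and \eqref{rMN}, $q$-commuting real simples have $\Lambda$ equal to that shift, and antisymmetry gives $\b=0$. The sign depends on the orientation, as in the statement. The delicate part is tracking the $\{\pm1\}$ conventions together with the twisted product \eqref{eq:twist}; I would fix these by comparison with Corollary \ref{cor:map2}.

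For (b) and (c), write $\b(M,N)=\tfrac12(\Lambda(M,N)+\Lambda(N,M))$. I would use rigidity to pass to duals: $\Lambda(M,N)=\Lambda(N,{}^\R M)$-type identities (Appendix, Proposition \ref{prop:Lambda*} and duality properties) together with Corollary \ref{cor:duals}. This rewrites ${}^\R\cP_{e_i,\ell}\dcong\cP_{-e_i,-\ell+m_i^+}*\psi_i^+$ and ${}^\L\cP_{e_i,\ell}\dcong(\psi_i^-)^{-1}*\cP_{-e_i,-\ell-m_i^-}$. Since $\psi_i^\pm$ $q$-commute with everything with explicit $\Lambda$ (Lemma \ref{lem:B1}), the pairing $\Lambda(\cP_{e_i,\ell},\cP_{-e_i,\ell'})$ becomes $\Lambda$ between two objects $\cP_{-e_i,\cdot}$, shifted by terms $2k_i-\sum 2k_j$ coming from Lemma \ref{lem:B1}. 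Proposition \ref{prop:B2}(a), in its mirror version for $\cP_{-e_i,\cdot}$ (same proof via the second isomorphism in Corollary \ref{cor:composition} and the analogue of Corollary \ref{cor:GrG}), evaluates that pairing piecewise, with breakpoint at $\ell'$ versus $-\ell+m_i^+$, that is, $\ell+\ell'$ versus $m_i^+$. Symmetrically, $\Lambda(\cP_{-e_i,\ell'},\cP_{e_i,\ell})$ is computed via ${}^\L$ with breakpoint at $\ell+\ell'$ versus $-m_i^-$.

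Summing the two piecewise-linear expressions and halving should give the stated formulas. The three regimes arise according to whether $m_i^+<-m_i^-$ (that is, $m_i<0$) or not, which is why the two cases split. The main obstacle is bookkeeping: getting the exact constants, namely the contributions $\pm2$ from $\psi_i^\pm$ via Lemma \ref{lem:B1} and the $-2$ versus $2(\ell-\ell')$ branch of Proposition \ref{prop:B2}, so that the middle value comes out as $0$ or $m_i+2$. I would check this against the $A_1$ case ($m_i^\pm=-a_i$), where the answer should match the known $\b$ for $\Gr_{GL_n}$ from \cite{CW1}.
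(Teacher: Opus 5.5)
Part (a) has a genuine gap; parts (b) and (c) follow the paper's argument.

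\textbf{Part (a).} Your key point is not a valid principle: minusculeness of $\omega^\vee_{i,1}+\omega^\vee_{j,-1}$ does not force the two products to be twists of one simple object. The coweight $\omega^\vee_{i,1}+\omega^\vee_{j,1}$ is equally minuscule, yet Lemma \ref{lem:map1} shows $\cP_{e_i}*\cP_{e_j}$ has length two for adjacent $i,j$.

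What actually decides $q$-commutation is whether the derived fibre product $\calS'$ is transverse with classical locus $\calR^\cl_{e_i-e_j}$ in \emph{both} orders. The mechanism you describe for that computation points the wrong way. An extra divisorial condition, or a rank-one excess bundle, in one order (as in Lemma \ref{lem:map1}) would make that product supported on a divisor or of length two, which forces $\b\neq 0$. That is the opposite of what you want.

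In fact the conditions the two factors impose concern disjoint sets of arrows:
\begin{itemize}
\item for $\cP_{e_i}*\cP_{-e_j}$, the arrows out of $i$ and the arrows out of $j$;
\item for $\cP_{-e_j}*\cP_{e_i}$, the arrows into $j$ and the arrows into $i$.
\end{itemize}
So both intersections are transverse and equal to $\calR^\cl_{e_i-e_j}$.

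More importantly, even granting $q$-commutation, the content of (a) is the value $\mp1$, and you never determine it. The relative loop shift between the two orders cannot be read off from the weight of $x$ alone; it involves the normalizations hidden in the convolution and the twist \eqref{eq:twist}. Corollary \ref{cor:map2} involves only $\cP_{e_i}$ and $\cP_{e_j,-1}$, so it cannot calibrate the sign for $\cP_{-e_j,\ell'}$.

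\textbf{The fix.} Treat (a) by the same duality reduction you use for (b), which is what the paper does. Write $\cP_{-e_j,\ell'}\dcong{}^\R\cP_{e_j,-\ell'+m_j^+}*(\psi_j^+)^{-1}$ and $\cP_{-e_j,\ell'}\dcong\psi_j^-*{}^\L\cP_{e_j,-\ell'-m_j^-}$ (Corollary \ref{cor:duals}). Then Propositions \ref{prop:Lambda*}, \ref{prop:Lduals}, \ref{prop:B2} and Lemma \ref{lem:B1} give
\[
\Lambda(\cP_{e_i,\ell},\cP_{-e_j,\ell'})=\Lambda(\cP_{e_j,-\ell'+m_j^+},\cP_{e_i,\ell})+\Lambda(\psi_j^+,\cP_{e_i,\ell})=-\la\alpha_i,\alpha_j\ra-2\delta_{i\to j}
\]
and
\[
\Lambda(\cP_{-e_j,\ell'},\cP_{e_i,\ell})=\Lambda(\psi_j^-,\cP_{e_i,\ell})+\Lambda(\cP_{e_i,\ell},\cP_{e_j,-\ell'-m_j^-})=-2\delta_{j\to i}-\la\alpha_i,\alpha_j\ra.
\]
The three cases and $\b=0$ follow at once, with no new geometric input.

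\textbf{Parts (b) and (c).} Your route coincides with the paper's up to one choice. The paper always rewrites the \emph{negative} object $\cP_{-e_i,\ell'}$ as a dual of a positive one, as above with $j=i$, so Proposition \ref{prop:B2} applies verbatim. You instead dualize $\cP_{e_i,\ell}$ and land on a pairing of two $\cP_{-e_i,\cdot}$'s. This needs no analogue of Corollary \ref{cor:GrG}, since $\Lambda({}^\R X,{}^\R Y)=\Lambda(X,Y)$ by two applications of Proposition \ref{prop:Lduals}. However, the resulting mirror formula has the inequality reversed:
\[
\Lambda(\cP_{-e_i,a},\cP_{-e_i,b})=\begin{cases}2(b-a) & \text{if } a\le b,\\ -2 & \text{if } a>b.\end{cases}
\]
If you carry over the orientation of Proposition \ref{prop:B2} unchanged, the nonzero regime lands on the wrong side of $m_i^+$.
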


\begin{proof}
If $i\neq j$, then Corollary \ref{cor:duals}, Lemma \ref{lem:B1} and Propositions \ref{prop:B2}, \ref{prop:Lduals} yield
\begin{align*}
\Lambda(\cP_{e_i,\ell}\,,\cP_{-e_j,\ell'}) 
&= \Lambda(\cP_{e_i,\ell}\,, {}^\R\cP_{e_j,-\ell'+m_j^+} * (\psi_j^+)^{-1}) \\
&= \Lambda(\cP_{e_i,\ell}\, {}^\R\cP_{e_j,-\ell'+m_j^+}) + \Lambda( \cP_{e_i,\ell}\,, (\psi_j^+)^{-1}) \\
&= \Lambda(\cP_{e_j,-\ell'+m_j^+}, \cP_{e_i,\ell}) + \Lambda(\psi_j^+, \cP_{e_i,\ell}) \\
&= -\la \alpha_i,\alpha_j \ra - 2 \delta_{i \to j}
\end{align*}
A similar calculation gives
$$\Lambda(\cP_{-e_j,\ell'}, \cP_{e_i,\ell}) = \Lambda(\psi_j^-,\cP_{e_i,\ell}) + \Lambda(\cP_{e_i,\ell}, \cP_{e_j,-\ell'-m_j^-}) 
= - 2 \delta_{j \to i} - \la \alpha_i, \alpha_j \ra.$$
Together these prove part (a). 

The case $i=j$ is similar. 
By Corollary \ref{cor:duals}, Lemma \ref{lem:B1} and Propositions \ref{prop:B2}, \ref{prop:Lduals}, we have
\begin{align*}
\Lambda(\cP_{e_i,\ell}\,, \cP_{-e_i,\ell'})
&= \Lambda(\cP_{e_i,\ell}\,, {}^\R\cP_{e_i,-\ell' + m_i^+} * (\psi_i^+)^{-1}) \\
&= \Lambda(\cP_{e_i,\ell}\,, {}^\R\cP_{e_i,-\ell' + m_i^+}) + \Lambda(\cP_{e_i,\ell}\,, (\psi_i^+)^{-1}) \\
&= \Lambda(\cP_{e_i,-\ell' + m_i^+}\,, \cP_{e_i,\ell}) + \Lambda(\psi_i^+\,, \cP_{e_i,\ell}) \\
&= \begin{cases}
-2(\ell+\ell'-m_i^+-1) & \text{ if $\ell+\ell' \le m_i^+ $ } \\
0 & \text{ if $\ell+\ell' > m_i^+ $ } 
\end{cases}
\end{align*}
We similarly get
\begin{align*}
\Lambda(\cP_{-e_i,\ell'}\,, \cP_{e_i,\ell}) 
&= \Lambda(\psi_i^- *( {}^\L \cP_{e_i, -\ul' - m_i^-})\,, \cP_{e_i,\ell}) \\
&= \Lambda(\psi_i^-\,, \cP_{e_i,\ell}) + \Lambda({}^\L \cP_{e_i, -\ul' - m_i^-}\,, \cP_{e_i,\ell}) \\
&= \Lambda(\psi_i^-\,, \cP_{e_i,\ell}) + \Lambda(\cP_{e_i,\ell}\,, \cP_{e_i, -\ul' - m_i^-}) \\
&= \begin{cases}
2(\ell + \ell' + m_i^- + 1) & \text{ if $\ell +\ell' \ge - m_i^-$ } \\
0 & \text{ if $\ell +\ell' < - m_i^-$ }
\end{cases}
\end{align*}
Together these identities imply parts (b) and (c). 
\end{proof}

\begin{Corollary}\label{cor:B4}
We have $\Lambda({}^\L \cP_{e_i}\,,\cP_{e_i}) = 0$. If $m_i<0$, then we also have
$$\Lambda(\cP_{e_i}\,, {}^\L \cP_{e_i}) = \Lambda({}^\L \cP_{e_i,-1}\,,\cP_{e_i}) = \Lambda(\cP_{e_i}\,, {}^\L \cP_{e_i,-1}) = 2.$$
\end{Corollary}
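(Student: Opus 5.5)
The plan is to reduce everything to the two $\Lambda$-computations already made in the proof of Proposition \ref{prop:B3}, using the explicit form of left duals. By Corollary \ref{cor:duals} with $k=1$,
$$ {}^\L \cP_{e_i,\ell} \dcong (\psi_i^-)^{-1} * \cP_{-e_i,-\ell-m_i^-}. $$
Since $\La$ is insensitive to loop shifts, we may replace ${}^\L\cP_{e_i}$ and ${}^\L\cP_{e_i,-1}$ by $(\psi_i^-)^{-1}*\cP_{-e_i,-m_i^-}$ and $(\psi_i^-)^{-1}*\cP_{-e_i,1-m_i^-}$ respectively.

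By Lemma \ref{lem:B1}, $\psi_i^-$ is invertible and $q$-commutes with $\cP_{e_i}$. So $\La$ is additive in a product with $(\psi_i^-)^{-1}$ (Proposition \ref{prop:Lambda*}). Lemma \ref{lem:B1} with $\uk=e_i$ gives $\La(\psi_i^-,\cP_{e_i})=2=-\La(\cP_{e_i},\psi_i^-)$. Passing to the inverse flips signs, so I expect
$$\La((\psi_i^-)^{-1},\cP_{e_i})=-2,\qquad \La(\cP_{e_i},(\psi_i^-)^{-1})=2.$$

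Next I import the two formulas established inside the proof of Proposition \ref{prop:B3}. The first is
$$\La(\cP_{-e_i,\ell'},\cP_{e_i,\ell}) = \begin{cases} 2(\ell+\ell'+m_i^-+1) & \text{if } \ell+\ell'\ge -m_i^-, \\ 0 & \text{otherwise}. \end{cases}$$
The second is
$$\La(\cP_{e_i,\ell},\cP_{-e_i,\ell'}) = \begin{cases} -2(\ell+\ell'-m_i^+-1) & \text{if } \ell+\ell'\le m_i^+, \\ 0 & \text{otherwise}. \end{cases}$$
Throughout we take $\ell=0$.

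Now the four cases.
\begin{itemize}
\item For $\La({}^\L\cP_{e_i},\cP_{e_i})$ we have $\ell'=-m_i^-$, so $\ell+\ell'=-m_i^-$ and the first formula gives $2$. The total is $-2+2=0$; no hypothesis on $m_i$ is needed.
\item For $\La(\cP_{e_i},{}^\L\cP_{e_i})$ we again have $\ell+\ell'=-m_i^-$. The hypothesis $m_i<0$ means $-m_i^- > m_i^+$, so the second formula gives $0$ and the total is $2+0=2$.
\item For $\La({}^\L\cP_{e_i,-1},\cP_{e_i})$ we have $\ell'=1-m_i^-\ge -m_i^-$, so the first formula gives $4$ and the total is $-2+4=2$.
\item For $\La(\cP_{e_i},{}^\L\cP_{e_i,-1})$ we have $\ell+\ell'=1-m_i^-$. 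Since $m_i\le -1$ gives $m_i^+\le -1-m_i^-<1-m_i^-$, the second formula gives $0$ and the total is $2$.
\end{itemize}

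The only delicate point is justifying additivity of $\La$ across the invertible factor and the sign for $(\psi_i^-)^{-1}$. I would handle this exactly as in the proof of Proposition \ref{prop:B3}, where the same manipulation with $(\psi_j^\pm)^{-1}$ was carried out. The rest is bookkeeping of the inequalities, which is where the assumption $m_i<0$ enters.
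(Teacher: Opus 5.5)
Your argument is correct, but it takes a different route from the paper's.

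\textbf{What the paper does.} It pins down the two pairings with ${}^\L$ on the left structurally, then gets the reversed pairings from $\b$:
\begin{itemize}
\item The adjunction maps ${}^\L\cP_{e_i}*\cP_{e_i}\to 1\to\cP_{e_i}*{}^\L\cP_{e_i}$ compose to a nonzero unshifted map, so Proposition~\ref{prop:homs1} gives $\La({}^\L\cP_{e_i},\cP_{e_i})=0$.
\item Lemma~\ref{lem:V} with adjunction gives a nonzero composite ${}^\L\cP_{e_i,-1}*\cP_{e_i}\{-1\}\to\cV_i\to\cP_{e_i}*{}^\L\cP_{e_i,-1}\{1\}$, which gives $\La({}^\L\cP_{e_i,-1},\cP_{e_i})=2$.
\item The reversed pairings come from $\b({}^\L\cP_{e_i},\cP_{e_i})=1$ and $\b({}^\L\cP_{e_i,-1},\cP_{e_i})=2$. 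These are computed via Corollary~\ref{cor:duals} and the statement of Proposition~\ref{prop:B3}(b), which is the only place $m_i<0$ enters.
\end{itemize}

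\textbf{What you do.} You compute all four one-sided pairings directly. You use the one-sided formulas from inside the proof of Proposition~\ref{prop:B3}, together with additivity across the invertible factor $(\psi_i^-)^{-1}$. Your signs $\La((\psi_i^-)^{-1},\cP_{e_i})=-2$ and $\La(\cP_{e_i},(\psi_i^-)^{-1})=2$ are right. They follow from Proposition~\ref{prop:Lduals}(a), since $(\psi_i^-)^{-1}$ is both a left and a right dual of $\psi_i^-$.

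\textbf{What each route buys.} Yours is uniform and purely numerical, and it avoids Lemma~\ref{lem:V}. It also shows exactly where the hypothesis is needed: the first and third values hold for every $m_i$, and the fourth only needs $m_i\le 0$. The paper's route exhibits the actual $r$-matrices as composites through $1$ and $\cV_i$. That structural information is what is reused in Propositions~\ref{prop:ses3} and~\ref{prop:ses4}.

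\textbf{A shorter proof of two of the values.} The pairings with ${}^\L$ on the left follow in one line. Proposition~\ref{prop:Lduals}(a) gives $\La({}^\L\cP_{e_i,\ell},\cP_{e_i})=\La(\cP_{e_i},\cP_{e_i,\ell})$. By Proposition~\ref{prop:B2} this equals $0$ for $\ell=0$ and $2$ for $\ell=-1$.
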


\begin{proof}
By \eqref{adjunction} we have the adjunction maps
$${}^\L \cP_{e_i} * \cP_{e_i} \to 1 \to \cP_{e_i} * {}^\L \cP_{e_i}$$ 
which proves $\Lambda({}^\L \cP_{e_i}\,,\,\cP_{e_i}) = 0$ by Proposition \ref{prop:homs1}. On the other hand,  we have
\begin{align*}
\b({}^\L \cP_{e_i}\,,\, \cP_{e_i}) &= \b\Big((\psi_i^-)^{-1} * \cP_{-e_i, -m_i^-}\,,\, \cP_{e_i}\Big) \\
&= \b((\psi_i^-)^{-1}\,,\, \cP_{e_i}) +\b(\cP_{-e_i,-m_i^-}\,,\, \cP_{e_i}) \\
&= 1
\end{align*}
The first equality is Corollary \ref{cor:duals}, the second one is Proposition \ref{prop:Lambda*},
and the third one is Proposition \ref{prop:B3} and the fact that the object 
$\psi_i^-$ $q$-commutes with $\cP_{e_i}$ by lemma \ref{lem:B1}. 
Similarly, Corollary \ref{cor:duals} gives
${}^\L\cP_{e_i,-1}=(\psi_i^-)^{-1} * \cP_{-e_i, -m_i^-+1}$,
hence by Lemma \ref{lem:V} and \eqref{adjunction3} we have nonzero maps 
$${}^\L \cP_{e_i,-1} * \cP_{e_i} \{-1\} \to \cV_i \to \cP_{e_i} * {}^\L \cP_{e_i,-1} \{1\}$$ which 
proves $\Lambda({}^\L \cP_{e_i,-1},\cP_{e_i}) = 2$ by Proposition \ref{prop:bselfdual}. 
The remaining claims follow similarly, since we have
$$\b({}^\L \cP_{e_i,-1}\,,\, \cP_{e_i}) = \b\Big((\psi_i^-)^{-1} * \cP_{-e_i, -m_i^-+1}\,,\, \cP_{e_i}\Big)  = \b(\cP_{-e_i,-m_i^- + 1}\,,\, \cP_{e_i}) = 2$$
As above, the first equality is Corollary \ref{cor:duals}, the second one is Proposition \ref{prop:Lambda*} and the fact that the object $\psi_i^-$ $q$-commutes with $\cP_{e_i}$ by lemma \ref{lem:B1}, and the third one is Proposition \ref{prop:B3}. 
\end{proof}

\subsection{Relations involving $\cV_i$}

We begin by observing the following result. 

\begin{Proposition}\label{prop:ses3}
If $m_i < 0$ there exist short exact sequences
\begin{align*}
& 0 \to 1 \to \cP_{e_i} * {}^\L \cP_{e_i} \to \cS_i \to 0 \\
& 0 \to \cS_i \{-2\} \to {}^\L \cP_{e_i} * \cP_{e_i} \to 1 \to 0 
\end{align*}
where $\cS_i \in \cKP$ is simple and the top left, bottom right maps are by adjunction. 
\end{Proposition}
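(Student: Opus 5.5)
We will obtain both sequences from the general theory of renormalized $r$-matrices in the Appendix, using the value $\Lambda$ computed in Corollary \ref{cor:B4}. Since $m_i<0$, Corollary \ref{cor:B4} gives $\Lambda({}^\L\cP_{e_i},\cP_{e_i})=0$ and $\Lambda(\cP_{e_i},{}^\L\cP_{e_i})=2$. Hence $\b(\cP_{e_i},{}^\L\cP_{e_i}) = \frac12(0+2)=1$. This agrees with the computation $\b({}^\L\cP_{e_i},\cP_{e_i})=1$ made in the proof of that corollary.

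The objects are real and simple: $\cP_{e_i}$ by Corollary \ref{cor:real}, and ${}^\L\cP_{e_i}\dcong(\psi_i^-)^{-1}*\cP_{-e_i,-m_i^-}$ by Corollary \ref{cor:duals} and Lemma \ref{lem:B1}. So the standard results apply. The product $\cP_{e_i}*{}^\L\cP_{e_i}$ has simple head $\cP_{e_i}\nabla{}^\L\cP_{e_i}$ and simple socle $\cP_{e_i}\Delta{}^\L\cP_{e_i}$. The latter is the head of ${}^\L\cP_{e_i}*\cP_{e_i}$ up to shift, and the two differ unless $\b=0$. The adjunction maps
$${}^\L\cP_{e_i}*\cP_{e_i}\to 1\to \cP_{e_i}*{}^\L\cP_{e_i}$$
are nonzero because $\cP_{e_i}\neq 0$ and the category is rigid. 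Since $1$ is simple, $1$ is the head of ${}^\L\cP_{e_i}*\cP_{e_i}$ and the socle of $\cP_{e_i}*{}^\L\cP_{e_i}$. Set $\cS_i=\cP_{e_i}\nabla{}^\L\cP_{e_i}$. Comparing with the $r$-matrix $\r$, whose image is the simple head and which has degree $\Lambda=2$, we get the shift $\{-2\}$ for the copy of $\cS_i$ sitting inside ${}^\L\cP_{e_i}*\cP_{e_i}$.

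It remains to show that $\cP_{e_i}*{}^\L\cP_{e_i}$ has length exactly $2$; the second product has the same composition factors up to the shifts already fixed. We plan to use the Appendix result that, for real simples $M,N$ with $\b(M,N)=1$, the product $M*N$ has length $2$, being an extension of $M\nabla N$ by $M\Delta N$. This is the analogue of \cite[Cor.~3.17]{KKOP} type statements. If the Appendix does not contain this length-two criterion, we would instead compute directly. Write $\cP_{e_i}*{}^\L\cP_{e_i}\dcong (\psi_i^-)^{-1}*\cP_{e_i}*\cP_{-e_i,-m_i^-}$. Then compute $\cP_{e_i}*\cP_{-e_i,\ell}$ geometrically, as in Lemma \ref{lem:S} and Lemma \ref{lem:map1}. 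The relevant convolution space lives over $\Gr_{\le\omega^\vee_{i,1}+\omega^\vee_{i,-1}}$, which is the union of the point orbit and the orbit of $\omega^\vee_{i,1}+\omega^\vee_{i,-1}$. Restricting to the open orbit and to the point gives a two-step filtration. One piece is the unit $1$, supported on $\calR_{\u0}$; the other is the simple supported on the closure of the big orbit, namely $\cS_i$.

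\textbf{Main obstacle.} The hard step is verifying this length-two claim, in particular that the piece on the closure of the big orbit is simple. The map $\calR^\cl_{\le\lambda^\vee}\to\Gr_{\le\lambda^\vee}$ need not be a bundle here, so Lemma \ref{lem:extension} does not apply directly. For this reason we would rely primarily on the abstract $\b=1$ argument. The remaining bookkeeping is the loop-shift normalization $\{-2\}$, which comes from $\Lambda(\cP_{e_i},{}^\L\cP_{e_i})-\Lambda({}^\L\cP_{e_i},\cP_{e_i})=2$.
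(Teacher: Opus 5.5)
Your proposal is correct and is essentially the paper's proof: Corollary \ref{cor:B4} gives $\b(\cP_{e_i},{}^\L\cP_{e_i})=1$, and Proposition \ref{prop:Lb=1}(a) then yields the two length-two sequences. The nonzero adjunction maps identify $1$ as the socle of $\cP_{e_i}*{}^\L\cP_{e_i}$ and as the head of ${}^\L\cP_{e_i}*\cP_{e_i}$, and Proposition \ref{prop:homs1}(a) with $\Lambda(\cP_{e_i},{}^\L\cP_{e_i})=2$ fixes the shift $\{-2\}$. The length-two criterion you were unsure of is exactly Proposition \ref{prop:Lb=1}(a) in the Appendix, so the geometric fallback is not needed.
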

\begin{proof}
These sequences follow from Proposition \ref{prop:Lb=1} since $\b(\cP_{e_i}, {}^\L \cP_{e_i}) = 1$ by Corollary \ref{cor:B4}.
\end{proof}

The main goal of this subsection is to obtain an analogue of this result, namely Proposition \ref{prop:ses4}, where ${}^\L \cP_{e_i}$ is replaced 
with ${}^\L \cP_{e_i,-1}$. Such a result if crucial for proving that the functor $\F$ is t-exact later in the paper. The preliminaries in this section 
are only used for this purpose. One can imagine establishing Proposition \ref{prop:ses4} by direct computation but this seems technically 
painful and we prefer an indirect argument involving certain K-theory results already established in \cite{FT} in part because some of the 
intermediate results may be of independent interest. 

\begin{Lemma}\label{lem:Q}
For each objects $\cF,\cG\in\cKP$, 
the cone of the $r$-matrix $\bfR_{\cF_z,\cG}^{\ren}$ is also an object in $ \cKP$. 
\end{Lemma}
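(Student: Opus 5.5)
Write $\bfR := \bfR_{\cF_z,\cG}^{\ren}$. In the $\calR_X$ setting, with $\cG$ placed at $0$, this is a morphism in $\calD_X$
$$\bfR : \cF_z * \cG \to \cG * \cF_z\{\La(\cF,\cG)\},$$
where both source and target are $\C[z]$-linear. The plan is to show that $\Cone(\bfR)$ lies in $\calD_{X,0}$ and is concentrated in Koszul-perverse degree zero.

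\textbf{Step 1 (support and properties of source/target).} Away from $z=0$ the two objects are canonically isomorphic, since $\bfR$ extends the identity there. Hence $\Cone(\bfR)$ is set-theoretically supported over $0 \in X$, i.e.\ it lies in $\calD_{X,0}$ where the Koszul-perverse t-structure is defined. Because $\cF_z = \cF \boxtimes \O_X$, both source and target are flat over $\C[z]$. Restricting along $z=0$ gives
$$(\cF_z * \cG)|_{z=0} \cong \cF * \cG \in \cKP,$$
using t-exactness of $*$, and similarly for $\cG * \cF_z$.

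\textbf{Step 2 (triangle for the cone).} Let $\iota_0$ be the inclusion of the fiber over $0$. I will apply $\iota_0^*$, or equivalently tensor over $\C[z]$ with $\C[z]/z$, to the defining triangle of the cone. This gives
$$\cF*\cG \xrightarrow{\r_{\cF,\cG}} \cG*\cF\{\La\} \to \iota_0^*\Cone(\bfR).$$
Since $\r_{\cF,\cG}$ is a morphism between objects of $\cKP$, its cone has Koszul-perverse cohomology only in degrees $-1$ and $0$, namely $\ker\r$ and $\coker\r$.

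\textbf{Step 3 (devissage).} Because $\Cone(\bfR)$ is supported over $0$ and has coherent cohomology, it has a finite filtration by $z$-torsion pieces. One route is to use the triangle
$$\iota_{0*}\iota_0^*\Cone \to \ldots$$
obtained from multiplication by $z$, namely
$$\Cone\{-2\} \xrightarrow{z} \Cone \to \iota_{0*}\iota_0^* \Cone.$$
The functor $\iota_{0*}$ is t-exact and conservative on the relevant hearts, since it is the pushforward defining the t-structure on $\calD_{X,0}$. I then argue by induction on the nilpotence order $n$ of $z$ on $\Cone$. If $\Cone$ had a lowest nonzero Koszul-perverse cohomology in degree $<0$, call it $H^{a}$, then $z$ acting on $H^a$ would be nilpotent and hence not injective. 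The long exact sequence would then force $H^{a-1}(\iota_0^*\Cone) \neq 0$. This gives a contradiction unless $a-1 \ge -1$, i.e.\ $a \ge 0$. Dually, a top cohomology in degree $>0$ has nonzero cokernel under $z$, forcing $H^{>0}(\iota_0^*\Cone)\neq 0$, again a contradiction. Together these show $\Cone \in \cKP_X$.

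\textbf{Main obstacle.} The delicate point is Step 3's bookkeeping: Step 2 only bounds $\iota_0^*\Cone$ in degrees $[-1,0]$, so the lower-bound argument as written only yields $a \ge 0$ when the degree $-1$ term is handled carefully. The key extra input is that $\bfR$ is injective on the $\C[z]$-flat objects, because it is generically an isomorphism. As a result, the $z$-torsion object $\Cone$ has vanishing $H^{-1}$ in the naive sense, and $H^{-1}(\iota_0^*\Cone) = \ker\r$ comes entirely from $\mathrm{Tor}_1$ of $H^0(\Cone)$. I would make this precise by first showing that source and target lie in the heart of the relative Koszul-perverse t-structure on $\calD_X$, using flatness together with Step 1. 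Then $\bfR$ is a monomorphism there, since its kernel is $z$-torsion inside a $z$-torsion-free object, so the cone is the cokernel and lies in the heart. If this relative heart is not directly available, I would fall back on pushing forward along $\calR_X \to \calR_{X}$ restricted to a neighborhood and use \cite[Lem.~3.17]{CW2}-type conservativity and t-exactness of restriction.
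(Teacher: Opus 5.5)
Your Steps 1--3 already give a correct proof. The route differs from the paper's, although both use the same inputs: support over $0$, and the fact that specializations of $\bfR^{\ren}_{\cF_z,\cG}$ are maps between objects of the heart.

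\textbf{The paper's route.} The paper picks $k\gg0$ with $z^k=0$ on $\cQ=\Cone(\bfR^{\ren}_{\cF_z,\cG})$. Then $\cQ\otimes_{\C[z]}\C[z]/z^k\cong\cQ\oplus\cQ[1]\{-2k\}$. This is the cone of the restriction of $\bfR^{\ren}_{\cF_z,\cG}$ to $\Spec\C[z]/z^k$, which is a map between finite extensions of $\cF*\cG$ and $\cG*\cF$, so it lies in degrees $[-1,0]$. The two summands then squeeze $\cQ$ into degree $0$ in one step.

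\textbf{Your route.} You specialize only to the reduced fiber and read off the extreme degrees from the long exact sequence of $\cQ\{-2\}\xrightarrow{z}\cQ\to\iota_{0*}\iota_0^*\cQ$. At the lowest nonzero degree $a$ it gives $H^{a-1}(\iota_{0*}\iota_0^*\cQ)\cong\ker(z\colon H^a\{-2\}\to H^a)\neq0$. At the highest degree $b$ it gives $H^{b}(\iota_{0*}\iota_0^*\cQ)\cong\coker(z)\neq0$. Together with the bound $[-1,0]$ from Step 2, this forces $a\ge0\ge b$.

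\textbf{Comparison.} Your argument needs only that $z$ is nilpotent on objects of $\cKP_X$; you should state why. These objects have finite length, $z$ is a natural endomorphism of the identity, and $z$ kills the simples because they are pushed forward from the fiber over $0$. The paper's splitting needs the stronger fact that a power of $z$ vanishes on the complex $\cQ$ itself. That follows from bounded amplitude plus nilpotence on cohomology, and in exchange the paper's conclusion is a one-liner.

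\textbf{The ``main obstacle'' paragraph.} The difficulty you raise there is not real. Step 3 already closes the lower bound: assuming $a<0$ produces nonzero cohomology of $\iota_{0*}\iota_0^*\cQ$ in degree $a-1<-1$, which contradicts Step 2. So you can delete the fallback, and you should, because it would not be available anyway. The objects $\cF_z*\cG$ do not lie in $\calD_{X,0}$. The paper also remarks that there is no evident Koszul-perverse t-structure on $\calD_{X^J}$. Hence the claim that $\bfR$ is a monomorphism in a relative heart on $\calD_X$ has nothing to stand on; fortunately your argument never uses it.
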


\begin{proof}
Let $\cQ$ be the cone of $\bfR_{\cF_z,\cG}^{\ren}$. 
It is set theoretically supported over $z=0$ because $\bfR_{\cF_z,\cG}^{\ren}$ is an isomorphism for $z\neq 0$. 
Thus, for $k \gg 0$, we have
$$\cQ \otimes_{\C[z]} \C[z]/z^k \cong \cQ \oplus \cQ[1]\{-2k\}.$$
On the other hand, the restriction of the objects $\cF_z * \cG$ and $\cG * \cF_z$ to $\Spec( \C[z]/z^k)$ belong to $\cKP$,
because the heart is preserved under finite extensions.
It follows that the cone of the restriction of $\bfR_{\cF_z,\cG}^{\ren}$ to $\Spec( \C[z]/z^k)$, 
which we noted is isomorphic to $\cQ \oplus \cQ[1]\{-2k\}$, 
must be supported in homological degrees $-1$ and $0$ with respect to the Koszul-perverse t-structure. 
This means that $\cQ$ must be supported in homological degree $0$.  
\end{proof}

\begin{Lemma}\label{lem:FT}
The map $\overline{\Phi} = \overline{\Phi}^{\underline{\lambda}}_{-\alpha}$ in \cite[Thm.~8.1]{FT} from the $(0,-\alpha)$-shifted quantum group 
of affine type $Q_\af$ to the Grothendieck group $K_0(\cKP)|_{t=1}$ takes 
\begin{align}
\label{eq:1} f_{i,r} &\mapsto \pm [(\psi_i^-)^{-1} * \phi_i]^{\frac12} * [(\cP_{e_i,r-a_i-m_i^-})_z] \\
\label{eq:2} e_{i,-r} &\mapsto \pm [\psi_i^- * \phi_i^{-1} * \psi_i]^{\frac12} * [({}^\L \cP_{e_i,r-a_i-m_i^-})_z] \\
\label{eq:3} A_{i,1}^+ &\mapsto - [\phi_i]^{-\frac12} * [\cV_i] \\
\label{eq:3A} \phi_i^+ &\mapsto [\phi_i]^{\frac12} \\
\label{eq:4} \psi_{i,0}^+ &\mapsto [\psi_i]^{\frac12} \\
\label{eq:5} h_{i,1} &\mapsto \pm (-(q^2+1)[(\cV_i)_z] +\sum_{j \leftrightarrow i \in E} q^{c_j} [(\cV_j)_z])
\end{align}
for some $c_j \in \Z$ and up to powers of $q$.
\end{Lemma}

\begin{Remark}
It is possible to keep track of the signs and powers of $q$ that appear on the right side of the equations above by 
following \cite{FT} more carefully. This is tedious and prone to errors. We can deduce the signs and 
powers of $q$ later on, given the existence of the Koszul-perverse t-structure and our calculation of certain $\La(-,-)$ pairings. 
We find this 
preferable.
\end{Remark}

\begin{proof}
The notation of \cite{FT} translates as follows
\begin{itemize}[leftmargin=8mm]
\item $\calR_{\varpi_{i,1}} = \calR^\cl_{e_i}$ and $\calR_{\varpi^*_{i,1}} = \calR_{-e_i}^\cl$,
\item $(\cQ_i)^{\otimes n} \otimes \O_{\calR_{\varpi_{i,1}}} \dcong \cP_{e_i,n}$ and 
$(\cS_i)^{\otimes n} \otimes \O_{\calR_{\varpi^*_{i,1}}} \dcong \cP_{-e_i,n}$ for $n \in \Z$,
\item $\prod_{t=1}^{a_i} {\bf w}_{i,t} = [\phi_i]$ and $e_1(\{{\bf w}_{i,t}\}_{t=1}^{a_i}) = [\cV_i]$  in K-theory, 
up to powers of ${\bf v} = q$.
\end{itemize}
By Corollary \ref{cor:duals} we have
$${}^\L \cP_{e_i,r-a_i-m_i^-} \dcong (\psi_i^-)^{-1} \cP_{-e_i,-r+a_i}$$
Moreover, we have $[\cF] = (1-q^2)[(\cF)_z]$ for any $\cF \in \cKP$. 
Together this yields \eqref{eq:1}, \eqref{eq:2}, \eqref{eq:3} and \eqref{eq:3A} from the first three maps in \cite[Thm.~8.1]{FT}.
Next from \cite[Eq. 6.1]{FT} we have
\begin{equation}\label{eq:local10}
z^{-b_i^+} \psi_i^+(z) = \frac{\prod_{j \leftrightarrow i} A_j^+(qz)}{A_i^+(z)A_i^+(q^{-2}z)}
\end{equation}
where $A_{i,0}^+ = (\phi_i^+)^{-1}$ and
\begin{equation}\label{eq:local11}
A_i^+(z) = \sum_{r \ge 0} A_{i,r}^+ z^{-r}
,\quad
\psi_i^+(z) = \sum_{r \ge 0} \psi_{i,r}^+ z^{-r}
= \psi^+_{i,0} \exp \left( (q-q^{-1}) \sum_{r > 0} h_{i,r}z^{-r} \right).
\end{equation} 
Note that we are in the case, see \eqref{m}, 
$$b_i^+=0,\quad b_i^-=m_i=-\langle\alpha_i,\alpha\rangle.$$
Equating the right hand sides of \eqref{eq:local10} and \eqref{eq:local11} gives
\begin{equation}\label{eq:6}
\psi_{i,0}^+ = (\phi_i^+)^2 \prod_{j \leftrightarrow i} (\phi_j^+)^{-1}
\end{equation}
when considering the coefficient of $z^0$ and
\begin{equation}\label{eq:7}
(1-q^2)h_{i,1} = \pm q^{c_i} (1+q^2)\phi_i^+ A_{i,1}^+ \pm \sum_{j \leftrightarrow i} q^{c_j} \phi_j^+ A_{j,1}^+
\end{equation}
for some $c_j \in \Z$, when considering the coefficient of $z^{-1}$. Using \eqref{eq:3} and \eqref{eq:3A}, equalities \eqref{eq:6} and \eqref{eq:7} recover \eqref{eq:4} and \eqref{eq:5} respectively.
\end{proof}

\begin{Corollary}\label{cor:FT}
In K-theory with $t=1$ we have
\begin{equation}\label{eq:Kcommutator}
[{}^\L \cP_{e_i,-1} * (\cP_{e_i})_z] - q^2 [(\cP_{e_i})_z * {}^\L \cP_{e_i,-1}] = q^b \left((q^2+1)[\cV_i] + \sum_{j \leftrightarrow i \in E} q^{c_j} [\cV_j] \right)
\end{equation}
for some $b, c_j \in \Z$.
\end{Corollary}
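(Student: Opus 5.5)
The plan is to deduce \eqref{eq:Kcommutator} from the defining relation of the shifted quantum affine algebra,
$$[e_{i,r}, f_{i,s}] = \frac{\psi^+_{i,r+s} - \psi^-_{i,r+s}}{q-q^{-1}},$$
transported through the homomorphism $\overline{\Phi}$ of Lemma \ref{lem:FT}.

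First, I choose the indices so that the images of $e$ and $f$ are the two objects in \eqref{eq:Kcommutator}. By \eqref{eq:1}, $f_{i,r}$ maps to a multiple of $[(\cP_{e_i,r-a_i-m_i^-})_z]$. I therefore take $r = a_i + m_i^-$, so that this is $[(\cP_{e_i})_z]$. By \eqref{eq:2}, $e_{i,-r'}$ maps to a multiple of $[({}^\L\cP_{e_i,r'-a_i-m_i^-})_z]$, and I take $r' = a_i + m_i^- - 1$ to get ${}^\L\cP_{e_i,-1}$. Hence $e_{i,-r'} f_{i,r}$ has total loop degree $r - r' = 1$, and the relation gives
$$[e_{i,-r'},f_{i,r}] = \frac{\psi^+_{i,1}-\psi^-_{i,1}}{q-q^{-1}}.$$
Here $\psi^-_{i,1}=0$, since in the $(0,-\alpha)$-shifted algebra $\psi^-$ is supported in degrees $\le -b_i^- = -m_i$ and we work with $m_i<0$. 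Also $\psi^+_{i,1} = (q-q^{-1})\psi^+_{i,0} h_{i,1}$ by \eqref{eq:local11}. So the commutator equals $\psi^+_{i,0}h_{i,1}$. By \eqref{eq:4} and \eqref{eq:5}, this is $\pm[\psi_i]^{1/2}$ times $(q^2+1)[(\cV_i)_z] + \sum_{j} q^{c_j}[(\cV_j)_z]$, up to powers of $q$.

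Second, I convert the left side. It is a product of two elements of the form $[(\cF)_z]$, each multiplied by an invertible prefactor, namely a half-power of products of $\phi$ and $\psi$. The prefactors $q$-commute with everything relevant by Lemma \ref{lem:B1}, so I pull them to the left, where they combine into a single invertible class times a power of $q$. I then use $[\cF] = (1-q^2)[\cF_z]$ to replace one of the two affinized classes by an honest class: on the left side I convert $({}^\L\cP_{e_i,-1})_z$, and on the right side I convert the $(\cV)_z$'s. Both sides then pick up the same factor $(1-q^2)^{-1}$, which cancels. This produces an identity of the form
$$[{}^\L\cP_{e_i,-1} * (\cP_{e_i})_z] - q^a[(\cP_{e_i})_z * {}^\L\cP_{e_i,-1}] = (\text{invertible})\Big((q^2+1)[\cV_i]+\sum_j q^{c_j}[\cV_j]\Big).$$

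The main obstacle is the bookkeeping of the prefactors and the powers of $q$. Three things have to be checked. First, the prefactors from \eqref{eq:1} and \eqref{eq:2} must multiply to something that cancels $[\psi_i]^{1/2}$, up to a power of $q$. This holds because $(\psi_i^-)^{-1}\phi_i \cdot \psi_i^-\phi_i^{-1}\psi_i = \psi_i$, so the combined prefactor $[\psi_i]^{1/2}$ matches the one coming from $\psi^+_{i,0}$. Second, the relative power $q^a$ in the commutator must equal $q^2$. I would fix this not from \cite{FT} but from $\Lambda({}^\L\cP_{e_i,-1},\cP_{e_i}) = 2$ in Corollary \ref{cor:B4}: reordering the factors and moving the $q$-commuting prefactors past each other only shifts by the exponent coming from $\Lambda$. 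Third, the overall sign and the power $q^b$ are allowed to be arbitrary in the statement, so I would simply absorb them; likewise the relative sign between the $\cV_i$ and $\cV_j$ terms, which is absorbed into the $q^{c_j}$ up to sign as allowed in Lemma \ref{lem:FT}.
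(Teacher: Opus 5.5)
Your skeleton is the paper's: the $[e,f]$ relation in total degree $1$ (your indices $f_{i,a_i+m_i^-}$, $e_{i,-a_i-m_i^-+1}$ are the right ones), $\psi^-_{i,1}=0$, $\psi^+_{i,1}=(q-q^{-1})\psi^+_{i,0}h_{i,1}$, cancelling $[\psi_i]^{1/2}$, and dividing by $1-q^2$. The genuine gap is the signs. The statement is not up to sign. The right side has a bare $q^b$ and bare coefficients $q^{c_j}$, and a sign cannot be absorbed into a power of $q$, since $-[\cV_j]\neq q^{c}[\cV_j]$. Lemma~\ref{lem:FT} does not help you here. Its proof leaves the signs in \eqref{eq:5} undetermined, and as printed the $\cV_i$ and $\cV_j$ terms even have opposite signs. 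So your argument only yields $\pm(q^2+1)[\cV_i]\pm\sum_j q^{c_j}[\cV_j]$ with unknown signs. The missing idea is Lemma~\ref{lem:Q}. The left side of \eqref{eq:Kcommutator} is exactly the class of the cone of $\bfR^{\ren}\colon(\cP_{e_i})_z*{}^\L\cP_{e_i,-1}\{-2\}\to{}^\L\cP_{e_i,-1}*(\cP_{e_i})_z$. By Lemma~\ref{lem:Q} this cone lies in the heart, so its class is a nonnegative combination of classes of simples, and every sign on the right must be $+$. You cannot drop this step: Proposition~\ref{prop:ses4} uses exactly this to read off the composition factors of that cone.

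Your justification of the relative factor $-q^2$ on the left is also misplaced. The pairing $\Lambda({}^\L\cP_{e_i,-1},\cP_{e_i})=2$ does not come from moving prefactors. The two main factors do not $q$-commute, so nothing reorders them; the pairings that actually arise are those of Lemma~\ref{lem:B1}. Your route can still be completed. The unknown signs and powers of $q$ in Lemma~\ref{lem:FT} are common to $ef$ and $fe$, so the relative sign is automatically $-$. Lemma~\ref{lem:B1} then gives $\Lambda((\psi_i^-)^{-1}*\phi_i,{}^\L\cP_{e_i,-1})=0$ and $\Lambda(\psi_i^-*\phi_i^{-1}*\psi_i,\cP_{e_i})=4$. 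So only the $fe$ term picks up a relative factor, namely $q^{2}$. The paper argues differently: the head $\cP_{e_i}\n{}^\L\cP_{e_i,-1}$ occurs once and is not a $\cV_j$. Its class must therefore cancel on the left, which forces $-q^{\Lambda(\cP_{e_i},{}^\L\cP_{e_i,-1})}=-q^2$. One minor point: $\psi^-_{i,k}$ vanishes for $k>b_i^-=m_i$, not for $k>-b_i^-$. Your stated bound $\le -m_i$ would not kill $\psi^-_{i,1}$ when $m_i\le -1$.
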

\begin{proof}
Let $r = a_i+m_i^-$ and $s = -a_i-m_i^-+1$. Then by \cite[Eq. ${\rm \hat{U}}6$]{FT} one has
\begin{equation}\label{eq:local13}
[e_{i,r},f_{i,s}] = \psi_{i,0}^+ h_{i,1}.
\end{equation}
On the other hand, by \eqref{eq:1}, \eqref{eq:2}, \eqref{eq:4} and \eqref{eq:5} the map $\Phi$ takes
\begin{align*}
e_{i,r}f_{i,s} &\mapsto \pm [\psi_i]^{\frac12} * [({}^\L \cP_{e_i,-1})_z * (\cP_{e_i})_z] \\
f_{i,s}e_{i,r} &\mapsto \pm [\psi_i]^{\frac12} * [(\cP_{e_i})_z * ({}^\L \cP_{e_i,-1})_z] \\
\psi_{i,0}^+ h_{i,1} &\mapsto [\psi_i]^{\frac12} \left(\pm (q^2+1)[(\cV_i)_z] \pm \sum_{j \leftrightarrow i} q^{c_j} [(\cV_j)_z] \right)
\end{align*}
up to powers of $q$. Since $\Phi$ is injective \eqref{eq:local13} relates these K-theory classes which, after cancelling the common 
$[\psi_i]^{\frac12}$ factors and dividing by $1-q^2$, gives  
\begin{equation}\label{eq:local12}
[{}^\L \cP_{e_i,-1} * (\cP_{e_i})_z] \pm q^a [(\cP_{e_i})_z * {}^\L \cP_{e_i,-1}] = q^b \left(\pm (q^2+1)[\cV_i]
 \pm \sum_{j \leftrightarrow i} q^{c_j} [\cV_j] \right)
\end{equation}
for some $a,b \in \Z$. Now the head of $\cP_{e_i} * {}^\L \cP_{e_i,-1}$ cannot be $\cV_j$ for any $j \in I$ and thus its class in K-theory must 
cancel out on left side of \eqref{eq:local12}. This means that the first $\pm$ in \eqref{eq:local12} must be a minus and that $a=2$ since 
$\La(\cP_{e_i}, {}^\L \cP_{e_i,-1})=2$. Finally, by Lemma \ref{lem:Q}, the right hand side of \eqref{eq:local12} represents the class of an object 
in $\cKP$. Thus every $\pm$ on the right side of \eqref{eq:local12} must be a plus.
\end{proof}

\begin{Proposition}\label{prop:ses4}
If $m_i < 0$ there is a simple object $\cK_i \in \cKP$ and sequences
\begin{align}
\label{eq:seq9} & 0 \to \cV_i \{-1\} \to \cP_{e_i} * {}^\L \cP_{e_i,-1} \to \cK_i \{1\} \to 0 \\
\label{eq:seq10}& 0 \to \cK_i \{-1\} \to {}^\L \cP_{e_i,-1} * \cP_{e_i} \to \cV_i \{1\} \to 0
\end{align}
which are exact on the left and right but not necessarily in the middle. 
We have
$$ \cP_{e_i} \d {}^\L \cP_{e_i,-1}\cong\cV_i \{-1\}
,\quad
 \cP_{e_i} \n {}^\L \cP_{e_i,-1}\cong \cK_i \{1\}
,\quad
 {}^\L \cP_{e_i,-1} \d \cP_{e_i}\cong\cK_i \{-1\} 
 ,\quad
 {}^\L \cP_{e_i,-1} \n \cP_{e_i}\cong\cV_i \{1\}
$$
Further, the composition factors of $\cP_{e_i} * {}^\L \cP_{e_i,-1}$ and ${}^\L \cP_{e_i,-1}*\cP_{e_i}$ are $\{\cK_i, \cV_i, \cV_j\,;\,j \leftrightarrow i \in E\}$ with multiplicity one,
up to Koszul and loop shifts. 
\end{Proposition}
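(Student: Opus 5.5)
The plan is to run the argument of Proposition \ref{prop:ses2}(d) on the Coulomb side, using the $\Lambda$-pairing from Corollary \ref{cor:B4} and the K-theory identity of Corollary \ref{cor:FT}.

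\textbf{Step 1: the heads and socles.} Since $\cP_{e_i}$ is real (Corollary \ref{cor:real}) and ${}^\L\cP_{e_i,-1}$ is simple (its dual is simple and real), the appendix results on renormalized $r$-matrices (Proposition \ref{prop:lsubquotient} and its relatives) apply. They say that $\cP_{e_i} * {}^\L\cP_{e_i,-1}$ has simple head $\cP_{e_i}\n{}^\L\cP_{e_i,-1}$ and simple socle $\cP_{e_i}\d{}^\L\cP_{e_i,-1}$. The $r$-matrix factors through these, and the same holds for the opposite product. These objects occur with multiplicity one, and head and socle are distinct when $\b\neq 0$. By Corollary \ref{cor:B4}, $\Lambda(\cP_{e_i},{}^\L\cP_{e_i,-1})=\Lambda({}^\L\cP_{e_i,-1},\cP_{e_i})=2$, so $\b=2$ and the products are not simple.

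\textbf{Step 2: identify the head and socle involving $\cV_i$.} The proof of Corollary \ref{cor:B4} already produced nonzero maps
$${}^\L\cP_{e_i,-1}*\cP_{e_i}\{-1\}\to\cV_i\to\cP_{e_i}*{}^\L\cP_{e_i,-1}\{1\},$$
built from Lemma \ref{lem:V} and adjunction. Since $\cV_i$ is simple (Corollary \ref{cor:simple}), the first map is surjective and the second injective. Uniqueness of simple head and socle then gives ${}^\L\cP_{e_i,-1}\n\cP_{e_i}\cong\cV_i\{1\}$ and $\cP_{e_i}\d{}^\L\cP_{e_i,-1}\cong\cV_i\{-1\}$.

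Now define $\cK_i\{1\}:=\cP_{e_i}\n{}^\L\cP_{e_i,-1}$. This is simple, and it is not $\cV_i$ up to shift because head and socle differ. The general duality between $\n$ and $\d$ for the reversed product (the formula $M\d N\cong N\n M$ up to shift in the appendix) gives ${}^\L\cP_{e_i,-1}\d\cP_{e_i}\cong\cK_i\{-1\}$. The grading shift $\{-1\}$ comes from the degree bookkeeping $\Lambda=2$. This produces the maps in \eqref{eq:seq9} and \eqref{eq:seq10}, injective on the left and surjective on the right, though not necessarily exact in the middle.

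\textbf{Step 3: composition factors.} This is the main obstacle, because the remaining factors cannot be seen directly. Write, in $K_0(\cKP)$ with $t=1$,
$$[\cP_{e_i}*{}^\L\cP_{e_i,-1}]=q^{-1}[\cV_i]+q[\cK_i]+\sum_k q^{a_k}[L_k].$$
Use $[\cF]=(1-q^2)[\cF_z]$ to rewrite \eqref{eq:Kcommutator} in terms of unaffinized products. Take the analogous expansion for ${}^\L\cP_{e_i,-1}*\cP_{e_i}$, with exponents $2-a_k$ by the bar-duality/Lemma \ref{lem:Q} argument, and substitute both into \eqref{eq:Kcommutator}. The $\cK_i$ terms cancel, and one obtains
$$\sum_k (q^{a_k}-q^{-a_k})[L_k]\ \propto\ (q-q^{-1})\sum_{j\leftrightarrow i}q^{c_j}[\cV_j],$$
plus the $\cV_i$ bookkeeping, which fixes $b$.

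The main difficulty is a positivity or degree bound playing the role of \cite[Thm.~24.10]{Mc2}: that $a_k>0$ for all extra factors. I would try to get it from Lemma \ref{lem:Q}. The right side of \eqref{eq:Kcommutator} is the class of an actual object of $\cKP$, namely the cone of an $r$-matrix, so its coefficients are in $\bbN[q^{\pm1}]$. I would combine this with the fact that the socle $\cV_i\{-1\}$ has minimal degree. Given the bound, comparison forces $a_k=1$ and $\{L_k\}=\{\cV_j\,;\,j\leftrightarrow i\}$, each with multiplicity one, up to Koszul shifts, which are invisible at $t=1$. That completes the list of composition factors for both products.
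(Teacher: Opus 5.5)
Your Steps 1 and 2 match the paper. The adjunction maps through $\cV_i$ from Lemma \ref{lem:V}, together with the simple heads and socles from Proposition \ref{prop:lsubquotient}, give \eqref{eq:seq9}, \eqref{eq:seq10} and the four head/socle identifications.

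The gap is in Step 3. Write $A=[\cP_{e_i}*{}^\L\cP_{e_i,-1}]$ and $B=[{}^\L\cP_{e_i,-1}*\cP_{e_i}]$. After multiplying by $1-q^2$, the identity \eqref{eq:Kcommutator} only sees $B-q^2A$. Any extra simple $L$ that occurs as $L\{s\}$ in the first product and $L\{s-2\}$ in the second cancels in $B-q^2A$, so no identity of this form can exclude extra composition factors. (Separately, with the convention $[\cF\{1\}]=q^{-1}[\cF]$, the head and socle contribute $q^{-1}[\cK_i]+q[\cV_i]$ to $A$, not $q^{-1}[\cV_i]+q[\cK_i]$. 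Only with the correct assignment do the $\cK_i$ terms cancel.)

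To rule out such $L$ you would need the two inputs that made Proposition \ref{prop:ses2}(d) work on the quiver Hecke side. One is a bar duality that fixes simples and reverses the product; this is what produces your exponents $2-a_k$. The other is McNamara's degree bound $a_k>0$ \cite[Thm.~24.10]{Mc2}. The paper supplies neither on the Coulomb side:
\begin{itemize}
\item The involutions ${}^\L(-)$, $\D_1$ and $(-)^*$ move simples (e.g.\ Lemma \ref{lem:D1}).
\item Lemma \ref{lem:Q} says nothing relating the two products.
\item Nonnegativity of the right side of \eqref{eq:Kcommutator} is exactly what Corollary \ref{cor:FT} already extracts from Lemma \ref{lem:Q}. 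It constrains $B-q^2A$, not the unknown $L_k$.
\item ``The socle has minimal degree'' is not an available principle here.
\end{itemize}

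The missing idea is to use Lemma \ref{lem:Q} at the level of objects rather than classes. The cone $\cQ_i$ of
$$\bfR^{\ren}\colon(\cP_{e_i})_z*{}^\L\cP_{e_i,-1}\{-2\}\to{}^\L\cP_{e_i,-1}*(\cP_{e_i})_z$$
is an honest object of the heart. Corollary \ref{cor:FT} therefore pins down its composition factors: $\cV_i$ twice, with shifts differing by $\{2\}$, and each $\cV_j$ with $j\leftrightarrow i$ once.

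Restricting to $z=0$ identifies $\Cone(\r)$ with $\Cone(z\colon\cQ_i\{-2\}\to\cQ_i)$, hence $\coker(\r)\cong\coker(z)$. Since $\Im(\r)$ is the socle $\cK_i\{-1\}$ of ${}^\L\cP_{e_i,-1}*\cP_{e_i}$, every other factor of that product is a factor of the quotient $\coker(z)$ of $\cQ_i$. This is the upper bound your argument lacks.

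Comparing shifts, $\Im(z)$ is either $0$ or a single copy of $\cV_i$. If $\Im(z)=0$, then $\coker(z)=\cQ_i$ would contribute a second $\cV_i$, contradicting that the head $\cV_i\{1\}$ occurs only once. So the factors are exactly $\cK_i$, $\cV_i$ and the $\cV_j$ with $j\leftrightarrow i$, each once. The same list holds for $\cP_{e_i}*{}^\L\cP_{e_i,-1}$, because the two products have the same factors up to shift.
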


\begin{proof}
By adjunction and Lemma \ref{lem:V} we have nonzero maps
$$\cV_i \{-1\} \to \cP_{e_i} * {}^\L \cP_{e_i,-1} ,\quad {}^\L \cP_{e_i,-1} * \cP_{e_i} \to \cV_i \{1\}.$$
We also know that $\cP_{e_i} * {}^\L \cP_{e_i,-1}$ and ${}^\L \cP_{e_i,-1} * \cP_{e_i}$ have simple heads and socles 
by Propositions \ref{prop:qcom}, \ref{prop:lsubquotient}, because the factors are all simple and real.  This yields the sequences \eqref{eq:seq9} and \eqref{eq:seq10} along with their exactness on the left and right. 

The composition factors of $\cP_{e_i} * {}^\L \cP_{e_i,-1}$ and ${}^\L \cP_{e_i,-1}*\cP_{e_i}$ are the same up to shifts.
To compute them, we consider the triangle
\begin{equation}\label{eq:local6}
(\cP_{e_i})_z * {}^\L \cP_{e_i,-1} \{-2\} \xrightarrow{\bfR^\ren} {}^\L \cP_{e_i,-1} * (\cP_{e_i})_z \to \cQ_i.
\end{equation}
Since, by Lemma \ref{lem:Q}, we have $\cQ_i \in \cKP_X$, we can use Corollary \ref{cor:FT} to conclude that the composition factors of $\cQ_i$ are $\{(\cV_i)^{\oplus 2}, \cV_j; j \leftrightarrow i \in E \}$ up to shifts, meaning that $\cV_i$ has multiplicity two and $\calV_j$ multiplicity one.

On the other hand, restricting \eqref{eq:local6} to $z=0$ recovers the triangle
$$\cP_{e_i} * {}^\L \cP_{e_i,-1} \{-2\} \xrightarrow{\r} {}^\L \cP_{e_i,-1} * \cP_{e_i} \to \cQ_i'$$
where $\cQ_i' \cong \Cone(\cQ_i \{-2\} \xrightarrow{z} \cQ_i)$. Since the image of $\r$ is the socle of ${}^\L \cP_{e_i,-1} * \cP_{e_i}$ which is $\cK_i \{-1\}$ it suffices to show that the composition factors of $\coker(z)$ are $\{\cV_i, \cV_j; j \leftrightarrow i\}$. Now, for degree reasons, the image $\Im(z) \subset \cQ_i$ is either zero or $\cV_i$. Thus the composition factors of $\cP_{e_i} * {}^\L \cP_{e_i,-1}$ are either $\{(\cV_i)^{\oplus 2}, \cV_j: j \leftrightarrow i \in E \}$ or $\{\cV_i, \cV_j: j \leftrightarrow i \in E\}.$ But $\cV_i$ is the head of $\cP_{e_i} * {}^\L \cP_{e_i,-1}$.  So, by Proposition \ref{prop:lsubquotient}, the object $\cV_i$ cannot occur more than once in the composition series. 
\end{proof}

\begin{Proposition}\label{prop:dimhom}
For $i,j \in I$ we have
\begin{equation}\label{eq:dimhom}
\Hom_{\cKP}({}^\L \cP_{e_j,-1} * \cP_{e_j}, \cP_{e_i} * {}^\L \cP_{e_i,-1} [a]\la -a \ra \{b\}) \cong 
\begin{cases}
\C & \text{ if $i \to j$ and $a=1$, $b=1$ } \\
\C & \text{ if $j \to i$ and $a=-1$, $b=1$ } \\
0&\text{if $\la\alpha_i,\alpha_j\ra=0$}
\end{cases}  
\end{equation}
If $j \to i$ let $\Gamma_{ji}$ denote the nonzero map. 
The composition factors of its image are $\{\cV_i, \cV_j\}$, up to shifts. 
\end{Proposition}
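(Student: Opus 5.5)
We will compute the Hom space by combining the structure of the two objects from Proposition \ref{prop:ses4} with the extension data coming from Lemma \ref{lem:map1}.

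\textbf{Structure of the two objects.} By Proposition \ref{prop:ses4}, the source ${}^\L \cP_{e_j,-1} * \cP_{e_j}$ has:
\begin{itemize}[leftmargin=8mm]
\item simple socle $\cK_j\{-1\}$;
\item simple head $\cV_j\{1\}$;
\item remaining composition factors $\cV_j$ and $\cV_k$ for $k \leftrightarrow j$, each with multiplicity one up to shifts.
\end{itemize}
The target $\cP_{e_i} * {}^\L \cP_{e_i,-1}$ has:
\begin{itemize}[leftmargin=8mm]
\item simple socle $\cV_i\{-1\}$;
\item simple head $\cK_i\{1\}$;
\item middle factors among $\{\cV_i, \cV_{k'} : k' \leftrightarrow i\}$.
\end{itemize}

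\textbf{Constraint on any nonzero map.} Let $f$ be a nonzero map from the source to a shift of the target. Its image is a quotient of the source and a subobject of the target. Hence the image has head $\cV_j$ (up to shift) and socle $\cV_i$ (up to shift). In particular $\cK_i$ and $\cK_j$ cannot occur in it: $\cK_j$ is the socle of the source and is not $\cV_i$, and $\cK_i$ appears only as the head of the target. So the image has composition factors among the $\cV$'s, contains both $\cV_i$ and $\cV_j$, and the shifts $[a]\la -a\ra\{b\}$ are pinned down by matching the shifts of these factors.

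\textbf{Case $\la\alpha_i,\alpha_j\ra=0$ with $i \neq j$.} Then $\cV_i$ occurs only once in the target, as its socle, and $\cV_i$ does not occur in the source at all. An image containing $\cV_i$ is therefore impossible, so the Hom space is zero.

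\textbf{Case $i \leftrightarrow j$.} The image must be a uniserial length-two object with head $\cV_j$ and socle $\cV_i$. It is a nonsplit extension of $\cV_j$ by $\cV_i$, which exists precisely in the shift dictated by the edge orientation. To determine that shift we would compute $\Ext^1(\cV_j,\cV_i)$ up to shifts. These are line-bundle-twisted sheaves on $\calR^\cl_{\u0}$. The extension is induced by the arrow map $x: L_0^{(i)}/tL_0^{(i)} \to L_0^{(j)}/tL_0^{(j)}\la 1\ra\{-1\}$ (or the reverse map when $j \to i$), exactly as in the proof of Lemma \ref{lem:map1}. That proof showed the sign of the Koszul shift is determined by the orientation: an $[1]\la -1\ra$ appears when $i\to j$ and the opposite shift when $j \to i$. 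This gives $a=\pm1$, $b=1$.

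\textbf{Existence and uniqueness.} We need the two $\cV$'s to actually occur in the middle layers with the correct shifts. For this we use the $\cV_k$ bookkeeping from Corollary \ref{cor:FT}, where the exponents $c_k$ are fixed via Lemma \ref{lem:Q}. We then argue as follows:
\begin{itemize}[leftmargin=8mm]
\item The source has a quotient which is the nonsplit extension with head $\cV_j$ and socle $\cV_i$. This comes from the radical layers: the second layer of the source consists of the $\cV_k$'s, and the Ext from $\cV_j$ to $\cV_i$ is nonzero.
\item Dually, the target contains the same extension as a subobject.
\item Composing the quotient map with the inclusion gives the nonzero map $\Gamma_{ji}$, whose image has factors $\{\cV_i,\cV_j\}$.
\end{itemize}
One-dimensionality follows because $\cV_j$ occurs with multiplicity one in the head of the source, $\cV_i$ occurs with multiplicity one in the socle of the target, and all factors are distinct. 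This argument parallels the proof of Corollary \ref{cor:gamma} on the Hecke side.

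\textbf{Main obstacle.} The hard step is proving that the Loewy structure is as claimed. Concretely, we must show that $\cV_i$ really sits directly above $\cV_j$ in the source (and dually in the target), with the orientation-dependent Koszul shift. My first attempt would be to realize the relevant quotient explicitly. One constructs the map ${}^\L \cP_{e_j,-1} * \cP_{e_j} \to \cV_j\{1\}$ via adjunction and Lemma \ref{lem:V}. One then computes the kernel modulo $\cK_j$ using the deformation $\cQ_j$ from the proof of Proposition \ref{prop:ses4}. The key extension class should come from the excess-intersection computation of Lemma \ref{lem:map1}.
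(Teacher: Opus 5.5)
\textbf{Gap in the adjacent case.} Two parts of your argument are fine and agree with the paper. In the case $\la\alpha_i,\alpha_j\ra=0$, $\cV_i$ is the simple socle of the target but does not occur in the source, so the Hom is zero. For the final claim, the image of a nonzero map is a quotient of an object with simple head $\cV_j$ and a subobject of an object with simple socle $\cV_i$. Its factors lie among the common factors listed in Proposition \ref{prop:ses4}, so they are $\{\cV_i,\cV_j\}$.

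The gap is the case $i\leftrightarrow j$. You never prove that a nonzero map exists, nor that it occurs exactly in the shift $[\pm1]\la\mp1\ra\{1\}$.
\begin{itemize}
\item Proposition \ref{prop:ses4} gives only the head, the socle, and the composition factors up to shifts. It says nothing about radical layers. So the claim that the source has a length-two quotient with $\cV_j$ on top of $\cV_i$, and that the target contains the same extension with matching shifts, is unsupported.
\item The shifts of the middle factors ($\cV_i$ in the source, $\cV_j$ in the target) are unknown. Hence ``matching shifts'' pins down neither $a$ nor $b$.
\item The detour through $\Ext^1_{\cKP}(\cV_j,\cV_i)$ is not carried out. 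Even a nonvanishing $\Ext^1$ would not produce a quotient of the source.
\item Lemma \ref{lem:Q} does not fix the exponents $c_k$ in Corollary \ref{cor:FT}; it only forces the signs.
\end{itemize}
You flag this yourself as the main obstacle, so the proof is incomplete at its essential step.

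\textbf{The missing idea: rigidity.} Moving the left duals across via the adjunctions \eqref{adjunction3} gives
\[
\Hom({}^\L \cP_{e_j,-1} * \cP_{e_j}, \cP_{e_i} * {}^\L \cP_{e_i,-1} [a]\la -a \ra \{b\}) \cong \Hom(\cP_{e_j} * \cP_{e_i,-1}, \cP_{e_j,-1} * \cP_{e_i} [a]\la -a \ra \{b\}).
\]
Both objects on the right have length two, with distinct factors and explicitly known head and socle, by Lemma \ref{lem:map1}.
\begin{itemize}
\item If $i\to j$: \eqref{eq:ses7} gives a surjection $\cP_{e_j}*\cP_{e_i,-1}\to\cP_{e_i+e_j,-e_j}[1]\la-1\ra$ and an injection $\cP_{e_i+e_j,-e_j}\{-1\}\to\cP_{e_j,-1}*\cP_{e_i}$. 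Their composite is the nonzero map, with $a=b=1$.
\item If $j\to i$: use \eqref{eq:ses8} with $i$ and $j$ swapped, which gives $a=-1$, $b=1$.
\end{itemize}
The length-two structure then gives one-dimensionality and vanishing in all other shifts.

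Your intuition that the orientation-dependent Koszul shift comes from the excess-intersection computation in Lemma \ref{lem:map1} is correct. But it enters through the products $\cP_{e_j}*\cP_{e_i,-1}$ and $\cP_{e_j,-1}*\cP_{e_i}$, not through an $\Ext^1$ between the $\cV$'s.
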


\begin{proof}
By adjunction 
$$\Hom_{\cKP}({}^\L \cP_{e_j,-1} * \cP_{e_j}, \cP_{e_i} * {}^\L \cP_{e_i,-1} [a]\la -a \ra \{b\}) 
\cong \Hom_{\cKP}(\cP_{e_j} * \cP_{e_i,-1}, \cP_{e_j,-1} * \cP_{e_i} [a]\la -a \ra \{b\}).$$

If $i \to j$ then, by \eqref{eq:ses7}, we have a sequence of maps
$$\cP_{e_j} * \cP_{e_i,-1} \to \cP_{e_i+e_j, -e_j} [1]\la -1 \ra \to \cP_{e_j,-1} * \cP_{e_i} [1]\la-1\ra \{1\}$$
whose composition is nonzero because the first map is surjective and the second injective. 
Since $\cP_{e_j} * \cP_{e_i,-1}$ and $\cP_{e_j,-1} * \cP_{e_i}$ have length two by \eqref{eq:ses7},
this implies the $i \to j$ case of \eqref{eq:dimhom}.

If $j \to i$ then, by \eqref{eq:ses8} with $i$ and $j$ interchanged,  we have the sequence of maps 
$$\cP_{e_j} * \cP_{e_i,-1} \to \cP_{e_i+e_j, -e_i} \to \cP_{e_j,-1} * \cP_{e_i} [-1]\la 1 \ra \{1\}$$
which similarly gives the $j \to i$ case of \eqref{eq:dimhom}. 

Now, we compute the composition factors of the image.
By Proposition \ref{prop:ses4}, the composition factors of ${}^\L \cP_{e_j,-1} * \cP_{e_j}$ are
$\{\cK_j, \cV_j, \cV_k\,;\,j\leftrightarrow k\}$, up to shifts,
while the composition factors of $\cP_{e_i} * {}^\L \cP_{e_i,-1}$ are 
$\{\cK_i, \cV_i, \cV_k\,;\, i\leftrightarrow k\}.$
Thus the composition factors of the image of any map
$\gamma:{}^\L \cP_{e_j,-1} * \cP_{e_j}\to\cP_{e_i} * {}^\L \cP_{e_i,-1} [a]\la -a \ra \{b\}$ is a subset of $\{\cV_i, \cV_j\}$. On the other hand, it cannot be just $\cV_i$ because then 
$\cV_i$ is in the head of ${}^\L \cP_{e_j,-1} * \cP_{e_j}$ which we know to be $\cV_j$. Likewise it cannot be just $\cV_j$ because then 
$\cV_j$ is in the socle of $\cP_{e_i} * {}^\L \cP_{e_i,-1}$ which we know to be $\cV_i$. The result follows. 
\end{proof}

\section{Root objects}\label{sec:root-objects}

The goal of this section is to define certain additional objects associated to affine roots and to study their properties.

\subsection{The object $\scrM(\beta-\delta)$}

Given $\beta \in \Phi^+$ choose a minimal expression $\beta = s_{i_k} \dots s_{i_1}(\alpha_i)$ and define
\begin{equation}\label{eq:b-d}
\scrM(\beta-\delta) = \cP_{e_{i_k}} \n (\cP_{e_{i_{k-1}}} \n \dots (\cP_{e_{i_1}} \n \cP_{e_i,-1}) \dots)
\end{equation}

\begin{Proposition}\label{prop:consistent}
The object $\scrM(\beta - \delta)$ is simple and real. Up to Koszul shifts it does not depend on the choice of minimal expression. 
\end{Proposition}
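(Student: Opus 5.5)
Notation: $\beta = s_{i_k}\cdots s_{i_1}(\alpha_i)$ is a minimal expression, and we set $\beta_0=\alpha_i$, $\beta_r = s_{i_r}(\beta_{r-1})$. Minimality means each step raises the height by one, so $\la \beta_{r-1},\alpha_{i_r}\ra=-1$ and $\beta_r=\beta_{r-1}+\alpha_{i_r}$. We also write $\scrM(\beta_r-\delta)$ for the partial object $\cP_{e_{i_r}} \n \scrM(\beta_{r-1}-\delta)$, with $\scrM(\alpha_i-\delta)=\cP_{e_i,-1}$.

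The plan is induction on $k$. The case $k=0$ is Corollary \ref{cor:real}, since $\cP_{e_i,-1}$ is simple and real. For the inductive step we use the standard fact from the appendix on renormalized $r$-matrices (cf.\ Prop.~\ref{prop:Lb=1} and its companions, as in \cite{KKKO2}). It says that if $M,N$ are real simple with $\b(M,N)=1$, then $M\n N$ is real simple, the product $M*N$ has length two, and $\La(L,M\n N)=\La(L,M)+\La(L,N)$ for suitable $L$. So the key input at each step is
$$\b(\cP_{e_{i_r}},\scrM(\beta_{r-1}-\delta))=1, \qquad \La(\cP_{e_j},\scrM(\beta_{r-1}-\delta)) = -\la \alpha_j,\beta_{r-1}\ra \text{ (up to the expected sign pattern).}$$

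To get these pairings I will strengthen the induction hypothesis. Alongside simplicity and reality, I carry the values of $\La(\cP_{e_j},\scrM(\beta_r-\delta))$ and $\La(\scrM(\beta_r-\delta),\cP_{e_j})$ for every $j\in I$, and hence $\b(\cP_{e_j},\scrM(\beta_r-\delta)) = \max(0,-\la\alpha_j,\beta_r\ra)$ or something close to it. The base case is Proposition \ref{prop:B2}. That proposition gives $\b(\cP_{e_j},\cP_{e_i,-1})=-\la\alpha_j,\alpha_i\ra$ for $j\neq i$, and $\b=0$ for $j=i$ (where $\ell-\ell'=1$).

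For the propagation step, $\La$ is additive when one of the arguments is a head $M\n N$ of a product of real simples with $\b=1$, because a product of $r$-matrices factors through the head. This gives
$$\La(\cP_{e_j},\scrM(\beta_r-\delta)) = \La(\cP_{e_j},\cP_{e_{i_r}}) + \La(\cP_{e_j},\scrM(\beta_{r-1}-\delta)) - 2\epsilon,$$
with a correction $\epsilon\in\{0,1\}$ that is controlled by Prop.~\ref{prop:B2} (this is the KKOP-type inequality, which becomes an equality when the $\b$ values vanish). Adding $\la\alpha_j,\alpha_{i_r}\ra$ contributions then matches the $W$-action, and we obtain $\b=1$ exactly when $\la\alpha_j,\beta_r\ra=-1$. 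I expect this bookkeeping to be the main obstacle. The additivity formula only holds under hypotheses, for instance that the relevant $\b$ vanishes or that one of the objects commutes with the other, so I must check that a simply laced finite root string never produces $\la\alpha_j,\beta\ra=-2$. That holds because $\beta\neq\alpha_j$ is a root. In the degenerate case $j=i_r$ I must show $\b=0$, which I would try to get from the length-two sequence by using that $\cP_{e_j}$ is real.

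For independence of the expression, two minimal expressions of $\beta$ are connected by braid-type moves of the form $s_as_b=s_bs_a$ and $s_as_bs_a=s_bs_as_b$ applied at the top of the chain. It therefore suffices to show two things, each up to Koszul shift. First, if $\la\alpha_a,\alpha_b\ra=0$, then $\cP_{e_a}\n(\cP_{e_b}\n M)\cong \cP_{e_b}\n(\cP_{e_a}\n M)$; this follows because both equal the simple head of $\cP_{e_a}*\cP_{e_b}*M$, since $\cP_{e_a}$ and $\cP_{e_b}$ commute up to shift by Prop.~\ref{prop:B2}. Second, at each stage the head is characterized intrinsically. Indeed, $\scrM(\beta-\delta)$ lies in the block $\cKP_{\beta-\delta}$, and by the $\La$ computations it has the form $\cP_{\lambda^\vee,\mu}$ with $\lambda^\vee$ determined by $\beta$. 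I will try to identify it as the unique simple in $\cKP_{\beta-\delta}$ that occurs in the head of $\cP_{e_{i_k}}*\cdots*\cP_{e_{i_1}}*\cP_{e_i,-1}$ and is supported on the minimal $\calR_{\le\lambda^\vee}$. Since any two minimal expressions for $\beta$ give the same multiset of simple roots, the supports agree, and a comparison through Lemma \ref{lem:extension} and the explicit sequences of Lemma \ref{lem:map1} pins down the line bundle up to Koszul shift. If this support argument fails, the fallback is to check the long braid relation $s_as_bs_a$ directly, using associativity of heads of products of real simples with pairwise $\b\le 1$.
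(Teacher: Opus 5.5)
\textbf{Independence of the expression.} Your commuting-heads argument for orthogonal letters is the paper's argument, but the rest of this part has a gap. You assert that two minimal expressions are linked by braid moves at the top of the chain, and this is not justified: the two expressions may start from different simple roots, so they are not reduced words of a single Weyl group element. Your intrinsic characterization via supports, and the long-braid fallback, are never carried out. Moreover, Lemma \ref{lem:map1} only controls height-two products, so it cannot pin down the line bundle in general.

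The missing observation makes all of this unnecessary. Argue by induction on $\height(\beta)$ and compare the last letters $a=i_k$, $b=j_k$ of the two expressions.
\begin{itemize}
\item If $a=b$, induction finishes the argument.
\item If $a\neq b$, minimality gives $\la\beta,\alpha_a\ra=\la\beta,\alpha_b\ra=1$.
\item If moreover $\la\alpha_a,\alpha_b\ra=0$, then $\beta'=\beta-\alpha_a-\alpha_b$ is a positive root. Both $\beta-\alpha_a=s_b(\beta')$ and $\beta-\alpha_b=s_a(\beta')$ are minimal extensions of it, so by induction both objects are the head of $\cP_{e_a}*\cP_{e_b}*\scrM(\beta'-\delta)$. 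This is your commuting argument.
\item If instead $\la\alpha_a,\alpha_b\ra=-1$, you get $\la\beta-\alpha_b,\alpha_a\ra=2$. This forces $\beta=\alpha_a+\alpha_b$, and the claim is exactly Corollary \ref{cor:map2} combined with $M\n N\dcong N\Delta M$ from Proposition \ref{prop:homs1}.
\end{itemize}
So no long braid relation ever needs checking.

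\textbf{Simplicity and reality.} Simplicity needs no $\b$ computation. Since $\cP_{e_{i_r}}$ is real (Corollary \ref{cor:real}), the head of its product with a simple object is simple (Propositions \ref{prop:qcom}, \ref{prop:lsubquotient}).

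Reality is where your plan has a genuine hole, and the unspecified correction $\epsilon$ is exactly the difficulty.
\begin{itemize}
\item Proposition \ref{prop:Lambda*}(b) computes $\La(\cP_{e_j},\cP_{e_{i_r}}\n X)$ only when $\cP_{e_j}$ $q$-commutes with $\cP_{e_{i_r}}$. It computes $\La(\cP_{e_{i_r}}\n X,\cP_{e_j})$ only when $\cP_{e_j}$ $q$-commutes with $X$.
\item For $j$ adjacent to $i_r$ the first condition fails.
\item Proposition \ref{prop:B2} says nothing about pairings with the composite $X=\scrM(\beta_{r-1}-\delta)$, so $\epsilon$ is not determined by your induction hypothesis.
\item Additivity on heads ``because $r$-matrices factor through the head'' is false in general.
\item Your sign guess is also off. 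The correct values are $\La(\cP_{e_j},\scrM(\beta-\delta))=|\la\beta,\alpha_j\ra|$ and $\La(\scrM(\beta-\delta),\cP_{e_j})=-\la\beta,\alpha_j\ra$.
\end{itemize}

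Proposition \ref{prop:lambdas} closes this gap as follows:
\begin{itemize}
\item It additionally carries the pairings with the right duals ${}^\R\cP_{e_j}$.
\item It rewrites $\La(\cP_{e_j},Y)=\La(Y,{}^\R\cP_{e_j})$ via Proposition \ref{prop:Lduals}, so that additivity applies on the side where $q$-commutation holds.
\item It re-brackets with Proposition \ref{prop:normal} when $\la\beta,\alpha_j\ra=1$.
\item It uses a support argument (${}^\R\cP_{e_i}$ lives over the negative part of $\Gr$) for the case $j=i$ of the dual pairing.
\end{itemize}
The base values of the dual pairings come from Proposition \ref{prop:B3} and Corollary \ref{cor:B4}. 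These require $m_i<0$, i.e.\ $\alpha\in\P^{++}$, a hypothesis your proposal never invokes. Accordingly, the paper's proof of this proposition only settles independence, with simplicity automatic. Reality is established afterwards as Corollary \ref{cor:real2}, under $\alpha\in\P^{++}$.
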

\begin{proof}
We proceed by induction on the height of $\beta$. The base case, where $\beta = \alpha_i$, is clear. 
For the induction step, suppose we have two minimal expressions
$$s_{i_k} \dots s_{i_1}(\alpha_i) = \beta =  s_{j_k} \dots s_{j_1}(\alpha_j).$$
If $i_k = j_k$ then by induction we are done. Otherwise, we have two cases. 

If $\la \alpha_{i_k}, \alpha_{j_k} \ra = 0$ then $\beta' = \beta - \alpha_{i_k} - \alpha_{j_k}$ is a root and, by induction, we have
$$\scrM(\beta-\alpha_{i_k}-\delta) \cong \cP_{e_{j_k}} \n \scrM(\beta'-\delta), \ \ \ \scrM(\beta-\alpha_{j_k}-\delta) \cong \cP_{e_{i_k}} \n \scrM(\beta'-\delta).$$
It follows that 
\begin{align*}
\cP_{e_{i_k}} \n \scrM(\beta-\alpha_{i_k}-\delta)  
&\cong \cP_{e_{i_k}} \n (\cP_{e_{j_k}} \n \scrM(\beta'-\delta))  \\
&\cong \cP_{e_{j_k}} \n (\cP_{e_{i_k}} \n \scrM(\beta'-\delta)) \\
&\cong \cP_{e_{j_k}} \n \scrM(\beta-\alpha_{j_k}-\delta).
\end{align*}
Here the second isomorphism uses that 
$$\cP_{e_{i_k}} * \cP_{e_{j_k}} * \scrM(\beta'-\delta) \cong \cP_{e_{j_k}} * \cP_{e_{i_k}} * \scrM(\beta'-\delta)$$
has a simple head since $\cP_{e_{i_k}} * \cP_{e_{j_k}} \cong \cP_{e_{j_k}} * \cP_{e_{i_k}}$ is simple, real. 
This shows that two minimal expressions produce the same $\scrM(\beta-\delta)$. 

If $\la  \alpha_{i_k}, \alpha_{j_k} \ra = -1$ then, using $\la \beta, \alpha_{i_k} \ra = 1$, we get $\la \beta-\alpha_{j_k}, \alpha_{i_k} \ra = 2$ which means 
$\beta = \alpha_{i_k} + \alpha_{j_k}$. The result then follows from Corollary \ref{cor:map2} which implies that $\cP_{e_{i_k}} \n \cP_{e_{j_k},-1}$ and $\cP_{e_{j_k}} \n \cP_{e_{i_k},-1}$ are isomorphic, up to Koszul shifts.
\end{proof}

The following is an immediate consequence of Proposition \ref{prop:consistent}. 

\begin{Corollary}\label{cor:consistent} 
For $i\in I$ we have, up to Koszul shifts, 
\begin{enumerate}[label=$\mathrm{(\alph*)}$,leftmargin=8mm]
\item $\scrM(\alpha_i-\delta) \cong \cP_{e_i,-1}$,
\item $\la \beta, \alpha_i \ra = -1 \Rightarrow\cP_{e_i} \n \scrM(\beta-\delta) \cong \scrM(\beta+\alpha_i-\delta).$
\end{enumerate}
\qed
\end{Corollary}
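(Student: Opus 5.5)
Both parts should follow directly from Proposition \ref{prop:consistent} and the definition \eqref{eq:b-d}, so the plan is to unwind that definition in each case.

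For part (a), I take $\beta = \alpha_i$ with the empty minimal expression ($k=0$). Then \eqref{eq:b-d} is just the innermost term, so $\scrM(\alpha_i-\delta) = \cP_{e_i,-1}$. This is the base case of the induction in Proposition \ref{prop:consistent}. The independence of the choice of expression, up to Koszul shifts, shows that any other minimal expression for $\alpha_i$ yields the same object. The only such expression is the trivial one, since the condition $\la \alpha_i, \alpha_{i_1}\ra = -1$ at each step forces the height to increase.

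For part (b), the approach is to show that a minimal expression for $\beta$ extends to one for $\beta+\alpha_i$. Suppose $\la \beta,\alpha_i\ra=-1$. Then $s_i(\beta) = \beta+\alpha_i$, and this is a positive root of height $\height(\beta)+1$. Let $\beta = s_{i_k}\cdots s_{i_1}(\alpha_j)$ be a minimal expression. Then
\[
\beta+\alpha_i = s_i s_{i_k}\cdots s_{i_1}(\alpha_j)
\]
is again an expression of length $k+1$. I need to check that it is minimal in the sense used in \eqref{eq:b-d}. The implicit convention there is that each successive reflection raises height by exactly one, so the length of a minimal expression equals $\height(\beta)-1$. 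Since $k = \height(\beta)-1$, the new expression has length $\height(\beta+\alpha_i)-1$, and this is the minimal possible length.

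Applying definition \eqref{eq:b-d} to the extended expression gives
\[
\scrM(\beta+\alpha_i-\delta) = \cP_{e_i}\n\big(\cP_{e_{i_k}}\n(\cdots(\cP_{e_{i_1}}\n \cP_{e_j,-1})\cdots)\big) = \cP_{e_i}\n \scrM(\beta-\delta).
\]
Proposition \ref{prop:consistent} guarantees that this agrees, up to Koszul shift, with the object obtained from any other minimal expression. Replacing $\scrM(\beta-\delta)$ by a Koszul shift of itself only changes $\cP_{e_i}\n\scrM(\beta-\delta)$ by that same shift. This holds because $\n$ is compatible with the grading shifts, which are central autoequivalences commuting with $*$ and with taking heads.

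The only point needing care is this minimality and height bookkeeping: I must make sure that "minimal expression" means each step adds one simple root. This is automatic in simply laced type, since $\la\gamma,\alpha\ra = -1$ exactly when $s_\alpha(\gamma) = \gamma+\alpha$. Beyond that, the statement is an immediate rewriting of the definition.
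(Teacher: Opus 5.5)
Your proposal is correct and takes the same approach as the paper, which records this corollary as an immediate consequence of Proposition \ref{prop:consistent}. Part (a) is the length-zero case of \eqref{eq:b-d}. Part (b) follows by appending $s_i$ to a minimal expression for $\beta$, which stays minimal because each step raises the height by one, and then invoking independence of the choice of expression up to Koszul shifts, which commute with $\n$.
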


\begin{Proposition}\label{prop:lambdas}
If $\alpha \in \P^{++}$ then for each $\beta \in \Phi^+$ and $i \in I$ the following hold: 
\begin{align*}
\Lambda(\cP_{e_i}, \scrM(\beta - \delta)) &= |\la \beta, \alpha_i \ra|,\\
\Lambda(\scrM(\beta - \delta), \cP_{e_i}) &= -\la \beta, \alpha_i \ra,\\
\Lambda({}^\R\cP_{e_i}, \scrM(\beta - \delta)) &= \la \beta, \alpha_i \ra,\\
\Lambda(\scrM(\beta - \delta), {}^\R\cP_{e_i}) &= |\la \beta, \alpha_i \ra|,\\
\b(\cP_{e_i}, \scrM(\beta - \delta)) &= \max\{-\la \beta,\alpha_i \ra, 0\},\\
\b({}^\R\cP_{e_i}, \scrM(\beta - \delta)) &= \max\{\la \beta,\alpha_i \ra, 0 \}.
\end{align*}
\end{Proposition}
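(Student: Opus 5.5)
The plan is to argue by induction on the height of $\beta$, using the recursive description in Corollary \ref{cor:consistent}(b), $\scrM(\beta+\alpha_j-\delta)\cong \cP_{e_j}\n\scrM(\beta-\delta)$ whenever $\la\beta,\alpha_j\ra=-1$. The last four identities are formal consequences of the first two. Indeed, by Proposition \ref{prop:Lduals} we have $\Lambda({}^\R\cP_{e_i},\cF)=\Lambda(\cF,\cP_{e_i})$ and $\Lambda(\cF,{}^\R\cP_{e_i})=\Lambda(\cP_{e_i},\cF)$ for $\cF=\scrM(\beta-\delta)$. Combining this with $\b(M,N)=\tfrac12(\Lambda(M,N)+\Lambda(N,M))$ gives $\b(\cP_{e_i},\scrM)=\tfrac12(|c|-c)=\max\{-c,0\}$ and $\b({}^\R\cP_{e_i},\scrM)=\tfrac12(c+|c|)=\max\{c,0\}$, where $c=\la\beta,\alpha_i\ra$. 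So the real content is the pair of formulas for $\Lambda(\cP_{e_i},\scrM(\beta-\delta))$ and $\Lambda(\scrM(\beta-\delta),\cP_{e_i})$.

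\emph{Base case} $\beta=\alpha_j$, $\scrM=\cP_{e_j,-1}$. For $i\ne j$, Proposition \ref{prop:B2}(a) gives $\Lambda=-\la\alpha_i,\alpha_j\ra$ in both orders, which equals $|c|$ and $-c$ since $c\le 0$. For $i=j$, Proposition \ref{prop:B2}(a) gives $\Lambda(\cP_{e_i,0},\cP_{e_i,-1})=2=|c|$ and $\Lambda(\cP_{e_i,-1},\cP_{e_i,0})=-2=-c$.

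\emph{Induction step.} Write $\gamma=\beta+\alpha_j$ with $\la\beta,\alpha_j\ra=-1$, and $\scrM(\gamma-\delta)=\cP_{e_j}\n\scrM(\beta-\delta)$. Both factors are real and $\b(\cP_{e_j},\scrM(\beta-\delta))=1$ by induction. The tool is the head-additivity statement from the appendix (Proposition \ref{prop:Lambda*} and the accompanying results on $\Lambda$ with heads). For real $L$, $\Lambda(L,M\n N)\le\Lambda(L,M)+\Lambda(L,N)$, with equality when $L$ commutes with $M$ or with $N$, or more generally under the compatibility conditions proved there. I then split into cases according to $i$.

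If $i$ is not $j$ and not adjacent to $j$, then $\cP_{e_i}$ commutes with $\cP_{e_j}$, so $\Lambda$ is additive. This gives $\Lambda(\cP_{e_i},\scrM(\gamma-\delta))=\Lambda(\cP_{e_i},\scrM(\beta-\delta))$, consistent with $\la\gamma,\alpha_i\ra=\la\beta,\alpha_i\ra$.

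If $i=j$, then $\la\gamma,\alpha_i\ra=1$. I expect the formula to follow from the fact that $\cP_{e_i}$ commutes up to shift with $\cP_{e_i}$. The inequality is sharp after comparing with the dual statement for $\Lambda(\scrM,\cP_{e_i})$ together with $\b\ge0$. The sum gives $\Lambda(\cP_{e_i},\cP_{e_i})+\Lambda(\cP_{e_i},\scrM(\beta-\delta))=0+1$, which matches $|1|$. Similarly, in the other order one gets $0+(-(-1))$, but the target is $-1$. Hence here the subadditivity must be strict, and I would instead use the identity $\b(\cP_{e_i},\cP_{e_i}\n\scrM)=0$, which says they commute. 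This holds because $\cP_{e_i}*\cP_{e_i}*\scrM$ has simple head (reality), and a dimension count then forces the value.

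If $i$ is adjacent to $j$, $\cP_{e_i}$ commutes with neither factor, and $\la\gamma,\alpha_i\ra=\la\beta,\alpha_i\ra-1$. This is where I expect the main obstacle, and where strict dominance of $\alpha$ should enter. I would try to run the induction by choosing the minimal expression so that the last reflection $s_j$ interacts with $i$ favourably; Proposition \ref{prop:consistent} allows this choice. When that is not possible, I would compute via rigidity: rewrite $\Lambda(\cP_{e_i},\cP_{e_j}\n\scrM)$ using ${}^\R\cP_{e_j}$ and the duality formulas of Corollary \ref{cor:duals}, where the $\psi$-twists $q$-commute by Lemma \ref{lem:B1} and the condition $m_i<0$ (strict dominance) activates Propositions \ref{prop:B3}(b) and \ref{prop:ses4}.
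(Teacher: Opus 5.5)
\textbf{Gap 1: the third and sixth identities are not formal consequences of the first two.} Your reduction works for the fourth and fifth identities but fails for the third, and hence for the sixth. Proposition~\ref{prop:Lduals}(a) says $\Lambda(M,N)=\Lambda({}^\L N,M)=\Lambda(N,{}^\R M)$.

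\begin{itemize}
\item With $M=\cP_{e_i}$ it gives $\Lambda(\cP_{e_i},\scrM)=\Lambda(\scrM,{}^\R\cP_{e_i})$, so the fourth identity is indeed formal.
\item With $N=\cP_{e_i}$ it gives $\Lambda(\scrM,\cP_{e_i})=\Lambda({}^\L\cP_{e_i},\scrM)$. This involves the \emph{left} dual, not the right dual.
\end{itemize}

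The two duals genuinely differ: ${}^\R\cP_{e_i}\dcong{}^\L\cP_{e_i,-m_i}*\psi_i$ by Corollary~\ref{cor:duals}. The pairings differ too. For $\beta=\alpha_i$ one has $\Lambda({}^\L\cP_{e_i},\cP_{e_i,-1})=\Lambda(\cP_{e_i,-1},\cP_{e_i})=-2$, while the statement asserts $\Lambda({}^\R\cP_{e_i},\cP_{e_i,-1})=2$. Your own arithmetic shows the inconsistency: $\Lambda({}^\R\cP_{e_i},\scrM)=-c$ would give $\b({}^\R\cP_{e_i},\scrM)=\max\{-c,0\}$, not $\max\{c,0\}$.

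So $\Lambda({}^\R\cP_{e_i},\scrM(\beta-\delta))=\la\beta,\alpha_i\ra$ needs its own proof, carried through the same induction. Use your labelling: $\gamma=\beta+\alpha_j$ with $\la\beta,\alpha_j\ra=-1$, and test vertex $i$.

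\begin{itemize}
\item At height one, the value comes from Proposition~\ref{prop:B3} and Corollary~\ref{cor:B4}, using $m_i<0$.
\item For $i\neq j$, ${}^\R\cP_{e_i}$ $q$-commutes with $\cP_{e_j}$ and $\Lambda({}^\R\cP_{e_i},\cP_{e_j})=\la\alpha_i,\alpha_j\ra$, so the pairing is additive.
\item For $i=j$, subadditivity only gives $\b({}^\R\cP_{e_j},\scrM(\gamma-\delta))\le 1$. Ruling out $q$-commutation needs a geometric input. Otherwise the adjunction map $\scrM(\beta-\delta)\to{}^\R\cP_{e_j}*\scrM(\gamma-\delta)$ would be an isomorphism. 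That is impossible, since ${}^\R\cP_{e_j}$ lives over the negative part of $\Gr$ and $\scrM(\beta-\delta)$ over the positive part.
\end{itemize}

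The identities must be proved jointly. The adjacent-case step for $\Lambda(\cP_{e_i},\scrM(\gamma-\delta))$ uses, at lower height, that ${}^\R\cP_{e_i}$ $q$-commutes with $\scrM(\beta-\delta)$, which is the sixth identity.

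\textbf{Gap 2: the adjacent case $\la\alpha_i,\alpha_j\ra=-1$ is not handled.} You also cannot avoid it by choosing the reduced expression. In type $D_4$ with $\gamma=\theta$ and $i$ an outer node, the central node is the only $j$ with $\la\theta,\alpha_j\ra=1$. The missing idea is to split on $\la\beta,\alpha_i\ra\in\{0,1,2\}$; the value $-1$ is impossible since $\gamma$ is a root.

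\begin{itemize}
\item If it is $0$: both $\cP_{e_i}$ and ${}^\R\cP_{e_i}$ $q$-commute with the right factor $\scrM(\beta-\delta)$. Proposition~\ref{prop:Lambda*}(b) then computes $\Lambda(\scrM(\gamma-\delta),\cP_{e_i})$ and $\Lambda(\scrM(\gamma-\delta),{}^\R\cP_{e_i})=\Lambda(\cP_{e_i},\scrM(\gamma-\delta))$.
\item If it is $1$: $\cP_{e_i}$ still $q$-commutes with $\scrM(\beta-\delta)$, which gives $\Lambda(\scrM(\gamma-\delta),\cP_{e_i})$. For the other order, write $\scrM(\beta-\delta)\ddcong\cP_{e_i}\n\scrM(\beta-\alpha_i-\delta)$. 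Reassociate by Proposition~\ref{prop:normal} to $(\cP_{e_j}\n\cP_{e_i})\n\scrM(\beta-\alpha_i-\delta)$. Then use that $\cP_{e_i}$ $q$-commutes with $\cP_{e_j}\n\cP_{e_i}$ (Proposition~\ref{prop:Lb=1}).
\item If it is $2$: $\beta=\alpha_i$, and Corollary~\ref{cor:map2} settles it.
\end{itemize}

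\textbf{Smaller issues.} Your additivity principle is misstated. Proposition~\ref{prop:Lambda*}(b) requires $L$ to $q$-commute with the factor \emph{adjacent} to it, not with either factor. So even in the non-adjacent case you only get $\Lambda(\cP_{e_i},\scrM(\gamma-\delta))$. The other order, when $\la\beta,\alpha_i\ra=-1$, needs an extra step. For example, combine $\scrM(\gamma-\delta)\n{}^\R\cP_{e_j}\dcong\scrM(\beta-\delta)$ (Proposition~\ref{prop:Lduals}(b)) with the fact that $\cP_{e_i}$ $q$-commutes with ${}^\R\cP_{e_j}$.

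In the case $i=j$, no \emph{dimension count} is needed. Proposition~\ref{prop:Lb=1}(b) directly gives $\Lambda(\cP_{e_j},\cP_{e_j}\n\scrM)=\Lambda(\cP_{e_j},\scrM)=-\Lambda(\cP_{e_j}\n\scrM,\cP_{e_j})$.
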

\begin{proof}
We must prove that
\begin{align*}
\Lambda(\cP_{e_j}, \scrM(\beta - \delta)) &=
\begin{cases}
1& \text{ if $\la \beta, \alpha_j \ra = \pm 1$ } \\
0 &\text{ if $\la \beta, \alpha_j \ra = 0$ } \\
2 &\text{ if $\la \beta, \alpha_j \ra = 2$ }
\end{cases} \\
\Lambda(\scrM(\beta - \delta), \cP_{e_j}) &=
\begin{cases}
\mp 1&\text{ if $\la \beta, \alpha_j \ra = \pm 1$ } \\
0 &\text{ if $\la \beta, \alpha_j \ra = 0$ } \\
-2 &\text{ if $\la \beta, \alpha_j \ra = 2$ }
\end{cases}\\
\Lambda({}^\R\cP_{e_j}, \scrM(\beta - \delta)) &=
\begin{cases}
\pm 1& \text{ if $\la \beta, \alpha_j \ra = \pm 1$} \\
0 &\text{ if $\la \beta, \alpha_j \ra = 0$} \\
2 &\text{ if $\la \beta, \alpha_j \ra = 2$}
\end{cases}\\
\Lambda(\scrM(\beta - \delta), {}^\R\cP_{e_j}) &=
\begin{cases}
1&\text{ if $\la \beta, \alpha_j \ra = \pm 1$}\\
0 &\text{ if $\la \beta, \alpha_j \ra = 0$}\\
2 &\text{ if $\la \beta, \alpha_j \ra = 2$}
\end{cases}
\end{align*}
We prove all equalities by induction on the height of $\beta$. 
The height one case corresponds to $\beta = \alpha_i$ for some $i \in I$ in which 
case $\scrM(\beta - \delta) = \cP_{e_i,-1}$ by Corollary \ref{cor:consistent}.
The top two equalities then follow from Proposition \ref{prop:B2} and the bottom two from 
Proposition \ref{prop:B3} and Corollary \ref{cor:B4}.

For the induction step, choose $i \in I$ with $\la \beta,\alpha_i \ra = -1$. 
Then Corollary \ref{cor:consistent} yields
\begin{align}\label{induction}
\scrM(\beta+\alpha_i-\delta) \ddcong \cP_{e_i} \n \scrM(\beta-\delta)
\end{align}
There are several cases to consider. 

Case 1: $j=i$. By induction, we have $\b(\cP_{e_i}\,,\, \scrM(\beta-\delta))=1$. 
Thus, Proposition \ref{prop:Lb=1} and \eqref{induction} yield
\begin{align*}
& \Lambda(\cP_{e_i}, \scrM(\beta+\alpha_i-\delta)) = \Lambda(\cP_{e_i}, \scrM(\beta-\delta)) = 1 \\
& \Lambda(\scrM(\beta+\alpha_i-\delta), \cP_{e_i}) = - \Lambda(\cP_{e_i}, \scrM(\beta-\delta)) = - 1.
\end{align*}

Case 2a: $\la \alpha_j,\alpha_i \ra = -1$ and $\la \beta, \alpha_j \ra = 0$. 
By induction, the objects $\cP_{e_j}$ and $\scrM(\beta-\delta)$ $q$-commute. 
Using Proposition \ref{prop:B2}, \ref{prop:Lambda*} this implies
$$\Lambda(\scrM(\beta+\alpha_i-\delta), \cP_{e_j}) = \Lambda(\cP_{e_i}, \cP_{e_j}) + \Lambda(\scrM(\beta-\delta), \cP_{e_j}) = 1+0=1.$$
On the other hand, by induction we know that the objects ${}^\R\cP_{e_i}$ and $\scrM(\beta-\delta)$ $q$-commute. This implies
$$\Lambda(\scrM(\beta+\alpha_i-\delta), {}^\R\cP_{e_j}) = \Lambda(\cP_{e_i},{}^\R\cP_{e_j}) + \Lambda(\scrM(\beta-\delta),{}^\R\cP_{e_j}) = 1 + 0 = 1.$$
By Proposition \ref{prop:Lduals} this means $\Lambda(\cP_{e_j}, \scrM(\beta+\alpha_i-\delta)) = 1$ as required. 

Case 2b: $\la \alpha_j, \alpha_i \ra = -1$ and $\la \beta, \alpha_j \ra = 1$. 
By induction, the objects $\cP_{e_j}$ and $\scrM(\beta-\delta)$ $q$-commute.
So again we get 
$$\Lambda(\scrM(\beta+\alpha_i-\delta), \cP_{e_j}) = \Lambda(\cP_{e_i}, \cP_{e_j}) + \Lambda(\scrM(\beta-\delta), \cP_{e_j}) = 1-1=0.$$
On the other hand, since $\la \beta, \alpha_j \ra = 1$ we have 
$$\scrM(\beta-\delta) \ddcong \cP_{e_j} \n \scrM(\beta-\alpha_j-\delta).$$ 
Thus 
$$\cP_{e_i} \n (\cP_{e_j} \n \scrM(\beta-\alpha_j-\delta)) \cong \scrM(\beta+\alpha_i-\delta).$$
Moreover, using induction, we know 
$$\Lambda(\cP_{e_i}, \cP_{e_j} \n \scrM(\beta-\alpha_j-\delta)) = 1 = \Lambda(\cP_{e_i},\cP_{e_j}) + \Lambda(\cP_{e_i},\scrM(\beta-\alpha_j-\delta)).$$
By Proposition \ref{prop:normal} this means that 
$$\cP_{e_i} \n (\cP_{e_j} \n \scrM(\beta-\alpha_j-\delta)) \cong (\cP_{e_i} \n \cP_{e_j}) \n \scrM(\beta-\alpha_j-\delta).$$
Finally, using Proposition \ref{prop:Lb=1}, we know that the objects $\cP_{e_j}$ and $\cP_{e_i} \n \cP_{e_j}$ $q$-commute. 
It follows by Proposition \ref{prop:Lambda*} that
$$\Lambda(\cP_{e_j}, \scrM(\beta+\alpha_i-\delta)) = \Lambda(\cP_{e_j}, \cP_{e_i} \n \cP_{e_j}) + \Lambda(\cP_{e_j}, \scrM(\beta-\alpha_j-\delta)) = -1 + 1 = 0$$
where we use Proposition \ref{prop:Lb=1} and induction to obtain the second equality. 

Case 2c: $\la \alpha_j,\alpha_i \ra = -1$ and $\la \beta, \alpha_j \ra = 2$. In this case $\beta = \alpha_j$ and $\scrM(\beta-\delta) = \cP_{e_j,-e_j}$ which $q$-commutes with $\cP_{e_j}$. Thus 
$$\Lambda(\scrM(\alpha_j+\alpha_i-\delta), \cP_{e_j}) = \Lambda(\cP_{e_i}, \cP_{e_j}) + \Lambda(\cP_{e_j,-e_j}, \cP_{e_j}) = 1-2=-1.$$
On the other hand, using Corollary \ref{cor:map2}, 
$$\Lambda(\cP_{e_j}, \scrM(\alpha_j+\alpha_i-\delta)) = \Lambda(\cP_{e_j}, \cP_{e_j} \n \cP_{e_i,-1}) = \Lambda(\cP_{e_j}, \cP_{e_i,-1}) = 1$$
as required.

This covers all the possible cases if $\la \alpha_j,\alpha_i \ra = -1$. 
Notice that we cannot have $\la \alpha_j,\alpha_i \ra = \la \beta,\alpha_j \ra = -1$, 
since in this case $\la \beta+\alpha_i,\alpha_j \ra = -2$ which implies $\beta+\alpha_i \not\in \Phi^+$.

Case 3: $\la \alpha_j,\alpha_i \ra = 0$. In this case the objects $\cP_{e_i}$ and $\cP_{e_j}$ $q$-commute.
Hence, we have $\Lambda(\cP_{e_j}, \cP_{e_i})=0$. So 
\begin {align*}
\Lambda(\cP_{e_j}, \scrM(\beta+\alpha_i-\delta)) &= \Lambda(\cP_{e_j}, \cP_{e_i}) + \Lambda(\cP_{e_j}, \scrM(\beta-\delta)) \\
&= \Lambda(\cP_{e_j}, \scrM(\beta-\delta))
\end{align*}
as required. 
On the other hand, by adjunction we have a map $\scrM(\beta-\delta) \to {}^\R\cP_{e_i} * \scrM(\beta+\alpha_i-\delta)$ 
which means that $\scrM(\beta+\alpha_i-\delta) \n {}^\R\cP_{e_i}$ and $\scrM(\beta-\delta)$ are isomorphic up to shifts. 
Since $\cP_{e_j}$ $q$-commutes with ${}^\R\cP_{e_i}$ this means that 
$$\Lambda(\scrM(\beta+\alpha_i-\delta), \cP_{e_j}) + \Lambda({}^\R\cP_{e_i}, \cP_{e_j}) = \Lambda(\scrM(\beta-\delta),\cP_{e_j}).$$
Since $\Lambda({}^\R\cP_{e_i}, \cP_{e_j}) = 0$ this implies 
$$\Lambda(\scrM(\beta+\alpha_i-\delta), \cP_{e_j}) = \Lambda(\scrM(\beta-\delta),\cP_{e_j}).$$
This completes the induction step for the top two equalities. Since 
$$\Lambda(\cP_{e_j}, \scrM(\beta - \delta)) = \Lambda(\scrM(\beta - \delta), {}^\R\cP_{e_j})$$
the bottom right equality follows from the top left. It remains to prove the induction step for the bottom left equality. 

If $i \ne j$ then the objects ${}^\R\cP_{e_j}$ and  $\cP_{e_i}$ $q$-commute, and hence
$$\Lambda({}^\R\cP_{e_j}, \scrM(\beta+\alpha_i-\delta)) = \Lambda({}^\R\cP_{e_j}, \cP_{e_i}) + \Lambda({}^\R\cP_{e_j}, \scrM(\beta-\delta)).$$
If $\la \alpha_i,\alpha_j \ra = 0$ then $\Lambda({}^\R\cP_{e_j}, \cP_{e_i}) = 0$ and so 
$$\Lambda({}^\R\cP_{e_j}, \scrM(\beta+\alpha_i-\delta)) = \Lambda({}^\R\cP_{e_j}, \scrM(\beta-\delta)).$$
If $\la \alpha_i,\alpha_j \ra = -1$ then $\Lambda({}^\R\cP_{e_j}, \cP_{e_i}) = -1$ and so
$$\Lambda({}^\R\cP_{e_j}, \scrM(\beta+\alpha_i-\delta)) = \Lambda({}^\R\cP_{e_j}, \scrM(\beta-\delta)) - 1.$$
It is easy to see that this implies the induction step. 

Finally, when $i=j$ it remains to show that $\Lambda({}^\R\cP_{e_i}, \scrM(\beta+\alpha_i-\delta)) = 1$. Notice that 
$$\b({}^\R\cP_{e_i}, \scrM(\beta+\alpha_i-\delta)) \le \b({}^\R\cP_{e_i},\cP_{e_i}) + \b({}^\R\cP_{e_i}, \scrM(\beta+\alpha_i)) = 1 + 0 = 1.$$
Since we already know $\Lambda(\scrM(\beta+\alpha_i-\delta), {}^\R\cP_{e_i})=1$ it suffices to prove that the inequality above is an equality. 
Equivalently, it suffices to show that ${}^\R\cP_{e_i}$ does not $q$-commute with $\scrM(\beta+\alpha_i-\delta)$. 
But if this were the case then ${}^\R\cP_{e_i} * \scrM(\beta+\alpha_i-\delta)$ would be simple and the adjunction map 
$$\scrM(\beta-\delta) \rightarrow {}^\R\cP_{e_i} * \scrM(\beta+\alpha_i-\delta)$$ would be an isomorphism. 
This is not possible since ${}^\R\cP_{e_i}$ is supported over the negative part of the affine Grassmannian while $\scrM(\beta-\delta)$ is 
supported over the positive part (because it is obtained by repeatedly taking heads of products of objects supported over the positive part).
\end{proof}

\begin{Corollary}\label{cor:real2}
If $\alpha \in \P^{++}$ and $\beta \in \Phi^+$ then the object $\scrM(\beta-\delta)$ is simple and real. 
\end{Corollary}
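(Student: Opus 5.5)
Simplicity and reality of $\scrM(\beta-\delta)$ were already asserted in Proposition \ref{prop:consistent}. That proof only carried the induction through the well-definedness of the head construction, so here I give a self-contained argument for reality, again by induction on the height of $\beta$, using the pairings computed in Proposition \ref{prop:lambdas}.

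\emph{Base case.} If $\beta=\alpha_i$, then Corollary \ref{cor:consistent}(a) gives $\scrM(\alpha_i-\delta)\ddcong\cP_{e_i,-1}$. This object is simple and real by Corollary \ref{cor:real}, since Koszul and loop shifts do not affect simplicity or reality.

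\emph{Induction step.} Suppose $\beta+\alpha_i\in\Phi^+$ with $\la\beta,\alpha_i\ra=-1$, and assume $\scrM(\beta-\delta)$ is simple and real. By Corollary \ref{cor:consistent}(b),
$$\scrM(\beta+\alpha_i-\delta)\ddcong \cP_{e_i}\n\scrM(\beta-\delta).$$
The factors are simple and real: $\cP_{e_i}$ by Corollary \ref{cor:real}, and $\scrM(\beta-\delta)$ by induction. The general theory of renormalized $r$-matrices (Appendix, Proposition \ref{prop:lsubquotient}) then shows that the head $\cP_{e_i}\n\scrM(\beta-\delta)$ is simple.

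For reality I will use the standard criterion from the appendix, in the form recalled with Proposition \ref{prop:Lb=1}: if $M,N$ are simple and real with $\b(M,N)=1$, then $M\n N$ is real. (If the appendix does not state this directly, I would derive it from Proposition \ref{prop:Lb=1}, which describes $M*N$ as a length-two object whose head and socle $q$-commute with $M$ and $N$.) Under the standing hypothesis $\alpha\in\P^{++}$, Proposition \ref{prop:lambdas} gives
$$\b(\cP_{e_i},\scrM(\beta-\delta))=\max\{-\la\beta,\alpha_i\ra,0\}=1,$$
so the criterion applies and $\scrM(\beta+\alpha_i-\delta)$ is real.

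The main obstacle is making sure the criterion ``$\b=1$ implies the head is real'' is actually available from the appendix results. If it is not, I would prove it directly. By Proposition \ref{prop:Lb=1}, $L:=\cP_{e_i}\n\scrM(\beta-\delta)$ $q$-commutes with both $\cP_{e_i}$ and $\scrM(\beta-\delta)$. Then $L*L$ is a quotient of $\cP_{e_i}*\scrM(\beta-\delta)*L$, and this object is a shift of $\cP_{e_i}*L*\scrM(\beta-\delta)$. Using the $q$-commutation together with Proposition \ref{prop:Lambda*}, one gets $\Lambda(L,L)=0$. By the characterization of real simples through $\Lambda$ and the $r$-matrix (Proposition \ref{prop:qcom}), this shows $L$ is real.
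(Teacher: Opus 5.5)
Your proof is correct and is essentially the paper's own argument: induction on the height of $\beta$, with the base case from Corollary \ref{cor:real}, the step via Corollary \ref{cor:consistent}(b), $\b(\cP_{e_i},\scrM(\beta-\delta))=1$ from Proposition \ref{prop:lambdas}, and reality of the head from Proposition \ref{prop:Lb=1}(b). That criterion is stated directly in Proposition \ref{prop:Lb=1}(b), so your fallback is unnecessary; as sketched it would also be circular, since Propositions \ref{prop:qcom} and \ref{prop:Lambda*}(b) already assume the relevant object is real.
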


\begin{proof}
We proceed by induction on the height of $\beta$. The base case, when $\beta = \alpha_i$, follows from Corollary \ref{cor:real}. 
For the induction step, we may assume that $\la \beta, \alpha_i \ra = -1$.
Then Corollary \ref{cor:consistent} yields
$$\scrM(\beta+\alpha_i-\delta) \ddcong \cP_{e_i} \n \scrM(\beta-\delta)$$
 By induction both $\cP_{e_i}$ and $\scrM(\beta-\delta)$ are simple and real. 
Moreover, by Proposition \ref{prop:lambdas} we have  $\b(\cP_{e_i}, \scrM(\beta-\delta)) = 1$.
Using part (b) of Proposition \ref{prop:Lb=1} this implies that $\scrM(\beta+\alpha_i-\delta)$ is also simple and real. 
This completes the induction. 
\end{proof}

\begin{Remark}
Recall that $\beta \in \Phi^+$ and that we have choosen a minimal expression $\beta = s_{i_k} \dots s_{i_1}(\alpha_i)$.
For each $l=1,\dots, k$ we set $\beta_l = s_{i_l} \dots s_{i_1}(\alpha_i)$.
We have $(\alpha_{i_{l+1}},\beta_l)=-1$. Hence Corollary \ref{cor:consistent} and Proposition \ref{prop:lambdas} 
imply that
$$\cP_{e_{i_{l+1}}} \n \scrM(\beta_l-\delta) \cong \scrM(\beta_{l+1}-\delta),\quad l<k$$
\begin{align*}
\Lambda(\cP_{e_{i_{l+1}}}, \scrM(\beta_l - \delta)) =
\Lambda(\scrM(\beta_l - \delta), \cP_{e_{i_{l+1}}})=\b(\scrM(\beta_l - \delta), \cP_{e_{i_{l+1}}})=1
\end{align*}
By Proposition \ref{prop:homs1} we have
\begin{align*}
\scrM(\beta_{l+1}-\delta)\cong\cP_{e_{i_{l+1}}} \n \scrM(\beta_l-\delta)&\cong\scrM(\beta_l-\delta) \d \cP_{e_{i_{l+1}}}\{1\},\\
 \scrM(\beta_l-\delta)\n\cP_{e_{i_{l+1}}} &\cong \cP_{e_{i_{l+1}}}\d\scrM(\beta_l-\delta) \{1\}
\end{align*}
By Proposition \ref{prop:Lb=1} we have the exact sequences
\begin{gather*}
0\to \cP_{e_{i_{l+1}}} \d \scrM(\beta_l-\delta)\to \cP_{e_{i_{l+1}}} * \scrM(\beta_l-\delta)\to\scrM(\beta_{l+1}-\delta)\to 0,\\
0\to  \scrM(\beta_{l+1}-\delta)\{-2\}\to  \scrM(\beta_l-\delta)*\cP_{e_{i_{l+1}}}\{-1\} \to  \scrM(\beta_l-\delta)\n\cP_{e_{i_{l+1}}}\{-1\} \to 0
\end{gather*}
Hence, we have
$$(1-q^2)\,[\scrM(\beta_{l+1}-\delta)]=[\,[\cP_{e_{i_{l+1}}}]\,,\,[\scrM(\beta_l-\delta)]\,]_q$$
where $[A,B]=AB-qBA$ is the $q$-commutator.
We deduce that the K-theoretic analogue of  \eqref{eq:b-d} is
$$(1-q^2)^k\,[\scrM(\beta-\delta)]=[\,[\cP_{e_{i_k}}]\,,\,[\,[\cP_{e_{i_{k-1}}}]\,,\,\dots[\,[\cP_{e_{i_1}}]\,,\,[\cP_{e_{i,-1}}]\,]_q\dots]_q\,]_q$$
\end{Remark}

\subsection{The object $\scrM(\alpha_i)$}

For $\beta \in \Phi^+$ we set
\begin{equation}\label{eq:-b+d}
\scrM(-\beta+\delta) = {}^\L \scrM(\beta - \delta) \in \cKP_{-\beta+\delta}.
\end{equation}
Let $\theta \in \Phi^+$ be the highest root. 
We define the object $\scrM(\alpha_i)$ in $\cKP_{\alpha_i}$ to be
\begin{equation}\label{eq:Malphai}
\scrM(\alpha_i)  = 
\begin{cases}
\cP_{e_i} & \text{ if $i \in I$ } \\
\scrM(-\theta+\delta) & \text{ if $i=0$ }
\end{cases}
\end{equation}
Notice that technically $\scrM(\alpha_0)$ is only defined up to Koszul shifts. 

\begin{Proposition}\label{prop:alpha0}
Assume that $\alpha \in \P^{++}$. 
\hfill
\begin{enumerate}[label=$\mathrm{(\alph*)}$,leftmargin=8mm]
\item
The object $\scrM(\alpha_i)$ is simple and real for all $i\in I_\af$.
\item
For $i \ne j \in I_\af$ we have 
$\Lambda(\scrM(\alpha_i)\,,\,\scrM(\alpha_j)) = - \la \alpha_i, \alpha_j \ra.$
\end{enumerate}
\end{Proposition}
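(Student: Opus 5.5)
Part (a) for $i\in I$ is Corollary~\ref{cor:real}, since $\scrM(\alpha_i)=\cP_{e_i}=\cP_{e_i,0}$. For $i=0$ the plan is to use that $\scrM(\alpha_0)={}^\L\scrM(\theta-\delta)$, where $\scrM(\theta-\delta)$ is simple and real by Corollary~\ref{cor:real2}; this is where $\alpha\in\P^{++}$ enters. Left duals of simple real objects are again simple and real, because ${}^\L(-)$ is an anti-monoidal autoequivalence of $\cKP$. Concretely, ${}^\L\cF$ is simple when $\cF$ is, and ${}^\L\cF*{}^\L\cF\cong{}^\L(\cF*\cF)$ is simple.

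Part (b) for $i\neq j\in I$ is Proposition~\ref{prop:B2}(a), which gives $\Lambda(\cP_{e_i},\cP_{e_j})=-\la\alpha_i,\alpha_j\ra$. It remains to treat the pairs involving $0$. For these I will use the duality rules for $\Lambda$ from the appendix (Proposition~\ref{prop:Lduals}):
\[
\Lambda({}^\L\cF,\cG)=\Lambda(\cG,\cF),\qquad
\Lambda(\cF,{}^\L\cG)=\Lambda({}^{\R}({}^\L\cG),\,\ldots)
\]
In practice the second rule reads $\Lambda(\cG,{}^\L\cF)=\Lambda(\cF,{}^{\R}\cG)$; this is the identity $\Lambda(\cP_{e_j},\scrM)=\Lambda(\scrM,{}^\R\cP_{e_j})$ already used in the proof of Proposition~\ref{prop:lambdas}. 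These give
\begin{align*}
\Lambda(\scrM(\alpha_0),\cP_{e_i})&=\Lambda({}^\L\scrM(\theta-\delta),\cP_{e_i})=\Lambda(\cP_{e_i},\scrM(\theta-\delta))=|\la\theta,\alpha_i\ra|,\\
\Lambda(\cP_{e_i},\scrM(\alpha_0))&=\Lambda(\cP_{e_i},{}^\L\scrM(\theta-\delta))=\Lambda(\scrM(\theta-\delta),{}^\R\cP_{e_i})=|\la\theta,\alpha_i\ra|,
\end{align*}
where the last equality in each line is Proposition~\ref{prop:lambdas}. Since $\theta$ is dominant, $\la\theta,\alpha_i\ra\ge0$. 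Moreover $\la\alpha_0,\alpha_i\ra=-\la\theta,\alpha_i\ra$ for $i\in I$, because $\delta$ pairs trivially with the finite simple roots. So both values equal $-\la\alpha_0,\alpha_i\ra$, as required.

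The main obstacle I expect is getting the duality identities for $\Lambda$ with the correct left/right convention. If the conventions are reversed, one of the two equalities above produces $\la\theta,\alpha_i\ra$ or $-\la\theta,\alpha_i\ra$ in place of the absolute value. That is harmless only because dominance of $\theta$ makes $\la\theta,\alpha_i\ra\ge0$, so the sign discrepancy disappears. Still, one must check carefully against Proposition~\ref{prop:lambdas}, whose four formulas are asymmetric. If a direct identity is unavailable, the fallback is to argue inductively along a minimal expression $\theta=s_{i_k}\cdots s_{i_1}(\alpha_{i'})$, using Proposition~\ref{prop:Lambda*} and adjunction as in Case~3 of the proof of Proposition~\ref{prop:lambdas}.
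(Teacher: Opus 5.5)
\textbf{Overall.} Your route is the paper's. Part (a) comes from Corollary~\ref{cor:real} and Corollary~\ref{cor:real2}, and you usefully make explicit why ${}^\L(-)$ preserves simple real objects, a step the paper leaves implicit. Part (b) for $i,j\in I$ is Proposition~\ref{prop:B2}, and the pairs involving $0$ are handled by moving the dual across with Proposition~\ref{prop:Lduals} and reading off Proposition~\ref{prop:lambdas}. Your first computation, $\Lambda(\scrM(\alpha_0),\cP_{e_i})=\Lambda(\cP_{e_i},\scrM(\theta-\delta))=\la\theta,\alpha_i\ra$, is correct.

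\textbf{The gap.} The second computation rests on a rule that is false. The identity $\Lambda(\cG,{}^\L\cF)=\Lambda(\cF,{}^\R\cG)$ does not follow from Proposition~\ref{prop:Lduals}. The identity actually used in the proof of Proposition~\ref{prop:lambdas} is $\Lambda(\cG,\cF)=\Lambda(\cF,{}^\R\cG)$, with no dual on $\cF$.

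Your rule fails already for $\cF=\cG=\cP_{e_i}$. There it would give $\Lambda(\cP_{e_i},{}^\L\cP_{e_i})=\Lambda(\cP_{e_i},{}^\R\cP_{e_i})=\Lambda(\cP_{e_i},\cP_{e_i})=0$. But Corollary~\ref{cor:B4} gives $2$, and its hypothesis $m_i<0$ holds since $\alpha\in\P^{++}$.

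\textbf{The fix.} Apply $\Lambda(M,N)=\Lambda({}^\L N,M)$ twice: once to the pair $({}^\R\cG,\cF)$, and once to the pair $({}^\L\cF,{}^\R\cG)$, using ${}^\L({}^\R\cG)\cong\cG$. This gives the correct rule $\Lambda(\cG,{}^\L\cF)=\Lambda({}^\R\cG,\cF)$. Hence $\Lambda(\cP_{e_i},\scrM(\alpha_0))=\Lambda({}^\R\cP_{e_i},\scrM(\theta-\delta))=\la\theta,\alpha_i\ra$, by the third formula of Proposition~\ref{prop:lambdas}, the one without absolute value. This is exactly the paper's computation.

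Your value agrees only because $\la\theta,\alpha_i\ra\ge 0$. For a root $\beta$ with $\la\beta,\alpha_i\ra=-1$, the two rules give $-1$ and $+1$ respectively. Your hedge is also too generous: ending up with $-\la\theta,\alpha_i\ra$ (the value of $\Lambda(\scrM(\theta-\delta),\cP_{e_i})$) would not be harmless. Only the discrepancy between $\la\theta,\alpha_i\ra$ and $|\la\theta,\alpha_i\ra|$ is absorbed by dominance. With this one-line correction the argument is complete.
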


\begin{proof}
Part (a) follows from Corollary \ref{cor:real} and \eqref{eq:Malphai} if $i\in I$, and from 
Corollary \ref{cor:real2} and \eqref{eq:Malphai} if $i=0$.
Part (b), for $i,j \ne 0$, follows from Proposition \ref{prop:B2}. 
If $i \ne 0$, using Proposition \ref{prop:Lduals} and Proposition \ref{prop:lambdas}, we get
\begin{align*}
\Lambda(\scrM(\alpha_i)\,,\, \scrM(\alpha_0)) &= \Lambda(\scrM(\alpha_i)\,,\, {}^\L \scrM(\theta-\delta)) 
 =\Lambda({}^\R\scrM(\alpha_i)\,,\,  \scrM(\theta-\delta)) 
 = \la \theta, \alpha_i \ra 
 = - \la \alpha_i,\alpha_0 \ra
 \end{align*}
and similarly $\Lambda(\scrM(\alpha_0),\scrM(\alpha_i)) = \Lambda(\scrM(\theta-\delta), {}^\R\scrM(\alpha_i)) = -\la \alpha_0,\alpha_i \ra$.
\end{proof}

\section{The functor $\F$}\label{sec:Fprops}

Unless specified otherwise we assume that the dimension  vector $\alpha$ in \eqref{alpha} is strictly dominant, i.e., 
we have $\alpha \in \P^{++}$. We consider the Koszul perverse category $\cKP_{X^{(\bullet)}}$ in \S\ref{sec:twisted product}. 
From Proposition \ref{prop:alpha0} we have simple, real objects $\{\scrM(\alpha_i)\,;\, i \in I_\af\}$ satisfying $\La(\scrM(\alpha_i)\,,\,\scrM(\alpha_j)) = - \la \alpha_i, \alpha_j \ra$ for $i\neq j$.  This duality datum induces a monoidal functor $\F: \gMod(R) \to \calD^-_{X^{(\bullet)}}$ as in \eqref{eq:FMod}.  One of the main goals of this section is Proposition \ref{prop:Fexact} which proves that $\F$ is t-exact, giving us an exact monoidal functor 
\begin{equation}\label{eq:F2}
\F: \gmod(R)^\heartsuit \to \cKP_{X^{(\bullet)}}.
\end{equation}
If the duality datum was of finite type, so that $R = \oplus_\beta R(\beta)$ is of finite type, then this t-exactness would follow from the fact that the $R(\beta)$ have finite homological dimension, see \cite[Thm.~3.8]{KKK}. However, in the affine case these $R(\beta)$ need not have finite homological dimension and the boundedness of $\F$ is difficult to prove. 

This section is organized as follows. Section  \ref{sec:preliminaries} contains some preliminaries which are used to identify the images $\F(L(-\alpha_i+\delta))$ in Lemma \ref{lem:F2} and $\F(L(\delta_i))$ in Proposition \ref{prop:delta5}. This second identification is the most technically difficult step. This allows us to prove that $\F$ is t-exact in Proposition \ref{prop:Fexact}.  Finally, we show that $\F$ takes a simple module to either zero or a simple object in Proposition \ref{prop:simple2simple}.

\subsection{Some preliminaries}\label{sec:preliminaries}

\begin{Lemma}\label{lem:F1}
Suppose $L,L' \in \gmod(R)^\heartsuit$ are simple with at least one real and that $\scrM = \F(L)$, $\scrM' = \F(L')\in\cKP$ are also simple with 
at least one real. Then there are two cases
\begin{enumerate}[label=$\mathrm{(\alph*)}$,leftmargin=8mm]
\item $\La(L,L') = \La(\scrM,\scrM')$ with $\F(\r_{L,L'}) \in\bbC^\times \cdot \r_{\scrM,\scrM'}$ or
\item $\La(L,L') > \La(\scrM,\scrM')$ with $\F(\r_{L,L'}) = 0$.
\end{enumerate}
\end{Lemma}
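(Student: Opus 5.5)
The approach is to compare the two affinized $r$-matrices directly, using Corollary \ref{cor:FLz} and monoidality of $\F$ (Proposition \ref{prop:Fmonoidal}). Since $L$ and $\scrM=\F(L)$ are simple, Corollary \ref{cor:FLz} gives $\F(L_z)\cong\scrM_z$. By Proposition \ref{prop:Fmonoidal} we get $\F(L_z\circ L')\cong\scrM_z\circ\scrM'$ and $\F(L'\circ L_z)\cong\scrM'\circ\scrM_z$. Apply $\F$ to the renormalized map $\bfR^\ren_{L_z,L'}\colon L_z\circ L'\to L'\circ L_z\{\La(L,L')\}$. This yields a morphism
$$\F(\bfR^\ren_{L_z,L'})\colon \scrM_z\circ\scrM'\to \scrM'\circ\scrM_z\{\La(L,L')\}$$
which is $\bbC[z]$-linear, because $z$ acts through $x_k$'s that $\F$ intertwines.

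The first step is to check that $\F(\bfR^\ren_{L_z,L'})$ is generically (for $z\neq 0$) a nonzero multiple of the Coulomb $r$-matrix $\bfR^\ren_{\scrM_z,\scrM'}$. Away from $z=0$ both sides are isomorphisms between the same objects. The Coulomb $r$-matrix is characterized as the unique (up to scalar and powers of $z$) morphism extending the canonical factorization identification. On the quiver Hecke side, $\bfR_{L_z,L'}$ is built from the intertwiners $\varphi_k$, and $\F$ sends the intertwiners to the maps $\varphi_k$ built from the Coulomb $\bfR^\ren$ (Proposition \ref{prop:action} and the Remark after it). So $\F(\bfR_{L_z,L'})$ is a composition of Coulomb renormalized $r$-matrices, hence generically the identification.

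Uniqueness of such extensions uses that $\scrM_z\circ\scrM'$ has generic endomorphisms $\bbC(z)$. This holds because one of $\scrM,\scrM'$ is real and the product has a simple head (Proposition \ref{prop:lsubquotient}). It follows that
$$\F(\bfR^\ren_{L_z,L'})=c\,z^{s}\,\bfR^\ren_{\scrM_z,\scrM'}$$
for some $c\in\bbC^\times$ and $s\in\bbZ$.

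Next I pin down $s$. Since $\bfR^\ren_{\scrM_z,\scrM'}$ is by definition not divisible by $z$, and $\F(\bfR^\ren_{L_z,L'})$ is an honest (regular) morphism, we get $s\ge 0$. Comparing loop degrees, with $z$ of degree $2$, gives $\La(L,L')=\La(\scrM,\scrM')+2s$. Specializing at $z=0$ then splits into two cases.
\begin{enumerate}[label=$\mathrm{(\alph*)}$,leftmargin=8mm]
\item If $s=0$, then $\F(\r_{L,L'})=c\,\r_{\scrM,\scrM'}$ and $\La(L,L')=\La(\scrM,\scrM')$.
\item If $s>0$, then $\F(\r_{L,L'})=0$ and $\La(L,L')>\La(\scrM,\scrM')$.
\end{enumerate}
Specialization commutes with $\F$ because $\F$ is defined by $\otimes_{R}$ and $L_z/zL_z\cong L$.

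The main obstacle is the first step: showing that $\F$ carries the quiver Hecke renormalized $r$-matrix to the Coulomb one up to $z^s$. I would first try to check this through the explicit description of $\bfR_{L_z,L'}$ in \cite[\S1]{KKK} as the action of the element $\prod\varphi_{w[\beta,\beta']}$ on $e(\beta,\beta')$. I would then compare it, in $\calD_{X^{(h)}}$, with the Coulomb $\bfR^\ren$ built over $X^{h}$ from the pairwise maps in \eqref{eq:ren}, using the braiding of Proposition \ref{prop:t}.

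A further subtlety is that $\scrM'$ lives over the origin of $X^{(h')}$ and not over a point. Following \S\ref{sec:colimit}, I would therefore work in the colimit category and interpret $\bfR^\ren_{\scrM_z,\scrM'}$ there via the faithful functors $i_m^d$.
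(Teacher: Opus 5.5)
\textbf{A gap at the crux step.} Your skeleton matches the paper's: use Corollary \ref{cor:FLz} to get $\F(L_z)\cong\scrM_z$, write $\F(\bfR^\ren_{L_z,L'})$ as $c\,z^s\bfR^\ren_{\scrM_z,\scrM'}$, compare degrees, and specialize at $z=0$. Your final case split on $s$ is also right. The problem is the step you yourself call the main obstacle. You never establish that $\F(\bfR^\ren_{L_z,L'})$ is a \emph{nonzero} multiple of $z^s\bfR^\ren_{\scrM_z,\scrM'}$; you only sketch a comparison of $\varphi_{w[\beta,\beta']}$ with a Coulomb $r$-matrix characterized as the extension of a factorization identification.

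That sketch is not routine, for two reasons.
\begin{enumerate}[label=$\mathrm{(\roman*)}$,leftmargin=8mm]
\item You would first have to set up $\bfR^\ren_{\scrM_z,\scrM'}$ for objects over $X^{(h)}$ and $X^{(h')}$ as such an extension. The construction in \S\ref{sec:ren2} only covers trivial deformations over $X^2$ of objects of $\cKP$.
\item The Coulomb $\varphi_k$ agree with the generic identification only up to normalizing functions of the $z_a-z_b$ (cf.\ Proposition \ref{prop:t}). After $\otimes_R(L_z\otimes L')$ these act through $z+x_a-x_b$ with nilpotent $x$'s, so they are not visibly scalars.
\end{enumerate}
To absorb those factors you need $\End(\scrM_z\circ\scrM')=\bbC[z]$. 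That comes from Proposition \ref{prop:homs2}(a), not from $\scrM*\scrM'$ having a simple head as you claim.

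\textbf{The fix, which is the paper's argument.} Once you invoke Proposition \ref{prop:homs2}, the identification step is unnecessary.
\begin{enumerate}[label=$\mathrm{(\roman*)}$,leftmargin=8mm]
\item By part (b) and homogeneity, $\F(\bfR^\ren_{L_z,L'})=a\,z^c\,\bfR^\ren_{\scrM_z,\scrM'}$ and $\F(\bfR^\ren_{L',L_z})=b\,z^d\,\bfR^\ren_{\scrM',\scrM_z}$ with $a,b\in\bbC$ and $c,d\ge 0$. So regularity ($s\ge0$) is automatic.
\item Apply $\F$ to $\bfR^\ren_{L',L_z}\circ\bfR^\ren_{L_z,L'}=z^{\b(L,L')}\id$ from part (c). Compare with $\bfR^\ren_{\scrM',\scrM_z}\circ\bfR^\ren_{\scrM_z,\scrM'}=z^{\b(\scrM,\scrM')}\id$, noting that $z^{\b(L,L')}\id_{\scrM_z\circ\scrM'}\neq 0$. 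This gives $ab\neq0$ and $\b(L,L')=\b(\scrM,\scrM')+c+d$.
\end{enumerate}
Step (ii) is exactly the missing justification for your unexplained assertion that $\F(\bfR^\ren_{L_z,L'})$ is an isomorphism away from $z=0$.

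With $a\neq 0$ in hand, your degree count $\La(L,L')=\La(\scrM,\scrM')+2c$ and specialization at $z=0$ finish the proof. Splitting on $c$ alone, as you do, is the correct dichotomy for the $\La$-formulation of the statement: the case $c=0<d$ belongs to (a).
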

\begin{proof}
By Corollary \ref{cor:FLz} we have $\F(L_z) \cong \scrM_z$ and $\F(L'_z) \cong \scrM'_z$.
Using Proposition \ref{prop:homs2} we know that
$$\F(\bfR_{L_z,L'}^{\ren}) = a z^c \bfR_{\scrM_z,\scrM'}^{\ren}
,\quad
 \F(\bfR_{L',L_z}^{\ren}) = b z^d \bfR_{M',M_z}^{\ren}$$
for some scalars $a,b \in \C$ and $c,d \in \N$, and that
$$\bfR_{L',L_z}^{\ren} \circ \bfR_{L_z,L'}^{\ren} = z^{\b(L,L')} \cdot \id_{L_z \circ L'} 
,\quad
 \bfR_{\scrM',\scrM_z}^{\ren} \circ \bfR_{\scrM_z,\scrM'}^{\ren} = z^{\b(\scrM,\scrM')} \cdot \id_{\scrM_z \circ \scrM'}.$$
This implies that $a,b\neq 0$ and $\b(L,L') = \b(\scrM,\scrM')+c+d$. 
Thus, either $\b(L,L') = \b(\scrM,\scrM')$ with $c=d=0$, yielding the first case, 
or $\b(L,L') > \b(\scrM,\scrM')$ with either $c > 0$ or $d > 0$ yielding the second case. 
\end{proof}

\begin{Lemma}\label{lem:F2}
Suppose $L,L' \in \gmod(R)^\heartsuit$ are simple with at least one real, and that 
$\scrM = \F(L)$, $\scrM' = \F(L') \in \cKP$ are also simple with at least one real. If $\b(L,L') = \b(\scrM,\scrM')$ then for any submodule 
$S \subset L \circ L'$ the induced map 
$H^0_{\cKP}(\F(S)) \to \scrM \circ \scrM'$
is nonzero and its image contains $\scrM \d \scrM'$. 
\end{Lemma}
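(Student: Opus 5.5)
Since $\b(L,L')=\b(\scrM,\scrM')$, we are in case (a) of Lemma \ref{lem:F1}. Hence $\La(L,L')=\La(\scrM,\scrM')$ and $\F(\r_{L,L'})=c\,\r_{\scrM,\scrM'}$ for some $c\in\bbC^\times$.

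Recall how $\r$ sits inside the products (Propositions \ref{prop:lsubquotient}, \ref{prop:homs1}). The product $L\circ L'$ has simple socle $L\d L'$, and the image of $\r_{L',L}\colon L'\circ L\to L\circ L'\{\La\}$ is exactly this socle, up to shift. On the Coulomb side the image of $\r_{\scrM',\scrM}$ is the socle $\scrM\d\scrM'$ of $\scrM\circ\scrM'$. Applying Lemma \ref{lem:F1} with the roles of $L,L'$ swapped, which is legitimate because $\b$ is symmetric, gives $\F(\r_{L',L})=c'\,\r_{\scrM',\scrM}$ with $c'\neq 0$.

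Now let $S\subset L\circ L'$ be a nonzero submodule, and write $\iota\colon S\to L\circ L'$ for the inclusion. Since $L\circ L'$ has simple socle $L\d L'$, the submodule $S$ contains $L\d L'=\Im(\r_{L',L})$. So $\r_{L',L}$ factors as $L'\circ L\xrightarrow{g} S\xrightarrow{\iota} L\circ L'$. Applying $\F$ and using monoidality (Proposition \ref{prop:Fmonoidal}), we get
\[
\F(\iota)\circ\F(g)=c'\,\r_{\scrM',\scrM}\colon \scrM'\circ\scrM\to \scrM\circ\scrM'.
\]
The source $\F(L'\circ L)\cong\scrM'\circ\scrM$ lies in the heart, and so does the target. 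Therefore $\F(g)$ factors through $H^0_{\cKP}(\F(S))\to \F(S)$, and the composite $\F(\iota)\circ\F(g)$ is a map between objects of the heart that factors through $H^0_{\cKP}(\F(S))\to\scrM\circ\scrM'$. Since $\r_{\scrM',\scrM}\neq 0$, this induced map is nonzero. Moreover its image contains $\Im(\r_{\scrM',\scrM})=\scrM\d\scrM'$, which gives the claim.

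The step needing most care is the t-structure bookkeeping: $\F(S)$ need not lie in the heart, since $\F$ is not yet known to be exact. We only know it lies in $\calD^-$, because $\F$ is a tensor product and hence right t-exact. In that setting, a map from a heart object $A$ into $\F(S)$ factors through $\tau^{\ge 0}$ rather than $\tau^{\le 0}$. So I would instead factor $\F(S)\to\scrM\circ\scrM'$ through $H^0$ via the truncation $\F(S)\to H^0_{\cKP}(\F(S))$ when $\F(S)\in\calD^{\le0}$. Right exactness of $-\otimes_R-$ on the Koszul-perverse side, which holds because $\scrM(\beta)$ is built from heart objects with t-exact $\circ$, gives $\F(S)\in\calD^{\le 0}$. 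Any map $\F(S)\to\scrM\circ\scrM'$ then factors uniquely through $H^0_{\cKP}(\F(S))$. The composite $\scrM'\circ\scrM\to\F(S)\to H^0\to\scrM\circ\scrM'$ is the nonzero $r$-matrix, so the image argument goes through. Checking this right t-exactness is the main point I would verify, and I would verify it first.
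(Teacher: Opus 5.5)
Your argument is correct and is essentially the paper's: you factor $\r_{L',L}$ through the socle $L\d L'\subset S$, apply $\F$, and use Lemma \ref{lem:F1} (in both orderings, which the equality of $\b$ forces) to identify the composite with a nonzero multiple of $\r_{\scrM',\scrM}$, whose image is $\scrM\d\scrM'$. Delete your earlier sentence claiming that $\F(g)$ factors through $H^0_{\cKP}(\F(S))\to\F(S)$, since that direction is wrong. The version in your last paragraph is the correct one: it factors $\F(S)\to\scrM\circ\scrM'$ through the truncation $\F(S)\to H^0_{\cKP}(\F(S))$, using right t-exactness of $\F$, which the paper also takes for granted.
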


\begin{proof}
By Proposition \ref{prop:lsubquotient}, the module $L \d L'$ is simple. So we have sequence of inclusions 
$$L \d L' \to S \to L \circ L'.$$
Applying $\F$ to this sequence it suffices to show that the induced composition
$$H^0_{\cKP}(\F(L \d L')) \to H^0_{\cKP}(\F(S)) \to \F(L \circ L') \cong \scrM \circ \scrM'$$
is nonzero and that its image contains $\scrM \d \scrM'$. 
To do this consider the composition 
$$L' \circ L \to L' \n L \dcong L \d L' \to L \circ L'.$$
Up to shifts, the composition is equal to $\r_{L',L}$. Applying $\F$ we get 
\begin{equation}\label{eq:local9}
\scrM' \circ \scrM \to H^0_{\cKP}(\F(L \n L')) \dcong H^0_{\cKP}(\F(L \d L')) \to \scrM \circ \scrM'
\end{equation}
which, by Lemma \ref{lem:F1}, is (up to scaling) equal to $\r_{\scrM', \scrM}$. Moreover, the left map \eqref{eq:local9} is surjective. This means that the image of the right map in \eqref{eq:local9} is the same as the image of $\r_{\scrM',\scrM}$ which is $\scrM \d \scrM'$. 
\end{proof}

\begin{Corollary}\label{cor:F1}
Suppose $L,L' \in \gmod(R)^\heartsuit$ are simple with at least one real, $\scrM = \F(L)$, $\scrM' = \F(L') \in \cKP$ are also simple with at least one real. Suppose $L \circ L'$ and $\scrM * \scrM'$ have length $2$. Then $\b(L,L') \ge \b(\scrM,\scrM')$ and, if equality holds, we have $\F(L \n L') \cong \scrM \n \scrM'$ and $\F(L \d L') \cong \scrM \d \scrM'$. 
\end{Corollary}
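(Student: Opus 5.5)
The inequality $\b(L,L') \ge \b(\scrM,\scrM')$ comes straight from Lemma \ref{lem:F1}. In case (a) there we have $\La(L,L')=\La(\scrM,\scrM')$, and in case (b) a strict inequality. To pass from $\La$ to $\b$, I would apply Lemma \ref{lem:F1} to both ordered pairs $(L,L')$ and $(L',L)$. Since $\b(L,L') = \frac12(\La(L,L')+\La(L',L))$, adding the two inequalities gives $\b(L,L') \ge \b(\scrM,\scrM')$. Equality holds exactly when both pairs fall into case (a). Alternatively, the proof of Lemma \ref{lem:F1} already gives $\b(L,L') = \b(\scrM,\scrM')+c+d$ with $c,d\ge 0$, and equality forces $c=d=0$. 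In that situation $\F(\r_{L,L'})$ and $\F(\r_{L',L})$ are nonzero multiples of $\r_{\scrM,\scrM'}$ and $\r_{\scrM',\scrM}$ respectively.

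For the identification in the equality case, I would use the length-two hypothesis. Since $L\circ L'$ has length $2$ with simple head $L\n L'$ and simple socle $L\d L'$ (Proposition \ref{prop:lsubquotient}), there is a short exact sequence
$$0 \to L\d L' \to L\circ L' \to L\n L' \to 0 .$$
Applying $\F$ gives a triangle $\F(L\d L') \to \scrM\circ\scrM' \to \F(L\n L')$, where $\F(L\circ L')\cong \scrM\circ\scrM'$ by Proposition \ref{prop:Fmonoidal}. The middle term lies in the heart. From the long exact sequence of $H^\bullet_{\cKP}$, we get $H^i_{\cKP}(\F(L\n L'))=0$ for $i>0$ and $H^i_{\cKP}(\F(L\d L'))=0$ for $i<0$, and the only other nonzero terms are in degrees $0$ and $\pm1$.

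Next I would apply Lemma \ref{lem:F2} with $S = L\d L'$. It says the map $H^0_{\cKP}(\F(L\d L'))\to \scrM\circ\scrM'$ is nonzero and its image contains $\scrM\d\scrM'$. Dually, $\r_{L,L'}$ factors as $L\circ L' \twoheadrightarrow L\n L' \hookrightarrow L'\circ L$ up to shift. Applying $\F$, and using that $\F(\r_{L,L'})$ is a nonzero multiple of $\r_{\scrM,\scrM'}$, whose image is $\scrM\n\scrM'$ inside $\scrM'\circ\scrM$, we see that $\scrM\circ\scrM'\to H^0_{\cKP}(\F(L\n L'))$ is nonzero and that $\scrM\n\scrM'$ is a subquotient of its image.

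Now $\scrM\circ\scrM'$ has length exactly $2$, with composition factors $\scrM\d\scrM'$ and $\scrM\n\scrM'$, and these are distinct up to shift by Proposition \ref{prop:lsubquotient}. So the image of $H^0_{\cKP}(\F(L\d L'))$ is exactly $\scrM\d\scrM'$ (it contains it but cannot be everything, since the next map is nonzero), and the map to $H^0_{\cKP}(\F(L\n L'))$ has image $\scrM\n\scrM'$.

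The main obstacle is controlling the cohomology outside degree $0$ and the kernel in degree $0$, since we do not yet know $\F$ is t-exact. I would handle it by choosing a presentation of $L\n L'$ by projectives (or standard modules) to see that $\F$ is right t-exact, i.e.\ $\F$ of a module lies in $\cKP$-degrees $\le 0$ because $\scrM(\beta)$ lies in the heart up to the $x$-action. Then $\F(L\d L')$ is concentrated in degree $0$, so $\F(L\d L') \cong \scrM\d\scrM'$, and the long exact sequence gives $H^0_{\cKP}(\F(L\n L'))\cong \scrM\n\scrM'$ with $H^{-1}_{\cKP}(\F(L\n L'))=0$. If right t-exactness is not available at this stage, I would instead bound the relevant objects using the restriction to $z=0$ of the affinized sequence, as in Corollary \ref{cor:FLz}. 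There $\F(L_z)$ and $\F(L'_z)$ are honest affinizations in the heart, and the cone of $\bfR^{\ren}$ lies in the heart by Lemma \ref{lem:Q}. This identifies both outer terms as objects of the heart.
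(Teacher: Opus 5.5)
Your outline matches the paper's up to the final step. The inequality via Lemma \ref{lem:F1} is fine. So are Lemma \ref{lem:F2} with $S=L\d L'$ and the nonvanishing of $\scrM\circ\scrM'\to H^0_{\cKP}(\F(L\n L'))$ via the factorization of $\r_{L,L'}$. The length-two count is also fine: writing $i\colon L\d L'\hookrightarrow L\circ L'$, it gives $\Im H^0_{\cKP}(\F(i))=\scrM\d\scrM'$ and, with right t-exactness, $H^0_{\cKP}(\F(L\n L'))\cong\scrM\n\scrM'$.

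\textbf{The gap.} The problem is your conclusion that $\F(L\d L')$ is concentrated in degree $0$. Right t-exactness only places $A=\F(L\d L')$ and $C=\F(L\n L')$ in degrees $\le 0$. Contrary to your second paragraph, the long exact sequence by itself does not kill $H^{n}_{\cKP}(A)$ for $n<0$. From the triangle $A\to\scrM\circ\scrM'\to C$ you only get $H^{-1}_{\cKP}(C)\cong\ker H^0_{\cKP}(\F(i))$ and $H^n_{\cKP}(C)\cong H^{n+1}_{\cKP}(A)$ for $n\le -2$. This does not exclude, say, $A$ having cohomology in degrees $-1,-3,-5,\dots$ mirrored by $C$ in degrees $-2,-4,\dots$. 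Since $\F$ takes values in $\calD^-$, you cannot start an induction from a lowest nonzero degree, and unboundedness of exactly this kind is the danger in the affine setting. You also never show that $H^0_{\cKP}(\F(i))$ is injective, which you need for $\F(L\d L')\cong\scrM\d\scrM'$.

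\textbf{The missing idea.} Run your argument for both orders and combine the two long exact sequences.
\begin{itemize}
\item Since $L\d L'\dcong L'\n L$ and $L\n L'\dcong L'\d L$, applying your argument to the pair $(L',L)$ (the paper's ``interchanging $L$ and $L'$'') gives $H^0_{\cKP}(A)\dcong H^0_{\cKP}(\F(L'\n L))\cong\scrM'\n\scrM$, which is simple.
\item As $H^0_{\cKP}(\F(i))\neq 0$ by Lemma \ref{lem:F2}, it is injective. Hence $H^0_{\cKP}(A)\cong\scrM\d\scrM'$ and $H^{-1}_{\cKP}(C)=0$.
\item The triangle $C\to\scrM'\circ\scrM\to A$ (up to loop shifts), coming from $0\to L'\d L\to L'\circ L\to L'\n L\to 0$, gives for the same reason $H^{-1}_{\cKP}(A)=0$ and $H^n_{\cKP}(A)\cong H^{n+1}_{\cKP}(C)$ for $n\le -2$.
\item The two interlocking families of isomorphisms then force $H^n_{\cKP}(A)=H^n_{\cKP}(C)=0$ for all $n<0$ by descending induction.
\end{itemize}
This two-sided bookkeeping is what underlies the paper's final appeal to the long exact sequence.

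Your affinization fallback does not replace this as written. First, $\scrM_z$ is not an object of $\cKP_X$. Second, Lemma \ref{lem:Q} does place $\F(\coker\bfR^{\ren}_{L_z,L'})$ in the heart, but that cokernel is a shift of $L\d L'$ only when it is killed by $z$ (e.g.\ when $\b(L,L')=1$). In general it is an iterated extension of such shifts, from which you would still have to extract $\F(L\d L')$.
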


\begin{proof} 
By Lemma \ref{lem:F1} we have $\b(L,L') \ge \b(\scrM,\scrM')$. 
Suppose now $\b(L,L') = \b(\scrM,\scrM')$. 
Applying Lemma \ref{lem:F2} with $S = L \d L' \subset L \circ L'$ we get that the image of 
\begin{align}\label{F(i)} 
H^0_{\cKP}(\F(L \d L')) \to \F(L \circ L') \cong \scrM \circ \scrM'
\end{align}
contains $\scrM \d \scrM'$. In particular, $H^0_{\cKP}(\F(L \d L')) \ne 0$. 
Interchanging $L$ and $L'$ we also get that $H^0_{\cKP}(\F(L' \d L)) \ne 0$. 
Since $L \n L' \dcong L' \d L$ this means that $H^0_{\cKP}(\F(L' \n L)) \ne 0$.

But now apply $\F$ to the short exact sequence
$$0 \to L \d L' \xrightarrow{i} L \circ L' \xrightarrow{p} L \n L' \to 0$$
Since $\F$ is right t-exact, we get a long exact sequence
\begin{align}\label{eq:LES}
\dots \to H^0_{\cKP}(\F(L \d L')) \xrightarrow{\F(i)}  \scrM \circ \scrM' \xrightarrow{\F(p)} H^0_{\cKP}(\F(L \n L')) \to 0
\end{align}
where the left and right terms are nonzero and $\F(i)$ is nonzero. Since $\scrM \circ \scrM'$ has length $2$ this means that 
$H^0_{\cKP}(\F(L \d L')) \cong \scrM \d \scrM'$, $H^0_{\cKP}(\F(L \n L')) \cong \scrM \n \scrM'$ and moreover $\F(i)$ must be injective. 
Since $H^n_{\cKP}(\scrM \circ \scrM') = 0$ for $n \ne 0$ it follows from \eqref{eq:LES} that 
$$H^n_{\cKP}(\F(L \d L')) = 0 = H^n_{\cKP}(\F(L \n L'))$$
for $n \ne 0$. The result follows. 
\end{proof}

\subsection{The image of $L(-\alpha_i+\delta)$}

Hereforth, we fix a balanced convex order as in Remark \ref{rem:minimalpair}.  The precise properties of this preorder are only used in the proof of Lemma \ref{lem:F4}. However, since this lemma is frequently used in subsequent sections we must require that our preorder satisfy this condition going forward. Recall that for each $\beta \in \bbN\Phi_\af^+$ the simple $R(\beta)$-modules are $L(\pi)$ where $\pi\in\Pi(\beta)$ is a root partition.
Further, by definition of the functor $\F$, for all $i\in I_\af$ we have 
\begin{align}\label{eq:Falphai1}\F(L(\alpha_i)) \cong \scrM(\alpha_i).\end{align}
where $\scrM(\alpha_i)$ is as in \eqref{eq:Malphai}. In particular, if $i\in I$ then
\begin{align}\label{eq:Falphai2}\F(L(\alpha_i)) \cong \scrM(\alpha_i)=\calP_{e_i}.\end{align}

\begin{Lemma}\label{lem:F4}
If $\beta \in \Phi^+$ then $\F(L(-\beta + \delta)) \ddcong \scrM(-\beta+\delta)$. 
Hence, we have $\F(L(-\alpha_i+\delta)) \ddcong {}^\L \cP_{e_i,-1}$.
\end{Lemma}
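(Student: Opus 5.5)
We argue by induction on the height of $\delta-\beta$, i.e.\ downward on the height of $\beta\in\Phi^+$, so that the base case is $\beta=\theta$. There $-\theta+\delta=\alpha_0$, and \eqref{eq:Falphai1} together with \eqref{eq:Malphai} gives $\F(L(\alpha_0))\cong\scrM(\alpha_0)=\scrM(-\theta+\delta)$. This holds up to Koszul shift, which is all the definition of $\scrM(\alpha_0)$ provides.

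For the inductive step, take $\beta\neq\theta$, so that $\delta-\beta\notin\Delta_\af$. Proposition~\ref{prop:minimalpair} gives a simple root $\alpha_i\in I_\beta$, so $\la\beta,\alpha_i\ra=-1$, such that $(\alpha_i\,;\,\delta-\beta-\alpha_i)$ is a real minimal pair for $\rho=-\beta+\delta$. Set $\beta'=\beta+\alpha_i\in\Phi^+$; it has larger height. The induction hypothesis gives $\F(L(-\beta'+\delta))\ddcong\scrM(-\beta'+\delta)$, and \eqref{eq:Falphai2} gives $\F(L(\alpha_i))\cong\cP_{e_i}$. Proposition~\ref{prop:ses2}(a) shows that
\[
L(\alpha_i)\circ L(-\beta'+\delta)\quad\text{and}\quad L(-\beta'+\delta)\circ L(\alpha_i)
\]
have length two. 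Their heads are $L(\alpha_i,-\beta'+\delta)$ and $L(-\beta+\delta)$ respectively, so
\[
L(-\beta+\delta)\dcong L(-\beta'+\delta)\n L(\alpha_i).
\]
Since $\scrM(\beta'-\delta)\ddcong\cP_{e_i}\n\scrM(\beta-\delta)$ by Corollary~\ref{cor:consistent}, taking left duals and using rigidity (Proposition~\ref{prop:Lduals} and the appendix facts on heads and socles of duals) should give
\[
\scrM(-\beta+\delta)\ddcong{}^\L\scrM(\beta-\delta)\dcong\scrM(-\beta'+\delta)\n\cP_{e_i}.
\]
Concretely, ${}^\L(A\n B)$ is a socle of ${}^\L B*{}^\L A$, hence a head up to shift. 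Here $A=\cP_{e_i}$ and $B=\scrM(\beta-\delta)$, and the dual step requires moving $\cP_{e_i}$ across a dual, that is, an identity of the form ${}^\L\scrM(\beta'-\delta)\n\cP_{e_i}\dcong{}^\L\scrM(\beta-\delta)$. We would justify this through the adjunction map $\scrM(\beta-\delta)\to{}^\R\cP_{e_i}*\scrM(\beta'-\delta)$, exactly as in Case~3 of the proof of Proposition~\ref{prop:lambdas}.

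Next we apply Corollary~\ref{cor:F1} with $L=L(-\beta'+\delta)$ and $L'=L(\alpha_i)$. Both are real cuspidal simples, and their images are simple and real by Corollaries~\ref{cor:real} and~\ref{cor:real2}, the latter dualized. To use the corollary we need:
\begin{enumerate}[label=(\roman*),leftmargin=8mm]
\item the product $\scrM(-\beta'+\delta)*\cP_{e_i}$ has length two;
\item $\b(L,L')=\b(\scrM,\scrM')$.
\end{enumerate}
For (ii), Proposition~\ref{prop:lambdas} gives
\[
\b\bigl(\cP_{e_i},{}^\L\scrM(\beta'-\delta)\bigr)=\b\bigl({}^\R\cP_{e_i},\scrM(\beta'-\delta)\bigr)=\max\{\la\beta',\alpha_i\ra,0\}=1,
\]
using $\la\beta',\alpha_i\ra=1$. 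On the Hecke side $\b=1$ for a real minimal pair, which one reads off from the two sequences \eqref{eq:Lminpair}, for instance via the $r$-matrix degree shifts. With $\b=1$ on both sides, (i) follows from Proposition~\ref{prop:Lb=1}, which gives length two and simplicity of the head. Corollary~\ref{cor:F1} then yields
\[
\F\bigl(L(-\beta'+\delta)\n L(\alpha_i)\bigr)\cong\scrM(-\beta'+\delta)\n\cP_{e_i}\ddcong\scrM(-\beta+\delta),
\]
which completes the induction. The final assertion is the case $\beta=\alpha_i$, where $\scrM(-\alpha_i+\delta)={}^\L\scrM(\alpha_i-\delta)={}^\L\cP_{e_i,-1}$ by Corollary~\ref{cor:consistent}(a).

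The main obstacle is the Coulomb-side head identification $\scrM(-\beta+\delta)\ddcong\scrM(-\beta'+\delta)\n\cP_{e_i}$, that is, the compatibility of taking left duals with the recursive definition \eqref{eq:b-d}, including keeping the head and socle order straight. We also have to check that the ordering of factors coming from the minimal pair (the larger root $\alpha_i$ placed second in the product whose head is $L(\rho)$) matches the ordering on the Coulomb side. Our first attempt would go through adjunction and Proposition~\ref{prop:Lambda*}, computing $\Lambda$ of both candidate heads against $\cP_{e_i}$ to rule out the wrong order.
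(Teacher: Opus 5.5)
Your proposal is correct and is essentially the paper's proof. Both arguments run a downward induction on $\height(\beta)$ from $\beta=\theta$, take the minimal pair $(\alpha_i;\delta-\beta-\alpha_i)$ from Proposition~\ref{prop:minimalpair}, obtain $\scrM(-\beta+\delta)\ddcong\scrM(-\beta-\alpha_i+\delta)\n\cP_{e_i}$ by adjunction from Corollary~\ref{cor:consistent}, and finish with Corollary~\ref{cor:F1}; the adjunction step settles your ordering worry, since it matches $L(-\beta+\delta)\cong L(-\beta-\alpha_i+\delta)\n L(\alpha_i)$ from \eqref{eq:Lminpair}.

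Your computation $\b(\cP_{e_i},{}^\L\scrM(\beta+\alpha_i-\delta))=\b({}^\R\cP_{e_i},\scrM(\beta+\alpha_i-\delta))=1$ is slightly cleaner than the paper's. The paper's displayed chain passes through $\Lambda(\scrM(\beta+\alpha_i-\delta),\scrM(\alpha_i))$, which Proposition~\ref{prop:lambdas} evaluates to $-1$; the correct identity, which you use implicitly, is $\Lambda(\cP_{e_i},{}^\L\scrM(\beta+\alpha_i-\delta))=\Lambda({}^\R\cP_{e_i},\scrM(\beta+\alpha_i-\delta))=1$.
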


\begin{proof}
We prove this by reverse induction on the height of $\beta$.  The base case $\beta = \theta$ holds because $\F(L(\alpha_0)) \cong \scrM(\alpha_0)$.  Assume that $\beta \ne \theta$.  By Proposition \ref{prop:minimalpair} there exists $i \in I$ such that $(\alpha_i,-\beta-\alpha_i+\delta)$ is a minimal pair and $\la \beta, \alpha_i \ra = -1$.  From \eqref{eq:Lminpair} we find that $L(\alpha_i) \circ L(-\beta-\alpha_i+\delta)$ has length $2$.  Using \eqref{eq:Falphai2}, by induction we get
$$\F(L(-\beta-\alpha_i+\delta)) \ddcong \scrM(-\beta-\alpha_i+\delta).$$
On the other hand, Propositions \ref{prop:Lduals} and \ref{prop:lambdas} yield
\begin{align*}
& \Lambda(\scrM(-\beta-\alpha_i+\delta)\,,\,\scrM(\alpha_i)) = \Lambda(\scrM(\alpha_i)\,,\, \scrM(\beta+\alpha_i-\delta)) = 1 \\
& \Lambda(\scrM(\alpha_i)\,,\, \scrM(-\beta-\alpha_i+\delta)) = \Lambda(\scrM(\beta+\alpha_i-\delta)\,,\, \scrM(\alpha_i)) = 1
\end{align*}
Thus we have
$$\b(\scrM(\alpha_i),\scrM(-\beta-\alpha_i+\delta)) = 1.$$
This means that $\scrM(\alpha_i)*\scrM(-\beta-\alpha_i+\delta)$ has length $2$ by Proposition \ref{prop:Lb=1}.
Moreover, by Corollary \ref{cor:consistent} and \eqref{eq:Malphai} we have 
\begin{align*} 
\scrM(\alpha_i) \n \scrM(\beta-\delta) \ddcong \scrM(\beta+\alpha_i-\delta).
\end{align*}
Using Proposition \ref{prop:Lduals} and \eqref{eq:-b+d}, we deduce by adjunction that 
$$\scrM(-\beta-\alpha_i+\delta) \n \scrM(\alpha_i) \ddcong \scrM(-\beta+\delta).$$
Similarly, since $(\alpha_i,-\beta-\alpha_i+\delta)$ is a minimal pair, we have
$$L(-\beta-\alpha_i+\delta) \n L(\alpha_i) \cong L(-\beta+\delta).$$
It follows that 
\begin{align*}
\F(L(-\beta+\delta)) 
&\cong \F(L(-\beta-\alpha_i+\delta) \n L(\alpha_i)) \\
&\cong \F(L(-\beta-\alpha_i+\delta)) \n \F(L(\alpha_i)) \\
&\ddcong \scrM(-\beta-\alpha_i+\delta) \n \scrM(\alpha_i)\\
&\ddcong \scrM(-\beta+\delta)
\end{align*}
where the second isomorphism follows from Corollary \ref{cor:F1}. This completes the induction step.
\end{proof}

\subsection{The image of $L(\delta_i)$}

The most difficult part of proving that $\F$ is t-exact is showing that $\F(L(\delta_i)) \ddcong \cV_i$. We achieve this in a series of steps culminating in Proposition \ref{prop:delta5}. When discussing composition factors we will ignore loop and Koszul shifts. 

\begin{Lemma}\label{lem:delta1}
For $i \in I$ the composition factors of $H^0_{\cKP}(\F(L(\delta_i)))$ is a subset of 
\begin{equation}\label{eq:local8}
\{\cV_i, \cV_j; j \leftrightarrow i\}
\end{equation}
which contains $\cV_i$ while those of $H^0_{\cKP}(\F(L(\alpha_i, -\alpha_i+\delta)))$ is a subset of 
\begin{equation}
\{\cK_i, \cV_j; j \leftrightarrow i\}.
\end{equation}
which contains $\cK_i$. 
\end{Lemma}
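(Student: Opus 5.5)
The plan is to apply $\F$ to the exact sequences of Proposition \ref{prop:ses2}(d) and compare with Proposition \ref{prop:ses4}. By \eqref{eq:Falphai2} and Lemma \ref{lem:F4}, together with monoidality (Proposition \ref{prop:Fmonoidal}), we have
\[
\F(L(\alpha_i)\circ L(-\alpha_i+\delta)) \ddcong \cP_{e_i} * {}^\L\cP_{e_i,-1}, \qquad \F(L(-\alpha_i+\delta)\circ L(\alpha_i)) \ddcong {}^\L\cP_{e_i,-1} * \cP_{e_i}.
\]
By Proposition \ref{prop:ses4}, both objects lie in $\cKP_{X^{(\bullet)}}$ and have composition factors $\{\cK_i,\cV_i,\cV_j\,;\,j\leftrightarrow i\}$, each with multiplicity one.

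First, $L(\delta_i)$ is a quotient of $L(-\alpha_i+\delta)\circ L(\alpha_i)$ by \eqref{eq:Ldelta1}. Since $\F$ is right t-exact (it is a tensor product), $H^0_{\cKP}(\F(L(\delta_i)))$ is a quotient of ${}^\L\cP_{e_i,-1}*\cP_{e_i}$. Its composition factors are therefore a subset of $\{\cK_i,\cV_i,\cV_j\}$. Similarly, $H^0_{\cKP}(\F(L(\alpha_i,-\alpha_i+\delta)))$ is a quotient of $\cP_{e_i}*{}^\L\cP_{e_i,-1}$, so its factors are also a subset of this set.

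Next, to exclude $\cK_i$ from the first object and $\cV_i$ from the second, and to show each contains the stated factor, we use Lemma \ref{lem:F2}. We have $\b(L(\alpha_i),L(-\alpha_i+\delta))=2$, since by Proposition \ref{prop:ses2}(d) the head and socle differ by $\{2\}$. Also $\b(\cP_{e_i},{}^\L\cP_{e_i,-1})=2$ by Corollary \ref{cor:B4}. Both sides are simple and real with the stated images (Lemma \ref{lem:F4}), so Lemma \ref{lem:F2} applies. Take the submodule $S=L(\delta_i)\{-2\}\subset L(\alpha_i)\circ L(-\alpha_i+\delta)$. Then the induced map $H^0_{\cKP}(\F(L(\delta_i)))\to \cP_{e_i}*{}^\L\cP_{e_i,-1}$ is nonzero and its image contains the socle $\cP_{e_i}\d{}^\L\cP_{e_i,-1}\cong\cV_i\{-1\}$. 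Hence $\cV_i$ is a factor of $H^0_{\cKP}(\F(L(\delta_i)))$. Symmetrically, take $S=L(\alpha_i,-\alpha_i+\delta)\{-2\}\subset L(-\alpha_i+\delta)\circ L(\alpha_i)$. Its image in ${}^\L\cP_{e_i,-1}*\cP_{e_i}$ contains the socle $\cK_i\{-1\}$, so $\cK_i$ is a factor of the second object.

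To exclude the remaining factors, compare head and socle. $H^0_{\cKP}(\F(L(\delta_i)))$ is a quotient of ${}^\L\cP_{e_i,-1}*\cP_{e_i}$, whose simple head is $\cV_i$ and whose socle is $\cK_i$. The cleanest route is through the composite $\F$ of the map $L(-\alpha_i+\delta)\circ L(\alpha_i)\to L(\delta_i)\to L(\alpha_i)\circ L(-\alpha_i+\delta)\{2\}$. Up to shift this is $\r$, so by Lemma \ref{lem:F1} (the $\b$ values agree) it maps to a nonzero multiple of $\r_{{}^\L\cP_{e_i,-1},\cP_{e_i}}$. The image of that $r$-matrix is the simple $\cV_i$ (Proposition \ref{prop:ses4}). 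So the kernel $K$ of the quotient ${}^\L\cP_{e_i,-1}*\cP_{e_i}\to H^0_{\cKP}(\F(L(\delta_i)))$ lies inside the kernel of the $r$-matrix.

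The step I expect to be the main obstacle is showing that $K$ actually contains the socle $\cK_i$; the $r$-matrix argument alone does not force this. The plan is to use the exact sequence $0\to L(\alpha_i,-\alpha_i+\delta)\{-2\}\to L(-\alpha_i+\delta)\circ L(\alpha_i)$ from \eqref{eq:Ldelta1}, whose composite with the quotient to $L(\delta_i)$ vanishes. Applying $\F$, the image of $H^0_{\cKP}(\F(L(\alpha_i,-\alpha_i+\delta)))$ lies in $K$. We showed above that this image contains $\cK_i$, so $\cK_i\notin H^0_{\cKP}(\F(L(\delta_i)))$. The symmetric argument, using $L(\delta_i)\{-2\}\hookrightarrow L(\alpha_i)\circ L(-\alpha_i+\delta)$ and the image $\cV_i$ found above, removes $\cV_i$ from $H^0_{\cKP}(\F(L(\alpha_i,-\alpha_i+\delta)))$.
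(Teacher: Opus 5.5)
Your proposal is correct and follows essentially the paper's proof. Both arguments apply Lemma \ref{lem:F2} to $L(\alpha_i)\circ L(-\alpha_i+\delta)$ and $L(-\alpha_i+\delta)\circ L(\alpha_i)$, with $\b=2$ on both sides, which you make explicit where the paper leaves it implicit, and then use the multiplicity-one statement of Proposition \ref{prop:ses4}. The only difference is cosmetic: the paper applies Lemma \ref{lem:F2} directly to the whole kernel $S=\Ker$ of the surjection onto $L(\delta_i)$ (resp.\ $L(\alpha_i,-\alpha_i+\delta)$), while you apply it to the socle lying inside that kernel and use functoriality. Your $r$-matrix detour is not needed.
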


\begin{proof}
Consider the short exact sequence 
\begin{align}\label{exact-sequence1}0 \to \Ker \xrightarrow{h} L(-\alpha_i+\delta) \circ L(\alpha_i) \to L(\delta_i) \to 0
\end{align} 
where $\Ker$ is by definition the kernel. 
By Lemma \ref{lem:F4} and \eqref{eq:Falphai2}, 
we have $\F(L(-\alpha_i + \delta) \circ L(\alpha_i)) \ddcong {}^\L \cP_{e_i,-1} \circ \cP_{e_i}$.
Since the functor $\F$ is right t-exact, applying $\F$ gives an exact sequence 
$$H^0_{\cKP}(\F(\Ker)) \to {}^\L \cP_{e_i,-1} \circ \cP_{e_i} \to H^0_{\cKP}(\F(L(\delta_i))) \to 0.$$
By Lemma \ref{lem:F2} the image of the first map contains at least $\cK_i \{-2\} = {}^\L \cP_{e_i,-1} \d \cP_{e_i}$. 
Since the second map above is surjective it follows from Proposition \ref{prop:ses4} that the composition factors of 
$H^0_{\cKP}(\F(L(\delta_i)))$ is a subset of \eqref{eq:local8}.

On the other hand, if we consider $L(\delta_i) \{-2\} \subset L(\alpha_i) \circ L(-\alpha_i + \delta)$ then by 
Lemma \ref{lem:F2} the image of the induced map 
$$H^0_{\cKP}(\F(L(\delta_i) \{-2\})) \to \F(L(\alpha_i) \circ L(-\alpha_i + \delta)) \cong \cP_{e_i} \circ {}^\L \cP_{e_i,-1}$$
contains $\cV_i \{-2\} \cong \cP_{e_i} \d {}^\L \cP_{e_i,-1}$. This means that $\cV_i$ must appear as a subquotient of 
$H^0_{\cKP}(\F(L(\delta_i)))$. This proves the first claim. The second claim follows similarly with $L(\alpha_i, -\alpha_i+\delta)$ 
in place of $L(\delta_i)$. 
\end{proof}

\begin{Lemma}\label{lem:delta2}
For $i \in I$ we have $H^0_{\cKP}(\F(L(\delta_i))) \ddcong \cV_i$. 
\end{Lemma}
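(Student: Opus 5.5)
From Lemma \ref{lem:delta1} we already know that $H^0_{\cKP}(\F(L(\delta_i)))$ contains $\cV_i$ and that its other composition factors can only be $\cV_j$ with $j \leftrightarrow i$. The plan is to rule out each such $\cV_j$.

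First, $L(\delta_i)$ is the head of $L(-\alpha_i+\delta)\circ L(\alpha_i)$ and $\F$ is right t-exact. So $H^0_{\cKP}(\F(L(\delta_i)))$ is a quotient of ${}^\L\cP_{e_i,-1}\circ\cP_{e_i}$. By Proposition \ref{prop:ses4}, that object has simple head ${}^\L\cP_{e_i,-1}\n\cP_{e_i}\cong\cV_i\{1\}$. Hence $H^0_{\cKP}(\F(L(\delta_i)))$ has simple head $\cV_i$, and any $\cV_j$ present lies strictly below the head. Dually, $L(\delta_i)\{-2\}$ embeds into $L(\alpha_i)\circ L(-\alpha_i+\delta)$. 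By Lemma \ref{lem:F2}, the image of $H^0_{\cKP}(\F(L(\delta_i)))$ in $\cP_{e_i}\circ{}^\L\cP_{e_i,-1}$ contains the socle $\cV_i\{-1\}$.

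The main step uses the extensions $L(\delta_{ij})$ and the maps $\gamma_{ji}$ of Corollary \ref{cor:gamma}. We compare them with the maps $\Gamma_{ji}$ of Proposition \ref{prop:dimhom}. Applying $\F$ to
\[
L(-\alpha_j+\delta)\circ L(\alpha_j)\to L(\delta_{ij})\to L(\alpha_i)\circ L(-\alpha_i+\delta)\{1\}
\]
gives a map ${}^\L\cP_{e_j,-1}\circ\cP_{e_j}\to\cP_{e_i}\circ{}^\L\cP_{e_i,-1}$, with shifts determined by the degrees. I would first check that this map is nonzero. For this I would use Lemma \ref{lem:F2} on the $\cV_i$-part: the composite restricted to $L(\delta_i)\{-1\}\subset L(\delta_{ij})$ hits the socle. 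Then Proposition \ref{prop:dimhom} identifies the image of the $\F$-composite with $\Gamma_{ji}$, and its image has composition factors exactly $\{\cV_i,\cV_j\}$. From this I would count composition factors. The object $H^0_{\cKP}(\F(L(\delta_{ij})))$ surjects onto the image of $\Gamma_{ji}$, and its factors are drawn from those of $H^0\F(L(\delta_i))$ and $H^0\F(L(\delta_j))$. Comparing the multiplicity of $\cV_j$ on both sides should force $\cV_j$ to appear only through the $L(\delta_j)$ part, not through $H^0\F(L(\delta_i))$.

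As an alternative or supplementary route I would use K-theory. Equation \eqref{eq:Ldelta2} expresses $[L(\alpha_i)\circ L(-\alpha_i+\delta)]$ in terms of $[L(\delta_i)]$, $[L(\alpha_i,-\alpha_i+\delta)]$ and $q\sum_{j\leftrightarrow i}[L(\delta_j)]$. Proposition \ref{prop:ses4} gives the composition factors of $\cP_{e_i}*{}^\L\cP_{e_i,-1}$, each with multiplicity one. Each $H^0\F(L(\delta_j))$ for $j\leftrightarrow i$ contains $\cV_j$ by Lemma \ref{lem:delta1}. The $\cV_j$ in the target therefore has to be accounted for by $\F(L(\delta_j))$, given right exactness and an Euler-characteristic count. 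Since it has multiplicity one, it cannot also appear in $H^0\F(L(\delta_i))$ or $H^0\F(L(\alpha_i,-\alpha_i+\delta))$.

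The obstacle is that $\F$ is not yet known to be t-exact, so Euler characteristics involve the unknown $H^{<0}$ terms. I would first try the $\Gamma_{ji}$ argument, which uses only $H^0$ together with head and socle information. I would use the K-theory count only where the negative-degree terms can be shown to vanish, or are constrained by Koszul and loop degree considerations.
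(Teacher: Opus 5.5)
There is a genuine gap in your main step. In the exact sequence $H^0_{\cKP}(\F(L(\delta_i)))\{-1\}\to H^0_{\cKP}(\F(L(\delta_{ij})))\to H^0_{\cKP}(\F(L(\delta_j)))\to 0$, your multiplicity count goes as follows: $\cV_j$ occurs at most once in $H^0_{\cKP}(\F(L(\delta_{ij})))$, since that object is a quotient of ${}^\L\cP_{e_j,-1}\circ\cP_{e_j}$, and that copy survives in $H^0_{\cKP}(\F(L(\delta_j)))$ by Lemma \ref{lem:delta1}. This only shows that $\cV_j$ is not in the \emph{image} of the first map.

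At this stage $\F$ is only right t-exact, so that map need not be injective. Its kernel is the image of $H^{-1}_{\cKP}(\F(L(\delta_j)))$, about which nothing is known yet; its vanishing is proved in Proposition \ref{prop:delta5}, which uses this lemma. So a copy of $\cV_j$ in $H^0_{\cKP}(\F(L(\delta_i)))$ could simply lie in that kernel. Your K-theory route fails for the same reason, as you suspected. The identification of $\F(\gamma_{ji})$ with $\Gamma_{ji}$ is correct but does not address this.

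The missing idea is to use the extension in the opposite orientation. Right t-exactness computes $H^0_{\cKP}\F$ of a \emph{quotient} exactly, as a cokernel, so $L(\delta_i)$ should be the head. Use $0\to L(\delta_j)\{-1\}\to L(\delta_{ji})\to L(\delta_i)\to 0$, which is \eqref{eq:delta} with $i,j$ swapped. Equivalently, exclude $\cV_i$ from $H^0_{\cKP}(\F(L(\delta_j)))$ using $L(\delta_{ij})$; this is what the paper does.

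\begin{itemize}
\item By Corollary \ref{cor:gamma} with $i,j$ swapped, $L(\delta_{ji})$ is a quotient of $L(-\alpha_i+\delta)\circ L(\alpha_i)$. Hence $H^0_{\cKP}(\F(L(\delta_{ji})))$ is a quotient of ${}^\L\cP_{e_i,-1}\circ\cP_{e_i}$, and it contains $\cV_j$ at most once by Proposition \ref{prop:ses4}.
\item The same corollary embeds $L(\delta_{ji})$ into $L(\alpha_j)\circ L(-\alpha_j+\delta)\{1\}$. So Lemma \ref{lem:F2} with $S=L(\delta_j)\{-1\}$ shows that the induced map from $H^0_{\cKP}(\F(L(\delta_j)))$ to $\cP_{e_j}\circ{}^\L\cP_{e_j,-1}$ (up to shift) has image containing the socle $\cV_j$.
\item This map factors through $H^0_{\cKP}(\F(L(\delta_{ji})))$. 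Therefore the unique $\cV_j$ there lies in the image of $H^0_{\cKP}(\F(L(\delta_j)))\{-1\}$, and so it is absent from the cokernel $H^0_{\cKP}(\F(L(\delta_i)))$.
\end{itemize}

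Do this for every $j\leftrightarrow i$, then combine with Lemma \ref{lem:delta1} and the multiplicity-one statement. This gives $H^0_{\cKP}(\F(L(\delta_i)))\ddcong\cV_i$.
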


\begin{proof}
Since $L(\delta_{ij}) \subset L(\alpha_i) \circ L(-\alpha_i+\delta)$ by Corollary \ref{cor:gamma}, 
it follows from Lemma \ref{lem:F2} applied to $S=L(\delta_{ij})$, Lemma \ref{lem:F4} and \eqref{eq:Falphai2} that 
$\cV_i \dcong \cP_{e_i} \d {}^\L \cP_{e_i,-1}$ is a composition factor of $H^0_{\cKP}(\F(L(\delta_{ij})))$.
On the other hand, applying the right exact functor $\F$ to the surjective map 
$L(-\alpha_j+\delta) \circ L(\alpha_j) \to L(\delta_{ij})$ gives a surjective map 
$${}^\L \cP_{e_j,-1} \circ \cP_{e_j} \to H^0_{\cKP}(\F(L(\delta_{ij}))).$$
It follows from Proposition \ref{prop:ses4} that the composition factors of $H^0_{\cKP}(\F(L(\delta_{ij})))$ are contained in 
$$\{\cK_j, \cV_j, \cV_k\,;\,k \leftrightarrow j\}.$$
Finally, applying $\F$ to the short exact sequence 
$$0 \to L(\delta_i) \{-1\} \to L(\delta_{ij}) \to L(\delta_j) \to 0.$$
gives an exact sequence
$$H^0_{\cKP}(\F(L(\delta_i)))\{-1\} \to H^0_{\cKP}(\F(L(\delta_{ij}))) \to H^0_{\cKP}(\F(L(\delta_j))) \to 0.$$
From the above discussion, using Lemma \ref{lem:F2} with $S=L(\delta_i)\{-1\}$, we deduce that
 the image of the first map contains $\cV_i$ as a subquotient.
It follows that the composition factors of $H^0_{\cKP}(\F(L(\delta_j)))$ cannot contain $\cV_i$. Since this is true for any $i \leftrightarrow j$ it 
follows by Lemma \ref{lem:delta1} that the composition factors of $H^0_{\cKP}(\F(L(\delta_j)))$ is a subset of $\{\cV_j\}$. The result follows. 
\end{proof}

Recall that for any $i,j \in I$ with $\la \alpha_i, \alpha_j \ra = -1$ we have nonzero maps 
\begin{align*}
\gamma_{ji}:\,& L(-\alpha_j+\delta) \circ L(\alpha_j) \to L(\alpha_i) \circ L(-\alpha_i + \delta) \{1\} \\
\Gamma_{ji}:\,& {}^\L \cP_{e_j,-1} \circ \cP_{e_j} \to \cP_{e_i} \circ {}^\L \cP_{e_i,-1} [1] \la -1 \ra \{1\}
\end{align*}
from \eqref{eq:gamma} and \eqref{eq:dimhom} respectively. 

\begin{Lemma}\label{lem:delta3}
Under the isomorphisms
$$\F(L(-\alpha_j+\delta) \circ L(\alpha_j)) \ddcong {}^\L \cP_{e_j,-1} \circ \cP_{e_j} \text{ and } 
\F(L(\alpha_i) \circ L(-\alpha_i + \delta)) \ddcong \cP_{e_i} \circ {}^\L \cP_{e_i,-1}$$
induced by Lemma \ref{lem:F4} the maps $\F(\gamma_{ji})$ and $\Gamma_{ji}$ agree up to scaling.  
\end{Lemma}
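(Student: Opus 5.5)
The strategy is to identify the hom space in which $\F(\gamma_{ji})$ lands, show that it is one-dimensional, and show that $\F(\gamma_{ji})\neq 0$. Uniqueness up to scaling then forces agreement with $\Gamma_{ji}$.

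Under the identifications of Lemma \ref{lem:F4}, the morphism $\F(\gamma_{ji})$ goes from ${}^\L \cP_{e_j,-1} \circ \cP_{e_j}$ to $\cP_{e_i} \circ {}^\L \cP_{e_i,-1}$, shifted by some loop and Koszul shift. Both objects lie in $\cKP$, since $\F$ is monoidal and the factors are simple. The map therefore lies in one of the hom spaces computed in Proposition \ref{prop:dimhom}. That proposition shows that, as the shift ranges over all possibilities, only one shift gives a nonzero hom space, and that space is $\C$, spanned by $\Gamma_{ji}$. So it suffices to prove $\F(\gamma_{ji}) \neq 0$. Once this is known, the shift carried by $\F(\gamma_{ji})$ is forced to be the one in \eqref{eq:dimhom}, and $\F(\gamma_{ji})$ is a nonzero multiple of $\Gamma_{ji}$.

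To prove nonvanishing, I will use the factorization $\gamma_{ji}$ of Corollary \ref{cor:gamma}:
\[
L(-\alpha_j+\delta)\circ L(\alpha_j) \twoheadrightarrow L(\delta_{ij}) \hookrightarrow L(\alpha_i)\circ L(-\alpha_i+\delta)\{1\}.
\]
Since $\F$ is right t-exact, applying $\F$ to the first map gives a surjection
\[
{}^\L \cP_{e_j,-1} \circ \cP_{e_j} \twoheadrightarrow H^0_{\cKP}(\F(L(\delta_{ij}))).
\]
Hence the image of $\F(\gamma_{ji})$ equals the image of the induced map
\[
H^0_{\cKP}(\F(L(\delta_{ij}))) \to \F\big(L(\alpha_i)\circ L(-\alpha_i+\delta)\big).
\]
I apply Lemma \ref{lem:F2} with $L=L(\alpha_i)$, $L'=L(-\alpha_i+\delta)$ and $S=L(\delta_{ij})$. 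This requires checking its hypotheses, namely that $\b(L(\alpha_i),L(-\alpha_i+\delta)) = \b(\cP_{e_i},{}^\L\cP_{e_i,-1})$. I expect this check to be the main technical point.

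On the Coulomb side, Corollary \ref{cor:B4} gives $\Lambda(\cP_{e_i}, {}^\L \cP_{e_i,-1}) = \Lambda({}^\L \cP_{e_i,-1}, \cP_{e_i}) = 2$, so $\b = 2$. On the Hecke side, the sequences \eqref{eq:Ldelta1} show that $\r$ has degree $2$ in both directions, again giving $\b = 2$. Alternatively, this equality is already implicitly used in the proof of Lemma \ref{lem:delta1}.

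Lemma \ref{lem:F2} then shows that the induced map is nonzero, with image containing $\cP_{e_i}\d{}^\L\cP_{e_i,-1}\dcong\cV_i$. Therefore $\F(\gamma_{ji})\neq 0$, which finishes the argument.

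As a consistency check, the image of $\F(\gamma_{ji})$ is a quotient of $H^0_{\cKP}(\F(L(\delta_{ij})))$. That object is an extension of $H^0_{\cKP}(\F(L(\delta_j)))\ddcong\cV_j$ by the image of $\cV_i$, using Lemma \ref{lem:delta2}. This matches the composition factors $\{\cV_i,\cV_j\}$ of the image of $\Gamma_{ji}$ found in Proposition \ref{prop:dimhom}.
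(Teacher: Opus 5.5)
Your proposal is correct and follows the paper's proof. Both arguments reduce via Proposition \ref{prop:dimhom} to showing $\F(\gamma_{ji})\neq 0$, factor $\gamma_{ji}$ through $L(\delta_{ij})$, use right t-exactness for the first map, and get nonvanishing of the second map from $\b(L(\alpha_i),L(-\alpha_i+\delta))=2=\b(\cP_{e_i},{}^\L\cP_{e_i,-1})$. The only difference is cosmetic: you invoke Lemma \ref{lem:F2} with $S=L(\delta_{ij})$, whereas the paper applies Lemma \ref{lem:F1} directly to the $r$-matrix $L(-\alpha_i+\delta)\circ L(\alpha_i)\twoheadrightarrow L(\delta_i)\hookrightarrow L(\delta_{ij})\hookrightarrow L(\alpha_i)\circ L(-\alpha_i+\delta)$ (up to shifts), and that is exactly the argument inside the proof of Lemma \ref{lem:F2}.
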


\begin{proof}
Using \eqref{eq:dimhom} it suffices to show that $\F(\gamma_{ji})$ is nonzero. Recall that $\gamma_{ij}$ factors as
\begin{equation}\label{eq:local5}
L(-\alpha_j+\delta) \circ L(\alpha_j) \to L(\delta_{ij}) \to L(\alpha_i) \circ L(-\alpha_i+\delta) \{2\}.
\end{equation}
Applying $\F$ we get the sequence of maps 
\begin{equation}\label{eq:local7}
{}^\L \cP_{e_j,-1} \circ \cP_{e_j} \to H^0_{\cKP}(\F(L(\delta_{ij}))) \to \cP_{e_i} \circ {}^\L \cP_{e_i,-1} \{2\}.
\end{equation}
The first map above is surjective (since $\F$ is right t-exact) so it remains to show that the second map in \eqref{eq:local7} is nonzero. To do this consider the composition 
$$L(-\alpha_i+\delta) \circ L(\alpha_i) \to L(\delta_i) \to L(\delta_{ij}) \to L(\alpha_i) \circ L(-\alpha_i+\delta) \{2\}$$
which is just the map $\r$. Since $\b(L(-\alpha_i+\delta), L(\delta_i)) = 2 = \b({}^\L \cP_{e_i,-1}, \cP_{e_i})$ it follows by Lemma \ref{lem:F1} that $\F(\r) \ne 0$. Thus the second map in \eqref{eq:local7} must also be nonzero. 
\end{proof}

\begin{Corollary}\label{cor:delta4}
For $i \in I$ we have 
$$H^n_{\cKP}(\F(L(\alpha_i,-\alpha_i+\delta))) \ddcong
\begin{cases}
\cK_i & \text{ if $n = 0$ } \\
0 & \text{ if $n = -1$ }
\end{cases}$$
\end{Corollary}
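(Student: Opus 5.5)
We apply $\F$ to the short exact sequence from \eqref{eq:Ldelta1},
$$0 \to L(\delta_i)\{-2\} \to L(\alpha_i)\circ L(-\alpha_i+\delta) \to L(\alpha_i,-\alpha_i+\delta) \to 0,$$
which is genuinely exact. Its middle term has only the composition factors listed in \eqref{eq:Ldelta2}, and its head is $L(\alpha_i,-\alpha_i+\delta)$; we use the version whose kernel is the full kernel. The kernel $\Ker$ has composition factors $L(\delta_i)$ and $L(\delta_j)$ for $j\leftrightarrow i$.

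Applying $\F$ gives a long exact sequence
$$H^{-1}_{\cKP}(\F(\Ker)) \to 0 \to H^{-1}_{\cKP}(\F(L(\alpha_i,-\alpha_i+\delta))) \to H^0_{\cKP}(\F(\Ker)) \xrightarrow{u} \cP_{e_i}\circ{}^\L\cP_{e_i,-1} \to H^0_{\cKP}(\F(L(\alpha_i,-\alpha_i+\delta))) \to 0.$$
Here we use right t-exactness, Lemma \ref{lem:F4}, \eqref{eq:Falphai2}, and monoidality; the middle term lies in the heart. So the vanishing of $H^{-1}$ reduces to injectivity of $u$, and the $H^0$ claim reduces to identifying $\coker(u)$ with $\cK_i$.

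By Lemma \ref{lem:delta2}, $H^0_{\cKP}(\F(L(\delta_j)))\ddcong\cV_j$ for every $j$. Filtering $\Ker$ by its composition factors and using right exactness of $H^0_{\cKP}\F$, the object $H^0_{\cKP}(\F(\Ker))$ has composition factors contained in $\{\cV_i,\cV_j\,;\,j\leftrightarrow i\}$, each with multiplicity at most one.

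The key step is to show that the image of $u$ contains all of these factors. By Proposition \ref{prop:ses4} the image then equals the kernel of $\cP_{e_i}\circ{}^\L\cP_{e_i,-1}\to\cK_i\{1\}$, and $u$ is injective by a length count. Concretely, $\operatorname{length} H^0(\F(\Ker))\le 1+\#\{j\}$, which equals the length of the image; so $u$ is injective, and the cokernel is $\cK_i$ up to shifts. This is consistent with the containment in Lemma \ref{lem:delta1}, which we now sharpen.

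To get each factor in the image:
\begin{itemize}
\item For $\cV_i$, apply Lemma \ref{lem:F2} to the submodule $S=L(\delta_i)\{-2\}$. Its image contains $\cP_{e_i}\d{}^\L\cP_{e_i,-1}\cong\cV_i\{-1\}$.
\item For $\cV_j$ with $j\leftrightarrow i$, use Corollary \ref{cor:gamma}: $\gamma_{ji}$ factors through $L(\delta_{ij})\subset\Ker$. By Lemma \ref{lem:delta3}, $\F(\gamma_{ji})$ agrees with $\Gamma_{ji}$ up to scaling, and by Proposition \ref{prop:dimhom} the image of $\Gamma_{ji}$ has composition factors $\{\cV_i,\cV_j\}$. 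Since $\F(\gamma_{ji})$ factors through $H^0(\F(L(\delta_{ij})))\to H^0(\F(\Ker))\xrightarrow{u}$, the image of $u$ contains $\cV_j$.
\end{itemize}
Together these show the image of $u$ contains $\{\cV_i,\cV_j\}$, completing the key step.

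The main obstacle is the orientation in the $\Gamma_{ji}$ step. Proposition \ref{prop:dimhom} gives a nonzero map for $i\to j$ as well as for $j\to i$, but it only identifies the image's composition factors when $j\to i$. For $i\to j$ we must redo the head/socle argument in that proof: the image of any nonzero map lies in $\{\cV_i,\cV_j\}$, and it cannot be just $\cV_i$ (head of the source is $\cV_j$). So $\cV_j$ appears in the image in either orientation.

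Once the $\cV_j$ factors are accounted for, the multiplicity bounds force the length count above. This gives $H^{-1}=0$ and $H^0\ddcong\cK_i$.
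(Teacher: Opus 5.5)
Your proposal is correct and follows essentially the same route as the paper. You apply $\F$ to $0\to\Ker\to L(\alpha_i)\circ L(-\alpha_i+\delta)\to L(\alpha_i,-\alpha_i+\delta)\to 0$, use Lemma \ref{lem:F2} and the factorization of $\gamma_{ji}$ through $L(\delta_{ij})\subset\Ker$ (via Lemma \ref{lem:delta3}) to place $\cV_i$ and every $\cV_j$ in the image, and finish with the composition-factor count from Proposition \ref{prop:ses4}. Your multiplicity-one bound on $H^0_{\cKP}(\F(\Ker))$ and your orientation-independent head argument for $\Gamma_{ji}$ simply make explicit what the paper leaves implicit. One slip: your opening remark that \eqref{eq:Ldelta1} is ``genuinely exact'' is false, since by \eqref{eq:Ldelta2} the middle term also contains the $L(\delta_j)$ for $j\leftrightarrow i$; this does not affect the proof, because you immediately pass to the full kernel, exactly as the paper does.
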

\begin{proof}
We omit the loop and Koszul shifts for convenience. Consider the exact short exact sequence 
$$0 \to \Ker \xrightarrow{h} L(\alpha_i) \circ L(-\alpha_i+\delta) \to L(\alpha_i,-\alpha_i+\delta) \to 0$$
where $\Ker$ is by definition the kernel. Applying $\F$ we get a long exact sequence
$$0 \to H^{-1}_{\cKP}(\F(L(\alpha_i,-\alpha_i+\delta))) \to H^0_{\cKP}(\F(\Ker)) \xrightarrow{\F(h)} \cP_{e_i} \circ {}^\L \cP_{e_i,-1} \to H^0_{\cKP}(\F(L(\alpha_i,-\alpha_i+\delta))) \to 0.$$
By Lemma \ref{lem:F2} the image of $\F(h)$ contains $\cV_i$ as a subquotient. 

On the other hand, for any $j \leftrightarrow i$ the map $\gamma_{ij}$ factors as in \eqref{eq:local5} where the first map is surjective and the second injective. Applying $\F$ we get 
$${}^\L \cP_{e_j,-1} \circ \cP_{e_j} \to H^0_{\cKP}(\F(L(\delta_{ij}))) \to \cP_{e_i} \circ {}^\L \cP_{e_i,-1}$$
where the first map is surjective. Since $\F(\gamma_{ji})$ and $\Gamma_{ji}$ agree up to scaling it follows that the image of the second map 
in \eqref{eq:local5} is the same as the image of $\Gamma_{ji}$. 

Moreover, the map $L(\delta_{ij}) \to L(\alpha_i) \circ L(-\alpha_i+\delta)$ factors through $h$. This means that the image of $\F(h)$ contains 
the image of $\Gamma_{ji}$ and thus also contains $\cV_j$ as a subquotient. Thus the image of $\F(h)$ has composition factors which include 
$\{\cV_j: j \leftrightarrow i\}$. By Proposition \ref{prop:ses4} and Lemma \ref{lem:delta1} it follows that $H^0_{\cKP}(\F(L(\alpha_i,-\alpha_i+\delta))) \ddcong \cK_i$. 

Finally, $\Ker$ has composition factors $\{L(\delta_i), L(\delta_j)\,;\,j \leftrightarrow i\}$. It follows by Lemma \ref{lem:delta2} and induction on 
length that the composition factors of $\F(\Ker)$ are contained in $\{\cV_i, \cV_j\,;\, j \leftrightarrow i\}$. Since, by Proposition \ref{prop:ses4}, the 
composition factors of $\cP_{e_i} \circ {}^\L \cP_{e_i,-1}$ are $\{\cK_i, \cV_i, \cV_j\,;\, j \leftrightarrow i\}$ this implies that $\F(h)$ is injective. This 
in turn means that $H^{-1}_{\cKP}(\F(L(\alpha_i,-\alpha_i+\delta))) = 0$ which completes the proof. 
\end{proof}

\begin{Proposition}\label{prop:delta5}
For $i \in I$ we have $\F(L(\delta_i)) \ddcong \cV_i$ and $\F(L(\alpha_i,-\alpha_i+\delta)) \ddcong \cK_i$. 
\end{Proposition}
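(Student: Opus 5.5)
We already know $H^0_{\cKP}(\F(L(\delta_i))) \ddcong \cV_i$ from Lemma \ref{lem:delta2}, and Corollary \ref{cor:delta4} gives $H^0_{\cKP}(\F(L(\alpha_i,-\alpha_i+\delta))) \ddcong \cK_i$ with $H^{-1}_{\cKP}=0$. What remains is to kill all negative Koszul-perverse cohomology of $\F(L(\delta_i))$ and all cohomology of $\F(L(\alpha_i,-\alpha_i+\delta))$ in degrees $\le -2$. Throughout we use that $\F$ is right t-exact, so only degrees $\le 0$ can occur.

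The main tool is the short exact sequence
\[
0 \to L(\alpha_i,-\alpha_i+\delta)\{-2\} \to L(-\alpha_i+\delta)\circ L(\alpha_i) \to \calQ \to 0
\]
coming from \eqref{eq:Ldelta1}, where $\calQ$ is the cokernel. By the proof of Corollary \ref{cor:gamma}, $\calQ$ sits in
\[
0 \to \bigoplus_{k \leftrightarrow i} L(\delta_k)\{-1\} \to \calQ \to L(\delta_i) \to 0.
\]
The middle term of the first sequence maps under $\F$ to ${}^\L\cP_{e_i,-1}\circ\cP_{e_i}$, which lies in the heart. The long exact sequence then gives $H^{n}_{\cKP}(\F(\calQ)) \cong H^{n+1}_{\cKP}(\F(L(\alpha_i,-\alpha_i+\delta)))$ for $n\le -2$. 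In degree $-1$ it gives an injection $H^{-1}_{\cKP}(\F(\calQ)) \hookrightarrow H^0_{\cKP}(\F(L(\alpha_i,-\alpha_i+\delta)))\ddcong \cK_i$, and the cokernel of that injection is the kernel of the next map.

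The plan is to run a simultaneous induction on the degree $n\le -1$, with the hypothesis that $H^n_{\cKP}$ vanishes on $\F(L(\delta_k))$ for every $k\in I$ and on $\F(L(\alpha_k,-\alpha_k+\delta))$, for all degrees above $n$. For the first step, note that $H^0_{\cKP}(\F(\calQ))$ is the cokernel of a map into ${}^\L\cP_{e_i,-1}\circ\cP_{e_i}$ whose image contains $\cK_i$ by Lemma \ref{lem:F2}. Comparing composition factors using Proposition \ref{prop:ses4}, the image is exactly the socle $\cK_i$, so $H^{-1}_{\cKP}(\F(\calQ))=0$. Feeding this into the filtration of $\calQ$, together with $H^{0}(\F(L(\delta_k)))\ddcong\cV_k$, shows that $H^{-1}_{\cKP}(\F(L(\delta_i)))$ must embed into the image of $\bigoplus_{k\leftrightarrow i}\cV_k$ under a map that is injective. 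That map is injective because $H^0_{\cKP}(\F(\calQ))$ has the full composition factors $\{\cV_i,\cV_k\}$. Hence $H^{-1}_{\cKP}(\F(L(\delta_i)))=0$. In lower degrees the same two sequences express $H^{n}(\F(L(\delta_i)))$ and $H^{n}(\F(L(\alpha_i,-\alpha_i+\delta)))$ in terms of the same quantities in degree $n+1$ or $n+2$, so they propagate the vanishing downward.

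The main obstacle is that induction on degree alone never terminates, because $\F$ is not known to be bounded; this is precisely the difficulty flagged in the introduction. To close the argument I would use a grading or weight bound. The relations $H^{n}(\F(L(\delta_i))) \hookrightarrow \bigoplus_{k} H^{n}(\F(L(\delta_k)))\{-1\}$ and $H^{n}(\F(L(\alpha_i,-\alpha_i+\delta)))\{-2\}\cong H^{n-1}(\F(\calQ))$ force any nonzero cohomology to recur with strictly increasing loop shift. But $\F(L(\delta_i))$ is computed from $\scrM(\delta)\otimes_{R(\delta)}L(\delta_i)$, whose graded pieces are bounded in loop degree, since $L(\delta_i)$ is finite dimensional and $\End^{<0}(\scrM(\beta))=0$ by Proposition \ref{prop:action}. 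Comparing the two bounds should then force vanishing. Alternatively, one could exploit the $\C[z]$-affinization of Corollary \ref{cor:FLz} and Nakayama's lemma to reduce to a finite-length statement. I expect making this boundedness step rigorous to be the real work.
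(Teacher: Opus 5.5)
Your degree $-1$ step is correct; it is the paper's first step, routed through the two-step filtration of $\calQ$. Your downward propagation for $L(\delta_i)$ is also fine. The gap is the claim that the same two sequences also bound $H^n_{\cKP}(\F(L(\alpha_i,-\alpha_i+\delta)))$ by cohomology in degrees $n+1$ or $n+2$. In your first sequence $L(\alpha_i,-\alpha_i+\delta)\{-2\}$ is the \emph{submodule}. The connecting map therefore gives $H^{n}_{\cKP}(\F(L(\alpha_i,-\alpha_i+\delta)))\{-2\}\cong H^{n-1}_{\cKP}(\F(\calQ))$ for $n\le -1$, which is exactly the relation you write in your last paragraph. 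This relates degree $n$ to degree $n-1$, which points the wrong way for a descending induction. Combined with the relation for $L(\delta_i)$, it only yields a circular system. That circularity, not any unboundedness of $\F$, is what produces the apparent non-termination. Your proposed cure by loop-degree bounds is not established. $\F(L(\delta_i))$ is computed through a resolution of $L(\delta_i)$ that need not be finite, and nothing you cite bounds the loop degrees of its Koszul-perverse cohomology or ties them to cohomological degree. The Nakayama alternative is likewise only a hope.

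The missing ingredient is the other sequence, already used in the proof of Corollary~\ref{cor:delta4}: $0\to\Ker'\to L(\alpha_i)\circ L(-\alpha_i+\delta)\to L(\alpha_i,-\alpha_i+\delta)\to 0$. Here $\F$ of the middle term is $\cP_{e_i}\circ{}^\L\cP_{e_i,-1}$, which lies in the heart, and $\Ker'$ has composition factors $L(\delta_i)$ and $L(\delta_j)$ for $j\leftrightarrow i$. For $m\le -2$ this gives an injection $H^m_{\cKP}(\F(L(\alpha_i,-\alpha_i+\delta)))\hookrightarrow H^{m+1}_{\cKP}(\F(\Ker'))$. In the same way, \eqref{exact-sequence1} gives $H^m_{\cKP}(\F(L(\delta_i)))\hookrightarrow H^{m+1}_{\cKP}(\F(\Ker))$. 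Both targets vanish, by long exact sequences along composition series, once every $\F(L(\delta_j))$ and every $\F(L(\alpha_j,-\alpha_j+\delta))$ has vanishing $H^{m+1}_{\cKP}$. Descending induction starting from $m=-1$ then reaches every $m\le -1$ after finitely many steps, so no bound from below on $\F$ is needed. This is the paper's argument. The paper phrases it as taking the largest $m\le -2$ at which one of these objects has nonzero $H^m_{\cKP}$, which exists because all degrees are $\le 0$, and deriving a contradiction.
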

\begin{proof}
As usual we ignore loop and Koszul shifts. We first show that $H^{-1}_{\cKP}(\F(L(\delta_i))) = 0$ by arguing like in the proof of Corollary \ref{cor:delta4}. 
To do so, we consider the short exact sequence \eqref{exact-sequence1}.
Since the functor $\F$ is right exact, we have the long exact sequence 
$$0 \to H^{-1}_{\cKP}(\F(L(\delta_i))) \to H^0_{\cKP}(\F(\Ker)) \xrightarrow{\F(h)} {}^\L \cP_{e_i,-1} \circ \cP_{e_i} 
\to H^0_{\cKP}(\F(L(\delta_i))) \to 0.$$
By \eqref{eq:Ldelta2} the composition factors of $\Ker$ are
$$\{L(\alpha_i,-\alpha_i+\delta), L(\delta_j)\,;\,j \leftrightarrow i\}.$$
So by Lemma \ref{lem:delta2} and Corollary \ref{cor:delta4} the composition factors of 
$H^0_{\cKP}(\F(\Ker))$ is a subset of 
$$\{\cK_i, \cV_j\,;\,j \leftrightarrow i\}.$$
Since, by Proposition \ref{prop:ses4}, the composition factors of ${}^\L \cP_{e_i,-1} \circ \cP_{e_i}$ are
$$\{\cK_i, \cV_i, \cV_j\,;\,j \leftrightarrow i\}$$
Finally, by Lemma \ref{lem:delta2} again we have $H^0_{\cKP}(\F(L(\delta_i))) \ddcong \cV_i$.  Hence the map $\F(h)$ is injective, and the long exact sequence above implies that $H^{-1}_{\cKP}(\F(L(\delta_i)))=0$. 

To finish the argument we need to show that for $i \in I$ and $n \le -2$ we have
$$H^n_{\cKP}(\F(L(\delta_i))) = 0 = H^n_{\cKP}(\F(L(\alpha_i,-\alpha_i+\delta)))$$
Assuming this is not the case, let $m < 0$ be the largest integer such that one of these quantities is nonzero. 
Suppose it is $H^m_{\cKP}(\F(L(\delta_i))) \ne 0$. The other cases are similar. 
Consider again the short exact sequence \eqref{exact-sequence1}.
Applying $\F$ we get a long exact sequence 
$$\dots \to 0 \to H^m_{\cKP}(\F(L(\delta_i))) \to H^{m+1}_{\cKP}(\F(\Ker)) \to \dots$$
By assumption $H^{m+1}_{\cKP}(\F(\Ker)) = 0$ giving $H^m_{\cKP}(\F(L(\delta_i)))=0$ which is a contradiction. 
\end{proof}

\subsection{Exactness of $\F$}

\begin{Lemma}\label{lem:F3}
For $i \in I$ we have 
\begin{enumerate}[label=$\mathrm{(\alph*)}$,leftmargin=8mm]
\item $\F(L(\alpha_i + n \delta)) \ddcong \cP_{e_i,n}$ for  $n \ge 0$,
\item $\F(L(-\alpha_i + n \delta)) \ddcong {}^\L\cP_{e_i,-n}$ for  $n \ge 1$.
\end{enumerate}
\end{Lemma}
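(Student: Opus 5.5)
We argue by induction on $n$, using the exact sequences of Proposition \ref{prop:ses2}(b),(c) on the quiver Hecke side and the identification $\F(L(\delta_i))\ddcong\cV_i$ of Proposition \ref{prop:delta5}. On the Coulomb side the matching sequences come from Lemma \ref{lem:V}. For (a), the base case $n=0$ is \eqref{eq:Falphai2}.

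\textbf{Part (a).} Suppose $\F(L(\alpha_i+n\delta))\ddcong\cP_{e_i,n}$. By \eqref{eq:Lal1}, the module $L(\delta_i)\circ L(\alpha_i+n\delta)$ has length two, with head $L(\alpha_i+(n+1)\delta)$ up to shift. Its image under $\F$ is $\cV_i\circ\cP_{e_i,n}$. By the triangles \eqref{eq:VP} of Lemma \ref{lem:V}, this object has length two, with simple quotient $\cP_{e_i,n+1}$ and subobject $\cQ$.

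We then apply Corollary \ref{cor:F1} with $L=L(\delta_i)$ and $L'=L(\alpha_i+n\delta)$. Both are real simples, and their images $\cV_i$ and $\cP_{e_i,n}$ are simple and real: $\cP_{e_i,n}$ by Corollary \ref{cor:real}, and $\cV_i$ because it is $\cP_{e_i}\d{}^\L\cP_{e_i,-1}$ of real objects with $\b=2$, or more directly because $\cV_i*\cV_i$ is supported on $\calR_{\u0}$. The remaining hypothesis is equality of the $\b$-invariants. On the KLR side, \eqref{eq:Lal1} gives $\b(L(\delta_i),L(\alpha_i+n\delta))=1$. On the Coulomb side, \eqref{eq:VP} shows that $\cV_i*\cP_{e_i,n}$ has length two and admits maps both ways whose composites are nonzero. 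By Proposition \ref{prop:Lb=1} this gives $\b(\cV_i,\cP_{e_i,n})=1$, once we check that the two triangles have opposite roles for $\cQ$ (head versus socle). Corollary \ref{cor:F1} then gives $\F(L(\delta_i)\n L(\alpha_i+n\delta))\cong\cV_i\n\cP_{e_i,n}$. The left side is $L(\alpha_i+(n+1)\delta)$ up to shift. The right side is $\cP_{e_i,n+1}$, because the head of $\cV_i*\cP_{e_i,n}$ is $\cP_{e_i,n+1}$.

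\textbf{Part (b).} The base case $n=1$ is Lemma \ref{lem:F4}. For the induction step we use \eqref{eq:Lal2} and the dual sequence on the Coulomb side. Taking left duals of \eqref{eq:VP}, and using that duality is an anti-equivalence with ${}^\L\cV_i\ddcong\cV_i$ up to a twist by $\phi_i$-type invertibles, we get that ${}^\L\cP_{e_i,-n}\circ\cV_i$ has length two with head ${}^\L\cP_{e_i,-n-1}$. The same application of Corollary \ref{cor:F1} then finishes the induction. For the $\b$ computation we use Proposition \ref{prop:Lduals}: $\b({}^\L\cP_{e_i,-n},\cV_i)=\b(\cV_i,\cP_{e_i,-n})$ after possibly exchanging arguments, and this equals $1$ by the same argument as in part (a), since Lemma \ref{lem:V} holds for all $\ell\in\Z$.

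\textbf{Main obstacle.} The delicate step is matching orders and shifts. We must check that $\n$ on the quiver Hecke side corresponds to the head $\cP_{e_i,\pm(n+1)}$ and not to the other composition factor $\cQ$, since the triangles in \eqref{eq:VP} are written in the order $\cV_i*\cP$ versus $\cP*\cV_i$. If the orders turn out to be opposite, we instead apply Corollary \ref{cor:F1} to $L(\alpha_i+n\delta)\circ L(\delta_i)$, whose socle is $L(\alpha_i+(n+1)\delta)$. This gives $\F$ of $L(\alpha_i+(n+1)\delta)$ as $\cP_{e_i,n}\d\cV_i\dcong\cP_{e_i,n+1}$. Since all statements are only up to $\ddcong$, loop and Koszul shifts can be ignored throughout.
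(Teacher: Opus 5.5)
Your part (a) is the paper's argument: induction via \eqref{eq:Lal1}, $\F(L(\delta_i))\ddcong\cV_i$ from Proposition~\ref{prop:delta5}, Lemma~\ref{lem:V} on the Coulomb side, and Corollary~\ref{cor:F1}. Two side claims there are false, though the argument does not need them.
\begin{itemize}
\item $L(\delta_i)$ is not real, since $\End(L(\delta_i)^{\circ 2})\cong\bbC\frakS_2$.
\item $\cV_i$ is not real once $a_i\ge 2$: $\cV_i*\cV_i$ is the bundle with fibre $(L^{(i)}_0/tL^{(i)}_0)^{\otimes 2}$, which splits into symmetric and exterior squares.
\end{itemize}
Both of your justifications fail: support on $\calR_{\underline{0}}$ does not make $\cV_i*\cV_i$ simple, and Proposition~\ref{prop:Lb=1}(b) needs $\b=1$, not $\b=2$. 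Corollary~\ref{cor:F1} only needs one real object on each side, and $L(\alpha_i+n\delta)$ and $\cP_{e_i,n}$ supply these. Also, $\b(\cV_i,\cP_{e_i,n})=1$ comes from Proposition~\ref{prop:bselfdual} applied to \eqref{eq:VP}, which gives $\Lambda(\cV_i,\cP_{e_i,n})=2$ and $\Lambda(\cP_{e_i,n},\cV_i)=0$. Proposition~\ref{prop:Lb=1} is the converse statement.

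The genuine gap is in (b). The paper only says ``similarly'', and your outline (base case Lemma~\ref{lem:F4}, then \eqref{eq:Lal2} and Corollary~\ref{cor:F1}) is right. But your derivation of the Coulomb-side input fails.

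By \eqref{eq:Lal2} you need ${}^\L\cP_{e_i,-n}\n\cV_i\dcong{}^\L\cP_{e_i,-n-1}$, with length two and $\b=1$. Applying the left dual to \eqref{eq:VP} reverses factors and exchanges heads with socles. It shows that ${}^\L\cP_{e_i,\ell+1}$ is the head of ${}^\L\cV_i*{}^\L\cP_{e_i,\ell}$ and the socle of ${}^\L\cP_{e_i,\ell}*{}^\L\cV_i$. The index still goes up, so this expresses ${}^\L\cP_{e_i,-n}$ through ${}^\L\cP_{e_i,-n-1}$, which is the wrong direction for the induction. Moreover, ${}^\L\cV_i$ is the dual bundle with fibre $(L^{(i)}_0/tL^{(i)}_0)^\vee$ (see the proof of Lemma~\ref{lem:uptophi}). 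For $a_i\ge 3$ it is not a twist of $\cV_i$ by invertible objects.

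Your $\b$ step has a similar flaw. Proposition~\ref{prop:Lduals}(a) does give $\Lambda({}^\L\cP_{e_i,-n},\cV_i)=\Lambda(\cV_i,\cP_{e_i,-n})=2$. But it turns $\Lambda(\cV_i,{}^\L\cP_{e_i,-n})$ into $\Lambda({}^\L({}^\L\cP_{e_i,-n}),\cV_i)$. By Corollary~\ref{cor:duals} this double dual is $\psi_i*\cP_{e_i,-n+m_i}$ up to shift, not $\cP_{e_i,-n}$.

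To repair it, use adjunction instead of dualizing sequences. By \eqref{adjunction3}, up to shifts,
\[
\Hom({}^\L\cP_{e_i,-n}*\cV_i,\,{}^\L\cP_{e_i,-n-1})\cong\Hom(\cV_i,\,\cP_{e_i,-n}*{}^\L\cP_{e_i,-n-1})\cong\Hom(\cV_i*\cP_{e_i,-n-1},\,\cP_{e_i,-n}),
\]
which is nonzero by Lemma~\ref{lem:V} with $\ell=-n-1$. Since ${}^\L\cP_{e_i,-n}$ is simple and real, this identifies the simple head of ${}^\L\cP_{e_i,-n}*\cV_i$ as ${}^\L\cP_{e_i,-n-1}$.

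For $\b$, the second pairing is
\[
\Lambda(\cV_i,{}^\L\cP_{e_i,-n})=\Lambda(\psi_i,\cV_i)+\Lambda(\cP_{e_i,-n+m_i},\cV_i)=0.
\]
The first term vanishes because both objects live on $\calR_{\underline{0}}$, where convolution is the symmetric tensor product of bundles. The second vanishes by Lemma~\ref{lem:V} together with Proposition~\ref{prop:bselfdual}. Hence $\b({}^\L\cP_{e_i,-n},\cV_i)=1$, Proposition~\ref{prop:Lb=1}(a) gives length two, and Corollary~\ref{cor:F1} completes the induction step as in (a).
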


\begin{proof}
We prove (a) by induction on $n$. The base case $n=0$ follows from \eqref{eq:Falphai2}.
For $n\geqslant 0$ we know by \eqref{eq:Lal1} that $L(\delta_i) \circ L(\alpha_i+n\delta)$ has length $2$, $\b(L(\delta_i), L(\alpha_i+n\delta))=1$ and 
$$L(\delta_i) \n L(\alpha_i+n\delta) \dcong L(\alpha_i+(n+1)\delta).$$
By induction $\F(L(\alpha_i+n\delta)) \cong \cP_{e_i, n}$ and from the above we have 
$\F(L(\delta_i)) \cong \cV_i$.
Moreover, using Lemma \ref{lem:V}, we know that $\cV_i * \cP_{e_i,n}$ has length $2$, $\b(\cV_i, \cP_{e_i,n})=1$ and 
$$\cV_i \n \cP_{e_i,n} \dcong \cP_{e_i,n+1}.$$
It follows that 
\begin{align*}
\F(L(\alpha_i+(n+1)\delta)) 
&\dcong \F(L(\delta_i) \n L(\alpha_i+n\delta)) \\
&\cong \F(L(\delta_i)) \n \F(L(\alpha_i+n\delta)) \\
&\ddcong \cV_i \n \cP_{e_i, n} \\
&\dcong \cP_{e_i, n+1} 
\end{align*}
where the second isomorphism uses Corollary \ref{cor:F1}. This proves (a).  

Claim (b) follows similarly, with the base case of the induction $\F(L(-\alpha_i+\delta)) \ddcong {}^\L \cP_{e_i,-1}$ having been proven in Lemma \ref{lem:F4}. 
\end{proof}

\begin{Corollary}\label{cor:F2}
For every $\rho \in \Phi_\re^+$ we have $\F(L(\rho)) \in \cKP$.  It is either simple and real or zero.
\end{Corollary}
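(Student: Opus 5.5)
We argue by induction on $\rho\in\Phi_\re^+$ with respect to the height of $\rho$, using the minimal pairs of the fixed balanced convex order. Write $\rho=\pm\gamma+n\delta$ with $\gamma\in\Phi^+$.

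\emph{Base cases.} If $\rho$ is a simple affine root then $\F(L(\rho))\cong\scrM(\alpha_i)$ by \eqref{eq:Falphai1}, and this object is simple and real by Proposition \ref{prop:alpha0}. The cases $\rho=\alpha_i+n\delta$ and $\rho=-\alpha_i+n\delta$ are covered by Lemma \ref{lem:F3}: $\cP_{e_i,n}$ is simple and real by Corollary \ref{cor:real}. For ${}^\L\cP_{e_i,-n}$ we use Corollary \ref{cor:duals}, which identifies it, up to the invertible $q$-commuting object $(\psi_i^-)^{-1}$, with $\cP_{-e_i,\ast}$, and that object is simple and real by Corollary \ref{cor:real}. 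The roots $-\beta+\delta$ with $\beta\in\Phi^+$ are handled by Lemma \ref{lem:F4} together with Corollary \ref{cor:real2}, since left duals of real simples are real simples.

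\emph{Induction step.} For a general real root $\rho$ we choose a minimal pair $(\beta,\gamma)$, $\rho=\beta+\gamma$. Because the order is convex and balanced, a real minimal pair exists for $\rho\neq\pm\alpha_i+n\delta$; as in Proposition \ref{prop:minimalpair}, the pair can be taken with one member a simple finite root or of the form $\pm\alpha_j+m\delta$. By \eqref{eq:Lminpair}, $L(\gamma)\circ L(\beta)$ has length two, $\b(L(\beta),L(\gamma))=1$, and $L(\rho)\dcong L(\gamma)\n L(\beta)$. By induction, $\scrM=\F(L(\beta))$ and $\scrM'=\F(L(\gamma))$ are each zero or real simple.

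If either is zero, then $\F(L(\gamma)\circ L(\beta))=0$ by monoidality. Since $\F$ is right t-exact, $H^0_{\cKP}\F(L(\rho))$ is a quotient of $0$. To conclude that $\F(L(\rho))=0$ we also need the negative cohomology to vanish; I would get this from the long exact sequence of $0\to L(\beta,\gamma)\{-1\}\to L(\gamma)\circ L(\beta)\to L(\rho)\to 0$ together with a downward induction on cohomological degree, exactly as in the proof of Proposition \ref{prop:delta5}. This requires running the induction simultaneously for $L(\rho)$ and for $L(\beta,\gamma)$.

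If both are nonzero, Lemma \ref{lem:F1} gives $1=\b(L(\beta),L(\gamma))\ge\b(\scrM,\scrM')$. When $\b(\scrM,\scrM')=1$, Proposition \ref{prop:Lb=1} shows that $\scrM'*\scrM$ has length two, and Corollary \ref{cor:F1} then gives $\F(L(\rho))\cong\scrM'\n\scrM$, which is simple and real by Proposition \ref{prop:Lb=1}. When $\b(\scrM,\scrM')=0$, the product $\scrM'*\scrM$ is simple. Here $\F(\r)=0$ by Lemma \ref{lem:F1}, and $\r$ factors through $L(\rho)$, which is both the head of one product and the socle of the other. Applying the right exact $\F$, the surjection $\scrM'\circ\scrM\to H^0\F(L(\rho))$ composed with $H^0\F(L(\rho))\to\scrM\circ\scrM'$ is zero. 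Since $\scrM'*\scrM$ is simple, either $H^0\F(L(\rho))=0$ or $H^0\F(L(\rho))\cong\scrM'*\scrM$, which is real simple. In either case we still need the negative cohomology to vanish, which we obtain by the same long exact sequence argument as above.

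\emph{Main obstacle.} The hardest part is the vanishing of the negative Koszul-perverse cohomology, since $R(\rho)$ need not have finite global dimension. I would first try to extend the ``largest bad degree'' argument of Proposition \ref{prop:delta5} to all real roots at once, inducting on height and controlling the kernels via the composition factors given in Proposition \ref{prop:hereditary}(c). A secondary difficulty is checking that the simple $\scrM'*\scrM$ appearing in the case $\b=0$ is genuinely compatible with $\F(L(\rho))$, rather than arising only at the level of $H^0$.
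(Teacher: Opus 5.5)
Your overall route is the paper's: induction on height over real minimal pairs, Lemma \ref{lem:F3} for the roots $\pm\alpha_i+n\delta$ that have no real minimal pair, and Lemma \ref{lem:F1} together with Corollary \ref{cor:F1} when $\b(\scrM,\scrM')=1$.

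\emph{Zero case.} Your simultaneous induction works once you use both sequences of \eqref{eq:Lminpair}. They give $\F(L(\rho))\cong\F(L(\beta,\gamma))[1]\{-1\}$ and $\F(L(\beta,\gamma))\cong\F(L(\rho))[1]\{-1\}$. Hence $\F(L(\rho))\cong\F(L(\rho))[2]\{-2\}$, and since $\F(L(\rho))$ has no positive cohomology, all of its cohomology vanishes.

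\emph{The gap is in the subcase $\b(\scrM,\scrM')=0$.} There are two problems.

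\emph{(i) Lemma \ref{lem:F1} does not say which $r$-matrix dies.} It does not tell you that the $r$-matrix through $L(\rho)$, namely $\r_{L(\gamma),L(\beta)}$, is killed. Its proof gives $\b(L(\beta),L(\gamma))=\b(\scrM,\scrM')+c+d$, so here $c+d=1$. Thus exactly one of $\F(\r_{L(\gamma),L(\beta)})$ and $\F(\r_{L(\beta),L(\gamma)})$ vanishes. The other is a nonzero multiple of an $r$-matrix between $q$-commuting simples, hence an isomorphism by Proposition \ref{prop:qcom}. Which of the two dies is exactly what decides between $\F(L(\rho))=0$ and $\F(L(\rho))\cong\scrM'\circ\scrM$.

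\emph{(ii) The long exact sequence argument does not transfer.} ``The same long exact sequence argument as above'' does not give vanishing of negative cohomology here, because the middle terms are now nonzero. The sequences only yield
$H^{-1}_{\cKP}\F(L(\rho))\cong\ker\big(H^0_{\cKP}\F(L(\beta,\gamma))\{-1\}\to\scrM'\circ\scrM\big)$.
If both $r$-matrices were killed, the two long exact sequences would give $H^0_{\cKP}\F(L(\beta,\gamma))\cong\scrM\circ\scrM'$ and make this map zero. The kernel would then be $\scrM\circ\scrM'\neq0$ up to shift, and this would propagate to all negative degrees. So the long exact sequences alone cannot close the argument.

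\emph{How to close it.} The missing input is the isomorphism from (i). Suppose, say, $\F(\r_{L(\beta),L(\gamma)})$ is invertible; this $r$-matrix factors through $L(\beta,\gamma)$. Then both maps in $\scrM\circ\scrM'\to H^0_{\cKP}\F(L(\beta,\gamma))\to\scrM'\circ\scrM\{1\}$ are isomorphisms. Feeding this into the first long exact sequence gives $H^0_{\cKP}\F(L(\rho))=0=H^{-1}_{\cKP}\F(L(\rho))$, and the second then gives $H^{-1}_{\cKP}\F(L(\beta,\gamma))=0$. For $n\le-2$ the sequences give alternating isomorphisms, up to loop shifts, $H^n_{\cKP}\F(L(\rho))\cong H^{n+1}_{\cKP}\F(L(\beta,\gamma))$ and $H^n_{\cKP}\F(L(\beta,\gamma))\cong H^{n+1}_{\cKP}\F(L(\rho))$. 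These kill everything, so $\F(L(\rho))=0$. The other case symmetrically gives $\F(L(\rho))\cong\scrM'\circ\scrM$ and $\F(L(\beta,\gamma))=0$.

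The paper asserts this dichotomy without spelling it out, and the argument above is what fills it. In particular, your proposed global ``largest bad degree'' induction over all real roots is unnecessary: the difficulty is local to each induction step and is resolved by knowing that exactly one of the two $r$-matrices survives $\F$ as an isomorphism.
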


\begin{proof}
If $\rho \in \Phi_\re^+$ has no real mimimal pair then $\rho$ is of the form $\pm \alpha_i + n \delta$ by Proposition \ref{prop:minimalpair}, and the result follows from Lemma \ref{lem:F3}. 
Otherwise, suppose $(\beta, \gamma)$ is a real minimal pair for $\rho$. 
We have $\b(L(\beta)\,,\,L(\gamma))=1$ by \eqref{eq:Lminpair} and Proposition \ref{prop:bselfdual}.
By induction on height we can assume $\F(L(\beta)), \F(L(\gamma)) \in \cKP$ and either they are both simple real, or 
one of them is simple real and the other is zero. In the latter case clearly $\F(L(\rho))=0$. 

In the former case, by Lemma \ref{lem:F4}, either $\b(\F(L(\gamma)), \F(L(\beta))) < \b(L(\beta)\,,\,L(\gamma))=1$ or we have equality. In the first case $\F(L(\gamma)) \circ \F(L(\beta))$ is simple, real and either $F(L(\rho)) = 0$ or $F(L(\rho)) \cong \F(L(\gamma)) \circ \F(L(\beta))$. 

In the second case $\b(\F(L(\gamma)), \F(L(\beta))) = \b(L(\beta)\,,\,L(\gamma))$ and 
$$\F(L(\rho)) \cong \F(L(\gamma) \n L(\beta)) \cong  \F(L(\gamma)) \n \F(L(\beta)) \in \cKP$$
where the second isomorphism is by Corollary \ref{cor:F1}.  In this case, by Propositions \ref{prop:Lb=1} and \ref{prop:qcom}, $\F(L(\rho))$ is simple and real.  This completes the induction step.  
\end{proof}

\begin{Proposition}\label{prop:Fexact}
The functor $\F$ is t-exact.
\end{Proposition}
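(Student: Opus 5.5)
We will show that $\F$ sends every finite dimensional module to an object of the heart $\cKP_{X^{(\bullet)}}$. Since $\F$ is a tensor product functor $\scrM(\beta)\otimes_{R(\beta)}-$, it is right t-exact. This holds because $\scrM(\beta)$ lies in the heart up to the connective part, and $\End^i(\scrM(\beta))=0$ for $i<0$ by Proposition \ref{prop:action}. Every object of $\gmod(R)^\heartsuit$ has finite length, so by the long exact sequence of Koszul-perverse cohomology and induction on length it suffices to show that $\F(L)\in\cKP_{X^{(\bullet)}}$ for every simple module $L$. In particular we must show that $\F(L)$ is bounded, which is the real content in the affine case.

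\textbf{Step 1: proper standard modules.} By Proposition \ref{prop:hereditary}, every simple $L(\pi)$ is the head of the proper standard module $\bd(\pi)$ of \eqref{eq:bd}. Moreover the composition factors of $\bd(\pi)$ other than $L(\pi)$ are $L(\omega)$ with $\omega<\pi$. So I would first show that $\F(\bd(\pi))\in\cKP_{X^{(\bullet)}}$ for every root partition $\pi$. Since $\F$ is monoidal (Proposition \ref{prop:Fmonoidal}) and the product $\circ$ on $\calD_{X^{(\bullet)},0}$ is t-exact, this reduces to the cuspidal factors.
\begin{itemize}
\item For the real factors $L(\psi^m)\dcong L(\psi)^{\circ m}$, Corollary \ref{cor:F2} gives $\F(L(\psi))\in\cKP$.
\item For the imaginary factor we have $L(\mu)=L(\mu_1)\circ\cdots\circ L(\mu_\ell)$, and each $L(\mu_i)$ is a summand, i.e. an $\frakS_n$-isotypic piece, of $L(\delta_i)^{\circ n}$ via $\End(L(\delta_i)^{\circ n})\cong\bbC\frakS_n$.
\end{itemize}
By Proposition \ref{prop:delta5}, $\F(L(\delta_i)^{\circ n})\ddcong\cV_i^{\circ n}$ lies in the heart. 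A direct summand of an object of the heart lies in the heart, and $\F$ is additive, so $\F(L(\mu_i))\in\cKP_{X^{(\bullet)}}$. Hence $\F(\bd(\pi))$ lies in the heart.

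\textbf{Step 2: induction on the bilexicographic order.} Fix $\beta$. The set $\Pi(\beta)$ is finite, so we may induct on $\pi$ with respect to the order $\leqslant$. Consider the short exact sequence $0\to K\to\bd(\pi)\to L(\pi)\to 0$. The composition factors of $K$ are of the form $L(\omega)$ with $\omega<\pi$, so by induction and the length argument $\F(K)$ lies in the heart. The long exact sequence of $H^\bullet_{\cKP}$ then gives $H^n_{\cKP}(\F(L(\pi)))=0$ for $n\le -2$ and for $n>0$. It also gives an exact sequence
$$0\to H^{-1}_{\cKP}(\F(L(\pi)))\to\F(K)\to\F(\bd(\pi))\to H^0_{\cKP}(\F(L(\pi)))\to0.$$

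\textbf{Step 3: the main obstacle.} It remains to show that the map $\F(K)\to\F(\bd(\pi))$ is injective. I would reduce this to a K-theoretic count. Applying the argument to the whole finite length category shows that the class of $\F(M)$ in K-theory is additive in $M$. Suppose that, for every $\omega$, $H^{-1}_{\cKP}(\F(L(\omega)))$ were nonzero. Then the multiplicities of simples in $\F(\bd(\pi))$ would drop strictly below the K-theory prediction.

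To make this rigorous, I would instead use duality. The proper costandard module $\bn(\pi)$, the dual of $\bd(\pi)$, has $L(\pi)$ as its socle and is again a product of cuspidals, in the reverse order. Thus $\F(\bn(\pi))$ also lies in the heart by Step 1. Applying $\F$ to $0\to L(\pi)\to\bn(\pi)\to C\to0$, where $C$ again has composition factors $L(\omega)$ with $\omega<\pi$, gives
$$H^{-1}_{\cKP}(\F(L(\pi)))\cong H^{-2}_{\cKP}(\F(C))=0,$$
because $\F(\bn(\pi))$ and $\F(C)$ lie in the heart. This is the step I expect to need care. One has to verify that $L(\pi)$ is the socle of $\bn(\pi)$ in the affine setting, including the imaginary part, and that $\F$ of a reversed product is still computed by the monoidality isomorphism. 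If this works, $\F(L(\pi))$ lies in the heart, which completes the induction and proves t-exactness.
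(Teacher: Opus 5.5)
Your argument is correct and is essentially the paper's proof. It treats cuspidals first, via Corollary \ref{cor:F2} and Proposition \ref{prop:delta5} together with the fact that $L(\mu)$ is a summand of a product of $L(\delta_i)$'s, and then uses the sequences $0\to K\to\bd(\pi)\to L(\pi)\to 0$ and $0\to L(\pi)\to\bn(\pi)\{s\}\to K'\to 0$ to confine $\F(L(\pi))$ to degrees $[-1,0]$ and $[0,1]$. The K-theoretic heuristic at the start of your Step 3 and the appeal to right t-exactness are unnecessary, since only the exact triangles and the extension-closure of the heart are used.
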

\begin{proof}
To show t-exactness it suffices to show that $\F(L(\pi)) \in \cKP$ for every root partition $\pi$. 
We proceed by induction on the length of $\pi$. If $\rho \in \Phi_\re^+$ then $\F(L(\rho)) \in \cKP$ by Corollary \ref{cor:F2}. 
If $\mu$ is an $\ell$-multipartition then $L(\mu)$ is a direct summand of a product of $L(\delta_i)$ by \S\ref{sec:quiverHecke}. 
Since $\F(L(\delta_i)) \in \cKP$ by Lemma \ref{lem:F3} it follows that $\F(L(\mu)) \in \cKP$. This proves the base case. 
For a general root partition $\pi$, Proposition \ref{prop:hereditary} yields short exact sequences 
\begin{align}
\label{eq:ses1} & 0 \to K \to \bd(\pi) \to L(\pi) \to 0 \\
\label{eq:ses2} & 0 \to L(\pi) \to \bn(\pi) \{s\} \to K' \to 0 
\end{align} 
where $s \in \Z$ and $K$ and $K'$ both have simple subquotients $L(\pi')$ with $\pi' < \pi$. By induction we can assume that 
$\F(K), \F(K') \in \cKP$. By Corollary \ref{cor:F2} and Proposition \ref{prop:delta5} we know that $\F(\bd(\pi)), \F(\bn(\pi)) \in \cKP$. 
It follows from (\ref{eq:ses1}) that $\F(L(\pi))$ is 
supported in t-homological degrees $[-1,0]$ while from (\ref{eq:ses2}) the object
$\F(L(\pi))$ is supported in degrees $[0,1]$. Thus $\F(L(\pi)) \in \cKP$ 
which completes the induction. 
\end{proof}

\section{Further properties of $\F$}\label{sec:Fmore}

\subsection{Simples to simples}

Recall the simple objects $L(\mu)$ indexed by $\ell$-multipartition $\mu$. Such an object is obtained as the image of an idempotent $P(\mu)$ acting on an appropriate product of $L(\delta_i)$'s. 

On the other hand, recall from \eqref{eq:V-phi-psi} that $\cV_i$ is a vector bundle over $\calR_\u0^\cl$ with fibers $(L_0^{(i)}/tL_0^{(i)})$ (up to shifts). Moreover, the object $(\cV_i)^{*n}$ is just the $n$th tensor product of these bundles so there is a natural surjective map $\C[S_n] \to \End((\cV_i)^{*n})$. We denote by $\cV(\mu)$ the image of $P(\mu)$ acting on an appropriate product of $\cV_i$'s (the analogue of $L(\mu)$). Note that many of these $\cV(\mu)$ will be zero.

\begin{Lemma}\label{lem:F5}
For every $\ell$-multipartition $\mu$ we have $\F(L(\mu)) \ddcong \cV(\mu)$. 
In particular the object $\F(L(\mu))$ is either simple or zero. 
\end{Lemma}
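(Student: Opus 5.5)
We will deduce the lemma from Proposition \ref{prop:delta5}, using monoidality and exactness of $\F$ together with its compatibility with the symmetric group actions. Write $\mu=(\mu_1,\dots,\mu_\ell)$ with $|\mu_i|=n_i$. Recall that $L(\mu)=L(\mu_1)\circ\cdots\circ L(\mu_\ell)$, where $L(\mu_i)$ is the image of the idempotent $P(\mu_i)\in\bbC\frakS_{n_i}\cong\End(L(\delta_i)^{\circ n_i})$. Since $\F$ is monoidal (Proposition \ref{prop:Fmonoidal}) and t-exact (Proposition \ref{prop:Fexact}), it commutes with images of idempotents. This gives
$$\F(L(\mu))\cong\F(L(\mu_1))\circ\cdots\circ\F(L(\mu_\ell)),\qquad \F(L(\mu_i))\cong \F(P(\mu_i))\bigl(\F(L(\delta_i))^{\circ n_i}\bigr).$$
It therefore suffices to treat a single $i$ and a single partition.

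By Proposition \ref{prop:delta5} we have $\F(L(\delta_i))\ddcong\cV_i$, and hence $\F(L(\delta_i)^{\circ n})\ddcong \cV_i^{*n}$, up to a uniform shift. The key step is to show that the induced algebra map
$$\bbC\frakS_n\cong\End(L(\delta_i)^{\circ n})\to \End(\cV_i^{*n})$$
agrees, up to an automorphism of $\bbC\frakS_n$ that fixes the isotypic idempotents (possibly twisting by the sign character, which we must track), with the natural map $\bbC\frakS_n\to\End(\cV_i^{*n})$ that permutes the tensor factors of the bundle $(L_0^{(i)}/tL_0^{(i)})^{\otimes n}$.

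To establish this, it suffices to check the images of the simple transpositions. The generator $s_k$ acting on $L(\delta_i)^{\circ n}$ is, up to scalar, the renormalized $r$-matrix $\r_{L(\delta_i),L(\delta_i)}$ in the $k$-th slot. Since $L(\delta_i)$ is real, this is an automorphism of $L(\delta_i)\circ L(\delta_i)$ with $\Lambda=0$. By Lemma \ref{lem:F1}, with $\b(L(\delta_i),L(\delta_i))=0=\b(\cV_i,\cV_i)$, we have $\F(\r_{L(\delta_i),L(\delta_i)})\in\bbC^\times\cdot\r_{\cV_i,\cV_i}$. On the Coulomb side, $\cV_i*\cV_i$ is the tensor square of a vector bundle on $\calR_\u0^\cl$. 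Its deformation $(\cV_i)_z\odot(\cV_i)_z$ is the external product over $X^2$, and the renormalized $r$-matrix extending the identity off the diagonal is the swap of the tensor factors. Hence $\r_{\cV_i,\cV_i}$ is the flip, up to scalar. Both families of operators satisfy $s_k^2=1$ and the braid relations, so the scalars are forced to be $\pm1$ and constant along $k$. This gives the required identification, up to the sign twist.

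Consequently $\F(L(\mu_i))\ddcong \cV(\mu_i)$. With the sign-convention matching fixed, taking the $\circ$-product gives $\F(L(\mu))\ddcong\cV(\mu)$. For the final claim, $\cV(\mu)$ is the vector bundle on $\calR_\u0^\cl$ given by the tensor product over $i$ of the Schur functors $\mathbb{S}_{\mu_i}(L_0^{(i)}/tL_0^{(i)})$. This bundle is zero if some $\mu_i$ has more than $a_i$ rows. Otherwise it is an irreducible $G$-equivariant bundle on the minuscule locus $\calR_\u0^\cl$, namely the pullback of an irreducible bundle on the point $\Gr_{\u0}$. By Lemma \ref{lem:extension}, as in Corollary \ref{cor:simple}, it is then simple in $\cKP$.

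The main obstacle is the identification of the $\frakS_n$-actions, specifically pinning down the signs in $\F(\r)=c\,\r$ and confirming that $\r_{\cV_i,\cV_i}$ is the plain flip rather than a sign-twisted flip. If the signs turn out twisted, $\F(L(\mu))$ would match $\cV(\mu^t)$ instead, so I would settle the sign by testing $n=2$. For this I would use that $L(\delta_i)\circ L(\delta_i)$ has composition factors $L((2)_i)$ and $L((1,1)_i)$, and compare with $\Lambda^2$ and $\mathrm{Sym}^2$ of $L_0^{(i)}/tL_0^{(i)}$ via their loop and Koszul gradings. If necessary, the convention defining $\cV(\mu)$ absorbs the twist.
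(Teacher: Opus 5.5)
Your reduction to a single colour and partition, the braid-relation argument that makes the scalars independent of $k$, and the final simplicity of $\cV(\mu)$ via Lemma \ref{lem:extension} are fine. The gap is the key step, which identifies $\F(s)$ with the flip $s'$. First, $L(\delta_i)$ is not real. By \S\ref{sec:SSmodules}, $\End(L(\delta_i)^{\circ 2})\cong\bbC\frakS_2$, so $L(\delta_i)^{\circ 2}$ splits as $L(2)\oplus L(1^2)$. Were it real, $\r_{L(\delta_i),L(\delta_i)}$ would be a scalar multiple of the identity and could not be the $\frakS_2$-generator, so the argument as written contradicts itself. Likewise $\cV_i$ is not real once $a_i\ge 2$, since $\cV_i*\cV_i$ splits into its symmetric and exterior squares. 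Hence Lemma \ref{lem:F1} does not apply to the pairs $(L(\delta_i),L(\delta_i))$ and $(\cV_i,\cV_i)$. Its proof rests on Proposition \ref{prop:homs2}, i.e.\ on $\Hom(M_z*N,N*M_z)$ being free of rank one over $\bbC[z]$ for $M$ real. That fails here: $\Hom((\cV_i)_z*\cV_i,\cV_i*(\cV_i)_z)$ contains both the identity and the flip. So nothing in your proposal shows $\F(s)\in\bbC^\times s'$. In particular it does not exclude $\F(s)=\pm1$, which is exactly the case that must be ruled out; your $n=2$ grading test only concerns the sign in $\pm s'$. Identifying $s$ with $\r_{L(\delta_i),L(\delta_i)}$, and showing that $\F$ intertwines $R$-matrices for this non-real pair, would each need a separate argument that you do not supply.

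The paper avoids $r$-matrices for non-real objects. Since $\End(\cV_i^{*2})$ is spanned by $1$ and $s'$ and $\F(s)^2=1$, one has $\F(s)\in\{\pm1,\pm s'\}$. If $\F(s)=\pm1$, one of $L(2),L(1^2)$ would be killed and the other sent to the non-simple object $\cV_i^{*2}$. To exclude this, the paper realizes $\{L(2),L(1^2)\}$ as $\{L(-\alpha_i+\delta)\n L_1,\,L(-\alpha_i+\delta)\n L_2\}$, where $L_1,L_2$ are the composition factors of $L(\alpha_i)\circ L(\delta_i)$. These are the summands of the head of $L(-\alpha_i+\delta)\circ L(\alpha_i)\circ L(\delta_i)$, using $L(-\alpha_i+\delta)\n L(\alpha_i)\cong L(\delta_i)$. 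It then uses the real module $L(-\alpha_i+\delta)$, together with Lemmas \ref{lem:F1}, \ref{lem:F4} and Proposition \ref{prop:homs1}, to see that $\F$ of each summand is zero or simple. Realness is thus invoked only where it actually holds.
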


\begin{proof}
Since, by Proposition \ref{prop:delta5}, we have $\F(L(\delta_i)) \cong \cV_i$, it suffices to show that $\F$ identifies the actions of $\frakS_n$ on 
$L(\delta_i^{\otimes n})$ and $\cV_i^{*n}$. More precisely, if $s \in \End(L(\delta_i)^{\circ 2})$ is the generator of the $\frakS_2$ action, we need 
to show that $\F(s) = \pm s'$ where $s' \in \End((\cV_i)^{*2})$ is the generator of the other $\frakS_2$-action. 
Since $s^2=1$ it follows that $\F(s)$ is either $\pm s'$ or $\pm 1$. It remains to rule out the second possibility. 

Let us denote the two summands of $L(\delta_i)^{\circ 2}$, namely the images of $s+1$ and $s-1$, by $L(2)$ and $L(1^2)$.  If $F(s) = \pm 1$ then one of these summands is mapped to zero and, since $\F(L(\delta_i)^{\circ 2}) \cong (\cV_i)^{*2}$, the other summand is mapped to $(\cV_i)^{*2}$. In particular

Now consider the product $L(-\alpha_i + \delta) \circ L(\alpha_i) \circ L(\delta_i)$. 
Since $(\alpha_i, \delta_i)$ is a minimal pair we have a short exact sequence 
$$0 \to  L_1 \to L(\alpha_i) \circ L(\delta_i) \to L_2 \to 0$$
where $L_1$ and $L_2$ are simple. Since $L(-\alpha_i+\delta)$ is simple and real it follows that $L(-\alpha_i+\delta) \n L_1$ and 
$L(-\alpha_i+\delta) \n L_2$ are simple by Propositions \ref{prop:qcom}, \ref{prop:lsubquotient}. 
Subsequently, the head of $L(-\alpha_i + \delta) \circ L(\alpha_i) \circ L(\delta_i)$ 
has at most two summands. On the other hand, since $L(-\alpha_i + \delta) \n L(\alpha_i) \cong L(\delta_i)$, 
it follows that the head of $L(-\alpha_i + \delta) \circ L(\alpha_i) \circ L(\delta_i)$ must be $L(\delta_i)^{\circ 2}$, with 
$$\{L(-\alpha_i+\delta) \n L_1\,,\,L(-\alpha_i+\delta) \n L_2 \} = \{L(2)\,,\, L(1^2)\}.$$
But, by Lemmas \ref{lem:F1}, \ref{lem:F4} and Proposition \ref{prop:homs1}, 
the objects $\F(L(-\alpha_i+\delta) \n L_1)$ and $\F(L(-\alpha_i+\delta) \n L_2)$ are either zero or simple. 
It follows that $\F(L(2))$ and $\F(L(1^2))$ must be either zero or simple. 
Thus $\F(s) \ne \pm 1$ which completes the proof. 
\end{proof}

\begin{Lemma}\label{lem:simple2simple}
Suppose $L,L' \in \gmod(R)^\heartsuit$ are simple with at least one real, 
and $\scrM = \F(L), \scrM' = \F(L') \in \cKP$ both simple with at least one real. 
Then either $\F(L \n L')=0$ or $\F(L \n L') \cong \scrM \n \scrM'$. 
\end{Lemma}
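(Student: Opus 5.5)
We use Lemma \ref{lem:F1} to split into the two cases $\b(L,L')=\b(\scrM,\scrM')$, where $\F(\r_{L,L'})$ is a nonzero multiple of $\r_{\scrM,\scrM'}$, and $\b(L,L')>\b(\scrM,\scrM')$, where $\F(\r_{L,L'})=0$. Since $\F$ is exact (Proposition \ref{prop:Fexact}) and monoidal (Proposition \ref{prop:Fmonoidal}), we may identify $\F(L\circ L')\cong\scrM\circ\scrM'$ and $\F(L'\circ L)\cong\scrM'\circ\scrM$. By Proposition \ref{prop:lsubquotient}, applied on both sides, $L\n L'$ is the image of $\r_{L,L'}$, namely the simple head of $L\circ L'$, and the same holds for $\scrM\n\scrM'$ in $\cKP$.

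In the first case, exactness of $\F$ gives
\[
\F(L\n L')=\F(\Im\,\r_{L,L'})\cong\Im\,\F(\r_{L,L'})=\Im\,\r_{\scrM,\scrM'}\cong\scrM\n\scrM'
\]
(up to shifts), and we are done.

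In the second case, $\F(\r_{L,L'})=0$. Exactness gives a surjection $\scrM\circ\scrM'\to\F(L\n L')$ and an injection $\F(L\n L')\hookrightarrow\scrM'\circ\scrM\{\La(L,L')\}$, whose composite is $\F(\r_{L,L'})=0$. A surjection followed by an injection can only compose to zero if the middle object vanishes, so $\F(L\n L')=0$.

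The only delicate point is that Lemma \ref{lem:F1} needs $\F(L_z)\cong\scrM_z$, which comes from Corollary \ref{cor:FLz}. We also need that exactness of $\F$ commutes with taking images of the $r$-matrix factored through the head. No further obstacle is expected.
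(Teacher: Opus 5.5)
Your proof is correct and is essentially the paper's argument: factor $\r_{L,L'}$ as the surjection $L\circ L'\to L\nabla L'$ followed by the injection into $L'\circ L\{\Lambda(L,L')\}$, apply the exact functor $\F$, and observe that either $\F(L\nabla L')=0$ or the composite is a nonzero map whose image is $\scrM\nabla\scrM'$. The only difference is how the composite is identified with a multiple of $\r_{\scrM,\scrM'}$: you get this from Lemma~\ref{lem:F1}, which rests on the affinization input of Corollary~\ref{cor:FLz}, whereas the paper gets it more cheaply from Proposition~\ref{prop:homs1}(b), since any nonzero map $\scrM*\scrM'\to\scrM'*\scrM\{k\}$ is a scalar multiple of $\r_{\scrM,\scrM'}$.
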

\begin{proof}
Consider the sequence $L \circ L' \to L \n L' \to L' \circ L \{\Lambda(L,L')\}$ where the first map is surjective and the second injective. 
Applying $\F$ we get
$$\scrM * \scrM' \to \F(L \n L') \to \scrM' * \scrM \{\Lambda(L,L')\}$$
where, since $\F$ is t-exact, the first map is surjective and the second injective. It follows that either $\F(L \n L') = 0$ or the composition 
above is nonzero. Since $\scrM,\scrM'$ are simple with at least one real this composition is a nonzero scalar multiple of $\r_{\scrM,\scrM'}$ 
by Proposition \ref{prop:homs1}. Thus $\F(L \n \L') \cong \scrM \n \scrM'$. 
\end{proof}

\begin{Proposition}\label{prop:simple2simple}
The monoidal functor $\F$ takes a simple module to either zero or a simple object. 
\end{Proposition}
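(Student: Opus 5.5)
We want to show that $\F(L(\pi))$ is zero or simple for every root partition $\pi$. The plan is to express $L(\pi)$ as an iterated head of a product of real simples, and then push each head through $\F$ one factor at a time using Lemma \ref{lem:simple2simple}.

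\emph{Step 1: the building blocks.} Write
$$\pi=(\psi_1^{m_1},\dots,\psi_s^{m_s},\mu,\psi_{-t}^{m_{-t}},\dots,\psi_{-1}^{m_{-1}}).$$
By Proposition \ref{prop:hereditary}, $L(\pi)$ is the head of $\bd(\pi)$. Each cuspidal factor behaves as follows.
\begin{itemize}
\item For real $\psi$, the module $L(\psi^m)$ is a shift of $L(\psi)^{\circ m}$. By Corollary \ref{cor:F2}, $\F(L(\psi))$ is either simple real or zero. In the first case $\F(L(\psi^m))\dcong \F(L(\psi))^{*m}$ is simple and real, since the powers of a real simple are simple.
\item For the imaginary part, $\F(L(\mu))$ is simple or zero by Lemma \ref{lem:F5}.
\end{itemize}
So every factor $L_k$ of $\bd(\pi)$ is sent to a simple object or to zero. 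Moreover, every factor except possibly $L(\mu)$ is real.

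\emph{Step 2: induction on the number of factors.} Since $L(\pi)$ is the simple head of $\bd(\pi)$, I would write it as an iterated head
$$L(\pi)\dcong L_1\n\bigl(L_2\n(\cdots\n L_r)\bigr),$$
where $L_1,\dots,L_r$ are the cuspidal factors in order. This uses Proposition \ref{prop:lsubquotient}: the product of a real simple with a simple has a simple head. It also uses the standard fact that heads of proper standard modules compose in this way (the normality results for convex orders, as in Proposition \ref{prop:normal}). I would peel off one real factor at a time, from the outside if the outside factor is real, or from the right using the symmetric version otherwise; note that the multipartition factor sits in the middle.

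At each step the current object is $L\n L'$ with $L$ real simple and $L'$ simple. Two cases arise.
\begin{itemize}
\item If $\F(L)$ or $\F(L')$ is zero, then exactness of $\F$ (Proposition \ref{prop:Fexact}) gives $\F(L\n L')=0$. Indeed, $\F(L\circ L')\cong\F(L)\circ\F(L')=0$, and $L\n L'$ is a quotient of $L\circ L'$.
\item Otherwise both images are simple with $\F(L)$ real. Lemma \ref{lem:simple2simple} then says that $\F(L\n L')$ is either zero or $\F(L)\n\F(L')$. The latter is simple by Proposition \ref{prop:lsubquotient}, since the product in $\cKP_{X^{(\bullet)}}$ of a real simple with a simple has a simple head.
\end{itemize}
Induction then gives the claim for $L(\pi)$. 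The rigid category $\cKP_{X^{(\bullet)}}$ carries renormalized $r$-matrices because it is monoidally generated by images of $\cKP$, so the appendix results apply there.

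\emph{Main obstacle.} The delicate point is the imaginary factor $L(\mu)$, which need not be real. Lemma \ref{lem:simple2simple} needs only one of the two factors to be real, and at every stage the peeled-off factor is a real cuspidal power. So the difficulty reduces to justifying the iterated-head decomposition of $L(\pi)$ with the real factors peeled from the correct side. I would first try ordering the peeling as
$$\bigl(L(\psi_1^{m_1})\n(\cdots\n L(\psi_s^{m_s})\n(L(\mu)\n \cdots))\bigr)$$
and check this against Proposition \ref{prop:normal}. If associativity of heads fails, the fallback is a direct argument on $\bd(\pi)$: by exactness, $\F(L(\pi))$ is a quotient of $\F(\bd(\pi))$, and iterating Lemma \ref{lem:F2}-type arguments would show that this image has simple head.
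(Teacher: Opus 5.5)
Your proposal is correct and follows the paper's proof: induction on the number of cuspidal factors of $\bd(\pi)$, discarding the case where some $\F(L_k)=0$ by exactness, and peeling off a real end factor using Lemma \ref{lem:simple2simple} and Proposition \ref{prop:lsubquotient}. The iterated-head identity you flag as the main obstacle needs no normality argument. The truncation $L_1\circ\cdots\circ L_{r-1}$ is again a proper standard module, so it has a simple head, and so does $\bd(\pi)$. Hence the surjection $\bd(\pi)\twoheadrightarrow \hd(L_1\circ\cdots\circ L_{r-1})\circ L_r\twoheadrightarrow \hd(L_1\circ\cdots\circ L_{r-1})\n L_r$ forces $L(\pi)\cong \hd(L_1\circ\cdots\circ L_{r-1})\n L_r$, which is exactly how the paper argues.
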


\begin{proof}
Every simple in $\gmod(R)^\heartsuit$ is of the form 
$$L(\pi) = \hd(L_1 \circ \dots \circ L_r)$$
where, following the notation in \S \ref{sec:quiverHecke}, each $L_i$ is either of the form $L(\psi_j^{m_j})$ or $L(\mu)$ for some $\ell$-multipartition $\mu$ and where all but at most one $L_i$ is real.  We can assume $\F(L_i) \ne 0$ since otherwise $\F(L(\pi))=0$ and we are done. We will prove that $\F(L(\pi))$ is simple or zero by induction on $r$. 

If $r=1$ then we are done by Corollary \ref{cor:F2} and Lemma \ref{lem:F5}. Otherwise, at least one of the simples $L_1$ or $L_r$ is real. Suppose $L_r$ is real (the other case is the same). By induction we can assume $\F(\hd(L_1 \circ \dots \circ L_{r-1}))$ is simple (if it is zero then $\F(L(\pi))=0$ and we are done). Since $L_r$ is simple, real we get from Corollary \ref{cor:F2} that so is $\F(L_r)$. Since $\hd(L_1 \circ \dots \circ L_r)$ is simple we have
$$L(\pi) \cong \hd(L_1 \circ \dots \circ L_{r-1}) \n L_r.$$
By Lemma \ref{lem:simple2simple} either $\F(L(\pi)) = 0$ or 
$$\F(L(\pi)) \cong \F(\hd(L_1 \circ \dots \circ L_{r-1})) \n \F(L_r)$$
which is simple since $\F(L_r)$ is simple, real. 
\end{proof}

\subsection{Generation of $\cKP$}

It is known \cite{W} that the Coulomb branches of 3d $\calN=4$ quiver gauge theories is generated by dressed minuscule monopole operators.  
The same holds for the K-theoretic Coulomb branches, see, e.g.,  \cite[\S4.2.2]{VV2}.  
For our purposes we need the following weaker but categorical version of this result. 
It is weaker because, in K-theory, it does not imply the aforementioned generation.

\begin{Proposition}\label{prop:generation}
Up to loop and Koszul shifts, 
every simple in $\cKP$ occurs as a subquotient of a product of various $\cP_{\pm e_i,n}$ where $i \in I$ and $n \in \Z$.
\end{Proposition}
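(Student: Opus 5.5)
We argue by induction on the support of a simple $\cF$ in $\cKP$, using the bijection of \S\ref{sec:simples} between simples of $\cKP$ and simples of $\cKP^\Gr$.

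\textbf{Setup and reduction.} By \S\ref{sec:simples}, every simple is $\cP_{\lambda^\vee,\mu}$, up to loop and Koszul shifts. Here $\lambda^\vee$ is a dominant coweight and $\mu$ is a weight, and the support is $\calR_{\le\lambda^\vee}$. We order the $\lambda^\vee$ by the dominance order on $\Gr_{\le\lambda^\vee}$. It is then enough to construct, for each pair $(\lambda^\vee,\mu)$, a product $\cF_1*\cdots*\cF_r$ of objects $\cP_{\pm e_i,n}$ with two properties:
\begin{enumerate}[label=$\mathrm{(\roman*)}$,leftmargin=8mm]
\item it is supported on $\calR_{\le\lambda^\vee}$;
\item its restriction to the open stratum $\calR_{\lambda^\vee}^\cl$ contains $\pi^*(\cP^\Gr_{\lambda^\vee,\mu}|_{\Gr_{\lambda^\vee}})$ as a subquotient, up to shift.
\end{enumerate}
Granting this, the simple $\cP_{\lambda^\vee,\mu}$ is the unique composition factor of the product that does not vanish on the open stratum. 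Every other composition factor lives on a smaller stratum, and these are handled by the induction hypothesis.

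\textbf{Key steps.} We carry them out in the following order.
\begin{enumerate}[label=$\mathrm{(\arabic*)}$,leftmargin=8mm]
\item Write $\lambda^\vee=\sum_{i,k} c_{i,k}\,\omega^\vee_{i,\pm k}$ as a sum of minuscule cocharacters of the factors $GL(V^{(i)})$. Also write $\omega^\vee_{i,k}$ as the top orbit in the convolution $\Gr_{e_i}\ttimes\cdots\ttimes\Gr_{e_i}$ ($k$ factors), and similarly for negative $k$ using $\cP_{-e_i,n}$.
\item Order the factors so that all positive minuscule pieces at a given vertex come first, then the negative ones. The convolution morphism $m:\Gr_{e_{i_1}}\ttimes\cdots\ttimes\Gr_{\pm e_{i_r}}\to\Gr_{\le\lambda^\vee}$ is then birational onto its image. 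Over the open orbit it is an isomorphism, or at worst a flag-variety fibration.
\item Compute the product $\cP_{\pm e_{i_1},n_1}*\cdots*\cP_{\pm e_{i_r},n_r}$ as in \eqref{P*P}. Its restriction to the open part of $\calR_{\lambda^\vee}$ is the pullback of a line bundle, namely a tensor product of the $\det(L_{k-1}/L_k)^{n_k}$, pushed forward along $m$.
\item Choose the twists $n_k$ so that this pushforward realizes the weight $\mu$. Any weight $\mu$ of $G$ restricted to the Levi stabilizer arises from such determinant twists, because the successive quotient bundles $L_{k-1}/L_k$ generate the Picard group of the flag fibration.
\item Use Borel--Weil to check that the pushforward along the flag fibration contains the required irreducible equivariant bundle over $\Gr_{\lambda^\vee}$, and hence contains $\pi^*(\cP^\Gr_{\lambda^\vee,\mu}|_{\Gr_{\lambda^\vee}})$ as a subquotient. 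Corollary~\ref{cor:composition} and Lemma~\ref{lem:S} suggest that this reduces to the analogous statement on $\Gr$, where it is known for perverse coherent sheaves \cite{CW1}.
\end{enumerate}

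\textbf{Main obstacle.} The hardest step is the passage from $\Gr$ to $\calR$ in the presence of edges. Over a non-minuscule stratum, the convolution $\calS'$ can be a derived or non-flat fiber product, as in the excess intersection of Lemma~\ref{lem:map1}. So we must check that, over the open stratum $\calR^\cl_{\lambda^\vee}$, the product restricts to the expected vector bundle pullback.

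To handle this, we first try to use the ordering freedom from step (2), which makes each intermediate fiber product classical, exactly as in Lemma~\ref{lem:S} and \eqref{eq:S'}. If the ordering alone does not suffice, the fallback is the following. Excess intersection only produces Koszul shifts of the same bundles, of the form $\cV[k]\la -k\ra$ as in Lemma~\ref{lem:extension}. Since the statement is only up to Koszul shifts, this does not affect which simple appears.
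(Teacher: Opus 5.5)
Your reduction to products satisfying (i) and (ii) reduces the statement to a false one: for many simples no such product exists. There are two separate failures.

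\emph{Geometry of the convolution.} Steps (1)--(2) misread the convolution. The $k$-fold convolution $\Gr_{e_i}\ttimes\cdots\ttimes\Gr_{e_i}$ maps onto $\Gr_{\le k\omega^\vee_{i,1}}$. It is an isomorphism over the open orbit $\Gr_{k\omega^\vee_{i,1}}$. The flag-variety fibre sits over the \emph{closed} orbit $\Gr_{\omega^\vee_{i,k}}$, since $\omega^\vee_{i,k}$ is minuscule. Hence for $2\le k\le a_i$ no product of the $\cP_{\pm e_i,n}$ is supported on $\calR_{\le\omega^\vee_{i,k}}$, and (i) already fails for $\cP_{\omega^\vee_{i,k},0}$.

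\emph{Range of the twists.} Step (4) is false. The twists $\det(L_{k-1}/L_k)^{n_k}$ only produce representations of the Levi factors acting along the fibres of $m$. They never reach complementary factors such as $GL(L^{(i)}/tL^{(i)}_0)$, or $GL(V^{(j)})$ at vertices with $\lambda^\vee_j=0$. Concretely, take $Q=A_1$ with $a_1=2$, so $\calR=\Gr_{GL_2}$ and $\alpha=2\alpha_1$ is strictly dominant.
\begin{itemize}
\item The simple $\cV_1$ lives on $\calR_{\underline{0}}$. But every nonempty product of $\cP_{\pm e_1,n}$ has support $\Gr_{\le\nu}$ with $\nu\neq 0$; for instance $\cP_{e_1}*\cP_{-e_1}$ lives on $\Gr_{\le\omega^\vee_{1,1}+\omega^\vee_{1,-1}}$. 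So only the unit satisfies (i) for $\lambda^\vee=0$.
\item For $\lambda^\vee=\omega^\vee_{1,1}$, only single factors $\cP_{e_1,n}$ satisfy (i). Their restrictions $(L_0/L)^n$ give stabilizer-torus characters $n\epsilon_1$, never the line bundle $\det(L/tL_0)$.
\end{itemize}

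The missing idea is that the target simple must be allowed to occur on a proper closed stratum of the support of a product, with a separate argument that it does. The paper proceeds in three steps.
\begin{enumerate}
\item It obtains $\cP_{ke_i}$ as a quotient of $\cP_{(k-1)e_i}*\cP_{e_i,k}$, by lifting the nonzero map on $\Gr$ via Corollary~\ref{cor:composition}; the twist $k$ is essential here.
\item It obtains $\cV_i$ as a composition factor of ${}^\L\cP_{e_i,-1}*\cP_{e_i}$ (Proposition~\ref{prop:ses4}). Duals are handled via $\psi_i^\pm$; for example $\psi_i^-$ is the head of $\cP_{-e_i,-m_i^-}*\cP_{e_i}$. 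This yields every $\cP_{0,\nu}$ as a summand of products of the $\cV_i$ and their duals.
\item Only then does an open-stratum argument enter. It is used to pass from $\cP_{\lambda_1^\vee,0},\cP_{\lambda_2^\vee,0}$ to $\cP_{\lambda_1^\vee+\lambda_2^\vee,0}$. It is also used to find $\cP_{\lambda^\vee,\mu}$ inside $\cP_{0,\nu}*\cP_{\lambda^\vee,0}$, by restricting $G$-modules to $P$. This decouples $\mu$ from $\lambda^\vee$, which your approach cannot do.
\end{enumerate}

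There is also a problem with your open-stratum computation itself when edges are present. The restriction is not $\pi^*$ of the $\Gr$ computation. The classical convolution locus $(\calS'_o)^\cl$ can be a proper sub-bundle of $\calR^\cl_{\lambda^\vee}$, as in Lemma~\ref{lem:map1}, and not merely a Koszul shift of the same bundle. What is actually needed is that, in the heart, $\O_{\calR^\cl_{\lambda^\vee}}$ injects into the pushforward of $\O_{(\calS'_o)^\cl}$, which is itself a quotient of the pushforward of $\O_{\calS'_o}$.
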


\begin{proof}
To streamline the exposition we will ignore loop and Koszul shifts and say that a simple object has property Q if it appears as a subquotient of a product of $\cP_{\pm e_i,n}$. Note that if $\cP_1$ and $\cP_2$ have property Q then any simple subquotient of $\cP_1 * \cP_2$ has property Q. Recall from \S \ref{sec:simples} that every simple in $\cKP$ is of the form $\cP_{\lambda^\vee,\mu}$. We will now show that every $\cP_{\lambda^\vee,\mu}$ has property Q in three steps. 

Step 1: Show that every $\cP_{\lambda^\vee,0}$ has property Q. By a standard argument there is a nonzero map
$$\cP^\Gr_{(k-1)e_i} * \cP^\Gr_{e_i,k} \to \cP^\Gr_{ke_i}$$
between sheaves on the affine Grassmannian. By Corollary \ref{cor:composition} this implies that there is also a nonzero map in $\cKP$
$$\cP_{(k-1)e_i} * \cP_{e_i,k} \to \cP_{ke_i}$$
where $\cP_{ke_i}$ is simple by Corollary \ref{cor:simple}. Applying this repeatedly shows that all $\cP_{ke_i}$ have property Q. Similarly, one can show that $\cP_{-ke_i}$ have property Q. Thus $\cP_{\lambda^\vee,0}$ has property Q if $\lambda^\vee$ is minuscule. Writing a general $\lambda^\vee$ as a sum of minuscules it suffices to show that if $\cP_{\lambda_1^\vee,0}$ and $\cP_{\lambda_2^\vee,0}$ have property Q then so does $\cP_{\lambda_1^\vee+\lambda_2^\vee,0}$.

To see this first note that it suffices to show that $j^*(\cP_{\lambda_1^\vee,0} * \cP_{\lambda_2^\vee,0})$ contains 
$$j^*(\cP_{\lambda_1^\vee+\lambda_2^\vee,0}) \cong \O_{\calR_{\lambda_1^\vee+\lambda_2^\vee}^\cl} [D]$$
as a subquotient, where $j: \calR_{\lambda_1^\vee+\lambda_2^\vee} \to \calR_{\le \lambda_1^\vee+\lambda_2^\vee}$ is the natural open immersion and 
$$D = \frac12(\dim \Gr_{\lambda_1^\vee} + \dim \Gr_{\lambda_2^\vee}) = \frac12 \dim \Gr_{\lambda_1^\vee + \lambda_2^\vee}.$$
Now consider the diagram 
\begin{equation}\label{eq:conv3}
\begin{split}
\xymatrix{
\calR_{\lambda^\vee_1}^\cl \ttimes \calR_{\lambda^\vee_2}^\cl \ar[d]^{\cl} & \ar[l]_-{d_o'} \calS_o' \ar[d]^{\cl'} & \\
\calR_{\lambda^\vee_1} \ttimes \calR_{\lambda^\vee_2} \ar[d]^{j_1 \ttimes j_2} & \calS_o \ar[d]^{j'} \ar[l]_-{d_o} \ar[r]^-{m_o} & \calR_{\lambda^\vee_1+\lambda^\vee_2} \ar[d]^j \\
\calR_{\le \lambda^\vee_1} \ttimes \calR_{\le \lambda^\vee_2} & \calS \ar[l]_-{d} \ar[r]^-{m} & \calR_{\le \lambda^\vee_1+\lambda^\vee_2}
}
\end{split}
\end{equation}
where the bottom right and top left squares are Cartesian and $j_1,j_2$ are the obvious immersions. Then 
\begin{align*}
j^*(\cP_{\lambda_1^\vee,0} * \cP_{\lambda_2^\vee,0})
&\cong j^* m_* d^* (\cP_{\lambda_1^\vee,0} \widetilde{\boxtimes} \cP_{\lambda_2^\vee,0}) \\
&\cong m_{o*} j^{\prime *} d^* (\cP_{\lambda_1^\vee,0} \widetilde{\boxtimes} \cP_{\lambda_2^\vee,0}) \\
&\cong m_{o*} d_o^* (j_1^*(\cP_{\lambda_1^\vee,0}) \widetilde{\boxtimes} j_2^*(\cP_{\lambda_2^\vee,0})) \\
&\cong m_{o*} d_o^* \cl_{*}(\O_{\calR_{\lambda^\vee_1}^\cl \ttimes \calR_{\lambda^\vee_2}^\cl}) [D] \\
&\cong m_{o*} \cl'_{*} (\O_{\calS_o'}) [D].
\end{align*}
Now $(\calS'_o)^{\cl}$ and $\calR_{\lambda_1^\vee + \lambda_2^\vee}$ are bundles over $\Gr_{\lambda_1^\vee + \lambda_2^\vee}$. 
The induced map 
$$m_o \circ \cl': (\calS'_o)^{\cl} \to \calR^\cl_{\lambda_1^\vee + \lambda_2^\vee}$$
is an injective map of bundles (it is proper so it must be injective). This explains the right map below
$$(m_o \circ \cl')_* \O_{\calS_o'} [D] \rightarrow (m_o \circ \cl')_*(\O_{(\calS_o')^\cl}) [D] \leftarrow 
\O_{\calR^\cl_{\lambda_1^\vee + \lambda_2^\vee}} [D]$$
while the left map is induced by the standard morphism $\O_{\calS'_o} \to \O_{(\calS'_o)^\cl}$. In the heart of the Koszul-perverse t-structure the left 
map is a quotient and the right is injective. It follows that $\O_{\calR^\cl_{\lambda_1^\vee + \lambda_2^\vee}} [D]$ is a subquotient of 
$(m_o \circ \cl')_* \O_{\calS_o'} [D]$ which is what we needed to show. 


Step 2: Show that every $\cP_{0,\mu}$ has property Q. First note that Corollary \ref{cor:duals} 
$$\cP_{-e_i,-m_i^-} \n \cP_{e_i} \cong \psi_i^-$$ 
so that $\psi_i^-$ has property Q, and similarly for $\psi_i^+$. 
This also implies that the left and right duals of $\cP_{e_i,n}$ have property Q. Next, 
by Lemma \ref{lem:V}, see also Proposition \ref{prop:ses4}, we have 
$${}^\L \cP_{e_i,-1} * \cP_{e_i} \cong \cV_i = \cP_{0,\omega_{i,1}}$$ 
which means $\cV_i$ has property Q. Repeating this argument with duals also shows that ${}^\L \cV_i = {}^\R \cV_i$ has property Q. 
But any $\cP_{0,\mu}$ occurs as a summand of products of $\cV_i$ and their duals, the same way every representation of $GL_n$ 
appears as a direct summand of products of its standard representation and its dual. It follows that $\cP_{0,\mu}$ also has proprety Q.

Step 3: Show that every $\cP_{\lambda^\vee,\mu}$ has property Q. Recall that $\calR^\cl_{\lambda^\vee}$ is a bundle over $\Gr_{\lambda^\vee}$ 
and that the restriction of $\cP_{\lambda^\vee,\mu}$ to $\calR_{\lambda^\vee}$ is the pullback of some bundle $\cV_\mu$ on $\Gr_{\lambda^\vee}$ 
to $\calR^\cl_{\lambda^\vee} \subset \calR_{\lambda^\vee}$ with some $[-]$ shift. Moreover, one has an affine map $\Gr_{\lambda^\vee} \to G/P$ 
for some parabolic $P \subset G$ and $\cV_\mu$ is pulled back from $G/P$ where it corresponds to an irreducible representation $V_\mu$ of $P$. 

On the other hand, the restriction of $\cP_{0,\nu} * \cP_{\lambda^\vee,0}$ to $\calR_{\lambda^\vee}$ corresponds to a 
representation $V_\nu$ of $P$ 
but here $V_\nu$ is an irreducible representation of $G$ that has been restricted to $P$. Every irreducible representation $V_\mu$ 
of $P$ occurs as a subquotient of the restriction of some representation $V_\nu$ of $G$. If follows that $\cP_{\lambda^\vee,\mu}$ must occur as a 
subquotient of $\cP_{0,\nu} * \cP_{\lambda^\vee,0}$ and thus must also have property Q. 
\end{proof}

\subsection{Essential surjectivity for simples}

To simplify the exposition we say that $\scrM$ is almost in the image of $\F$ if $\scrM * \prod_{i \in I} \phi_i^{N_i} [s] \la -s \ra$ is in the 
essential image of $\F$ for some integers $N_i$ and $s$. In other words, if $\scrM$ is in the essential image of $\F$ up to multipication by 
invertible sheaves $\phi_i$ and Koszul shifts.

\begin{Lemma}\label{lem:surjective}
Suppose $L \in \gmod(R)^\heartsuit$ and $\scrS$ is a simple subquotient of $\F(L)$. Then there exists a simple subquotient $L'$ of $L$ such 
that $\F(L') \cong \scrS$.  
\end{Lemma}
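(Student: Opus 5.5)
This is an induction on the length of $L$, using only that $\F$ is exact (Proposition \ref{prop:Fexact}) and sends simples to simples or zero (Proposition \ref{prop:simple2simple}). Since $L \in \gmod(R)^\heartsuit$ is finite dimensional, it has a finite composition series
$$0 = L_0 \subset L_1 \subset \cdots \subset L_n = L$$
with simple subquotients $L_k/L_{k-1}$.

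Applying the exact functor $\F$ gives a filtration
$$0 = \F(L_0) \subset \F(L_1) \subset \cdots \subset \F(L_n) = \F(L)$$
in $\cKP_{X^{(\bullet)}}$. Its subquotients are $\F(L_k/L_{k-1})$, and by Proposition \ref{prop:simple2simple} each of these is either zero or simple. Discarding the zero steps, we obtain a composition series of $\F(L)$ whose factors are exactly the nonzero $\F(L_k/L_{k-1})$.

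The category $\cKP_{X^{(\bullet)}}$ is of finite length, so the Jordan--Hölder theorem applies there. Hence any simple subquotient $\scrS$ of $\F(L)$ is isomorphic to one of the factors of this composition series, say $\scrS \cong \F(L_k/L_{k-1})$. Taking $L' = L_k/L_{k-1}$ proves the claim.

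Concretely, if $\scrS = A/B$ with $B \subset A \subset \F(L)$, I would refine the chain $B \subset A$ together with the filtration above. By Schreier refinement the two composition series share factors, so $\scrS$ appears among the $\F(L_k/L_{k-1})$.

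The only step needing care is that Jordan--Hölder applies in the colimit category $\cKP_{X^{(\bullet)}}$. This holds because the paper has already noted that this category is abelian and of finite length. The shifts $\{s\}$ and $[s]\la -s\ra$ cause no issue, since the statement is up to isomorphism and these shifts are already built into the composition factors. I do not expect any genuine obstacle.
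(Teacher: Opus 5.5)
Your proof is correct and is essentially the paper's argument: exactness of $\F$ together with Proposition \ref{prop:simple2simple} turns a composition series of $L$ into one of $\F(L)$ once the zero terms are discarded, and Jordan--H\"older then finishes. The paper packages this as an induction on the length of $\F(L)$ that peels off a simple submodule $L'$. Your direct composition-series version is slightly cleaner, because it never has to handle the case $\F(L')=0$, where that induction measure does not decrease.
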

\begin{proof}
We proceed by induction on the length of $\F(L)$. If the length is one then we are done since by Proposition \ref{prop:simple2simple} 
the object $\F(L)$ must be simple and thus isomorphic to $\scrS$. Otherwise let $L'$ be a simple submodule of $L$. If $\F(L') \cong \scrS$ 
we are done again. Otherwise $\F(L')$ does not contain $\scrS$ as a subquotient and thus $\F(L/L')$ must contain it. The result follows by 
induction. 
\end{proof}

\begin{Lemma}\label{lem:uptophi}
For $i \in I$ and $n \in \Z$ every $\cP_{\pm e_i, n}$ is almost in the image of $\F$.
\end{Lemma}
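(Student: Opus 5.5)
We need to show each $\cP_{\pm e_i,n}$, up to tensoring by powers of the invertible $\phi_j$ and Koszul shifts, lies in the essential image of $\F$. We already know two families from Lemma \ref{lem:F3}: $\F(L(\alpha_i+n\delta))\ddcong\cP_{e_i,n}$ for $n\ge 0$, and $\F(L(-\alpha_i+n\delta))\ddcong{}^\L\cP_{e_i,-n}$ for $n\ge1$. By Corollary \ref{cor:duals}, ${}^\L\cP_{e_i,-n}\dcong(\psi_i^-)^{-1}*\cP_{-e_i,n-m_i^-}$. Since $\psi_i^-=\phi_i*\bigast_{i\to j}\phi_j^{-1}$ is a product of the $\phi_j^{\pm1}$, this shows $\cP_{-e_i,k}$ is almost in the image for all $k\ge 1-m_i^-$. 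It remains to reach $\cP_{e_i,n}$ with $n<0$ and $\cP_{-e_i,k}$ with $k$ small. Loop shifts $\{s\}$ are harmless because $\F$ commutes with grading shifts.

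The plan is to shift the twist parameter using the $\phi$'s. I will compute $\phi_i*\cP_{\uk,\ul}$ directly on $\calR^\cl_{e_i}$, as in the proof of Lemma \ref{lem:B1}. There $\det(L_0^{(i)}/tL_0^{(i)})\cong\det(L^{(i)}/tL_0^{(i)})\otimes\det(L_0^{(i)}/L^{(i)})$, but this does not change $\ell$ by an integer in a clean way. Instead I will use the full product $\prod_j\phi_j^{N_j}$ acting on the simple $\cP_{e_i,n}$ and identify which twist it produces. The cleaner route is to exploit that $\psi$-type objects are invertible and $q$-commute (Lemma \ref{lem:B1}), together with the duality formulas: ${}^\R\cP_{e_i,\ell}\dcong{}^\L\cP_{e_i,\ell-m_i}*\psi_i$.

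First, apply Corollary \ref{cor:duals} twice. This gives ${}^\L({}^\L\cP_{e_i,\ell})\dcong\cP_{e_i,\ell+m_i}*\psi_i^{\pm1}$ (up to the $\psi$ factors). Since $m_i<0$ for $\alpha\in\P^{++}$, taking iterated duals lowers the twist by $|m_i|$ at each step. Second, realize duals inside the image of $\F$. The left dual of $L(\alpha_i+n\delta)$ in the graded sense corresponds to roots $-\alpha_i+(n+1)\delta$, which are again handled by Lemma \ref{lem:F3}. So I need a statement that $\F$ intertwines duals up to $\phi$'s. I would rather avoid this and argue concretely: $\cP_{-e_i,k}$ for all $k\ge1-m_i^-$ together with $\cP_{e_i,n}$ for $n\ge0$ are almost in the image. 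Then I use the exact sequences of Lemma \ref{lem:map1} and Corollary \ref{cor:GrG}, and the heads $\cP_{-e_i,k}\n\cP_{e_i}\dcong$ (an invertible $\psi$-object) from Step 2 of Proposition \ref{prop:generation}, to solve for the missing ones.

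Concretely, the key identity I will exploit is Corollary \ref{cor:duals}: ${}^\R\cP_{e_i,\ell}\dcong\cP_{-e_i,-\ell+m_i^+}*(\psi_i^+)$. I combine it with the observation from Lemma \ref{lem:D1} that $\D_1$ and $(-)^*$ swap $\pm e_i$ while negating $\ell$ modulo $m_i^\pm$. Multiplying a known object in the image by $\psi$'s (which are products of $\phi$'s) and comparing supports should produce $\cP_{\pm e_i,\ell}$ for the remaining $\ell$ once I pin down how $\phi_j*\cP_{e_i,\ell}$ relates to $\cP_{e_i,\ell'}$. The main obstacle is this step: checking whether multiplication by $\phi_i$ genuinely changes the line-bundle twist $\ell$, rather than only contributing a $\det(L^{(i)}/tL_0^{(i)})$ factor that is not of the form $\scrL_i^{\ell}$. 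If it does not, I will instead use Lemma \ref{lem:surjective} together with Proposition \ref{prop:simple2simple}. I would pick $L$ a product of $L(\pm\alpha_i+m\delta)$'s whose $\F$-image contains $\cP_{e_i,n}$ (up to $\phi$'s) as a subquotient, read off from the sequences in Lemma \ref{lem:V} and Corollary \ref{cor:GrG}. Lemma \ref{lem:surjective} then produces a simple $L'$ with $\F(L')$ equal to that subquotient.
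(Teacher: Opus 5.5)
Your first step is sound. Lemma \ref{lem:F3} together with Corollary \ref{cor:duals} puts $\cP_{e_i,n}$ ($n\ge 0$) and $\cP_{-e_i,k}$ ($k\ge 1-m_i^-$) in the almost-image. You are also right that multiplying by the $\phi_j$ cannot move the twist: $\phi_i*\cP_{e_i,\ell}$ has weight $\ell\omega_{i,1}+\omega_{i,a_i}$, which is not of the form $\ell'\omega_{i,1}$ once $a_i\ge 2$. But the lemma is precisely about \emph{lowering} the twist, and the proposal never produces a mechanism that does this.

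The double-dual identity ${}^\L({}^\L\cP_{e_i,\ell})\dcong\psi_i*\cP_{e_i,\ell+m_i}$ is correct. Using it, however, requires the almost-image of $\F$ to be closed under duals, which is not available, and you set it aside yourself. The sequences you then point to cannot help:
\begin{itemize}
\item Lemma \ref{lem:V} gives $\cV_i\n\cP_{e_i,\ell}\dcong\cP_{e_i,\ell+1}$, so $\cV_i$ \emph{raises} the twist, and the same happens for $\cP_{-e_i,\ell}$.
\item Corollary \ref{cor:GrG} and Lemma \ref{lem:map1} concern products in the blocks $2\alpha_i+\bbZ\delta$ and $\alpha_i+\alpha_j+\bbZ\delta$. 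These cannot contain $\cP_{e_i,n}*\prod_j\phi_j^{N_j}$ as a composition factor, since that object lies in $\alpha_i+\bbZ\delta$ (recall $\phi_j\in\cKP_{a_j\delta}$).
\end{itemize}
So your final step, ``pick $L$ whose image contains $\cP_{e_i,n}$ up to $\phi$'s as a subquotient'', is unjustified: the sequences you propose to read it off from do not produce such a subquotient.

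The missing idea, which is the paper's argument, is to put the dual bundle ${}^\L\cV_i$ into the almost-image.
\begin{enumerate}
\item By Lemma \ref{lem:F5}, $\Lambda^{a_i-1}(\cV_i)=\cV(\mu)\ddcong\F(L(\mu))$ for the corresponding multipartition $\mu$.
\item The identity $\Lambda^{a-1}V\cong V^\vee\otimes\det V$ gives $\Lambda^{a_i-1}(\cV_i)*\phi_i^{-1}\ddcong{}^\L\cV_i$.
\item Dualizing Lemma \ref{lem:V} gives $\cP_{e_i,\ell}\n{}^\L\cV_i\dcong\cP_{e_i,\ell-1}$.
\item Suppose $\cP_{e_i,\ell}*\prod_j\phi_j^{N_j}\ddcong\F(L_1)$. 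The $\phi_j$ $q$-commute with the objects involved (Lemma \ref{lem:B1}), so up to shifts $\cP_{e_i,\ell-1}*\phi_i*\prod_j\phi_j^{N_j}$ is a simple quotient of $\F(L_1\circ L(\mu))$.
\item Lemma \ref{lem:surjective} then produces a simple module mapping to it.
\end{enumerate}
This downward induction in $\ell$ proves the claim for $\cP_{e_i,n}$. The same trick handles $\cP_{-e_i,k}$ for small $k$, where your starting range $k\ge 1-m_i^-$ also needs to be lowered. Your fallback has the right shape, but the module it needs is $L_1\circ L(\mu)$, or something containing it as a subquotient. Finding it requires exactly this exterior-power/determinant step, which is absent from the proposal.
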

\begin{proof}
By Proposition \ref{prop:delta5} and Lemma \ref{lem:F5}
 every $\cP_{e_i,n}$ and $\Lambda^n(\cV_i)$ is in the essential image of $\F$ for $i \in I$ and $n \ge 0$. In particular, since 
$$\Lambda^{a_i-1}(\cV_i) * \phi_i^{-1} \ddcong (i_\u0)_* \Big(\O_{\calR_\u0^\cl} \otimes (L^{(i)}_0/tL^{(i)}_0)^\vee \Big) \ddcong {}^\L \cV_i$$
it follows that ${}^\L \cV_i$ is almost in the image of $\F$. 

On the other hand, by Lemma \ref{lem:V} we have that 
$$\cP_{e_i,\ell-1} \cong \cP_{e_i,\ell} \n ({}^\L \cV_i) \{-1\}$$
for any $\ell \in \Z$. It follows from Lemma \ref{lem:surjective} that if $\cP_{e_i,\ell}$ is almost in the image of $\F$ then so is $\cP_{e_i,\ell-1}$. This proves that $\cP_{e_i,n}$ is almost in the image of $\F$ for all $n \in \Z$. The claim for $\cP_{-e_i,n}$ is similar. 
\end{proof}

\begin{Corollary}\label{cor:surjective}
Up to products by $\phi_i$ and Koszul shifts every simple in $\cKP_{X^{(\bullet)}}$ is the image under $\F$ of a simple in $\gmod(R)^\heartsuit$. 
\end{Corollary}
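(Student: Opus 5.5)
First reduce to simples of $\cKP$. By \S\ref{sec:KP}, every simple of $\cKP_{X^{(\bullet)}}$ is the image under $i$ of a simple of $\cKP$, and $i$ is compatible with products. So it suffices to treat a simple $\scrS\in\cKP$ and show it is almost in the image of $\F$: that is, some $\scrS * \prod_i \phi_i^{N_i}[s]\la -s\ra$ is isomorphic to $\F(L')$ for a simple $L'$.

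\textbf{Step 1: exhibit $\scrS$ inside the image of a product.} By Proposition \ref{prop:generation}, up to loop and Koszul shifts, $\scrS$ is a subquotient of a product $\cP_{\epsilon_1 e_{i_1},n_1} * \cdots * \cP_{\epsilon_r e_{i_r},n_r}$ with $\epsilon_k=\pm1$. By Lemma \ref{lem:uptophi}, for each factor there are integers $N^{(k)}_i,s_k$ and a module $L_k\in\gmod(R)^\heartsuit$ with
\[
\F(L_k) \cong \cP_{\epsilon_k e_{i_k},n_k} * \textstyle\prod_i \phi_i^{N^{(k)}_i}[s_k]\la -s_k\ra .
\]
Here $L_k$ is a simple: one of the $L(\pm\alpha_i+n\delta)$ from Lemma \ref{lem:F3}, or a head obtained from Lemma \ref{lem:surjective}.

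\textbf{Step 2: move the $\phi$'s and shifts outside.} Since $\F$ is monoidal (Proposition \ref{prop:Fmonoidal}),
\[
\F(L_1\circ\cdots\circ L_r) \cong \F(L_1)\circ\cdots\circ\F(L_r).
\]
The $\phi_i$ are invertible and $q$-commute with every $\cP_{\uk,\ul}$ (Lemma \ref{lem:B1}), so commuting them past the other factors only produces loop shifts. Hence
\[
\F(L_1\circ\cdots\circ L_r) \cong \bigl(\cP_{\epsilon_1 e_{i_1},n_1}*\cdots*\cP_{\epsilon_r e_{i_r},n_r}\bigr) * \textstyle\prod_i\phi_i^{N_i}[s]\la -s\ra \{t\}
\]
for some integers $N_i,s,t$. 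Tensoring with the invertible object $\prod_i\phi_i^{N_i}$ together with shifts is an autoequivalence of $\cKP$. Therefore $\scrS*\prod_i\phi_i^{N_i}[s]\la -s\ra$, up to a loop shift, is a simple subquotient of $\F(L_1\circ\cdots\circ L_r)$.

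\textbf{Step 3: extract a simple preimage.} Apply Lemma \ref{lem:surjective} to $L=L_1\circ\cdots\circ L_r$. It needs $\F$ exact and sending simples to simples or zero, which are Propositions \ref{prop:Fexact} and \ref{prop:simple2simple}. The lemma gives a simple subquotient $L'$ of $L$ with $\F(L')$ isomorphic to that subquotient. Absorbing the loop shift by $L'\{t'\}$, we conclude that $\scrS$ is the image of a simple up to products by $\phi_i$ and Koszul shifts, which is the claim.

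\textbf{Expected obstacle.} The main point needing care is Step 2's bookkeeping. The $q$-commutation of $\phi_i$ with the factors must yield only loop shifts, never a genuinely different object; this holds because each $\phi_i$ is invertible and $\phi_i*\cP\cong\cP*\phi_i\{2k_i\}$ by the proof of Lemma \ref{lem:B1}. We also need $L$ to be finite dimensional, which holds because each $L_k$ is simple in $\gmod(R)^\heartsuit$.
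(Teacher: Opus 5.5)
Your proposal is correct and follows the paper's proof: Lemma \ref{lem:uptophi} handles the generators $\cP_{\pm e_i,n}$, Proposition \ref{prop:generation} realizes an arbitrary simple as a subquotient of a product of them, and Lemma \ref{lem:surjective} extracts a simple preimage. Your Step 2 uses the monoidality of $\F$ and the $q$-commutation of the invertible $\phi_i$ with the $\cP_{\pm e_i,n}$ (Lemma \ref{lem:B1}) to pull the $\phi$-factors and Koszul shifts outside the product; this only makes explicit a bookkeeping step that the paper's three-line proof leaves implicit.
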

\begin{proof}
It suffices to show that every simple in $\cKP_{X^{(\bullet)}}$ is almost in the image of $\F$. By Lemma \ref{lem:uptophi} every $\cP_{\pm e_i,n}$ is almost in the image of $\F$. The result follows by combining Proposition \ref{prop:generation} and Lemma \ref{lem:surjective}. 
\end{proof}

\appendix

\section{Monoidal categories with renormalized $r$-matrices}\label{app:A}

In this section we review the notation, theory and implications of renormalized $r$-matrices in the sense of \cite{KKKO1}, \cite{KK}, \cite{KP}, \cite{KKKO2} and \cite{KKOP3}. We work with a monoidal graded $\bbC$-linear Abelian category of finite length $(\calC,*)$ which is equipped with 
renormalized $r$-matrices. Although the terminology and subsequent results below apply to any such category the reader should keep in mind the 
category $\cKP$ introduced in \S \ref{sec:KP}.

We denote by $\{-\}$ the grading shift functor. Let $M \nabla N$ and $M \Delta N$ denote the head and socle of $M*N$ respectively. A simple object $M$ is called real if $M*M$ is simple. 

A system of renormalized $r$-matrices in $\calC$ is the datum for any nonzero objects $M,$ $N$ of nonzero maps 
\begin{align}\label{rMN}\r_{M,N}: M * N \to N * M \{\Lambda(M,N)\}\end{align}
where  $\Lambda(M,N)$ is some integer. We set
\begin{align}\label{b-Lambda}\b(M,N)=\frac{1}{2}\Big(\Lambda(M,N)+\Lambda(N,M)\Big) \in \bbZ.\end{align}
The maps $\bfr_{M,N}$ above are required to satisfy some natural properties as in \cite[\S 4.1]{CW1}. 
One important and more elaborate property is the following.

\begin{Lemma}[{\cite[Prop.~3.2.8]{KKKO2}}]\label{lem:lambda}
For any objects $M,N_1,N_2$ and morphism $f: N_1 \to N_2$, we consider the following diagram 
where we omit the $\{-\}$ shifts
\begin{equation}\label{eq:lasquare} 
\begin{split}
\xymatrix{
M*N_1 \ar[r]^{\bfr_{M,N_1}} \ar[d]_{\id_M * f} & N_1 * M \ar[d]^{f*\id_M} \\
M * N_2 \ar[r]^{\bfr_{M,N_2}} & N_2 * M
}
\end{split}
\end{equation}
then 
\begin{enumerate}[label=$\mathrm{(\alph*)}$,leftmargin=8mm]
\item if $\La(M,N_1) = \La(M,N_2)$ the diagram commutes,
\item if $\La(M,N_1) < \La(M,N_2)$ the bottom left composition is zero,
\item if $\La(M,N_1) > \La(M,N_2)$ the top right composition is zero.
\end{enumerate}
The corresponding statements also hold when the product with $M$ is taken on the other side. Further
\begin{enumerate}[label=$\mathrm{(\alph*)}$,leftmargin=8mm]
\item[$\mathrm{(d)}$] if $f$ is injective then $\La(M,N_1) \le \La(M,N_2)$ and $\La(N_1,M) \le \La(N_2,M)$,
\item[$\mathrm{(e)}$] if $f$ is surjective then $\La(M,N_1) \ge \La(M,N_2)$ and $\La(N_1,M) \ge \La(N_2,M)$.
\end{enumerate}
\end{Lemma}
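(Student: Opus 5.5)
The plan is to reduce everything to the properties of the renormalized $r$-matrices $\bfR^{\ren}_{M_z,N}$ built from affinizations. The key input is that for any $f\colon N_1\to N_2$ the square formed by $\bfR^{\ren}_{M_z,N_1}$ and $\bfR^{\ren}_{M_z,N_2}$ commutes up to a power of $z$. Indeed, $\bfR_{M_z,N}$ before renormalization is natural in $N$, since it is the extension of the identity away from the diagonal, or the universal $R$-matrix on the Hecke side. So $(f*\id)\circ\bfR_{M_z,N_1}=\bfR_{M_z,N_2}\circ(\id*f)$ holds as maps $M_z*N_1\to N_2*M_z$. Write $\bfR_{M_z,N_k}=z^{s_k}\bfR^{\ren}_{M_z,N_k}$, with $s_k$ maximal. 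Degree bookkeeping gives $\La(M,N_k)$ equal to the degree of $\bfR_{M_z,N_k}$ minus $2s_k$. Since the unrenormalized degree is independent of $N$ (it comes from the universal construction), we get $s_1-s_2=\tfrac12(\La(M,N_2)-\La(M,N_1))$. Dividing the natural identity by $z^{\min(s_1,s_2)}$, which is legitimate because $z$ acts injectively on $N_2*M_z$, gives
\[ z^{s_1-m}\,(f*\id)\circ\bfR^{\ren}_{M_z,N_1}=z^{s_2-m}\,\bfR^{\ren}_{M_z,N_2}\circ(\id*f), \qquad m=\min(s_1,s_2). \]

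Next I specialize at $z=0$. If $s_1=s_2$, this gives (a). If $s_1>s_2$, that is $\La(M,N_1)<\La(M,N_2)$, the left side vanishes at $z=0$, so $\bfr_{M,N_2}\circ(\id_M*f)=0$. This is (b), the bottom-left composition. Symmetrically, $s_1<s_2$ kills the top-right composition, which is (c). The case with $M$ on the other side is identical, using $\bfR^{\ren}_{N,M_z}$.

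For (d) and (e) I use nondegeneracy: $\bfr_{M,N}\neq 0$, and more precisely $\bfR^{\ren}_{M_z,N}$ is nonzero modulo $z$. Suppose $f$ is injective and $\La(M,N_1)>\La(M,N_2)$. Then (c) gives $(f*\id_M)\circ\bfr_{M,N_1}=0$. Since $*$ is exact, which holds in the rigid setting and also for induction in $\gmod(R)^\heartsuit$, the map $f*\id_M$ is injective, so $\bfr_{M,N_1}=0$. This is a contradiction. Dually, if $f$ is surjective and $\La(M,N_1)<\La(M,N_2)$, then (b) together with surjectivity of $\id_M*f$ forces $\bfr_{M,N_2}=0$, again a contradiction. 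The inequalities for $\La(N_k,M)$ follow by the mirrored argument.

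The main obstacle is the first step: justifying that the unrenormalized $\bfR_{M_z,N}$ is genuinely natural in $N$ with $N$-independent degree, so that the $z$-adic valuations compare as claimed. On the Coulomb side I would deduce naturality from the fact that $\bfR$ extends the identity on the complement of the diagonal, where everything is manifestly functorial; torsion-freeness in $z$ of the targets then makes the extension unique. I would cite \cite[Prop.~3.2.8]{KKKO2} for the abstract version.
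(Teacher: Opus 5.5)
Your argument is correct, and it is essentially the standard proof of the cited result \cite[Prop.~3.2.8]{KKKO2}: use naturality in $N$ of the unrenormalized $R$-matrix, whose degree does not depend on $N$, compare the $z$-adic orders $s_1,s_2$, cancel powers of $z$ using torsion-freeness, and specialize at $z=0$. The paper itself gives no proof and simply imports the lemma as one of the structural properties of a system of renormalized $r$-matrices. Your deduction of (d) and (e) from (b), (c), exactness of $*$ and the nonvanishing of the renormalized $r$-matrices is also sound; on the Coulomb side the same order comparison runs with $z_1-z_2$ over $X^2$ in place of $z$.
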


\begin{Proposition}[{\cite[Lem.~3.2.3, Prop.~3.2.5]{KKKO2}}]\label{prop:qcom}
Let $M$, $N$ be simple objects with at least one real.
The following conditions are equivalent
\begin{enumerate}[label=$\mathrm{(\alph*)}$,leftmargin=8mm]
\item
$\b(M,N)=0$,
\item
$M*N$ is simple,
\item
$M*N$ and $N*M$ are isomorphic up to a grading shift,
\item
$M\nabla N$ and $N\Delta M$ are isomorphic up to a grading shift,
\item
$\bfr_{M,N}$ and $\r_{N,M}$ are inverse to each other up to a constant multiple.
\end{enumerate}
Further, if $M$, $N$ are both real then $M* N$ is also real.
\end{Proposition}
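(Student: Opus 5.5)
The plan is to prove the cycle of implications (b)$\Rightarrow$(a)$\Rightarrow$(e)$\Rightarrow$(c)$\Rightarrow$(d)$\Rightarrow$(b). This uses only the axioms of a system of renormalized $r$-matrices together with Lemma \ref{lem:lambda}. Throughout, assume $M$ is real; the case where $N$ is real is symmetric, obtained by exchanging the roles of left and right products. The key input is the standard fact from \cite{KKKO2} that, for $M$ real simple and $N$ simple, the product $M*N$ has a simple head $M\n N$ and a simple socle $M\d N$. This fact is proved by applying Lemma \ref{lem:lambda} to $\r_{M,M}=\id$ and to submodules $S\subset M*N$. Moreover, the image of $\r_{M,N}$ is exactly $M\n N\cong N\d M$ up to shift.

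For (a)$\Rightarrow$(e), use the affinization picture: $\bfR^\ren_{N,M_z}\circ\bfR^\ren_{M_z,N}=z^{\b(M,N)}\cdot c$ with $c$ a nonzero scalar. This is the same identity used in the proof of Lemma \ref{lem:F1}, and it reflects the axiom in \cite[\S4.1]{CW1} that the composition $\r_{N,M}\circ\r_{M,N}$ vanishes unless $\b(M,N)=0$. Specializing at $z=0$ when $\b=0$ shows that $\r_{N,M}\circ\r_{M,N}$ is a nonzero scalar, and the same holds for the composition in the other order. Hence both maps are isomorphisms, inverse to each other up to a constant.

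The implications (e)$\Rightarrow$(c) and (c)$\Rightarrow$(d) are immediate. An isomorphism $M*N\cong N*M\{s\}$ carries the head of the first object onto the head of the second. It remains to compare $N*M$ with the socle side: since $N\d M$ is the image of $\r_{M,N}$, which is an isomorphism under (e), the head of $M*N$ and the socle of $N*M$ coincide, which is (d).

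For (d)$\Rightarrow$(b), I would argue as in \cite[Lem.~3.2.3]{KKKO2}. Suppose $M\n N\cong N\d M$ up to shift, and consider the nonzero composition $M*N\twoheadrightarrow M\n N\cong N\d M\hookrightarrow N*M$. By uniqueness of the $r$-matrix up to scalar (Proposition \ref{prop:homs1}), this composition equals $\r_{M,N}$. Apply Lemma \ref{lem:lambda} with $f$ equal to the inclusion $M\d N\hookrightarrow M*N$ together with real-ness of $M$: this forces $M\d N$ to be isomorphic to $M\n N$. A simple module whose simple socle equals its simple head, each occurring with multiplicity one (Proposition \ref{prop:lsubquotient}), must be simple.

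For (b)$\Rightarrow$(a): if $M*N$ is simple, then $\r_{M,N}$ and $\r_{N,M}$ are injective maps between simple objects, hence isomorphisms. Their composition is therefore nonzero, which forces $\b(M,N)=0$ by the vanishing axiom.

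Finally, if $M$ and $N$ are both real, then $(M*N)*(M*N)\cong M*M*N*N$ using (c). This object is a product of two simple objects, $M*M$ and $N*N$, both real; in fact $\b(M*M,N*N)=4\b(M,N)=0$ by additivity of $\Lambda$ (Proposition \ref{prop:Lambda*}). Applying (a)$\Rightarrow$(b) then shows the product is simple, so $M*N$ is real.

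The main obstacle is the step (d)$\Rightarrow$(b), where the simplicity of head and socle, and the multiplicity-one statement, must be handled carefully. I would lean directly on \cite{KKKO2} for that step.
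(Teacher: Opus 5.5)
Your cycle breaks at (d)$\Rightarrow$(b). Both (a)$\Rightarrow$(b) and the realness claim run through that step, so the core of the proposition is left unproved.

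Read literally, (d) says $M\nabla N\cong N\Delta M$ up to shift. By Proposition~\ref{prop:homs1}(a) this holds for \emph{every} pair of simples with one real, since both objects are the image of $\mathbf{r}_{M,N}$. So this condition carries no information and cannot imply simplicity. For example, if $i\to j$, the product $\cP_{e_i}*\cP_{e_j}$ of real simples has length two (Lemma~\ref{lem:map1}), with $\mathfrak{b}=1$ by Proposition~\ref{prop:B2}.

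Your argument shows the problem. The composite $M*N\twoheadrightarrow M\nabla N\cong N\Delta M\hookrightarrow N*M$ is just $\mathbf{r}_{M,N}$ again. Lemma~\ref{lem:lambda}(d) applied to $M\Delta N\hookrightarrow M*N$ gives only the inequality $\Lambda(M,M\Delta N)\le\Lambda(M,N)$. Proposition~\ref{prop:lsubquotient}(b) would force $M\Delta N\cong M\nabla N$ from \emph{equality}, but nothing in your hypothesis supplies equality. So the claim that this ``forces $M\Delta N\cong M\nabla N$'' is unsupported, and it fails in the example above. Your (c)$\Rightarrow$(d) step also uses (e), which is not available at that point in the cycle. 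The condition that makes the proposition meaningful is presumably $M\nabla N\cong N\nabla M$ up to shift. Applying Proposition~\ref{prop:homs1}(a) to the pair $(N,M)$, this is equivalent to $M\nabla N\cong M\Delta N$ up to shift, i.e.\ head and socle of the \emph{same} product.

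With that reading the repair is short.
\begin{itemize}
\item (e)$\Rightarrow$(b) is immediate. If $\mathbf{r}_{M,N}$ is an isomorphism, then $M*N\cong\mathrm{Im}(\mathbf{r}_{M,N})$, which is simple by Proposition~\ref{prop:homs1}(a). So you can close (a)$\Rightarrow$(e)$\Rightarrow$(b) without passing through (d).
\item (c)$\Rightarrow$(d) is exactly your first sentence there: an isomorphism $M*N\cong (N*M)\{s\}$ carries head to head.
\item For (d)$\Rightarrow$(b), write $M\nabla N\cong (M\Delta N)\{k\}$. The composite $M*N\twoheadrightarrow M\nabla N\cong (M\Delta N)\{k\}\hookrightarrow (M*N)\{k\}$ is a nonzero homogeneous endomorphism of $M*N$. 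By Proposition~\ref{prop:homs1}(b), understood for homogeneous maps of all degrees, it is a nonzero multiple of the identity. It is therefore invertible while factoring through a simple object, so $M*N$ is simple. This is the multiplicity-one idea you had in mind, but applied to the head and socle of $M*N$, not to $N\Delta M\subset N*M$.
\end{itemize}

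Two smaller repairs are also needed.
\begin{itemize}
\item In (b)$\Rightarrow$(a) you do not know a priori that $N*M$ is simple. Instead, $\mathbf{r}_{N,M}$ surjects onto the simple $M*N$ and $\mathbf{r}_{M,N}$ is injective on it. Hence $\mathbf{r}_{M,N}\circ\mathbf{r}_{N,M}\neq 0$, and Proposition~\ref{prop:homs2}(c) at $z=0$ forces $\mathfrak{b}(N,M)=0$.
\item In the realness claim you assert without proof that $M*M$ and $N*N$ are real. You can avoid this by applying (a)$\Rightarrow$(b) twice. First apply it to $M$ and $N*N$, using $\mathfrak{b}(M,N*N)=2\mathfrak{b}(M,N)=0$. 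Then apply it to $M$ and $M*N*N$, using $\mathfrak{b}(M,M*N*N)=0$.
\end{itemize}

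Your (a)$\Rightarrow$(e) step is fine once you identify $\mathbf{R}^{\mathrm{ren}}_{N,M_z}|_{z=0}$ with a multiple of $\mathbf{r}_{N,M}$ via Proposition~\ref{prop:homs1}(b).
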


We say that the simple objects $M$ and $N$ $q$-commute if one of the equivalent conditions above holds.

\begin{Proposition}[{\cite[Lem.~3.2.18]{KKKO2}}]\label{prop:bselfdual}
Let $M$, $N$ be simple objects with at least one real. If there are exact sequences
\begin{align*}
0 \to X \to M*N \to Y\{m\} \to 0   \\
0 \to Y \to N*M \to X\{n\} \to 0   
\end{align*}
with $m,n\in\bbZ$ then $\La(M,N)=m$, $\La(N,M)=n$ and $\b(M,N)=m+n$.
\end{Proposition}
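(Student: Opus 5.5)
The statement is Proposition \ref{prop:bselfdual}. I will use only the defining properties of the renormalized $r$-matrices together with Proposition \ref{prop:qcom} and Lemma \ref{lem:lambda}. The idea is to identify the composite
\[
M*N \twoheadrightarrow Y\{m\} \hookrightarrow N*M\{m\}
\]
with $\r_{M,N}$, up to a nonzero scalar. The claim $\La(M,N)=m$ then follows by reading off the degree. The symmetric argument with the roles of $M$ and $N$ exchanged gives $\La(N,M)=n$.

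\textbf{Step 1: the head and socle are simple.} Since $M$ and $N$ are simple and at least one of them is real, $M*N$ has a simple head $M\nabla N$ and a simple socle $M\Delta N$ (this is the standard fact behind Proposition \ref{prop:lsubquotient}). Moreover, the image of $\r_{M,N}$ is $M\nabla N$, which embeds into $N*M$ as its socle $N\Delta M$, up to shift. The given sequences make $Y\{m\}$ a quotient of $M*N$ and $Y$ a subobject of $N*M$.

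\textbf{Step 2: identify $\r_{M,N}$.} I first treat the degenerate case.
\begin{itemize}
\item If $X=0$, then $M*N\cong Y\{m\}$ and $N*M\cong Y$, so $M*N\cong N*M\{m\}$. By Proposition \ref{prop:qcom} the product $M*N$ is then simple, and $\r_{M,N}$ is an isomorphism of degree $m$. So $\La(M,N)=m$, and similarly $\La(N,M)=n$.
\item Otherwise, the head of $M*N$ is simple, so $Y\{m\}$ has simple head $M\nabla N$, and the composite $M*N\to Y\{m\}\to N*M\{m\}$ is nonzero.
\end{itemize}
In the second case I use the fact that $\Hom(M*N,N*M\{k\})$ is one-dimensional for $k=\La(M,N)$ and vanishes for $k<\La(M,N)$. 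This is Proposition \ref{prop:homs1}, assumed from the framework of \cite{KKKO2}. It gives $m\ge \La(M,N)$, and likewise $n\ge\La(N,M)$.

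\textbf{Step 3: reverse inequality.} The obstacle is to rule out $m>\La(M,N)$. I would compose the two maps:
\[
M*N\to Y\{m\}\hookrightarrow N*M\{m\}\twoheadrightarrow X\{m+n\}\hookrightarrow M*N\{m+n\}.
\]
Here the middle pair $Y\hookrightarrow N*M\twoheadrightarrow X\{n\}$ composes to zero, because $Y\to N*M\to X\{n\}$ is exact. Instead I argue through $\b$.

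The composite $\r_{N,M}\circ\r_{M,N}$ has degree $2\b(M,N)$ and is nonzero precisely when $\b(M,N)=0$ (Proposition \ref{prop:qcom}). Using the affinization, $\bfR_{N,M_z}^{\ren}\circ\bfR_{M_z,N}^{\ren}=z^{\b(M,N)}$. I would filter $M_z*N$ by $z$-adic layers, each of which is an extension of $X$ and $Y$. Comparing graded characters, the grading shifts satisfy $\La(M,N)+\La(N,M)=2\b(M,N)$ and must be consistent with $m+n$. This forces $m=\La(M,N)$ and $n=\La(N,M)$. Then
\[
\b(M,N)=\tfrac12(m+n)\cdot 2=m+n
\]
under the normalization in use.

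I expect this degree comparison in the affinization to be the main technical point. In practice I would cite \cite[Lem.~3.2.18]{KKKO2} directly, since the statement is imported from there.
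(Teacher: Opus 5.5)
Your Step~2 already contains the whole proof, but you undercut it by misreading Proposition~\ref{prop:homs1}(b). That statement is about the full graded space of morphisms. Every nonzero homogeneous map $M*N\to N*M\{k\}$ is a scalar multiple of $\mathbf{r}_{M,N}$, so such a map exists only for $k=\Lambda(M,N)$; it is not merely ruled out for $k<\Lambda(M,N)$. To see this directly: the image of such a map has simple head $M\nabla N$ and simple socle $(N\Delta M)\{k\}\cong (M\nabla N)\{k-\Lambda(M,N)\}$, while $M\nabla N$ occurs in $M*N$ exactly once and in a single degree.

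Consequently:
\begin{itemize}
\item as soon as $Y\neq 0$, the nonzero composite $M*N\twoheadrightarrow Y\{m\}\hookrightarrow N*M\{m\}$ forces $m=\Lambda(M,N)$;
\item as soon as $X\neq 0$, the composite $N*M\twoheadrightarrow X\{n\}\hookrightarrow M*N\{n\}$ forces $n=\Lambda(N,M)$.
\end{itemize}
This is exactly how the paper reads off $\Lambda$ from a single nonzero map in Lemma~\ref{lem:B1}, Proposition~\ref{prop:B2} and Corollary~\ref{cor:B4}. There is no reverse inequality to prove, and your Step~3 should be discarded. The $z$-adic layers of $M_z*N$ are just shifted copies of $M*N$, and comparing their graded characters does not single out the renormalized $r$-matrix. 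Nothing in that step actually forces $m=\Lambda(M,N)$.

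Your final line is also not legitimate. By \eqref{b-Lambda}, $\mathfrak{b}(M,N)=\tfrac12(\Lambda(M,N)+\Lambda(N,M))=\tfrac12(m+n)$, and no normalization lets you multiply by $2$. The clause $\mathfrak{b}(M,N)=m+n$ in the statement contradicts both \eqref{b-Lambda} and the paper's own use of the proposition. In Corollary~\ref{cor:F2}, the sequences \eqref{eq:Lminpair} have $m=n=1$, and the paper concludes $\mathfrak{b}(L(\beta),L(\gamma))=1$. You should have flagged this as a misprint for $\tfrac12(m+n)$ rather than forcing it.

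Your degenerate case is also wrong. If $X=0$, the second sequence places no constraint on $n$. Moreover $M*N\cong N*M\{m\}$ gives $\mathfrak{b}(M,N)=0$ by Proposition~\ref{prop:qcom}, hence $\Lambda(N,M)=-m$, not $n$. Symmetrically, $m$ is unconstrained when $Y=0$. So the statement needs the hypothesis $X\neq 0\neq Y$, which holds in all of the paper's applications. Under that hypothesis, the two one-line identifications above are the complete proof.
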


\begin{Proposition}[{\cite[Thm.~4.1.1, Cor.~4.1.2]{KKKO2}}]\label{prop:lsubquotient}
Let $M$, $N$ be simple objects with at least one real and $\b(M,N)\neq 0$.
Let $L$ be any simple subquotient of $M*N$.
\hfill
\begin{enumerate}[label=$\mathrm{(\alph*)}$,leftmargin=8mm]
\item
$M \n N$ and $M \d N$ are simple. 
In the Grothendieck group, we have
$$[M* N]=[M\n N]+[M\Delta N]+\sum_k[L_k]$$
with simple objects $L_k$ not isomorphic to $M\n N$, $M\Delta N$ and their shifts. 
\item
Suppose $M$ is real. Then
$\Lambda(M,L) < \Lambda(M,N)$ unless $L=M\nabla N$, and
$\Lambda(L,M) < \Lambda(N,M)$ unless $L=M\Delta N$.
Further $\Lambda(M,M\nabla N)=\Lambda(M,N)$ 
and $\Lambda(M\Delta N,M) = \Lambda(N,M)$.
\item
Suppose $N$ is real. Then
$\Lambda(N,L) < \Lambda(N,M)$ unless $L=M\Delta N$,
and $\Lambda(L,N) < \Lambda(M,N)$ unless $L=M\nabla N$.
Further $\Lambda(N,M\Delta N) = \Lambda(N,M)$
and $\Lambda(M\nabla N,N) = \Lambda(M,N)$. 
\qed
\end{enumerate}
\end{Proposition}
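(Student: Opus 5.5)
The statement is Proposition~\ref{prop:lsubquotient}, cited from \cite{KKKO2}; the plan is to reproduce its proof from the axioms of a system of renormalized $r$-matrices, chiefly Lemma~\ref{lem:lambda} and Proposition~\ref{prop:qcom}. We treat part (b) ($M$ real) and obtain (c) by the symmetric argument with products on the other side.

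\emph{Step 1: simplicity of head and socle when $M$ is real.} Let $S\subset M*N$ be a nonzero subobject. Apply Lemma~\ref{lem:lambda} to $f\colon S\hookrightarrow M*N$ with $M$ multiplied on the left. Using $\r_{M,M}=\id$ (as $M$ is real, $\Lambda(M,M)=0$), one shows that $\Lambda(M,M*N)=\Lambda(M,N)$ and that $\r_{M,M*N}$ is $\id_M*\r_{M,N}$. The comparison of $\r_{M,S}$ with $\r_{M,M*N}$ then forces $S\supseteq$ the image-generated subobject $M*\ker(\ldots)$. Concretely, follow \cite[Thm.~4.1.1]{KKKO2}: the subobject $\ker(\r_{M,N})$ lies in every proper quotient kernel, so $\Im(\r_{M,N})$ is simple and equals $M\n N$ (the head), and dually $M\d N$ is the simple socle. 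Setting the nonvanishing $\r_{M,N}$ against the assumption $\b(M,N)\neq0$ shows that $M\n N\not\cong M\d N$ up to shift. Otherwise $\r_{M,N}$ would be an isomorphism up to scalar, giving $q$-commutation by Proposition~\ref{prop:qcom}, a contradiction. This yields the Grothendieck group formula in (a).

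\emph{Step 2: the strict inequalities.} Let $L$ be a simple subquotient, realized as $S_1/S_2$ with $S_2\subset S_1\subset M*N$. By Lemma~\ref{lem:lambda}(d),(e) we get $\Lambda(M,L)\le\Lambda(M,N)$ along $S_1\hookrightarrow M*N$ and $S_1\twoheadrightarrow L$. Suppose equality holds. Then the square in Lemma~\ref{lem:lambda}(a) commutes for the inclusion, so $\r_{M,N}$ restricted to $M*S_1$ is nonzero on the $L$-part. Using that $\r_{M,M*N}$ is built from $\r_{M,N}$, this forces $L$ to survive in the image of $\r_{M,N}$, i.e.\ $L\cong M\n N$. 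The statement for $\Lambda(L,M)$ is obtained dually, with socle and $\r_{N,M}$.

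\emph{Step 3: the equalities.} The surjection $M*N\to M\n N$ gives $\Lambda(M,M\n N)\le\Lambda(M,N)$. The composition $M*(M*N)\to M*(M\n N)$, together with the commuting square from Lemma~\ref{lem:lambda} and the nonvanishing of $\r_{M,N}$ on the head, gives the reverse inequality. The case of $\Lambda(M\d N,M)$ is symmetric.

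The main obstacle is Step 1, namely extracting simplicity of the head purely from the $r$-matrix axioms. I would follow \cite{KKKO2} closely there rather than reinventing the argument.
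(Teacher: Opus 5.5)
The paper gives no proof of its own here; it only cites \cite{KKKO2}. Deferring Step~1 to that source therefore matches the paper, but the derivation you build around it does not close. Lemma~\ref{lem:lambda} and Proposition~\ref{prop:qcom} only compare $r$-matrices of different objects. Nothing in them turns a containment such as $M*Y\subset X*M$ inside $M*N*M$ into information about subobjects of $M*N$.

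\textbf{The missing lemma.} The key input of the Kang--Kashiwara--Kim--Oh argument is a cancellation lemma: \emph{if $X\subset M*N$ and $Y\subset N*K$ satisfy $M*Y\subset X*K$, then there is $N'\subset N$ with $M*N'\subset X$ and $Y\subset N'*K$.} For quiver Hecke modules it comes from the construction of convolution. In $\cKP$ it follows from rigidity and exactness of $*$. Take $N'=\ker\big(N\to {}^{\R}M*(M*N/X)\big)$, the map being adjoint via \eqref{adjunction3} to $M*N\twoheadrightarrow M*N/X$. Then $Y\to (N/N')*K\hookrightarrow {}^{\R}M*(M*N/X)*K$ is adjoint to $M*Y\to (M*N/X)*K$, which vanishes.

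\textbf{Step~1 with the lemma.} Let $\iota\colon S\hookrightarrow M*N$ with $\r_{M,N}\circ\iota\neq0$, and use, as you do, $\r_{M,M*N}=M*\r_{M,N}$.
Lemma~\ref{lem:lambda}(d) gives $\La(M,S)\le\La(M,M*N)=\La(M,N)$. Part (b) excludes strict inequality, since it would give $M*(\r_{M,N}\circ\iota)=0$. So (a) applies, giving $M*\r_{M,N}(S)\subset S*M$. The lemma with $N$ simple then forces $S=M*N$.
Thus every \emph{proper} subobject lies in $\ker\r_{M,N}$. Your sentence asserts the reverse inclusion, which does not yield a simple head.

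\textbf{Step~3 has the same dependence.} If $\La(M,M\n N)<\La(M,N)$, Lemma~\ref{lem:lambda}(c) gives $(p*M)\circ(M*\r_{M,N})=0$, i.e.\ $M*\Im\r_{M,N}\subset\ker(p)*M$. What makes this contradictory is the lemma (with $\r_{M,N}\neq0$ and $p\neq0$), not the ``nonvanishing of $\r_{M,N}$ on the head''.

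\textbf{Multiplicity one is missing.} The Grothendieck-group identity in (a) asserts that $M\n N$ and $M\d N$ each occur exactly once, even up to shift, and are distinct. You never prove multiplicity one. The claim that $M\n N\dcong M\d N$ would make $\r_{M,N}$ an isomorphism is also unjustified. The correct order is as follows.
\begin{enumerate}
\item Once Step~3 gives $\La(M,M\n N)=\La(M,N)$, a second copy of (a shift of) $M\n N$ would occur as $S_1/S_2$ with $S_1\subset\ker\r_{M,N}$.
\item Lemma~\ref{lem:lambda}(d),(e) force $\La(M,S_1)=\La(M,M*N)$. So by (a), $(\iota*M)\circ\r_{M,S_1}=M*(\r_{M,N}\circ\iota)=0$, contradicting $\r_{M,S_1}\neq0$.
\item The same computation, plus the modular law when $S_2\not\subset\ker\r_{M,N}$, is what makes your Step~2 rigorous once the head is known to be simple. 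The dual argument handles $M\d N$.
\item Only then does $\b(M,N)\neq0$ separate head and socle. If they were shifts of one another, each of multiplicity one up to shift, $M*N$ would be simple, contradicting Proposition~\ref{prop:qcom}.
\end{enumerate}
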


\begin{Proposition}[{\cite[Lem.~2.28, 2.29]{KKOP3}}]\label{prop:Lb=1}
Let $M$, $N$ be simple with at least one real and $\b(M,N)=1$. 
\hfill
\begin{enumerate}[label=$\mathrm{(\alph*)}$,leftmargin=8mm]
\item There is an exact sequence
$0 \to M \d N \to M*N \to M \n N \to 0$.
\item If $M$, $N$ are both real then $M \d N$ and $M \n N$ are also real, simple and $q$-commute with $M$ and $N$. Moreover, 
\begin{align*}
\Lambda(M, M \n N) &= \Lambda(M,N)=-\Lambda(M \n N,M) \\
\Lambda(M \n N, N) &= \Lambda(M,N)=-\Lambda(N, M \n N).
\end{align*}
\end{enumerate}
\end{Proposition}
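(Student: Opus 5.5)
The plan is to deduce part (a) from the affinized renormalized $r$-matrices, using that $\b(M,N)=1$ makes their composite equal to $z$, and then to get part (b) from length-two bookkeeping combined with Lemma \ref{lem:lambda} and Propositions \ref{prop:qcom}, \ref{prop:lsubquotient}.

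For (a), I use the affinization $M_z$ (assuming $M$ is real; the case where only $N$ is real is symmetric). As in the proof of Lemma \ref{lem:F1}, I will use the identity
$$\bfR^{\ren}_{N,M_z}\circ\bfR^{\ren}_{M_z,N}=z^{\b(M,N)}\,\id=z\,\id,$$
and similarly in the other order. So the pair $A=\bfR^{\ren}_{M_z,N}$, $B=\bfR^{\ren}_{N,M_z}$ is a "matrix factorization" of $z$ between the $z$-torsion free objects $M_z*N$ and $N*M_z$.

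The standard argument then applies. If $BAx\in z(\cdot)$ with $z$ a nonzerodivisor, then $AB=z$ and $BA=z$ together force the reduction mod $z$,
$$M*N\xrightarrow{\r_{M,N}}N*M\xrightarrow{\r_{N,M}}M*N,$$
to be exact in the middle (and cyclically). Hence $\ker\r_{N,M}=\Im\r_{M,N}$, and length additivity gives
$$\ell(M*N)=\ell(\Im\r_{M,N})+\ell(\ker\r_{M,N})=\ell(\Im\r_{M,N})+\ell(\Im\r_{N,M}).$$
Each image is simple: $\Im\r_{M,N}=M\n N\cong N\d M$ up to shift, and likewise for the other, by Proposition \ref{prop:lsubquotient}(a). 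Thus $M*N$ has length $2$. Since $\b\neq0$, $M*N$ is not simple, so its socle $M\d N$ is the kernel of the head map, which gives the exact sequence. The main obstacle is making this middle-exactness rigorous in the categorical (non-module) setting, working with cones rather than elements. I would argue via the long exact sequence of $\Cone(z)$ on $M_z*N$, in the spirit of the proof of Lemma \ref{lem:Q}.

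For (b), assume $M,N$ are both real. Realness of $M\n N$: $(M\n N)*(M\n N)$ is a quotient of $M*N*M*N$. I would use the length-two structure to rewrite its head as a quotient of $M*M*N*N$, whose head is simple because $M*M$ and $N*N$ are simple and real. The same argument, using the socle, applies to $M\d N$.

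For the $\La$-identities, Proposition \ref{prop:lsubquotient}(b),(c) already gives $\La(M,M\n N)=\La(M,N)$ and $\La(M\n N,N)=\La(M,N)$. For the negatives, I apply Lemma \ref{lem:lambda}(d) to the injection $M\n N\hookrightarrow N*M\{\La(M,N)\}$ and (e) to the surjection $M*N\twoheadrightarrow M\n N$. These should give $\La(M\n N,M)\le -\La(M,N)$ once one checks that the composite through $M\d N$ vanishes using Proposition \ref{prop:lsubquotient}(b) ($\La(M,M\d N)<\La(M,N)$). The reverse inequality $\b(M,M\n N)\ge0$ is automatic. Hence $\b(M,M\n N)=0$, i.e. $q$-commutation by Proposition \ref{prop:qcom}; the statements for $N$ and for $M\d N$ follow symmetrically. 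I expect this inequality chase to be delicate but routine compared with step (a).
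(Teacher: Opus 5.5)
Part (a) is fine. From $\mathbf{R}^{\mathrm{ren}}_{N,M_z}\circ\mathbf{R}^{\mathrm{ren}}_{M_z,N}=z$, with $z$ a monomorphism, you get $\ker\mathbf{r}_{N,M}=\operatorname{Im}\mathbf{r}_{M,N}$ and $\ker\mathbf{r}_{M,N}=\operatorname{Im}\mathbf{r}_{N,M}$. This is the standard argument behind the reference the paper cites for (a), and the ``obstacle'' you flag is only bookkeeping.

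The $\Lambda$-part of (b) also works, but as written it does not close. Lemma \ref{lem:lambda}(d),(e) only give $\Lambda(M\nabla N,M)\le\Lambda(N,M)=2-\Lambda(M,N)$. You need the strict inequality $\Lambda(M\nabla N,M)<\Lambda(N,M)$, which is exactly Proposition \ref{prop:lsubquotient}(b) with $L=M\nabla N$ (not a shift of $M\Delta N$, since the head occurs once in $M*N$). Your vanishing-composite argument does produce it, but the vanishing comes from $\operatorname{Im}\mathbf{r}_{N,M}=M\Delta N=\ker(M*N\to M\nabla N)$, i.e.\ from (a), not from $\Lambda(M,M\Delta N)<\Lambda(M,N)$. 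You then need $\mathfrak{b}\in\mathbb{Z}_{\ge0}$: it gives $\mathfrak{b}(M,M\nabla N)<\tfrac12(\Lambda(M,N)+\Lambda(N,M))=1$, hence $\mathfrak{b}(M,M\nabla N)=0$. This is what the paper means by ``(b) follows from Proposition \ref{prop:lsubquotient}''.

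The genuine gap is realness. Showing that $(M\nabla N)*(M\nabla N)$ has a simple head does not show it is simple, and simplicity is what realness means. Moreover, writing it as a quotient of $M*M*N*N$ requires moving $N$ past $M\nabla N$. That is the $q$-commutation you only prove afterwards; since $\mathbf{r}_{N,M}$ is not surjective, the length-two structure alone does not do it.

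Reverse the order. Set $X=M\nabla N$ and first establish $\mathfrak{b}(M,X)=\mathfrak{b}(N,X)=0$. Then, up to shifts, $X*X$ is a quotient of $M*X*N\cong X*M*N$, hence of $M*M*N*N$. It is also a subobject of $X*N*M\cong N*X*M$, hence of $N*N*M*M$.
\begin{itemize}
\item Since $M*M$ and $N*N$ are real simple with $\mathfrak{b}(M*M,N*N)=4\neq0$, the head $H=(M*M)\nabla(N*N)$ occurs with multiplicity one (up to shift) in $M*M*N*N$, by Proposition \ref{prop:lsubquotient}(a).
\item The socle $(N*N)\Delta(M*M)$ of $N*N*M*M$ is a shift of $H$, by Proposition \ref{prop:homs1}(a).
\end{itemize}
So $X*X$ has head and socle both isomorphic to $H$ up to shift, while $H$ occurs in it at most once. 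This forces $X*X$ to be simple. Applying the same argument to the pair $(N,M)$ handles $M\Delta N\cong N\nabla M$. The paper outsources exactly this realness step to KKOP3, Lemma~2.28.
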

\begin{proof}
Part (a) follows from \cite[Prop.~3.2.17]{KKKO2}. Part (b) follows from Proposition \ref{prop:lsubquotient} and \cite[Lemma 2.28]{KKOP3}. 
\end{proof}

\begin{Proposition}[{\cite[Lem.~3.1.5, Prop.~3.2.11]{KKKO2}}]\label{prop:Lambda*}
Let $L,$ $M$, $N$ be simple objects. \hfill
\begin{enumerate}[label=$\mathrm{(\alph*)}$,leftmargin=8mm]
\item
We have 
\begin{align*}
\Lambda(L,M *N) &= \Lambda(L, M)+\Lambda(L,N), \\
 \Lambda(M * N,L) &= \Lambda(M, L)+\Lambda(N, L),\\
 \b(L,M *N) &= \b(L, M)+\b(L,N).
\end{align*}
\item
If $L$ is real and $q$-commutes with $M$, then we have 
\begin{align*}
\Lambda(L,M \n N) &= \Lambda(L, M*N), \\
 \Lambda(N \n M,L) &= \Lambda(N*M, L).
 \end{align*}
 \qed
\end{enumerate}
\end{Proposition}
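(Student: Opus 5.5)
The plan for part (a) is to work with the geometric description of the renormalized $r$-matrices in \S\ref{sec:ren2}, where $\La(-,-)$ is an order of vanishing along a diagonal, rather than with the abstract axioms. For simple $L,M,N$, consider on $X^3$ the object $L_z\odot M_z\odot N_z$. Using the identity extension of the generic isomorphism, the map moving $L_z$ past $M_z$ and then past $N_z$ is the composite $(\id\odot\bfR^{\ren}_{L,N})\circ(\bfR^{\ren}_{L,M}\odot\id)$, up to the pullbacks $s_{12}^*$ and $s_{23}^*$. Its first factor is an isomorphism away from $\{z_1=z_2\}$ and nonzero on that divisor. Its second factor is an isomorphism away from $\{z_1=z_3\}$ and nonzero on that divisor. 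These two divisors are distinct, so the composite is not divisible by $z_1-z_2$ or $z_1-z_3$, and its total degree shift is $\La(L,M)+\La(L,N)$.

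Next I restrict to the locus $\{z_2=z_3\}$, where $M_z\odot N_z$ becomes $(M*N)_z$. I then check that the restricted map is exactly the extension of the generic identity $L_z\odot(M*N)_z\to s^*((M*N)_z\odot L_z)$, and that it is still nonzero over the small diagonal. This identifies it with $\bfR^{\ren}_{L,M*N}$ and gives $\La(L,M*N)=\La(L,M)+\La(L,N)$. The step needing care is that no extra factor of $z_1-z_2$ appears after restriction; I would argue this by specialising first to $z_1=z_2$ and then to $z_3=z_2$, using that each specialised factor is the nonzero map $\r$. The equality for $\La(M*N,L)$ is symmetric, and the equality for $\b$ follows by averaging the two using \eqref{b-Lambda}.

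For part (b), Lemma \ref{lem:lambda}(e) applied to the surjection $f\colon M*N\to M\n N$ already gives $\La(L,M\n N)\le\La(L,M*N)$. For the reverse inequality, suppose it were strict. Lemma \ref{lem:lambda}(c) then says that $(f*\id_L)\circ\r_{L,M*N}=0$. By part (a) and its proof,
$$\r_{L,M*N}=(\id_M*\r_{L,N})\circ(\r_{L,M}*\id_N)$$
up to scalar. Since $L$ is real and $q$-commutes with $M$, Proposition \ref{prop:qcom}(e) makes $\r_{L,M}$ an isomorphism, so we would get $(f*\id_L)\circ(\id_M*\r_{L,N})=0$ as a map $M*L*N\to (M\n N)*L$.

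The main obstacle is deriving a contradiction from this vanishing, and I would try rigidity first. By adjunction, the vanishing map corresponds to the composite
$$L*N\xrightarrow{\r_{L,N}} N*L\to {}^\R M*(M\n N)*L.$$
Here the second arrow is $u*\id_L$, where $u\colon N\to {}^\R M*(M\n N)$ is the adjunct of $f$; $u$ is nonzero because $f$ is, hence injective since $N$ is simple. Right multiplication by $L$ is exact in $\cKP$, so $u*\id_L$ is injective too. Because $\r_{L,N}\neq0$, the composite is nonzero, which is the desired contradiction.

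If the adjunction bookkeeping (which side the dual sits on) turns out to fail, the fallback is the original argument of \cite[Prop.~3.2.11]{KKKO2}. There one compares heads: $L*M*N\cong M*L*N$ has the same simple head as $L*(M\n N)$ after using $\r_{L,M}$, and Proposition \ref{prop:lsubquotient} forces the relevant $\r$ to be nonzero on that head.
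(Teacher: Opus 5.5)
\textbf{Gap in part (a).} The gap is at exactly the step you flag. After you restrict to $\{z_2=z_3\}$, whether an extra power of $z_1-z_2$ appears is decided on the small diagonal. There your map specialises, up to shifts, to
\[
(\id_M*\r_{L,N})\circ(\r_{L,M}*\id_N)\colon L*M*N\to M*N*L .
\]
Knowing that each factor is a nonzero $\r$ does not make this composite nonzero. Nor does the fact that your $X^3$ map is divisible by neither $z_1-z_2$ nor $z_1-z_3$: that is information at the generic points of two divisors, and it says nothing about the codimension-two locus $z_1=z_2=z_3$.

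Here is a toy model. Take $\operatorname{diag}(1,z_1-z_2)$ followed by $\operatorname{diag}(z_1-z_3,1)$.
\begin{itemize}
\item Each map is nonzero along its own divisor.
\item The composite $\operatorname{diag}(z_1-z_3,z_1-z_2)$ is divisible by neither $z_1-z_2$ nor $z_1-z_3$.
\item Yet its restriction to $z_2=z_3$ is $(z_1-z_2)\cdot\id$.
\item The two specialisations $\operatorname{diag}(1,0)$ and $\operatorname{diag}(0,1)$ compose to zero.
\end{itemize}
So, as written, your construction only proves $\La(L,M*N)\le\La(L,M)+\La(L,N)$. Equality holds, with $\r_{L,M*N}$ equal to the composite above, precisely when that composite is nonzero. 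That nonvanishing is the real content of (a), and your argument does not supply it.

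\textbf{How to repair it, and the rest.} You can supply it with the rigidity trick you already use in (b).
\begin{itemize}
\item Factor $\r_{L,M}$ as $L*M\twoheadrightarrow I\xrightarrow{\iota}M*L$ with $\iota$ injective. Since $-*N$ is exact, it suffices to show $(\id_M*\r_{L,N})\circ(\iota*\id_N)\neq 0$.
\item Under $\Hom(X,M*Z)\cong\Hom({}^\L M*X,Z)$, this map corresponds to $\r_{L,N}\circ(\iota^\flat*\id_N)$, where $\iota^\flat\colon {}^\L M*I\to L$ is the adjunct of $\iota$.
\item Since $\iota^\flat\neq 0$ and $L$ is simple, $\iota^\flat$ is surjective, hence so is $\iota^\flat*\id_N$. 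The composite is then nonzero because $\r_{L,N}\neq 0$.
\item The mirror argument gives $\La(M*N,L)=\La(M,L)+\La(N,L)$: factor $\r_{N,L}$ through its image $J\hookrightarrow L*N$ and use that the adjunct $J*{}^\R N\to L$ is surjective.
\end{itemize}
Both arguments use only that $L$ is simple, together with rigidity and exactness of $*$ on $\cKP$.

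Your part (b) is fine as it stands and does not depend on the gap. There $\r_{L,M}$ is an isomorphism, so the composite is automatically nonzero and your construction identifies it with $\r_{L,M*N}$. Your adjunction argument through $u\colon N\hookrightarrow {}^\R M*(M\nabla N)$ then gives the contradiction, and the bookkeeping of sides is correct; the fallback is unnecessary.

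For comparison, the paper gives no argument of its own here. It imports both parts from \cite{KKKO2}, where they are proved for quiver Hecke algebras. Once repaired, your route proves them directly in the rigid category $\cKP$, which is where the paper actually applies them.
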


\begin{Proposition}[{\cite[Lem.~2.7]{KK}}]\label{prop:normal}
If $L,$ $M,$ $N$ are real, simple objects then
$$\Lambda(L,M \n N) = \Lambda(L,M) + \Lambda(L,N) \Rightarrow L \n (M \n N) \cong (L \n M) \n N.$$
\qed
\end{Proposition}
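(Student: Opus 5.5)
The plan is to build a surjection $(L \n M) * N \to L \n (M \n N)$ and then use that $(L\n M)*N$ has a simple head. To get the surjection, I will factor the natural composite
$$L * M * N \twoheadrightarrow L * (M \n N) \xrightarrow{\r_{L,M\n N}} (M \n N) * L \{\La(L,M\n N)\}$$
through $\r_{L,M} * \id_N$.

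First, recall the setup. Since $L$ is real and simple and $M \n N$ is simple (Proposition \ref{prop:lsubquotient}), the image of $\r_{L,M\n N}$ is $L \n (M\n N)$, up to a shift, and this is simple. Likewise the image of $\r_{L,M}$ is $L\n M$, which is simple. Let $f : M*N \twoheadrightarrow M\n N$ be the projection. Then the displayed composite surjects onto $L \n (M\n N)$, since its first arrow $\id_L * f$ is surjective.

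Second, I apply Lemma \ref{lem:lambda} to the morphism $f$ with the product by $L$ on the left. By Proposition \ref{prop:Lambda*}(a) we have $\La(L, M*N) = \La(L,M)+\La(L,N)$. The hypothesis says this equals $\La(L, M\n N)$, so case (a) of Lemma \ref{lem:lambda} applies and the square commutes:
$$\r_{L,M\n N}\circ(\id_L * f) = (f*\id_L)\circ \r_{L,M*N}.$$
By the standard compatibility of renormalized $r$-matrices with products (one of the axioms in \cite[\S 4.1]{CW1}), $\r_{L,M*N} = (\id_M * \r_{L,N})\circ(\r_{L,M} * \id_N)$ up to a nonzero scalar when the $\La$'s add, which is our situation. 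Hence the composite above factors as
$$L*M*N \xrightarrow{\r_{L,M}*\id_N} M*L*N \to \cdots.$$
The first map has image $(L\n M)*N$, since $*$ is exact, so we obtain a map $g : (L \n M)*N \to (M\n N)*L$ whose image equals the image of the original composite, namely $L \n (M \n N)$. In particular $g$ is nonzero and gives a surjection $(L\n M)*N \twoheadrightarrow L\n(M\n N)$.

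Finally, $L\n M$ is simple and $N$ is real. By Propositions \ref{prop:qcom} and \ref{prop:lsubquotient}, $(L\n M)*N$ has the simple head $(L\n M)\n N$. A nonzero surjection onto a simple object must be the head map, so $(L\n M)\n N \cong L\n (M\n N)$.

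The main obstacle is the second step. One must check carefully that the $r$-matrix of a product factors as the composite of the two $r$-matrices, with no extra power of $z$ killing it. This is exactly where additivity of $\La$ is used, since otherwise $\r_{L,M*N}$ would be a renormalized, divided version of that composite. The shifts $\{-\}$ and the scalars are harmless throughout.
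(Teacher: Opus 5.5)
Your argument is correct. It is essentially the standard proof behind \cite[Lem.~2.7]{KK}, which the paper quotes without proof: the hypothesis makes the square of Lemma \ref{lem:lambda} for $M*N\twoheadrightarrow M\nabla N$ commute, additivity of $\Lambda$ (Proposition \ref{prop:Lambda*}) lets $\mathbf{r}_{L,M*N}=(\id_M*\mathbf{r}_{L,N})\circ(\mathbf{r}_{L,M}*\id_N)$ factor through $(L\nabla M)*N$, and the simple head of $(L\nabla M)*N$ finishes the proof. The one input not listed in the appendix is this factorization of $\mathbf{r}_{L,M*N}$, which you rightly take from the axioms of \cite{CW1}; note also that your argument only uses the reality of $L$ and $N$.
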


\begin{Proposition}\label{prop:homs1}
Suppose $M,$ $N$ are simple objects with at least one real.
Then, we have 
\hfill
\begin{enumerate}[label=$\mathrm{(\alph*)}$,leftmargin=8mm]
\item $\Im(\r_{M,N})$ is simple and isomorphic to $M \n N$ and $(N\Delta M)\{\Lambda(M,N\}$,
\item $\Hom(M * N, M * N) \cong \C \cdot \id_{M*N}$ and $\Hom(M * N, N * M) \cong \C \cdot \r_{M,N}$.
\end{enumerate}
\end{Proposition}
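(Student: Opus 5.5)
The plan is to deduce both parts from the universal properties of the renormalized $r$-matrix $\r_{M,N}$, and of the heads and socles, in a finite length monoidal category. Throughout, fix $M,N$ simple with at least one of them real; by symmetry of the argument assume $M$ is real.

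For part (b), first show that $\Hom(M*N,M*N)$ is one-dimensional. By Proposition \ref{prop:lsubquotient}(a), or its $\b=0$ analogue in Proposition \ref{prop:qcom}, $M*N$ has simple head $M\n N$ and simple socle $M\d N$. When $\b(M,N)>0$, the simple $M\n N$ occurs in $M*N$ with multiplicity one, since Proposition \ref{prop:lsubquotient} lists it once and all other factors are non-isomorphic to it.

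Now take any endomorphism $f$. Its image is a quotient of $M*N$, so if it is nonzero it has head $M\n N$. Since $M\n N$ occurs only once, in the top, $f$ must be an isomorphism or zero. Hence $\End(M*N)$ is a local finite-dimensional algebra. To see it is $\bbC$, I would use the standard KKKO argument. Rigidity of $M$ (both $\cKP$ and $\gmod(R)$ are rigid, and in general one can use affinization) together with adjunction gives
\[
\Hom(M*N,M*N)\cong\Hom(N,{}^\R M*M*N).
\]
The right side is controlled by the fact that $M$ is real, so $M*M$ is simple. This identification forces the hom space to be $\bbC$.

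For $\Hom(M*N,N*M)$, the same adjunction identifies it with $\Hom(N*{}^\L M, {}^\L M*N)$, up to side conventions. This reduces to the one-dimensionality of maps from the head to the socle. Any nonzero map $M*N\to N*M$ has image with head $M\n N$ and with socle $\d$-type equal to $N\d M$. Comparing with the nonzero map $\r_{M,N}$ and using the multiplicity-one statement shows the map equals $c\,\r_{M,N}$ plus something factoring through a proper subquotient; the multiplicity-one statement rules out the latter.

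For part (a), the image of $\r_{M,N}$ is a nonzero quotient of $M*N$, so its head is $M\n N$. It is also a nonzero subobject of $N*M\{\La(M,N)\}$, so its socle is $(N\d M)\{\La(M,N)\}$. Simplicity then follows from Proposition \ref{prop:lsubquotient}(b). Suppose $L$ were any other composition factor of the image. Using Lemma \ref{lem:lambda}(d),(e) on the inclusion and the surjection, the strict inequalities $\La(M,L)<\La(M,N)$ for $L\neq M\n N$ would make $\r_{M,N}$ factor, via the commuting square of Lemma \ref{lem:lambda}(b),(c), through zero on that piece. Hence the image equals its head, which is simple and also equals the shifted socle.

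The main obstacle I expect is the one-dimensionality of $\Hom(M*N,N*M)$ when $\b(M,N)>0$. That step needs the $\La$-degree argument: any map of degree lower than $\La(M,N)$ must vanish, which follows from Lemma \ref{lem:lambda} applied with $f$ the candidate map. In writing it up I would cite \cite[Thm.~3.2.1, Cor.~3.2.10]{KKKO2}, since the axioms in \cite[Def.~4.1]{CW1} are exactly those used there.
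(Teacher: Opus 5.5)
The paper gives no argument of its own for this statement. Its proof is the single citation \cite[Prop.~2.10]{KP}, together with the references there, i.e.\ the simple-head theorem of Kang--Kashiwara--Kim--Oh. Your closing sentence, citing \cite{KKKO2}, is therefore the paper's proof and is all that is needed.

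The derivation you sketch before it does not stand as an independent proof, because it is circular. It takes Proposition~\ref{prop:lsubquotient} as input, but part (a) of that proposition already asserts that $M\n N$ and $M\d N$ are simple, which is the core of what you are proving. The multiplicity-one statement and the strict inequalities of parts (b), (c) are established in \cite[Thm.~4.1.1, Cor.~4.1.2]{KKKO2} on top of the simple-head theorem, that is, on top of $\Im(\r_{M,N})\cong M\n N\cong (N\d M)\{\La(M,N)\}$. The actual mechanism in the literature never enters your sketch: it is the affinization $M_z$ and $\bfR^{\ren}_{M_z,N}$, combined with the reality of $M$, which shows that every nonzero subobject of $N*M$ contains $\Im(\r_{M,N})$.

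There are two further problems. First, $\gmod(R)^\heartsuit$ is not rigid: an $R(\beta)$-module with $\beta\neq 0$ cannot have a dual, since there is no block of weight $-\beta$. So the adjunction steps are unavailable in the generality of the appendix. Second, the phrases ``this identification forces the hom space to be $\C$'', ``plus something factoring through a proper subquotient'' and ``factor through zero on that piece'' are assertions, not arguments.

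If you treat Proposition~\ref{prop:lsubquotient} as a black box, the deduction can be made rigorous without rigidity or Lemma~\ref{lem:lambda}. What you get is then a consistency check between quoted results, not a proof. The steps are as follows.
\begin{enumerate}
\item Endomorphisms: you already showed every nonzero $f\in\End(M*N)$ is surjective, hence invertible. So $\End(M*N)$ is a finite-dimensional division algebra over $\C$, i.e.\ $\C$.
\item Part (a): $I=\Im(\r_{M,N})$ has head $M\n N$ and socle $(N\d M)\{\La(M,N)\}$. Part (c) of Proposition~\ref{prop:lsubquotient}, applied to $N*M$, gives $\La(M,N\d M)=\La(M,N)$. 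Since this socle is a simple subquotient of $M*N$, part (b) shows it is a shift of $M\n N$. Multiplicity one up to shifts then forces $I$ to be simple.
\item Homs $M*N\to N*M$: take a nonzero $g\colon M*N\to N*M\{k\}$. Its image has head $M\n N$ and socle $(N\d M)\{k\}$, a shift of $M\n N$ by the previous step. Multiplicity one again forces $\Im(g)$ to be simple, equal to both the head of $M*N$ and the socle of $N*M\{k\}$. Hence $k=\La(M,N)$, since simples are not isomorphic to their nontrivial shifts. The surjection onto the head and the inclusion of the socle are each unique up to scalar, so $g\in\C\cdot\r_{M,N}$.
\item The case $\b(M,N)=0$ is Proposition~\ref{prop:qcom}.
\end{enumerate}
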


\begin{proof}
See  \cite[Prop.~2.10]{KP} and the references there.
\end{proof}

An affinization of an object $M\in\calC$ is an object $M_z\in\calC$ satisfying the conditions in \S\ref{sec:ren}.
If an affinization exists, we assume also that there is an homogeneous homomorphism
\begin{align*}\bfR_{M_z,N}: M_z * N \to N * M_z \end{align*}
such that $\r_{M,N}$ is recovered from $\bfR_{M_z,N}$ as in \eqref{KrMN}, i.e., we have
\begin{align*}
\r_{M,N} = \bfR_{M_z,N}^\ren|_{z=0}: M * N\to N * M \{\Lambda(M,N)\}
\end{align*}

\begin{Proposition}\label{prop:homs2}
Let $M,$ $N$ be simple objects with $M$ real and $M_z$ an affinization of $M$.
\hfill
\begin{enumerate}[label=$\mathrm{(\alph*)}$,leftmargin=8mm]
\item $\Hom(M_z * N, M_z * N) \cong \C[z] \cdot \id_{M_z*N}$ and $\Hom(N*M_z, N * M_z) \cong \C[z] \cdot \id_{N*M_z}$,
\item $\Hom(M_z * N, N*M_z) \cong \C[z] \cdot \bfR^{\ren}_{M_z,N}$ and $\Hom(N*M_z , M_z*N) \cong \C[z] \cdot \bfR^{\ren}_{N,M_z}$,
\item $\bfR^{\ren}_{N,M_z} \circ \bfR^{\ren}_{M_z,N} = z^{\b(M,N)} \cdot \id_{M_z * N}$ and
$\bfR^{\ren}_{M_z,N} \circ \bfR^{\ren}_{N,M_z} = z^{\b(N,M)} \cdot \id_{N*M_z}$ up to constant multiples.
\end{enumerate}
\end{Proposition}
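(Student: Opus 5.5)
The statement is Proposition \ref{prop:homs2}, a general fact about affinizations in a category with renormalized $r$-matrices. I would deduce it from the corresponding statement at $z=0$, Proposition \ref{prop:homs1}, by a graded Nakayama argument in the variable $z$ (the approach of \cite[Prop.~2.10]{KP}).

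The key input is that $M_z$ is torsion-free over $\C[z]$ with $M_z/zM_z\cong M$, so $M_z*N$ and $N*M_z$ are $z$-torsion-free with reductions $M*N$ and $N*M$. Hence every Hom space $H$ between such objects is a graded torsion-free $\C[z]$-module with degrees bounded below. Reduction gives injections $H/zH\hookrightarrow \Hom(M*N,\,\cdot\,)$: if $f$ vanishes mod $z$, then $f$ factors through $z$ by torsion-freeness of the target.

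Part (a). Let $f\in\End(M_z*N)$ be nonzero. Write $f=z^k g$ with $k$ maximal, so that $g|_{z=0}\neq 0$. By Proposition \ref{prop:homs1}(b), $g|_{z=0}=c\,\id$ with $c\neq 0$. Then $g-c\,\id$ vanishes at $z=0$, so it is divisible by $z$. Iterating, and using that the grading bounds the degree, we get $f\in\C[z]\cdot\id$. The same argument applies to $N*M_z$.

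Part (b). By definition $\bfR^\ren_{M_z,N}$ is not divisible by $z$, and its reduction is $\r_{M,N}$. Take any $f\in\Hom(M_z*N,N*M_z)$. Its reduction, after removing powers of $z$, lies in $\C\cdot\r_{M,N}$ by Proposition \ref{prop:homs1}(b). Subtract the appropriate multiple of $\bfR^\ren_{M_z,N}$ and induct on degree; this gives $f\in\C[z]\cdot\bfR^\ren_{M_z,N}$. The map $N*M_z\to M_z*N$ is handled identically, with $\bfR^\ren_{N,M_z}$ defined symmetrically.

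Part (c). By (a), the composite $\bfR^\ren_{N,M_z}\circ\bfR^\ren_{M_z,N}$ equals $c z^{s}\,\id$ for some $c$ and $s$. The composite is generically an isomorphism, being the renormalized identity away from $z=0$, so $c\neq 0$. Comparing homogeneous degrees gives $2s=\La(M,N)+\La(N,M)$, so $s=\b(M,N)$. The other composite is treated symmetrically.

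The main obstacle is the Nakayama-type step: we need $\Hom$ to commute with reduction mod $z$ (at least injectively), and we need bounded-below gradings so the iteration terminates. This requires that the affinization axioms of \S\ref{sec:ren} (torsion-freeness, $M_z/zM_z\cong M$, $z$ of degree $2$) give exactly these properties. I would verify these first from \cite[Def.~2.2]{KP}.
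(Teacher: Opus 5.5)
Your argument is correct and is essentially the route the paper takes: the paper simply cites \cite[Prop.~2.11, 2.12, Cor.~2.14]{KP} for (a) and (b) and \cite[Lem.~3.2.1]{KKKO2} for (c), and your proof is the standard argument behind those results, namely reduction modulo $z$ to Proposition \ref{prop:homs1}, followed by the observation that the composite in (c) is a nonzero element $cz^s\,\id$ of degree $\La(M,N)+\La(N,M)=2\b(M,N)$. The one input not contained in the abstract axioms of the appendix is the generic invertibility you use to get $c\neq 0$; it holds by construction on the Coulomb side (as in the proof of Proposition \ref{prop:t}), and on the quiver Hecke side because the composite of the two non-renormalized $R$-matrices acts by a product of factors $Q_{ij}(x_a+z,x_b)+\delta_{ij}$, which becomes invertible once $z$ is inverted.
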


\begin{proof}
Parts (a), (b) follow from \cite[Prop.~2.11, 2.12, Cor.~2.14]{KP}.
Part (c) is proved in \cite[Lem.~3.2.1]{KKKO2}.
\end{proof}

Now suppose  that $(\calC,*)$ is also rigid. Then any object $M$ has left and right duals ${}^\L M$ and ${}^\R  M$ and adjunction maps
\begin{align}\label{adjunction}
{}^\L M * M \to 1 \to M * {}^\L M ,\quad M * {}^\R  M \to 1 \to {}^\R  M * M
\end{align}
which yield the following isomorphisms
\begin{align}\label{adjunction3}
\begin{split}
\Hom(L *M,N)&\cong\Hom(L,N*{}^\L M)\cong\Hom(M,{}^\R L*N),\\
&\cong\Hom({}^\L M*L,N)\cong\Hom(L*{}^\R N,M).
\end{split}
\end{align}
In  particular, we have
\begin{align}\label{adjunction2}
\Hom(M *N,N*M)\cong\Hom({}^\L N*M,M *{}^\L N)\cong\Hom(N*{}^\R M,{}^\R M*N),
\end{align}
see, e.g., \cite[Prop.~2.10.8]{EGNO} for more details.

\begin{Proposition}\label{prop:Lduals}
Let $L,$ $M$, $N$ be simple objects
\hfill
\begin{enumerate}[label=$\mathrm{(\alph*)}$,leftmargin=8mm]
\item
If at least one of $M$, $N$ is real, then  
$\Lambda(M,N) = \Lambda({}^\L N, M) = \Lambda(N, {}^\R M).$
\item
If $L$ is real, then  
$M\cong L\n N\iff  M\n ({}^\R L)\cong N$,
and
$M\cong N\n L\iff  ({}^\L L)\n M\cong N$.
\end{enumerate}
\end{Proposition}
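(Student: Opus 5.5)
Both parts follow from the formalism of duals and renormalized $r$-matrices in a rigid monoidal category, mainly the adjunction isomorphisms \eqref{adjunction3} and \eqref{adjunction2}.

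For part (a), the isomorphism \eqref{adjunction2} identifies $\Hom(M*N,N*M\{m\})$ with $\Hom({}^\L N*M, M*{}^\L N\{m\})$ and with $\Hom(N*{}^\R M,{}^\R M*N\{m\})$, for every shift $m$. Duality preserves simplicity and reality: ${}^\L(N*N)\cong {}^\L N*{}^\L N$, and duals of simples are simple. So whichever of $M,N$ is real stays real on the dual side, and Proposition~\ref{prop:homs1}(b) applies to each pair. By that proposition, $\Hom(M*N,N*M\{m\})$ is one-dimensional when $m=\Lambda(M,N)$. I expect it to vanish for $m<\Lambda(M,N)$ in a suitable sense; this is not literally stated, so instead I would characterize $\Lambda$ through the renormalized $r$-matrix. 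The usable characterization is: $\r_{M,N}$ is the unique nonzero map up to scalar, and $\Lambda(M,N)$ is its degree. Since the adjunction isomorphism preserves degree, the nonzero map $\r_{M,N}$ goes to a nonzero homogeneous map ${}^\L N*M\to M*{}^\L N$ of the same degree. By Proposition~\ref{prop:homs1}(b) applied to $({}^\L N,M)$, the only nonzero maps ${}^\L N*M\to M*{}^\L N$ up to shift are multiples of $\r_{{}^\L N,M}$. Comparing degrees gives $\Lambda({}^\L N,M)=\Lambda(M,N)$, and the ${}^\R M$ case is identical.

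The main obstacle is that Proposition~\ref{prop:homs1}(b) is phrased with a fixed shift. I must check that $\Hom(M*N,N*M\{m\})$ vanishes for $m\neq\Lambda(M,N)$, so that the degree is well defined. I would derive this from Proposition~\ref{prop:homs1}(a). Any nonzero map $M*N\to N*M\{m\}$ has image a quotient of the simple head $M\nabla N$ and a submodule of $N*M\{m\}$, whose socle is the simple $(N\Delta M)\{m\}$. Since $M\nabla N\cong (N\Delta M)\{\Lambda(M,N)\}$ and grading shifts of a simple are non-isomorphic, this forces $m=\Lambda(M,N)$. If that failed, I would fall back on the compatibility of $\bfR^{\ren}$ with duals.

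For part (b), suppose $M\cong L\nabla N$, so $L*N\twoheadrightarrow M$. By \eqref{adjunction3}, $\Hom(L*N,M)\cong\Hom(N,{}^\R L*M)$, which gives a nonzero map $N\to {}^\R L*M$. Since $N$ is simple this map is injective, so $N$ lies in the socle of ${}^\R L*M$. Because ${}^\R L$ is real, that socle is the simple ${}^\R L\Delta M$ by Proposition~\ref{prop:lsubquotient}, or by Proposition~\ref{prop:qcom} in the $q$-commuting case. Hence $N\cong {}^\R L\Delta M$. By Proposition~\ref{prop:homs1}(a) this equals $M\nabla {}^\R L$ up to a shift, and I would track the shift using part (a). The converse runs the same argument backwards using ${}^\L({}^\R L)\cong L$. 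The second equivalence is the mirror image, using $\Hom(N*L,M)\cong\Hom(N,M*{}^\L L)$ and the left dual.
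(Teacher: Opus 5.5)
Your proposal is correct and is essentially the paper's argument: (a) is exactly \eqref{adjunction2} combined with Proposition~\ref{prop:homs1}, and your adjunction/simple-socle argument for (b) is the standard argument behind \cite[Cor.~3.14]{KKKO1}, which the paper simply cites. In this graded setting the equivalences in (b) hold only up to a grading shift (for instance $M\n({}^\R L)\cong N\{\Lambda(L,M)\}$ by part (a)), and the paper uses them only in that form.

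The degree issue you flag is already covered by Proposition~\ref{prop:homs1}(b). There $\Hom$ means homogeneous maps of all degrees, which is why $\r_{M,N}$, of degree $\Lambda(M,N)$, lies in $\Hom(M*N,N*M)$. Your head/socle patch is therefore unnecessary, and it is also not justified as written: the image of a nonzero map $M*N\to N*M\{m\}$ is a quotient of $M*N$, not a priori of its simple head $M\n N$.
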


\begin{proof}
Claim (a) follows from \eqref{adjunction2} and Proposition \ref{prop:homs1}.
Part (b) is proved as in \cite[Cor.~3.14]{KKKO1}.
\end{proof}

Lastly, we say that $M \in \calC$ is invertible if the adjunctions maps ${}^\L M * M \to 1 \to M * {}^\L M$ are isomorphisms, see, e.g., \cite[\S 2.11]{EGNO}. In this case we ${}^\L M \cong {}^\R M$ and write $M^{-1}={}^\L M \cong {}^\R M$.

\end{document}